\newcommand{\gpp}{\mathfrak{g}_P}
\newcommand{\gee}{\mathfrak{g}_E}
\newcommand{\MGC}{\mathcal{G}_{\mathbb{C}}}
\newcommand{\lan}{\langle }
\newcommand{\ran}{\rangle}
\newcommand{\ML}{\mathcal{L}}
\newcommand{\MD}{\mathcal{D}}
\newcommand{\MM}{\mathcal{M}}
\newcommand{\MO}{\mathcal{O}}
\newcommand{\Ker}{\mathrm{Ker}}
\newcommand{\End}{\mathrm{End}}
\newcommand{\MW}{\mathcal{W}}
\newtheorem{theorem}{Theorem}[section]
\newtheorem{conjecture}[theorem]{Conjecture}
\newtheorem{corollary}[theorem]{Corollary}
\newtheorem{definition}[theorem]{Definition}
\newtheorem{lemma}[theorem]{Lemma}
\newtheorem{proposition}[theorem]{Proposition}
\newtheorem*{remark}{Remark}
\newcommand{\MC}{\mathcal{C}}
\newcommand{\Tr}{\mathrm{Tr}}
\newcommand{\pab}{\bar{\partial}}
\newcommand{\st}{\star}
\newcommand{\we}{\wedge}
\newcommand{\pa}{\partial}
\newcommand{\RP}{\mathbb R^+}% {(0,+\infty)}
\newcommand{\EBE}{extended Bogomolny equations\;}
\newcommand{\ti}{\times}
\newcommand{\Si}{\Sigma}
\newcommand{\bz}{\bar{z}}
\newcommand{\vp}{\varphi}
\newcommand{\MA}{\mathcal{A}}
\newcommand{\ME}{\mathcal{E}}
\newcommand{\da}{\dagger}
\newcommand{\al}{\alpha}
\newcommand{\na}{\nabla}
\newcommand{\ep}{\epsilon}
\newcommand{\hA}{\widehat{A}}
\newcommand{\calC}{\mathcal C}
\newcommand{\calU}{\mathcal U}
\newcommand{\calV}{\mathcal V}
\newcommand{\calX}{\mathcal X}
\newcommand{\del}{\partial}
\newcommand{\RR}{\mathbb R}
\newcommand{\CC}{\mathbb C}
\newcommand{\NP}{\mathrm{NP}}
\newcommand{\GNP}{\mathrm{GNP}}
\newcommand{\wt}{\widetilde}
\newcommand{\hD}{\hat{D}}
\newcommand{\lam}{\lambda}
\newcommand{\si}{\sigma}
\newcommand{\isu}{i\mathfrak{su}}
\newcommand{\mft}{\mathfrak{t}}
\newcommand{\mfe}{\mathfrak{e}}
\newcommand{\HM}{H_{\mathrm{mod}}}
\newcommand{\AM}{A_{\mathrm{mod}}}
\newcommand{\PM}{\phi_{z,\mathrm{mod}}}
\newcommand{\POM}{\phi_{1,\mathrm{mod}}}
\newcommand{\MCK}{\mathcal{M}^{\mathbb{C}}_{\mathrm{Knot}}}
\newcommand{\diag}{\mathrm{diag}}
\newcommand{\mo}{\mathrm{mod}}
\newcommand{\he}{\hat{e}}
\newcommand{\MG}{\mathcal{G}}
\newcommand{\slf}{\mathfrak{sl}}
\newcommand{\mfr}{\mathfrak{r}}
\newcommand{\vfr}{\vec{\mathfrak{r}}}
\newcommand{\su}{\mathfrak{su}}
\newcommand{\Lam}{\Lambda}
\newcommand{\SL}{\mathrm{SL}}
\newcommand{\MMC}{\mathcal{M}^{\mathbb{C}}}
\newcommand{\NPK}{\mathrm{NPK}}
\newcommand{\mfd}{\mathfrak{d}}
\newcommand{\Hit}{\mathrm{Hit}}
\newcommand{\ie}{\mathrm{ie}}
\newcommand{\Ad}{\operatorname{Ad}}
\newcommand{\ad}{\operatorname{ad}}
\begin{document}

\title[The extended Bogomolny equations, II]{The extended Bogomolny equations with generalized Nahm pole boundary conditions, II}
\author{Siqi He} 
\address{Department of Mathematics, California Institute of Technology\\Pasadena, CA, 91106}
\email{she@caltech.edu}
\author{Rafe Mazzeo}
\address{Department of Mathematics, Stanford University\\Stanford,CA 94305 USA}
\email{rmazzeo@stanford.edu}

\begin{abstract}
We develop a Kobayashi-Hitchin correspondence for the \EBE, i.e., the dimensionally reduced Kapustin-Witten equations, on the
product of a compact Riemann surface $\Si$ with $\RP_y$, with generalized Nahm pole boundary conditions at
$y=0$.  The correspondence is between solutions of these equations satisfying these singular boundary conditions
and also limiting to flat connections as $y \to \infty$, and certain holomorphic
data consisting of effective triplets $(\ME, \vp, L)$ where $(\ME, \vp)$ is a stable $\mathrm{SL}(n+1,\CC)$ Higgs pair and $L \subset \ME$
is a holomorphic line bundle.  This corroborates a prediction of Gaiotto and Witten, and is an extension of our earlier
paper \cite{HeMazzeo2017} which treats only the $\mathrm{SL}(2,\RR)$ case. 
\end{abstract}

\maketitle

\begin{section}{Introduction}	
This paper generalizes our earlier work \cite{HeMazzeo2017}  and continues the study of the three-dimensional reduction of
the Kapustin-Witten (KW) equations on manifolds of the form $\Sigma \times \RR^+_y$, where $\Sigma$ is either a compact Riemann
surface or else $\Si = \CC$, with generalized Nahm pole boundary conditions at $y=0$ and where the data converges to a 
flat $\mathrm{SL}(n+1,\CC)$ connection as $y \to \infty$.  In this paper we provide a complete existence 
and regularity theory for this problem when the underlying complex gauge group is $\mathrm{SL}(n+1,\CC)$. 

We briefly recall the broader setting, and refer to \cite{KapustinWitten2006, witten2011fivebranes, gaiotto2012knot} 
as well as \cite{MazzeoWitten2013, MazzeoWitten2017} for more details. The KW equations on a four-manifold $M$, which
involve a connection $A$ on a $G$-bundle $E$ over $M$ and an $\mathrm{ad}(E)$-valued $1$-form $\phi$, take the form
\begin{equation}
F_A - \phi \we \phi + \star d_A \phi = 0, \qquad d_A \star \phi = 0.
\label{KWE}
\end{equation}
These are actually a specialization of a $1$-parameter family of equations to one particularly interesting parameter value.
Namely, if we define the complex connection $\MA = A + i\phi$ and compute its curvature $F_{\MA}$ in the usual way,
then this one-parameter family of equations can be written as
\[
e^{i\theta} F_{\MA} = \star \overline{ e^{i\theta} F_{\MA} };
\]
this can be regarded as a complex, phase-shifted form of the (anti)self-duality equations. The equations \eqref{KWE} 
correspond to the particular value $\theta = \pi/4$.  Thus the KW equations have some features of a $G_{\CC}$ 
gauge theory.

Complex flat connections are always solutions of any of this family of equations, but to find a richer class of solutions 
we specialize to this parameter value, i.e., to consider the equations \eqref{KWE}. Witten, in a series of papers and 
lectures \cite{witten2011fivebranes, Witten2014LecturesJonesPolynomial, Witten2016LecturesGaugeTheory}, following 
on the paper of Gaiotto and Witten \cite{gaiotto2012knot}, developed a far-reaching conjecture: 
the solution spaces of these equations when $M^4 = W^3 \ti \RP$, where $W \ti \{0\}$ contains a knot $K$, 
and where we impose a certain set of singular boundary conditions along $(W \ti \{0\}) \setminus K$ and separately along $K$, 
should contain information to capture the Jones polynomial of $K$ when $W = S^3$ and to define a generalization of 
the Jones polynomial in general. More specifically, the coefficients of the Jones polynomial should equal the count
of solutions to the KW equations with these singular boundary conditions (and for bundles of different degrees). 
We refer to \cite{RyosukeEnergy,He2017} for the study of moduli space of solutions with the singular boundary conditions 
and Taubes recent dramatic advance \cite{Taubescompactness} in the study of compactness properties of these moduli spaces. 

As one step toward the daunting problem of counting solutions to \eqref{KWE}, Gaiotto and Witten \cite{gaiotto2012knot} 
proposed an Atiyah-Floer type approach: fix a Heegard spliting $W=H_1\cup_{\Si} H_2$ and suppose that we stretch the metric
transversely to $H_1 \cap H_2$ so the two handlebodies are joined by a long neck $\cong \Si\ti [-L,L]$, $L \gg 1$.   If $\MM_{\Si}$ 
denotes the $G_{\CC}$ character variety of $\Si$, then the character varieties of the $H_i$ are Lagrangians $L_1, L_2 \subset \MM_{\Si}$. 
The Lagrangian intersection Floer homology of $L_1$ and $L_2$ gives an invariant of $W$, see \cite{ManolescuAbouzaid, 
FukayaDaemi2017atiyah} for recent progress. Similarly, if $W$ contains a knot, we may position it and stretch as before 
so that the portion of the knot in the long neck consists of a set of parallel straight lines $\{p_j\} \times [-L,L]$;
these intersect $\Si \ti \{0\}$ in a finite collection of points. Let $L_3$ be the moduli space of KW equations over 
$(\Si \ti \RR) \ti\RP$ with these singular boundary conditions at $\Si \ti \RR$ as before, but assuming all data is 
invariant in the $\RR$ direction.  Now, rather than counting intersections of the Lagrangians, we count holomorphic 
triangles in $\MM_{\Si}$ which span $L_1,L_2,L_3$. 
The current form of the conjecture is that this count yields the coefficients of the Jones polynomial. We refer 
to \cite{Gukov2017bps} for an explanation of this Atiyah-Floer type approach. 
There should also be symplectic knot Floer approach to define the Jones polynomial over general 3-manifold, 
in analogy to \cite{SeidelSmithLinkinvariant,ManolescuKnot}. 

%In the present situation, the singular boundary conditions play an additional
%important role. 

All of this motivates the need to describe the moduli space of solutions to the dimensionally reduced KW equations
on $\Si \ti \RP_y$ with singular boundary conditions at $y=0$, which is the topic of the present paper.  The case
where the knot is empty is already interesting, but in this dimensionally reduced setting, knots correspond to
a collection of points on $\Si$ since these `expand' to a collection of parallel lines at $y=0$ in $\Si \ti \RR \ti \RR_+$.
Thus when we refer to knot singularities in this paper, we mean simply a finite collection of points
$\{p_1, \ldots, p_k\} \subset \Si$. 

We now write these dimensionally-reduced equations explicitly. Let $G$ be a compact semisimple Lie group, and $E$ 
a complex Hermitian $G$ vector bundle over $\Si\ti\RP$. We denote by $\mathrm{ad}(E)$ the adjoint bundle of infinitesimal
automorphisms of $E$. The extended Bogomolny equations (EBE, for short) are a system of equations for a connection 
$A$ on $E$, an $\mathrm{ad}(E)$-valued $1$-form $\phi$ and an $\mathrm{ad}(E)$-valued section $\phi_1$: 
\begin{equation}
\begin{aligned}
F_A-\phi\we\phi& =\st d_A\phi_1 \\
d_A\phi+\st [\phi,\phi_1] & =0, \\ 
d_A^{\st}\phi &=0.
\end{aligned}
\label{KW2}
\end{equation}

Note that these equations do not involve either $A_y$ or $\phi_y$, the components of these fields in the vertical ($y$) direction.
While we may remove the $A_y$ component by a gauge choice, it is impossible to gauge away $\phi_y$. However, it
turns out that from the form of the singular boundary conditions at $y=0$ and the asymptotic conditions at $y=\infty$
one may deduce a posteriori that $\phi_y \equiv 0$.  On the other hand $\phi_1$ is an extra field in the theory which cannot be
removed. 

The \EBE have some important specializations. If the solution is $\Si$-invariant, then \eqref{KW2} reduces to the Nahm 
equation \cite{nahm1980simple}. If $\phi =0$, then \eqref{KW2} reduces to the Bogomolny equations \cite{Bogomol1976stability}.  
Finally, if the solution is independent of $y$ and  $\phi_1 = 0$, then \eqref{KW2} reduces to the Hitchin equations 
\cite{Hitchin1987Selfdual}. Thus the \EBE is a hybrid of these three famous equations.

For simplicity, we  assume in this paper that $G = \mathrm{SU}(n+1)$, and that the primary data is a complex vector bundle $E$ 
of degree $0$ and rank $n+1$ endowed with a Hermitian metric.  While all the results here should
go through in a relatively straightforward way for general $G$, treating that more general case would at the
least require substantial notational changes, so we do not carry this out here. 

As explained in \cite{gaiotto2012knot}, the equations \eqref{KW2} have a `Hermitian-Yang-Mills' structure.  In the
spirit of the famous papers of Donaldson \cite{donaldson1985anti, Donaldson1987Infinite}, and Uhlenbeck-Yau 
\cite{uhlenbeck1986existence}, Gaiotto and Witten predicted a `Kobayashi-Hitchin type' correspondence between 
the moduli space of solutions of these equations satisfying appropriate boundary conditions at $y=0$ and as $y \to \infty$, modulo
unitary gauge equivalence, and a certain moduli space of holomorphic data over the Riemann surface $\Si$, modulo
complex gauge equivalence. 

We describe this correspondence more carefully.  On the gauge-theoretic side, define $\MM_{\NP}$ and $\MM_{\GNP}$ 
to equal the moduli spaces of solutions to \eqref{KW2} satisfying the Nahm pole ($\NP$) and generalized Nahm pole ($\GNP$,
which stands for Nahm pole with knot singularities) boundary conditions at $y=0$ and which converge to flat $SL(n+1,\mathbb{C})$ 
connections as $y\to \infty$, up to unitary gauge equivalence.  On the complex geometric side, we consider triplets 
$(\ME,\vp,L)$ where  $(\ME,\vp)$ is a stable $\mathrm{SL}(n+1,\CC)$ Higgs pair and $L$ is a holomorphic line subbundle of $\ME$. 
We explain in Section 4.2 that any such triplet determines a divisor $\mfd(\ME,\vp,L)$ in $\Si$ which corresponds
to the location and weighting of the knot singularities. The triple is called effective if the divisor is effective. 
Define $\MMC$ to equal the set of effective triplets modulo complex gauge equivalence. We write $\MMC_\emptyset$ 
for the subset of $\MMC$ where $\mfd(\ME,\vp,L)=\emptyset$; its complement is denoted by $\MMC_{\mathrm{Knot}}$. 
We show in Section 4.1 that $\MMC_{\emptyset}$ is isomorphic to the Hitchin component $\MM_{\mathrm{\Hit}}$ in the moduli space of
$\mathrm{SL}(n+1,\mathbb{C})$ Higgs pairs. 

Gaiotto and Witten defined maps 
\begin{equation*}
I_{\NP}:\MM_{\NP}\to \MM_{\mathrm{Hit}}, \qquad I_{\NPK}:\MM_{\NPK}\to \MMC_{\mathrm{Knot}}, 
\end{equation*}
which are explained in detail in Section 4, and conjecture that $I_{\NP}$ and $I_{\NPK}$ are injective.
In this paper we verify this conjecture. 
\begin{theorem}
\label{maintheorem} The maps $I_{\NP}$ and $I_{\NPK}$ are bijective: 
\begin{itemize} 
\item[i)] For every element in the Hitchin component $\MM_{\Hit}$, there exists a solution to \eqref{KW2} with
only a Nahm pole singularity at $y=0$;
\item[ii)] For any triple $(\ME,\vp,L)$ with effective divisor  $\mfd(\ME,\vp,L)$, there exists a solution to 
\eqref{KW2} with knot singularities at $y=0$ and with locations and weight data $\mfd(\ME,\vp,L)$;
\item[iii)] If two solutions which satisfy generalized Nahm pole boundary condition have the same image under 
$I_{\NP}$ or $I_{\NPK}$, then they are unitary gauge equivalent.
\end{itemize}
\end{theorem}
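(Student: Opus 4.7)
The overall plan is a Kobayashi--Hitchin correspondence for the extended Bogomolny equations. Regarding \eqref{KW2} as a moment-map equation for the unitary gauge group acting on a space of triples $(A,\phi,\phi_1)$, the strategy is to prove that each complex gauge orbit of admissible holomorphic data contains exactly one unitary orbit solving the $\EBEt$. Parts (i) and (ii) of Theorem \ref{maintheorem} are surjectivity: existence of such a unitary orbit for every element of $\MM_{\Hit}$, respectively every effective triple $(\ME,\vp,L)$. Part (iii) is its uniqueness within the complex orbit.

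For uniqueness (iii), I would suppose $(A,\phi,\phi_1)$ and $(A',\phi',\phi_1')$ have the same image in $\MMC$. After a unitary change, they differ by a complex gauge transformation $g$, and the positive self-adjoint endomorphism $H = gg^{*}$ satisfies a nonlinear Hermitian--Einstein-type equation on $\Si\ti\RP$. The $\NP$ or $\NPK$ boundary conditions and the polyhomogeneous expansions at $y=0$, together with convergence to flat connections as $y\to\infty$, force precise asymptotics of $H$ at both ends. A Donaldson--Simpson type integration-by-parts applied to a quantity like $\log\mathrm{tr}(H + H^{-1})$ then yields $H\equiv \mathrm{Id}$, so that $g$ is unitary; the boundary contributions at $y=0$ and at $y=\infty$ vanish because of the controlled model asymptotics. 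This proves injectivity of both $I_{\NP}$ and $I_{\NPK}$.

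For existence (i) and (ii), given data $(\ME,\vp,L)$ with $\mfd(\ME,\vp,L)$ empty or effective, the plan is first to construct an approximate solution $(A_0,\phi_0,\phi_{1,0})$ by splicing the model $(\AM,\PM,\POM,\HM)$ from earlier in the paper near $y=0$, adapted to the divisor $\mfd$, to the model at $y=\infty$ provided by the Hitchin section associated to the stable Higgs pair $(\ME,\vp)$. The full $\EBEt$ is then reduced to a single quasilinear elliptic equation $\MD(H) = 0$ for a Hermitian complex gauge transformation $H$ of this background, the two holomorphic components of \eqref{KW2} having been arranged by the model. I would then solve $\MD(H) = 0$ by a continuity method in the weighted function spaces $\FMS$ and $\FBS$ set up to encode both the singular behavior at $y=0$ and the exponential decay rate at $y=\infty$. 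Openness follows from linearized elliptic theory in these spaces; closedness requires uniform a priori estimates for the whole family.

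The principal obstacle is the a priori estimate closing the continuity method. Standard Donaldson--Uhlenbeck--Yau machinery must be adapted simultaneously to the singular boundary at $y=0$ and the noncompact $y\to\infty$ direction, and the two analytic regimes interact. The key mechanism is that stability of $(\ME,\vp)$ together with effectiveness of $\mfd(\ME,\vp,L)$ makes a Donaldson-type functional proper: any putative divergence of $\log H$ along the continuity path would produce, in the limit, a weakly holomorphic subsheaf of $\ME$ whose slope violates stability, a contradiction. In the knot case, there is an extra compatibility between the $\NPK$ weight data at each $p_i$ and the multiplicities appearing in $\mfd(\ME,\vp,L)$; effectiveness of the divisor is precisely what makes these match, keeping the functional finite despite the knot singularity. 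Once the a priori estimate is in hand, weighted Schauder theory completes the continuity method and produces a solution in the required regularity class, establishing surjectivity.
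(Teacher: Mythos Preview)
Your overall architecture is right: continuity method for surjectivity, a convexity/integration-by-parts argument for injectivity. But the closedness step in your existence argument diverges substantially from what the paper actually does, and your version would be much harder to carry out.

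You propose obtaining the a priori $\calC^0$ bound by a Donaldson--Uhlenbeck--Yau properness argument: if $\log H$ blows up along the continuity path, extract a weakly holomorphic destabilizing subsheaf and contradict stability of $(\ME,\vp)$. The paper does \emph{not} do this. The continuity path is $N_t(s)=\Ad(e^{s/2})\Omega_H+ts=0$, and the added $ts$ term is precisely what makes the $\calC^0$ bound elementary: taking the inner product with $s$ gives
\[
\Delta|s|^2+t|s|^2+\langle\Omega_{H_0},s\rangle\le 0,
\]
and since $\Omega_{H_0}$ vanishes to infinite order at $y=0$ and decays exponentially as $y\to\infty$, a barrier constructed from the scalar Green operator on the half-cylinder and the maximum principle give $|s|\le C$ directly, with no appeal to stability whatsoever. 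Stability enters only to guarantee the flat limit at $y\to\infty$, a Poincar\'e-type inequality on slices for exponential decay, and the uniqueness argument. Your subsheaf-extraction strategy on $\Si\times\RP$ with Nahm-pole and knot singularities would require new machinery (what is the right notion of subsheaf compatible with the boundary data? how do the weights at knots interact with slopes?) that the paper simply bypasses.

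The genuinely delicate analytic content of closedness is elsewhere: once $\|s\|_{\calC^0}\le C$, one must show \emph{uniform decay} $|s|\le Cy^\alpha$ (resp.\ $CR^\alpha$ near knots) so that the limit still satisfies the singular boundary condition. This is done via a Morrey-space argument with scale-invariant norms, not via any stability consideration.

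For uniqueness you suggest applying a maximum principle to $\log\Tr(H+H^{-1})$. The paper instead defines a Donaldson functional $\MM(H,K)$ for the \EBE and shows directly that $m(t)=\MM(Ke^{ts},K)$ is convex with $m'(0)=m'(1)=0$; the boundary terms in the second variation vanish because the indicial-root analysis forces $s$ to decay faster than $y^1$, and strict convexity uses $\Ker\MD_1\cap\Ker\MD_2=0$ from stability. Your approach is in the same spirit and could likely be made to work, but the paper's is a cleaner variational formulation.
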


The key part of this Theorem is the proof that $I_{\NP}$ and $I_{\NPK}$ are surjective. In other words, we must show that
given any holomorphic triplet as above, there exists a solution to the \EBE on $\Si\ti\RP$ satisfying the (generalized) Nahm pole 
boundary condition at $y=0$ and with specified asymptotic limit as $y\to\infty$. 

We note that this fits into a long stream of articles concerning the existence of solutions to the Hermitian-Yang-Mills 
equations over manifolds with boundary \cite{Donaldsonboundary} or with cylindrical ends, \cite{Guo1996}, 
\cite{Owens2001}, \cite{Earp2015}, \cite{WalpuskiJacob}.

Sections 2-4 below explain the framework and formulation of this problem in more detail. The existence proof is contained
in Sections 5-9.  In our formulation we search for a Hermitian metric $H$ which satisfies a quasilinear elliptic system (with
singular boundary conditions) which we write as $\Omega_H=0$.  We use the classical method of continuity.  As a preliminary
step, given any holomorphic triplets, i.e., elements of $\MM_{\Hit}$ or $\MMC_{\mathrm{Knot}}$,  we construct a Hermitian metric
$H_0$ which satisfies the Nahm boundary conditions at $y=0$ and has the desired asymptotic limit as $y \to \infty$,
and such that $\Omega_{H_0}$ vanishes to all orders as $y \to 0$ and decays exponentially as $y \to \infty$. 
We then consider perturbations $H = H_0 e^s$ of this approximate solution.  The continuity path is a family of
equations $N_t(s) = 0$, where $t=0$ corresponds to the equation we wish to solve.  We show that there is a `trivial'
solution when $t=1$.  Openness of the set of values of $t$ for which there is a solution requires a careful study of
the linearization, which in turn relies on the theory of uniformly degenerate elliptic equations \cite{Mazzeo1991}, as
developed further and specialized to the setting of the KW equations in \cite{MazzeoWitten2013, MazzeoWitten2017}.  
The argument for closedness relies on a sequence of a priori estimates.  The $\calC^0$ estimate can be handled by
the maximum principle, and the interior higher order estimates follow from known results. Our task it to prove
the uniform decay at $y=0$ and as $y \to \infty$. The former of these requires an analysis using Morrey spaces
to control the $L^\infty$ decay rate, and a more delicate argument using scale-invariant Morrey 
spaces to control higher regularity there. From these we obtain enough information to invoke the regularity theory
of \cite{MazzeoWitten2013, MazzeoWitten2017} to prove that limits of solutions are polyhomogeneous, which
then allows us to continue the deformation in the $t$ parameter.   Finally, in Section 10, we prove uniqueness, i.e.,
part $(iii)$ of Theorem \ref{maintheorem}.  To do so we study the variation of the \EBE and construct a Donaldson type 
functional for this situation, generalizng \cite{Donaldson1987Infinite}. We conclude by showing that every solution 
is a minimizer for this convex functional. 

\textbf{Conventions.}  In estimates below, $C, C', C_0$, etc., will denote constants which arise and depend only on $\Si$,
$E$ and the background Hermitian metric $H_0$, but whose values change from one line to the next. We always specify
when a constant depends on further data. 

\textbf{Acknowledgements.}  The first author wishes to thank Ciprian Manolescu, Adam Jacob, Xuemiao Chen, Thomas Walpuski 
and Xinwen Zhu for numerous helpful discussions. The second author thanks Edward Witten for many illuminating
conversations; he has been supported by the NSF grant DMS-1608223.
\end{section}

\begin{section}{The Extended Bogomolny Equations}
	\begin{subsection}{Hermitian Geometry}
Consider the space $\Si \ti \RP$, where $\Si$ is a compact Riemann surface with product metric 
$g=g_0^2|dz|^2+dy^2$. Let $E$ be complex Hermitian vector bundle of rank $n+1$ over $\Si\ti\RP$ with $\det E=0$.
Fixing a Hermitian metric $H$ on $E$ gives an $\mathrm{SU}(n+1)$ structure on this bundle, and we denote by
$\gee$ the associated adjoint bundle. Finally, $y$ denotes a fixed linear coordinate on $\RP$, and we use any local 
holomorphic coordinate chart $z=x_2+ix_3$ on $\Si$. 

As explained in the introduction, the fields in our equation are a unitary connection $A$ on $E$, and two Higgs fields
$\phi\in\Omega^1(\gee)$ and $\phi_1\in\Omega^0(\gee)$. We work in a gauge where $A^{\st}=-A$, $\phi^{\st}=\phi$, 
$\phi_1^{\st}=-i\phi_1$; here $\st$ is the conjugate transpose determined by the Hermitian metric $H$. 

Our starting point is an observation by Gaiotto and Witten in \cite{gaiotto2012knot} that the \EBE have a Hermitian-Yang-Mills 
structure.  Write 
\[
d_A=\na_2dx_2+\na_3dx_3+\na_ydy,\ \mbox{and}\ \ \phi=\phi_2dx_2+\phi_3dx_3 =\frac{1}{2}(\vp_z dz+\vp_{\bz} d\bz),
\]
where $\vp_z=\phi_2-i\phi_3$ and $\vp_{\bz} = \phi_2 + i\phi_3$.  We often write 
$\vp_z dz$ as $\vp$ since this will be our primary object; indeed $\vp$ determines $\phi$ since $\vp_{\bz} = - \vp_z^*$. 
%We also write the holomorphic part of $\phi$ as $\vp=i\phi_zdz=(i\phi_2+\phi_3 )dz.$  

Now, following \cite{witten2011fivebranes} \cite{gaiotto2012knot}, define the operators 
\begin{equation}
\begin{split}
&\MD_1=(\na_2+i\na_3)d{\bz}=(2\pa_{\bz}+A_1+iA_2)d{\bz},\\
&\MD_2=\operatorname{ad}\vp=[\vp,\cdot ]=[(\phi_2 - i \phi_3) \, dz ,\cdot ], \\
&\MD_3=\na_y-i\phi_1=\pa_y+A_y-i\phi_1, 
\end{split}
\end{equation}
and their adjoints with respect to $H$ and the pairing $(\alpha, \beta) \mapsto \int_{\Si \ti \RP} \alpha \we \st\overline{\beta}$.  
Noting that $\MD_1 = \pab_A$ defines the holomorphic structure on $E$, then its adjoint on sections or endomorphisms
valued in $0$-forms and $(0,1)$-forms, respectively, is $(\MD_1^{\dag_H})_0 = \partial_A = (\na_2 - i \na_3) dz$
and $(\MD_1^{\dag_H})_{0,1} = - \partial_A = (-\na_2 + i \na_3) dz$. For the other two operators we have
\[
\MD_2^{\da_H}=-[\phi_2+i\phi_3, \cdot]\, d\bz,\ \MD_3^{\da_H}=-\na_y-i\phi_1.
\]
The \EBE can then be written in the elegant form
\begin{equation}
\begin{split}
&[\MD_i,\MD_j]=0, \ \ i,j=1,2,3,\\
& \frac{i}{2}\Lambda \left([\MD_1, \MD_1^{\da_H}]+[\MD_2,\MD_2^{\da_H}]\right)+ [\MD_3,\MD_3^{\da_H}]=0,
\label{EBE}
\end{split}
\end{equation}
where $\Lambda:\Omega^{1,1}\to\Omega^0$ is the inner product with the K\"ahler form (normalized as $(i/2)dz\we d\bz$ 
when the metric on $\Si$ is flat). 

In the local coordinates above, and taking into account the signs on $\MD_1^{\dag_H}$, we calculate
\begin{equation}
\begin{split}
&[\MD_1,\MD_1^{\da}]=2iF_{23}\, dz\we d\bz,\\ 
&[\MD_2,\MD_2^{\da}]=-2i[\phi_2,\phi_3]\, dz\we d\bz,\\
&[\MD_3,\MD_3^{\da}]=-2i\na_y\phi_1,
\end{split}
\end{equation}
so
\begin{equation}
\frac{i}{2} \Lambda \left([\MD_1, \MD_1^{\da}]+[\MD_2,\MD_2^{\da}]\right)  + [\MD_3,\MD_3^{\da}] =F_{23}-[\phi_2,\phi_3]-\na_y\phi_1=0.
\end{equation}

As pointed out by Witten \cite{witten2011fivebranes}, the key to understanding this system is the observation that 
the first set of equations, $[\MD_i,\MD_j]=0$, enjoy a larger symmetry than the full system. Indeed, the full system
is invariant under the real gauge group $\mathcal G$ of special unitary transformations, while the first set of equations is
invariant under the complex gauge group $\MGC:=\SL (n+1,E)$ of special linear automorphisms of $E$.   The action
is as follows: if $g\in \MGC$, then 
\begin{equation}
\MD_i^{g}:=g\circ \MD_i\circ g^{-1},\; \MD_i^{\da,g}:=(g^{\st})^{-1}\circ \MD_i\circ g^{\st},
\end{equation}
where $g^{\st}$ is the conjugate transpose of $g$. This extends the action of $\MG=\{g\in\MGC|gg^{\st}=1\}.$ 
The first set of equations in \eqref{EBE} can be regarded as a complex moment map, while the final equation
is the accompanying real moment map.

%For \eqref{EBE}, the smaller system $[\MD_i,\MD_j]=0$ is invariant under the complex gauge group $\MGC$ action invariant, but the full system is invariant under the unitary gauge group. We can visualize the last equation as a real moment map equation. 

\begin{remark}
We have used the phrase Hermitian-Yang-Mills structure for the \EBE because of the following analogy. 
Let $X$ be a K\"ahler manifold, $E$ a complex bundle over $X$ with $\det E=0$. The Hermitan-Yang-Mills 
equations for connections $A$ on $E$ take the form
\begin{equation}
F_{A}^{(0,2)}=0,\;i\Lambda F_A^{(1,1)}=0,
\end{equation}
where $\Lambda$ is the inner product with the K\"ahler class. Using local coordinates $(z_1,\cdots,z_n)$, define 
the $D_i=\pa_{\bz_i}+A_{\bz_i}$. Then the equation $F_{A}^{(0,2)}=0$ is equivalent to $[D_i,D_j]=0$ for all $i,j$, 
while $i\Lambda F_A^{(1,1)}=0$ is equivalent to $\sum_{i=1}^n[D_i,D_i^{\da}]=0$.
\end{remark}
\end{subsection}

\begin{subsection}{Holomorphic Data}
	\label{holdata}
Following Donaldson-Uhlenbeck-Yau \cite{donaldson1985anti}, \cite{uhlenbeck1986existence}, \cite{Donaldson1987Infinite}, 
this leads to the expectation that one should start from holomorphic data satisfying the $\MGC$-invariant equations, and then
correct these to solve the $\MG$-invariant equation.   We now show how this works in the present circumstances.

Denote by $E_y$ the restriction of $E$ to the slice $\Si\ti\{y\}$. Now observe that $\MD_1$ is a $\bar{\pa}$ operator 
which satisfies $\MD_1^2=0$, so by the Newlander-Nirenberg theorem, it defines a holomorphic structure $\ME_y$ on $E_y$.
Next, on each slice $\ME_y$, $\MD_2$ is a $K_\Si$-valued endomorphism ($K_\Si$ is the 
canonical bundle of $\Si$), and the equation $[\MD_1,\MD_2]=0$ implies $\MD_1\vp=0$, i.e., the endormophism $\vp$ 
is holomorphic. In other words, writing $\vp_y$ for the restriction of $\vp$ to $E_y$, we obtain a family of Higgs bundle 
$(\ME_y,\vp_y)$ over $\Si\ti\{y\}$. Finally, $\MD_3$ provides a parallel transport in the $y$ direction, and 
the equations $[\MD_1,\MD_3]=0,[\MD_2,\MD_3]=0$ imply that this family of Higgs bundles are parallel with respect
to $\MD_3$. 

Based on these observations, we define a data set for our problem to consist of a bundle $E$ of rank $(n+1)$ and degree $0$ 
over $\Si\ti\RP$, along with a system of operators $\Theta=(\MD_1,\MD_2,\MD_3)$ acting on $\MC^{\infty}(\Si \ti \RP; E)$ which
satisfy:
\begin{itemize}
\item For any smooth function $f$ and section $s$ of $E$, $\MD_1(fs) = \bar{\pa}f s + f \MD_1 s$, 
$\MD_3(fs) = (\del_{y}f) s + f \MD_3 s$; 
\item $\MD_2=[\vp, \cdot ]$ for some $\vp\in \Omega^{1,0}(\gpp)$; 
\item $[\MD_i,\;\MD_j]=0$ for all $i, j$. 
\end{itemize}
Two sets of data $(E,\Theta)$ and $(E, \wt \Theta)$ are called equivalent if there exists a complex gauge transform $g$ such that 
$g^{-1}\wt \MD_i g=\MD_i$, $i=1,2,3$.  By our previous discussion, under the $\MGC$ action, $(E,\Theta)$ is equivalent 
to a Higgs bundle. We discuss Higgs bundles further in Section 3. 

Given $(E,\Theta)$, a choice of Hermitian metric $H$ on $E$ determines the adjoints $\MD_i^{\da_H}$ of the operators $\MD_i$ by
\begin{itemize}
\item $\bar{\pa}(H(s,s'))=H(\MD_1 s,s')+H(s,\MD_1^{\da_H}s'),\;\pa_{y}(H(s,s'))=H(\MD_3 s,s')+H(s,\MD_3^{\da_H}s')$;
\item $H(\MD_2 s,s')+H(s,\MD_2^{\da_H}s') = 0$.
\end{itemize}
We then have that $\MD_1^{\da_H}$ and $\MD_3^{\da_H}$ are derivations and $\MD_2^{\da_H}$ is 
tensorial, i.e., $\MD_1^{\da_H} (fs) = (\del_{z} f) s + f \MD_1^{\da_H} s$, 
$\MD_3^{\da_H} (fs) = (\del_{y} f) s + f \MD_3^{\da_H}s$, while $\MD_2^{\da_H}(fs) = f \MD_2^{\da_H}(s)$.  
%Note also that $\MD_i'=-\MD_i^{\da}$. \rafe{$\MD_i' = \MD_i^{\da_H}$, notits negative?} 

We now shift perspective slightly. Instead of letting complex gauge transformations act on the data set $\Theta$, it 
is easier to fix $\Theta$ and let $\MGC$ act on the Hermitan metric.  We thus regard the real moment map equation 
in \eqref{EBE} as an equation for the Hermitian metric $H$.  Set $\MD_y=\frac{1}{2}(\MD_3+\MD_3^{\da_H})$, 
$\MD_{\bz}=\MD_1$ and $\MD_{z}=\MD_1^{\da_H}$ and define a unitary connection $\MD_{A}$, and an
endomorphism-valued $1$-form $\phi$ and $0$-form $\phi_1$ on $(E,\Theta,H)$ by 
\begin{equation}
\begin{split}
\MD_A s:&=\MD_1s+\MD_1^{\da_H}s+\MD_y s\, dy,\\
[\phi,s]:&=[\MD_2,s]+[\MD_2^{\da_H},s],\\
\phi_1:&=\frac{i}{2}(\MD_3-\MD_3^{\da_H}).
\end{split}
\label{relationship}
\end{equation}
The triple $(A,\phi,\phi_1)$ is called the Chern connection of $(E,\Theta,H)$. 

The following proposition about the $\MGC$ action on $\Theta$ was proved in {\cite{HeMazzeo2017}}:
\begin{proposition}
\label{dataonlycomesfromequation}
(1) Suppose that $(E,\Theta)$ and $(E, \wt \Theta)$ are two data sets. %Define $E_y$ to be the slice $E_y:=E|_{\Si\ti\{y\}}$.
If the restrictions of $\Theta$ to $E_y$ and $\wt \Theta$ to some possibly different $E_{y'}$ are complex gauge equivalent, 
then $(E,\Theta)$ and $(E, \wt \Theta)$ are equivalent. 

(2) If $(E,\Theta,H)$ is a solution to the \EBE, and if $g$ is a complex gauge transform, then $(E,\Theta^g)$, where $\Theta^g=
(g^{-1}D_1g,g^{-1}D_2g,g^{-1}D_3g), H^g =Hg^{\st_H} g$ is also a solution.
\end{proposition}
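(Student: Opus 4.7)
The relations $[\MD_1,\MD_3]=[\MD_2,\MD_3]=0$ say precisely that both the holomorphic structure $\MD_1$ and the Higgs field $\vp$ are $\MD_3$-parallel in the $y$-direction. The plan is to use the $\MD_3$-parallel transport isomorphisms $\tau_{y,y'}:E_y\to E_{y'}$ (and their analogues $\wt\tau_{y,y'}$ for $\wt\Theta$) to promote the given slicewise equivalence to a global one. Concretely, if $g_0:(\ME_y,\vp_y)\to(\wt\ME_{y'},\wt\vp_{y'})$ is the given complex gauge equivalence on a pair of slices, I would define a section $g\in\MGC$ over $\Si\ti\RP$ by
\[
g|_{y''} \;=\; \wt\tau_{y',y''}\circ g_0 \circ \tau_{y'',y}
\]
and verify the three intertwinings $g^{-1}\wt\MD_i g=\MD_i$: the $i=3$ case is automatic since $g$ is built from $\tau,\wt\tau$; the cases $i=1,2$ follow because the transports preserve the Higgs structures slicewise while $g_0$ identifies them on one slice.

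\textbf{Plan for Part (2).} This is a covariance check, organized in two steps. First, the complex moment map equations are automatic:
\[
[\MD_i^g,\MD_j^g] = [g^{-1}\MD_i g,\,g^{-1}\MD_j g] = g^{-1}[\MD_i,\MD_j]g = 0.
\]
Second, for the real moment map, the crux is the adjoint transformation identity
\[
(\MD_i^g)^{\da_{H^g}} \;=\; g^{-1}\,\MD_i^{\da_H}\,g,
\]
which I would prove directly from the definition of each adjoint together with $H^g(s,s')=H(g^{\st_H}g s,s')$. For the tensorial operator $\MD_2$ this follows by pulling back $H(\MD_2 s,s')+H(s,\MD_2^{\da_H}s')=0$ through $g$; for the derivations $\MD_1$ and $\MD_3$ the same computation works, using the respective Leibniz rules. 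Once the adjoint identity is established, each commutator satisfies $[\MD_i^g,(\MD_i^g)^{\da_{H^g}}] = g^{-1}[\MD_i,\MD_i^{\da_H}]g$, so the real moment map equation transforms by overall conjugation by $g^{-1}$ and is therefore preserved.

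\textbf{Main obstacle.} Neither part involves a deep analytic difficulty; both are essentially formal consequences of the setup. Part (1) is a parallel-transport argument exploiting the commutation relations $[\MD_i,\MD_3]=0$. In part (2), the one step requiring careful bookkeeping is the adjoint identity, because $H$ enters twice (in the definition of $H^g$ and in the defining relation for the adjoint), and one must also keep straight which of $\MD_1,\MD_2,\MD_3$ are derivations and which is tensorial; once this is unpacked, the rest is routine linear algebra.
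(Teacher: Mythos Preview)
Your plan is correct and is the expected argument. Note that in this paper the proposition is simply \emph{cited} from \cite{HeMazzeo2017} rather than proved, so there is no ``paper's own proof'' here to compare against; your outline is precisely the standard proof one would give (and presumably what appears in that reference). For part~(1), the parallel-transport construction $g|_{y''}=\wt\tau_{y',y''}\circ g_0\circ\tau_{y'',y}$ is exactly right, and your identification of $[\MD_i,\MD_3]=0$ as the reason the Higgs structures are transported slicewise is the key point. For part~(2), the adjoint identity $(\MD_i^g)^{\da_{H^g}}=g^{-1}\MD_i^{\da_H}g$ is indeed the only nontrivial check, and your approach via the defining relations for the adjoints is the correct way to unpack it; just be careful with the convention $\MD_i^g=g^{-1}\MD_i g$ versus the earlier $\MD_i^g=g\MD_i g^{-1}$ used in the paper (a harmless discrepancy, but keep it consistent when you write out the computation).
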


We now record some computations in a local frame, with coordinate $(z,y)$. Writing $\MD_1=\pa_{\bz}+\al$, 
$\MD_1^{'}=\pa_{z}+A^{(1,0)}$, $\MD_3=\pa_y+\MA_y$ and $\MD_3^{'}=\pa_y+\MA_y^{'}$, we compute:
\begin{equation}
\begin{split}
A^{(1,0)}&=H^{-1}\partial_z H-H^{-1}(\bar{\al})^{\top}H,\\
A&=A^{(1,0)}+\alpha=H^{-1}\partial_z H-H^{-1}\bar{\al}^{\top}H+\al,\\
\vp^{\da}&=H^{-1}\bar{\vp}^{\top}H,\\
\MA_y^{'}&=H^{-1}\partial_y H-H^{-1}\bar{\MA_y}^{\top} H.
\end{split}
\label{99}
\end{equation}
Thus in a frame where $\alpha=\MA_y=0$, then %the adjoint operators become
\begin{equation}
\begin{split}
\MD_1^{\da}=-\MD_1^{'}=-(\pa_z+H^{-1}\pa_z H),\;
\MD_2^{\da}=-\MD_2^{'}=[\vp^{\da}, \cdot ],\;
\MD_3^{\da}=-\MD_3^{'}=-\pa_y-H^{-1}\partial_y H.
\end{split}
\end{equation}

The gauge is called holomorphic parallel if $\MD_1=\bar{\pa}$, $\MD_3=\pa_y$. In such a gauge, 
the moment map equation \eqref{EBE} becomes
\begin{equation}
-\bar{\pa}(H^{-1}\pa H)-g_0^2\pa_y(H^{-1}\pa_y H)+[\vp,\vp^{\star}]=0,
\end{equation}
where the metric on $\Si$ is $g_0^2 |dz|^2$. 

Next, given $(E,\Theta,H)$, consider the Chern connection $(A,\phi,\phi_1)$. The gauge is called unitary if $(A,\phi,\phi_1)$ 
are unitary matrices.  In analogy to a standard result \cite{atiyah1978geometry}, we record the link between connections 
in unitary and holomorphic frames: 
\begin{proposition}{\cite{HeMazzeo2017}} \label{complexgaugeactionchange}
With $(E,\Theta, H)$ as above, there is a unique triplet $(A,\phi,\phi_y)$ compatible with the unitary structure and 
with structure defined by $\Theta$. In other words, in every unitary gauge, $A^{\st}=-A$, $\phi^{\st}=\phi$, $\phi_1^{\st}=-\phi_1$, 
while in every parallel holomorphic gauge, $\MD_1=\overline{\partial}_E$ and $\MD_3=\pa_y$, i.e., $A^{(0,1)}= A_y-i\phi_1=0$.
\end{proposition}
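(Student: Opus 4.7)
The plan is to treat this as a Chern-type uniqueness statement: produce the triplet from the explicit formulas \eqref{relationship} and then verify that it is uniquely characterized by the two compatibility conditions. Existence is already built into \eqref{relationship}: given $(E,\Theta,H)$, the operators $\MD_i,\MD_i^{\da_H}$ are determined, and one sets $\MD_A s := \MD_1 s + \MD_1^{\da_H} s + \tfrac12(\MD_3 + \MD_3^{\da_H}) s\,dy$, $[\phi,\cdot] := [\MD_2,\cdot] + [\MD_2^{\da_H},\cdot]$, and $\phi_1 := \tfrac{i}{2}(\MD_3 - \MD_3^{\da_H})$. That $\MD_A$ is a genuine connection and that $\phi,\phi_1$ are tensorial follows from the derivation/tensoriality properties of $\MD_i,\MD_i^{\da_H}$ recorded just above the proposition.

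Next I would verify the two compatibility conditions in turn. For unitary reality, work in a local unitary frame so $H=I$ and the $H$-adjoint reduces to conjugate transpose. The defining identity $\bar\pa(H(s,s')) = H(\MD_1 s,s') + H(s,\MD_1^{\da_H}s')$ then forces $\MD_1^{\da_H} = \pa_z - \al^{\st}$ when $\MD_1 = \bar\pa + \al$; hence the $(0,1)$ and $(1,0)$ parts of $A$ are $\al$ and $-\al^{\st}$, giving $A^{\st} = -A$. The analogous manipulations for $\MD_2$ and $\MD_3$, combined with \eqref{99}, give $\phi^{\st} = \phi$ and $\phi_1^{\st} = -\phi_1$. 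For the parallel holomorphic condition, one chooses a frame where $\al = 0$ and $\MA_y = 0$, so $\MD_1 = \bar\pa$ and $\MD_3 = \pa_y$; then from \eqref{relationship} the $(0,1)$ part of $A$ equals $\al = 0$, and
\[
A_y - i\phi_1 \;=\; \tfrac12(\MA_y + \MA_y^{'}) - i\cdot \tfrac{i}{2}(\MA_y - \MA_y^{'}) \;=\; \MA_y \;=\; 0.
\]

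For uniqueness, suppose $(A',\phi',\phi_1')$ is a second triplet satisfying both conditions and consider the difference $(B,\psi,\psi_1) := (A-A',\phi-\phi',\phi_1-\phi_1')$. This satisfies the linear conditions $B^{\st} = -B$, $\psi^{\st} = \psi$, $\psi_1^{\st} = -\psi_1$ in every unitary gauge; and in a parallel holomorphic gauge $B^{(0,1)} = 0$, $B_y - i\psi_1 = 0$, together with the vanishing of the $(1,0)$ part of $\psi$ (since $\MD_2 = [\vp,\cdot]$ is common to both triplets, forcing the $(1,0)$ part of $\phi$ to equal $\vp$). Reality then pins down $B^{(1,0)} = -(B^{(0,1)})^{\st} = 0$ and the $(0,1)$ part of $\psi$, so $\psi = 0$. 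The relation $B_y = i\psi_1$ combined with $B_y^{\st} = -B_y$ and $\psi_1^{\st} = -\psi_1$ gives $B_y^{\st} = -i\psi_1^{\st} = i\psi_1 = B_y$, forcing $B_y = 0$ and hence $\psi_1 = 0$. The only step that requires genuine care is the hermiticity verification, which reduces to reading the $H$-adjoint identities in a unitary frame; the remaining steps are linear algebra, and nothing here poses a serious obstacle given the preparatory material.
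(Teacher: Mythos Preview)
The paper does not actually prove this proposition; it is quoted from \cite{HeMazzeo2017} without argument, so there is no paper proof to compare against. Your approach is the natural Chern-connection existence/uniqueness argument and is essentially correct.

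One small clarification is worth making in your uniqueness step. You phrase the vanishing conditions $B^{(0,1)}=0$, $\psi^{(1,0)}=0$, $B_y-i\psi_1=0$ as holding ``in a parallel holomorphic gauge'', but in fact these are the gauge-independent compatibility conditions with $\Theta$: by hypothesis both triplets recover the same $\MD_1,\MD_2,\MD_3$, which forces $A^{(0,1)}=A'^{(0,1)}$, $\phi^{(1,0)}=\phi'^{(1,0)}$, and $A_y-i\phi_1=A'_y-i\phi'_1$ in any frame. Likewise, the reality conditions $B^{\st}=-B$, $\psi^{\st}=\psi$, $\psi_1^{\st}=-\psi_1$ are frame-independent once $\st$ is understood as the $H$-adjoint (as set up at the start of Section~2). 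So you are not illicitly mixing a unitary frame with a holomorphic frame; both sets of constraints hold simultaneously in any frame, and your linear-algebra conclusion goes through as written.
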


In a local holomorphic trivialization of $E$, we can represent the metric by a Hermitian matrix (also denoted $H$). 
For $g\in\MGC$ with $g^{\da}g=H$, e.g.\, $g=H^{\frac{1}{2}}$, then in holomorphic parallel gauge 
\begin{equation}
A^{(1,0)}=H^{-1}\pa H =g^{-1}(g^{\da})^{-1}(\pa_z g^{\da}) g+g^{-1} \pa_z g, \ \ A^{(0,1)}=0.
\end{equation}
Thus $g$ transforms from holomophic to unitary gauge. If $\hA$ is the connection form in unitary gauge, then 
\begin{equation}
\hA_z=(g^{\da})^{-1}\pa_zg^{\da}, \ \ \hA_{\bar{z}}=-(\pa_{\bar{z}}g) g^{-1},
\end{equation}
so $\hA_{\bar{z}}^{\da}=-\hA_z$. Similarily, 
\begin{equation}
\begin{split}
&\phi_z=g\vp g^{-1},\;\phi_{\bz}= (g^{\da})^{-1}\bar{\vp}^{\top} g^{\da},\\
&A_y=\frac{1}{2}((\pa_yg) g^{-1}-(g^{\da})^{-1}\pa_yg^{\da}),\ \ \phi_1=\frac{i}{2}( (g^{\da})^{-1}\pa_yg^{\da}+\pa_yg^{\da}
(g^{\da})^{-1}),
\end{split}
\end{equation}
where $\vp_z$ is the holomorphic gauge and $\phi,\;A_y,\;\phi_1$ are in the unitary gauge. 
\end{subsection}
\end{section}

\begin{section}{Boundary Conditions}
The boundary conditions for this system are that $(A,\phi,\phi_1)$ converges to a flat irreducible $\SL (n+1,\CC)$ 
connection as $y \to \infty$, while for $y\to 0$, $(A,\phi,\phi_1)$ satisfies the generalized Nahm pole boundary condition with
knot singularities.  We describe these in more detail now.

\begin{subsection}{Higgs Bundle}
We now recall the well-known interrelationship between the moduli spaces of flat $\SL(n+1,\CC)$ connections,
Higgs bundles and solutions of the Hitchin equations. % we shall do so in the language of Higgs bundle.
Recall that a Higgs bundle (or Higgs pair) over $\Si$ is a pair $(\ME,\varphi)$, where $(\ME, \bar{\pa}_{\ME})$ 
is a holomorphic bundle of rank $n+1$ and $\vp\in H^0(\End(\ME)\otimes K)$. We assume here that $\det \ME=0$. 
A Higgs pair $(\ME,\vp)$ with $\deg \ME = 0$ is called stable if $\deg (V)<0$ for any holomorphic subbundle 
$V$ with $\varphi(V)\subset V\otimes K$, and polystable if it is a direct sum of stable Higgs pairs. 
(This is a special case of the familiar definition when $\deg \ME$ is not necessarily $0$.) 

The Hitchin equations are obtained by setting $\MD_3 = 0$ in the \EBE (or alternately, considering only the
equations for $\MD_1$ and $\MD_2$ on each slice $\Si \ti \{y\}$): 
\begin{equation}
F_H+[\vp,\vp^{\st_H}]=0,\;\bar{\pa}\vp=0.
\label{Hitchinequation}
\end{equation}
We regard the Hermitian metric $H$ on $\ME$ as the variable, and then $F_H$ is the curvature of the Chern 
connection $\nabla_H$ associated to $H$ and the holomorphic structure, and $\vp^{\st_H}$ is the 
adjoint with respect to $H$.  Irreducibility and reducibility of solutions $(A,\vp)$ is defined in the obvious way.
\begin{theorem}{\cite{Hitchin1987Selfdual}}
For any Higgs pair $(\ME,\vp)$ on $\Si$, there exists an irreducible solution $H$ to the Hitchin equations 
if and only if this pair is stable, and a reducible solution if and only if it is polystable.
\end{theorem}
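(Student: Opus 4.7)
The plan is to establish both directions of this Kobayashi--Hitchin correspondence between Hermitian solutions and (poly)stable Higgs pairs.

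\emph{Easy direction (solution implies (poly)stability).} Suppose $H$ solves \eqref{Hitchinequation}, and let $V\subset\ME$ be a holomorphic, $\vp$-invariant subbundle. The metric $H$ splits $\ME=V\oplus V^\perp$ as a smooth bundle, producing a second fundamental form $\beta\in\Omega^{0,1}(\Hom(V,V^\perp))$ and an off-diagonal component $\gamma$ of $\vp$ sending $V$ into $V^\perp\otimes K_\Si$. Pairing the Hitchin equation $i\Lambda F_H = -i\Lambda[\vp,\vp^{\st_H}]$ with the orthogonal projection onto $V$ and integrating over $\Si$ gives the familiar Chern--Weil identity
\[
\deg V \;=\; -\frac{1}{2\pi}\int_\Si \bigl(|\beta|^2+|\gamma|^2\bigr)\,dA \;\le\; 0,
\]
with equality if and only if $\beta=0$ and $\gamma=0$, i.e., $V$ is a holomorphic and $\vp$-invariant orthogonal summand of $(\ME,\vp)$. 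An irreducible solution therefore forces strict inequality and hence stability, while a reducible solution yields a Higgs decomposition and hence polystability.

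\emph{Hard direction (stability implies existence).} Given a stable pair $(\ME,\vp)$, I would follow the continuity method of Donaldson--Simpson. Fix a smooth background metric $H_0$, write $H=H_0 h$ with $h$ a positive self-adjoint endomorphism of determinant $1$, and consider the family
\[
i\Lambda\bigl(F_H+[\vp,\vp^{\st_H}]\bigr) + t\log h \;=\; 0, \qquad t\in[0,1].
\]
At $t=1$ there is a trivial solution $h\equiv\mathrm{Id}$. Openness of the solvable parameters is a standard implicit function argument: the linearization is a self-adjoint operator on trace-free Hermitian endomorphisms of the form $\Delta+[\vp,[\vp^{\st_H},\cdot]]+t\cdot\mathrm{Id}$, which is invertible for $t>0$ and whose $t=0$ kernel consists of $\vp$-commuting Hermitian endomorphisms--ruled out by stability.

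\emph{Main obstacle: closedness.} The delicate step is a uniform $C^0$ bound on $s=\log h$ as $t\to 0$. Pairing the deformed equation with $s$ produces a subharmonic inequality that controls $\sup|s|$ in terms of $\|s\|_{L^1}$, so the task is to prevent $\|s_{t_i}\|_{L^1}\to\infty$ along a sequence $t_i\to t_\infty$. The Uhlenbeck--Yau argument handles this: one renormalizes $u_i=s_{t_i}/\|s_{t_i}\|_{L^1}$, extracts a weak $W^{1,2}$ limit $u_\infty\neq 0$, and uses its spectral projectors to produce a weakly holomorphic, $\vp$-invariant subsheaf $V\subset\ME$ with $\deg V\ge\mu(\ME)=0$, contradicting stability. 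Elliptic bootstrapping then upgrades the resulting $C^0$ metric to a smooth solution. For the polystable case one applies this argument on each factor of the Jordan--H\"older filtration and assembles the results as a direct sum.
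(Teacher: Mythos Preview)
The paper does not prove this theorem at all: it is simply quoted as a classical result of Hitchin, with the citation \cite{Hitchin1987Selfdual}, and is used as background to motivate the boundary conditions at $y\to\infty$. There is therefore no proof in the paper to compare your proposal against.

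That said, your sketch is the standard Donaldson--Simpson route and is essentially correct in outline, but your continuity path is set up incorrectly. You claim that at $t=1$ the equation
\[
i\Lambda\bigl(F_H+[\vp,\vp^{\st_H}]\bigr) + t\log h = 0
\]
is solved by $h\equiv\mathrm{Id}$; plugging in gives $i\Lambda(F_{H_0}+[\vp,\vp^{\st_{H_0}}])=0$, i.e., $H_0$ would already have to be a Hitchin metric. The usual remedy is either to subtract the background error term (so the deformed equation reads $i\Lambda(F_H+[\vp,\vp^{\st_H}]) - t\,i\Lambda(F_{H_0}+[\vp,\vp^{\st_{H_0}}]) + t\log h = 0$, trivially solved by $h=\mathrm{Id}$ at $t=1$), or to start at $t$ large where the $t\log h$ term dominates and existence follows from the implicit function theorem. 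With that correction, the rest of your argument---the Chern--Weil computation for the easy direction, the $C^0$ estimate via subharmonicity, and the Uhlenbeck--Yau destabilizing-subsheaf argument for closedness---is the right strategy.
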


To any solution $H$ to the Hitchin equations is associated a flat $\mathrm{SL}(n+1,\CC)$ connection
$D = \nabla_H + \vp + \vp^{\st_H}$, and hence a representation $\rho: \pi_1(\Si) \to \mathrm{SL}(n+1,\CC)$,
which is well-defined up to conjugation.  Irreducibility of the solution is the same as irreducibility of the representation,
while reducibility corresponds to the fact that $\rho$ is reductive.   The map from flat connections back
to solutions of the Hitchin system involves finding a {\em harmonic metric} which yields a decomposition of $D = D^{\mathrm{skew}} 
+ D^{\mathrm{Herm}}$ into skew-Hermitian and Hermitian parts, so that $D^{\mathrm{Herm}} = \vp + \vp^{\star_H}$
and $( (D^{\mathrm{skew}})^{0,1}, \vp)$ satisfies Hitchin equations.

The culmination of the work of Hitchin, Donaldson, Simpson and Corlette is the diffeomorphic equivalence between
the spaces of stable Higgs pairs, irreducible solutions of the Hitchin equations and irreducible flat connections;
there is a similar equivalence for the polystable/reducible spaces. 

The condition that $(A,\phi,\phi_1)$ converges to a flat $\mathrm{SL}(n+1,\mathbb{C})$ connection thus requires
that we only consider the stable Higgs bundle. 

Now consider the moduli space $\MM_{\mathrm{Higgs}}$ of $\SL(n+1,\mathbb{C})$ Higgs bundle.  The Hitchin fibration is the map 
% $p_2,p_3,\cdots,p_{n+1}$ be a basis for the algebra of invariant polynomials on the Lie algebra of degree $2,3,\cdots,n+1$, then we obtain the holomorphic map introduced in 
\begin{equation}
\begin{split}
&\pi:\MM_{\mathrm{Higgs}}\to \oplus_{i=1}^n H^0(\Si,\;K^{n+1})\\
&\pi(\vp)=(p_2(\vp),\;\cdots,\;p_{n+1}(\vp)),
\label{HitchinFiberationMap}
\end{split}
\end{equation}
where $\det (\lambda -\vp) = \sum \lambda^{n+1-j} (-1)^j p_{j}(\vp)$. By \cite{hitchin1987stable}, this map
is proper. 

We next introduce the so-called Hitchin component (also called the Hitchin section). Choose a spin structure $K^{\frac{1}{2}}$ and, writing $B_{i}=i(n+1-i)$,
define the Higgs bundle $(\ME, \vp)$:
\begin{equation}
\begin{aligned}
\ME: & =S^{n}(K^{-\frac{1}{2}}\oplus K^{\frac{1}{2}})=K^{-\frac{n}{2}}\oplus K^{-\frac{n}{2}+1}\oplus\cdots\oplus K^{\frac{n}{2}} \\[5mm]
\vp & =\begin{pmatrix}
    0 & \sqrt{B_1} & 0 &\cdots& 0\\
    0 & 0 & \sqrt{B_2} & \cdots& 0\\
    \vdots & \vdots &\ddots & &\vdots\\
    0 & \vdots & &\ddots &\sqrt{B_n}\\
    q_{n+1}& q_{n} & \cdots & q_2 &0
\end{pmatrix}.
\end{aligned}
\label{HitchincomponentHiggsfield}
\end{equation}
The constant $\sqrt{B_i}$ in the $(i,i+1)$ entry represents this multiple of the natural isomorphism $K^{-\frac{n}{2} + i  } \to 
K^{-\frac{n}{2} + i -1 }\otimes K$, and similarly, $H^0(\Si,K^{n+1-i}) \ni q_{n+1-i} :K^{- \frac{n}{2} + i }\to K^{ \frac{n}{2}} \otimes K$. 

The complex gauge orbit of this family of Higgs bundle,
\begin{equation}
\{(\ME:=S^{n}(K^{-\frac{1}{2}}\oplus K^{\frac{1}{2}}),\ \vp\ \mbox{as in}\ \eqref{HitchincomponentHiggsfield})\}/\MGC.
\end{equation}
is called the Hitchin component, or sometimes also the Hitchin section, and denoted $\MM_{\mathrm{Hit}}$. Note that when $n$ 
is even, only even powers of $K^{1/2}$ appear, so $\MM_{\mathrm{Hit}}$ is independent of the choice of spin structure in that case. 
\begin{theorem}{\cite{hitchin1992lie}}
(1) Every element in $\MM_{\mathrm{Hit}}$ is a stable Higgs pair, and $\MM_{\mathrm{Hit}}$ is parametrized by the vector space 
$\oplus_{i=2}^{n+1}H^0(\Si,K^{i})$;

(2) The map which assigns to an element of $\oplus_{i=2}^{n+1}H^0(\Si,K^{i})$ the unique corresponding solution of the Hitchin equations
is an equivalence from this vector space to one component of the moduli space of flat completely irrreducible 
$\SL(n+1,\mathbb{R})$ connections. In other words, the map
$$
\pi|_{\MM_{\mathrm{Hit}}}:\MM_{\mathrm{Hit}}\to \oplus_{i=2}^{n+1}H^0(\Si,K^{i})
$$ 
is a diffeomorphism. 
\end{theorem}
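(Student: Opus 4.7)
The plan divides along the two parts of the theorem, both resting on the non-abelian Hodge correspondence summarized in the previous theorem of the excerpt. For part (1), I would first verify stability at the base point $q_i \equiv 0$: the Higgs field $\vp$ is then the image of the principal nilpotent under the principal embedding $\slf_2 \HRA \slf_{n+1}$, and since the super-diagonal entries $\sqrt{B_i}$ are nowhere vanishing, $\vp$ is a single Jordan block at every point of $\Si$. Its only invariant subbundles are therefore the flag pieces $\ME_{\leq k} = K^{-n/2} \oplus \cdots \oplus K^{-n/2+k-1}$, whose degrees $(g-1)k(k-n-1)$ are strictly negative for $1 \leq k \leq n$. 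For arbitrary $(q_2, \ldots, q_{n+1})$ the super-diagonal is still nowhere zero, so $\vp$ remains an everywhere cyclic (regular) endomorphism of $\ME$; on the dense locus in the Hitchin base where the associated spectral curve $S \subset T^*\Si$ is smooth and irreducible, cyclicity forbids proper $\vp$-invariant subbundles outright, and the degenerate locus is handled either by BNR/spectral-cover semicontinuity or by extending the degree bound $\deg V \leq (g-1)\operatorname{rk}(V)(\operatorname{rk}(V) - n - 1)$ to the whole base by continuity in $(q_2,\ldots,q_{n+1})$.

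For part (2), the map $\pi|_{\MM_{\Hit}}$ can be computed directly: the characteristic polynomial of the companion-type matrix $\vp$ in \eqref{HitchincomponentHiggsfield} expands as $\det(\lambda - \vp) = \lambda^{n+1} + \sum_{j=2}^{n+1}(-1)^j c_j\, q_j\, \lambda^{n+1-j}$ with nonzero universal constants $c_j$ depending only on the $B_i$, so $\pi$ sends the section bijectively onto $\oplus_{i=2}^{n+1} H^0(\Si, K^i)$ with a smooth inverse. The induced map on the $\MGC$-quotient is still injective because Proposition \ref{dataonlycomesfromequation} together with the explicit normal form \eqref{HitchincomponentHiggsfield} show that two gauge-equivalent Higgs pairs in the section must share the same tuple $(q_2,\ldots,q_{n+1})$.

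To identify the image in the character variety with a component of the $\SL(n+1,\RR)$ representations, I would exhibit an orthogonal real structure on $(\ME, \vp)$. The canonical isomorphisms $K^{-n/2+i-1}\otimes K^{n/2-i+1} \cong \mathcal{O}$, combined using an anti-diagonal sign convention, assemble into a non-degenerate symmetric bilinear form on $\ME$ under which $\vp$ is self-adjoint. Uniqueness of the harmonic metric $H$ solving \eqref{Hitchinequation} then forces $H$ to be compatible with this form, so the flat connection $\nabla_H + \vp + \vp^{\st_H}$ preserves a real structure and has holonomy inside $\SL(n+1,\RR)$. A dimension count $\sum_{i=2}^{n+1} 2(2i-1)(g-1) = 2(g-1)n(n+2) = (2g-2) \dim_\RR \SL(n+1,\RR)$ shows that the image is open in a single connected component of the $\SL(n+1,\RR)$ character variety; connectedness of $\MM_{\Hit}$ (it is a real vector space) and properness of the Hitchin fibration \eqref{HitchinFiberationMap} then force the image to be closed as well, so it exhausts the component containing the Fuchsian locus $q_i\equiv 0$, which is by definition the Hitchin component.

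The hardest step is the real-structure identification in the third paragraph: verifying that the harmonic metric is genuinely compatible with the orthogonal form (and not merely with the Hermitian structure) requires a delicate uniqueness argument for Hitchin's equations restricted to the real locus. Extending stability beyond the generic locus of the Hitchin base is the next most technically demanding point. Both are carried out in the original work \cite{hitchin1992lie}.
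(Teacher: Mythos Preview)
The paper does not supply a proof of this theorem; it is quoted verbatim from \cite{hitchin1992lie} as background, and the text moves on immediately after the statement. There is therefore no proof in the paper to compare your proposal against. Your outline is a reasonable reconstruction of the shape of Hitchin's original argument, and you yourself correctly flag that the two most delicate steps (the real-structure compatibility of the harmonic metric, and stability away from the generic fibre) are deferred to \cite{hitchin1992lie}.

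One soft spot worth naming explicitly: in Part~(1), your treatment of stability off the generic locus of the Hitchin base is not yet an argument. Stability is an open but not a closed condition on Higgs bundles, so ``extending the degree bound by continuity in $(q_2,\ldots,q_{n+1})$'' does not work as stated, and ``BNR/spectral-cover semicontinuity'' would need to be made precise (semicontinuity of what quantity, in which direction?). Hitchin's actual proof does not pass through the spectral curve at all: it gives a direct uniform bound $\deg V < 0$ for any $\vp$-invariant subbundle $V$, valid for every $(q_2,\ldots,q_{n+1})$ simultaneously, by exploiting the everywhere-nonvanishing superdiagonal to control how $V$ sits relative to the canonical flag of $\ME$.
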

Part (2) here explains why we call $\MM_{\mathrm{Hit}}$ a component .
\end{subsection}

\begin{subsection}{Nahm Pole Boundary Condition}
We now introduce the Nahm pole boundary condition for bundles of rank $n+1$, cf.\ \cite{witten2011fivebranes}. 

Recall some basic representation theory for $\slf_{n+1}$, cf.\ \cite{Serre2001Complex} for terminology. 
The Cartan matrix $A$ of $\slf_{n+1}$ is the $n\times n$ matrix
\begin{equation}
A = (A_{ij})  = \begin{pmatrix}
2 & -1 & 0 & 0&\cdots& 0\\
-1 & 2 & -1 & 0&\cdots& 0\\
0 & -1& 2& -1&\cdots &0\\
\vdots & \vdots &\ddots &  &\vdots\\
0 & 0 & 0&\ddots&2  &-1\\
0& 0 & 0&\cdots  &-1 &2
\end{pmatrix},
\label{Cartanmatrix}
\end{equation} 
%$A^{-1}_{ij}$ for the $ij$ entries of $A$ and $A^{-1}$. 
and there is an explicit formula for the components of its inverse
\begin{equation}
A_{ij}^{-1}=\min\{ij\}-\frac{ij}{n+1},
\end{equation}
cf.\ \cite{Wei2017Inverse}. The constants
\[
B_i=2\sum_j (A^{-1})_{ij}=i(n+1-i)
\] 
appear in several places below.

We use a Chevalley basis in the Lie algebra, with standard representatives
%with $E_i^{\pm}$ the raising and lowering operators for each simple root
%and $H_i$ the corresponding coroot. 
\[
E^+_j :=E_{j,j+1}, \ \ E^-_j:=E_{j+1,j}, \ \  H_j:=E_{j,j}-E_{j+1,j+1}
\]
where $E_{ij}$ is the matrix with a $1$ in the $ij$ slot and $0$'s elsewhere. These have commutation relations
\begin{equation}
[H_i,H_j]=0,\;[H_i.E_j^{\pm}]=\pm A_{ij}E_j^{\pm},\;[E_i^+,E_j^-]=\delta_{ij}H_j.
\label{ChevalleyBasis}
\end{equation}

Fix a principal embedding of $\mathfrak{sl}_2$ in $\mathfrak{sl}_{n+1}$ and let $\mathfrak{t}_1,\mathfrak{t}_2,\mathfrak{t}_3$ be 
a basis of the embedded $\mathfrak{sl}_2$ satisfying $[\mathfrak{t}_i,\mathfrak{t}_j]=\ep_{ijk}\mathfrak{t}_k$. Let $E$ be
a degree $0$ bundle of rank $n+1$ over $\mathbb{C}\ti\RP$. The \EBE \eqref{EBE} have the singular model solution 
\begin{equation}
A=0,\;\phi_z=\frac{\mathfrak{t}_1-i\mathfrak{t}_2}{y},\;\phi_1:=\frac{\mathfrak{t}_3}{y},
\label{NahmSolution}
\end{equation}
which are in fact the basic solution of the Nahm equations \cite{nahm1980simple}.  There are good representatives of this
conjugacy class: 
\begin{equation}
\mathfrak{t}_1-i\mathfrak{t}_2=\sum_j \sqrt{B_j}E_j^+, \ \mathfrak{t}_3=\frac{i}{2}\sum_j B_jH_j.
\end{equation}
We also set $\mfe^+:=\mft_1-i\mft_2,\;\mfe^-:=-\mft_1-i\mft_2,\mfe^0:=-2i\mft_3$, so that
\begin{equation}
[\mfe^+,\;\mfe^-]=\mfe^0,\;[\mfe^0,\;\mfe^-]=-2\mfe^-,\;[\mfe^0,\;\mfe^+]=2\mfe^+.
\end{equation}

Now define 
\begin{equation}
\begin{split}
\phi_z= \frac{\mfe^+}{y} \, dz  = \frac{1}{y}\begin{pmatrix}
0 & \sqrt{B_1} & 0  &\cdots& 0\\
0 & 0 & \sqrt{B_2}  &\cdots& 0\\
\vdots & \vdots &\ddots &  &\vdots\\
\vdots & \vdots & &\ddots  &\sqrt{B_n}\\
0& 0 & \cdots & 0 &0
\end{pmatrix}\, dz,
\end{split}
\label{modelformphiz}
\end{equation}
and
\begin{equation}
\begin{split}
\phi_1= \frac{i \mfe^0}{2y} = \frac{i}{2y}\begin{pmatrix}
n & 0 & 0  &\cdots& 0\\
0 & n-2 &  0 &\cdots& 0\\
\vdots & \vdots &\ddots &  &\vdots\\
\vdots & \vdots & 0&-(n-2)  &0\\
0& 0 & \cdots  &0 &-n
\end{pmatrix}.
\end{split}
\label{cannonicalformsl2Liealgebra}
\end{equation}

%Over $\Si\ti\RP$, where $\Si$ is a Riemann surface with genus $g(\Si)>1$, in \cite{witten2011fivebranes}, Witten introduce a singular boundary condition with leading asymptotic as the solution in \eqref{NahmSolution}:
\begin{definition}
We say the fields $(A,\phi_z,\phi_1)$ on the bundle $E$ %with $\det E=0$ over $\Si\ti\RP$ 
satisfies the Nahm pole boundary condition if in terms of some local trivialization  
\begin{equation}
A\sim \MO(y^{-1+\ep}),\;\phi_z=\frac{\mfe^+}{y}+\MO(y^{-1+\ep}),\;\phi_1:=\frac{i\mfe^0}{2y}+\MO(y^{-1+\ep})
\label{Nahmpoleboundaryconditionforconnection}
\end{equation}
as $y\to 0.$
\end{definition}
We may assume that the error blows up slightly less rapidly than $y^{-1}$ either in an $L^2$ or $L^\infty$ sense, but then,
as described in \cite{MazzeoWitten2013}, one may prove that if these fields satisfy the \EBE or Kapustin-Witten equations,
then there exists a gauge in which they are much more regular. 
%it is of course necessary to suppose that these fields lie in some specific function space, e.g. a weighted Holder space, and the error estimate $\MO(y^{-1+\ep})$ is interpreted in terms of that norm. The regularity theory in that paper shows that a solution of the extended Bogomolny equations, or indeed of the full Kapustin-Witten system, is then much more regular after being put into gauge.

We also study these boundary condition in parallel holomorphic gauge. Let $(\calU, z)$ be a local holomorphic chart 
on $\Si$, and as suggested in \cite{witten2011fivebranes}, consider a local holomorphic trivialization of $\ME$ in terms 
of which %the Higgs field takes the form
\begin{equation}
\label{vpformNahmpole}
\begin{split}
\vp =\begin{pmatrix}
\st & \sqrt{B_1} & 0 &\cdots& 0\\
\st & \st & \sqrt{B_2} &\cdots& 0\\
\vdots & \vdots &\ddots & &\vdots\\
\vdots & \vdots & &\ddots &\sqrt{B_n}\\
\st& \st & \cdots  &\st &\st
\end{pmatrix}\, dz,
\end{split}
\end{equation}
so all entries above the main diagonal are constant or zero.   This gives a commuting triplet $(\MD_1, \MD_2, \MD_3 = \del_y)$. 
The Nahm pole boundary condition is attained by adjoining the {\em singular} Hermitian metric $H_0= \exp(-\log y\, \mfe^0)$.
Indeed, following Proposition \ref{complexgaugeactionchange},  changing to a unitary frame for this metric corresponds to
conjugating by the complex gauge transformation $g_0$ for which $g_0^2 = H_0$. These conjugated fields have
the form \eqref{modelformphiz} (plus a term which is $\MO(1)$) and \eqref{cannonicalformsl2Liealgebra}, and hence
satisfy the Nahm pole  boundary conditions.  Consider any other Hermitian metric $H=H_0e^s$, where $s$ is a section of 
$\isu(E,H_0)$ with $\sup |s| + y |ds| \leq Cy^\ep$. A straightforward computation shows that the corresponding fields 
$(A_H,\phi_H,(\phi_1)_H)$ in unitary gauge then also satisfy the Nahm pole boundary conditions. This leads
to the definition of Nahm pole boundary condition for a Hermitian metric: 
\begin{definition}
Suppose that in some local trivialization, the Higgs field $\vp$ has the form $\eqref{vpformNahmpole}$. In that frame,
set $H_0=\exp(-\log y\, \mfe^0)$. Then we say that a Hermitian metric $H$ satisfies the Nahm pole boundary condition
if $H = H_0 e^s$ with $|s| + |y\, ds| < Cy^\ep$. 
\end{definition}
%Since the Chern connection involves one derivative of $H$, then assuming some regularity of $s$, it is not hard to
%see that these two definitions coincide.
\end{subsection}

\begin{subsection}{Knot Singularities}
	\label{subsectionknotsingularity}
We now define the singular model solution for a knot. This model was found by Witten for rank $2$ bundles
\cite{witten2011fivebranes}, while for $\mathrm{SL}(n+1)$, $n>1$, and general higher rank semisimple groups,
it was obtained by Mikhaylov \cite{Mikhaylov2012solutions}. 
The form is rather complicated and less explicit in general, hence we shall simply outline the initial reduction of the equations
and describe its relevant features here.

Let $E$ be a complex bundle of rank $n+1$ over $\mathbb{C}\ti\RP$, and consider the extended Bogomolny equations with 
singularity at $\{z=y=0\}$. This is called the boundary t'Hooft operator in the physics literature. 

Fix an $n$-tuple of nonnegative integers $\vfr = \{\mfr_1, \ldots, \mfr_n\}$ and define, in parallel holomorphic gauge and
using a Chevalley basis, the Higgs field
\begin{equation}
\label{varphimatrix}
\begin{split}
\vp=\begin{pmatrix}
0 & z^{\mfr_1} & 0 &\cdots& 0\\
0 & 0 & z^{\mfr_2} &\cdots& 0\\
\vdots & \vdots &\ddots & &\vdots\\
\vdots & \vdots & &\ddots &z^{\mfr_n}\\
0& 0 & \cdots  &0 &0
\end{pmatrix}\, dz. 
\end{split}
\end{equation}
Choose a gauge transformation $g = \exp \mu$ with $\mu$ taking values in $\mathfrak h$ and define
the Hermitian metric $H = g^2$.  We now transform the fields to unitary gauge. Using 
cylindrical coordinates $y = R \sin \psi$, $r = R \cos \psi$ with $r = |z|$, $R = |(r,y)|$, the fields become
\begin{equation}
A_2=-i\pa_3 \mu,\;A_3 = i \pa_2\mu,\;A_y=0,\;\phi_z=e^{\mu}\vp e^{-\mu},\;\phi_1=-i\pa_y\mu
\end{equation}
(recall $z = x_2 + i x_3$).    Now write $\mu=\frac{1}{2}\sum_{ij}A^{-1}_{ij}H_i\zeta_j$ for some functions $\zeta_j$. Then
\eqref{EBE} reduces to the system 
\begin{equation}
\sum_j A_{sj}^{-1}\Delta_{3} \psi_j+r^{2\mfr_s+1}e^{\psi_s}=0,\ \ s = 1, \ldots, n,
\end{equation}
where $\Delta_{3}=-(\pa_{x_2}^2+\pa_{x_3}^2+\pa_y^2)$.  Changing variables $\zeta_j:=q_j-2(\mfr_j+1)\log r$ yields
\begin{equation}
\sum_{j}A_{sj}^{-1}r^2\Delta_{3}q_j-e^{q_s}=0,
\end{equation}
and finally, defining $\sigma$ by $y/r = \sinh \si$, i.e., $\si=\log(\frac{\sqrt{y^2+r^2}+y}{r})$, and assuming that 
$q_j = q_j(\si)$, we obtain the `repulsive' Toda system
\begin{equation}
\frac{d^2 q_i }{d\si^2}-\sum_{j}A_{ij}e^{q_j}=0.
\end{equation}
The Nahm pole boundary condition now takes the form
\begin{equation}
q_j\sim -2\log \psi +\log B_j+\cdots \ \ \mbox{as}\ \ \psi \to 0.
\end{equation}

It turns out to be more convenient to analyze these equations using the functions $\chi_i=\sum_jA_{ij}^{-1}q_j$,
and in terms of these, the model Hermitian metric equals
\begin{equation} 
\HM:=\exp(\sum_i(\chi_i-2\sum_jA_{ij}^{-1}(\mfr_j+1)\log r)H_i).
\label{modelHermitianmetricforknot}
\end{equation}
The derivation of expressions for the functions $\chi_i$ involves more intricate Lie theoretic considerations, 
for which we refer to \cite{Mikhaylov2012solutions}.  This expression for $\HM$ leads
to (unfortunately quite complicated) expressions for the model solution fields $(\AM,\PM,\POM)$ in unitary gauge. 

\begin{theorem}[\cite{witten2011fivebranes}, \cite{Mikhaylov2012solutions}] 
There exists a model knot solution on $\CC \ti \RP$ with t'Hooft singularity at $z=0$. It can be given either in 
terms of the Hermitian metric $\HM$, or else in terms of the fields $\AM,\PM,\POM$ in unitary gauge. These all satisfy 
the Nahm pole boundary conditions at $\psi=0$ (i.e., as $y \to 0$ for $R > 0$) and  
are homogeneous of degree $-1$ in $R$, so in particular
$|\AM| = \MO(R^{-1})$, $|\PM|, |\POM| =\MO(R^{-1}\psi^{-1}),$ as $R\to 0$, $\psi \to 0$. 
\end{theorem}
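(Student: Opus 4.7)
The plan is to split the argument along the two-step reduction already indicated in the excerpt: (i) solve the one-dimensional repulsive Toda ODE system for functions $q_i(\sigma)$ with the correct two-ended asymptotics, and then (ii) reconstruct the Hermitian metric $\HM$ and the unitary-gauge fields $(\AM,\PM,\POM)$ and verify the stated homogeneity and decay bounds. The reduction itself has already been carried out: working in parallel holomorphic gauge with $\vp$ as in \eqref{varphimatrix}, taking $H=g^2$ with $g=\exp\mu$ and $\mu$ valued in $\mathfrak{h}$, and using the principal Cartan ansatz $\mu=\tfrac{1}{2}\sum_{ij}A^{-1}_{ij}H_i\zeta_j$, the full \EBE collapse to
\[
\frac{d^2 q_i}{d\sigma^2}-\sum_{j}A_{ij}e^{q_j}=0,\qquad i=1,\dots,n,
\]
for $q_i$ depending only on $\sigma$, where $\sinh\sigma = y/r$. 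The required boundary behavior is $q_j \sim -2\log\psi+\log B_j$ as $\sigma\to 0$ (Nahm pole at $\psi=0$), together with a matching regularity condition as $\sigma\to\infty$ that makes the reconstructed fields smooth away from the t'Hooft ray $\{z=0\}$.

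For existence of the Toda solution I would use the variational/convexity approach. The key point is that the Cartan matrix $A$ of $\slf_{n+1}$ is positive definite, so the system is the Euler--Lagrange equation of a strictly convex functional of the form
\[
\mathcal{F}(q) \;=\; \int_{0}^{\infty}\Bigl(\tfrac{1}{2}\sum_{ij}A_{ij}^{-1}q_i' q_j' + \sum_j e^{q_j}\Bigr)\,d\sigma,
\]
on a weighted Sobolev space adapted to the prescribed singular behavior at $\sigma=0$. First construct an explicit approximate solution $q_j^{(0)}$ matching the boundary asymptotics at both ends, then write $q_j=q_j^{(0)}+w_j$ and minimize over compactly supported perturbations $w_j$; coercivity of $\mathcal{F}$ on this space, together with strict convexity, yields a unique minimizer which is smooth by standard elliptic regularity. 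An alternative, carried out by Mikhaylov \cite{Mikhaylov2012solutions}, is to use the integrability of the Toda chain and write the solution explicitly in terms of Whittaker-type special functions attached to the principal $\slf_2$ embedding.

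Having produced $q_i(\sigma)$, define $\chi_i = \sum_j A_{ij}^{-1}q_j$ and set $\HM$ by \eqref{modelHermitianmetricforknot}; then use Proposition \ref{complexgaugeactionchange} (taking $g = \HM^{1/2}$) to convert to unitary gauge and read off $(\AM,\PM,\POM)$. Because $q_j$ depends only on $\sigma$, and $\sigma = \log((\sqrt{r^2+y^2}+y)/r)$ is scale-invariant in $(r,y)$, the resulting Hermitian metric differs from the Nahm model only by functions of $\sigma$, and $R\,d/dR$ annihilates these, giving the required degree $-1$ homogeneity of the fields. The Nahm pole boundary condition at $\psi=0$ then reduces, under the change of variables $\sigma\to 0$ at $R>0$, precisely to the asymptotics $q_j\sim -2\log\psi+\log B_j$ that were built into the construction. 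Finally, the pointwise estimates $|\AM|=\MO(R^{-1})$ and $|\PM|,|\POM|=\MO(R^{-1}\psi^{-1})$ follow by direct differentiation: $\AM$ is built from $\partial\mu$ which scales like $R^{-1}$, while $\PM = e^\mu \vp e^{-\mu}$ picks up an extra factor of $\psi^{-1}$ from the logarithmic blowup of $\mu$ at $\sigma=0$, and similarly for $\POM$.

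The main obstacle is the first step: producing a Toda solution with the correct \emph{singular} boundary condition at $\sigma=0$ and the correct regularity as $\sigma\to\infty$. The $\sigma\to 0$ asymptotics force a careful choice of function space and approximate solution before the convex minimization can be deployed, and verifying that the minimizer attains the prescribed leading coefficients $\log B_j$ (rather than merely the leading $-2\log\psi$ divergence) requires matching the subleading behavior of the ODE with the representation-theoretic structure encoded by the principal $\slf_2$. Once this has been carried out—as in \cite{witten2011fivebranes} for $n=1$ and \cite{Mikhaylov2012solutions} for general $n$—the remaining reconstruction and estimates are a direct computation.
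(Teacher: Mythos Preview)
The paper does not give its own proof of this theorem: it carries out the reduction to the repulsive Toda system and then simply cites \cite{witten2011fivebranes} and \cite{Mikhaylov2012solutions} for the existence of the required solution, illustrating with the explicit $n=1,2$ formulas from Mikhaylov's appendix. Your proposal therefore goes well beyond the paper.

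The route actually taken in the cited references differs from your variational sketch. Mikhaylov exploits the complete integrability of the open $A_n$ Toda lattice and writes down the solution in closed form via Lie-theoretic constructions attached to the principal $\slf_2$; the asymptotics at both ends, including the subleading coefficients $\log B_j$, are then read off directly from the explicit expressions. Your convex-minimization approach is a reasonable alternative in principle, and you correctly isolate the main technical obstruction: near $\sigma=0$ the prescribed behavior $q_j\sim -2\log\sigma$ forces $e^{q_j}\sim\sigma^{-2}$, so the functional as written is infinite on the relevant profile and must be renormalized against an approximate solution before any coercivity argument can run. One point in your outline is misstated: the limit $\sigma\to\infty$ corresponds to $r\to 0$ at fixed $y>0$, i.e.\ to \emph{approaching} the t'Hooft ray $\{z=0\}$, not to moving away from it. The correct condition there is not smoothness but rather a prescribed linear growth of $q_j$ in $\sigma$ (visible in the exponents $(4m_1+2m_2)/3$ etc.\ in the $\SL(3)$ formula), and it is precisely this growth that encodes the weight vector $\vfr$. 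Getting both ends right simultaneously is what singles out the unique model solution, and the explicit integrable-systems construction handles this matching transparently, whereas in a variational approach it would have to be imposed through the choice of function space at infinity.
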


We now illustrate by presenting the cases $n=1, 2$, following \cite[Appendix]{Mikhaylov2012solutions}. In the following, 
write $\chi(\si)=\sum_{i=1}^n\chi_i(\si)H_i$. 

\medskip

\noindent{\bf The model solution for $\SL(2,\CC)$.}  Here the weight is a single number $\mfr$
and $\chi_1(\si)=-\log(\frac{\sinh( (\mfr + 1)\si)}{\mfr+1})$. 

Writing $u:=\chi-(\mfr+1)\log r=\log \frac{2(\mfr+1)}{(R+y)^{\mfr+1}-(R-y)^{\mfr+1}}$, then %the model Hermitian metric becomes
$$
\HM=\exp((\chi-(\mfr+1)\log r) H)=\begin{pmatrix}     e^{u} & 0 \\     0 & e^{-u}     \end{pmatrix}.
$$
%Using \eqref{explicitmodelunitarygauge}, then we obtain the expressions 
and in unitary gauge
\begin{equation*}
\begin{split}
\phi_z=&\frac{1}{r} \exp(i\mfr\theta+\chi(\si))\begin{pmatrix}
0 & 1 \\
0 & 0
\end{pmatrix}=\frac{2(\mfr+1)e^{i\mfr\theta}\cos^\mfr \Psi}{R(1+\sin \Psi)^{\mfr+1}-(1-\sin \Psi)^{\mfr+1}}\begin{pmatrix}
0 & 1 \\
0 & 0
\end{pmatrix},\\[5mm]
\phi_1=&-\frac{i}{2R}\pa_\si \chi(\si)=\frac{n+1}{R}\frac{(1+\sin \Psi)^{\mfr+1}+(1-\sin \Psi)^{\mfr+1}}{(1+\sin \Psi)^{\mfr+1}-(1-\sin \Psi)^{\mfr+1}}\begin{pmatrix}
\frac i2 & 0 \\
0 & -\frac i2
\end{pmatrix},\\[5mm]
A=&  % -(\mfr+1)\begin{pmatrix}
% \frac i2 & 0 \\
% 0 & -\frac i2
% \end{pmatrix}+\frac{1}{2}\sin \Psi \frac{(1+\sin \Psi)^{\mfr+1}+(1-\sin \Psi)^{\mfr+1}}{(1+\sin \Psi)^{\mfr+1}-(1-\sin \Psi)^{\mfr+1}}d\theta \begin{pmatrix}
% \frac i2 & 0 \\
% 0 & -\frac i2
% \end{pmatrix}\\[5mm]
-(\mfr+1)\cos^2 s\frac{(1+\sin \Psi)^{\mfr}-(1-\sin \Psi)^{\mfr}}{(1+\sin \Psi)^{\mfr+1}-(1-\sin \Psi)^{\mfr+1}}d\theta \begin{pmatrix}
\frac i2 & 0 \\ 0 & -\frac i2
\end{pmatrix}.
\end{split}
\end{equation*}
These formul\ae\ appeared in \cite{MazzeoWitten2017} and were used in \cite{HeMazzeo2017}.

\medskip

\noindent{\bf The model solution for $\SL(3,\CC)$.} When $n=2$, we set $m_i=\mfr_i+1$, $i=1,2$.  The solutions
to the Toda system are now given by 
\begin{equation*}
\begin{split}
\exp(-\chi_1)&=\frac{1}{4}\left(\frac{\exp(\frac{4m_1+2m_2}{3}\si)}{m_1(m_1+m_2)}-\frac{\exp(\frac{-2m_1+2m_2}{3}\si)}{m_1m_2}+
\frac{\exp(-\frac{2m_1+4m_2}{3}\si)}{m_2(m_1+m_2)}\right),\\[0.5ex]
\exp(-\chi_2)&=\frac{1}{4}\left(\frac{\exp(\frac{2m_1+4m_2}{3}\si)}{m_2(m_1+m_2)}-
\frac{\exp(\frac{2m_1-2m_2}{3}\si)}{m_1m_2}+\frac{\exp(-\frac{4m_1+2m_2}{3}\si)}{m_1(m_1+m_2)}\right),
\end{split}
\end{equation*}
while 
\[
\HM:=\exp(\sum_{=1}^2(\chi_i-2\sum_jA_{ij}^{-1} m_j \log r)H_i).
\]
We do not write out the lengthier formul\ae\ for $A$, $\phi_z$ and $\phi_1$ in this case.

\medskip

We can now use local  coordinates $(z,y)$ to transport this model solution to be centered at any point $(p,0) \in \Si \times \RP$.
This gives the approximate solution $(\AM,\PM,\POM)$ near this point with weight vector $\vfr:=\{\mfr_1,\cdots,\mfr_n\}$. 
\begin{definition}
A solution $(A,\phi,\phi_1)$ to the \EBE satisfies the Nahm pole boundary condition with knot singularity of weight
$\vfr$ at $(p,0) \in \Si \times \RP$ if, in some gauge, 
\begin{equation}
(A,\phi,\phi_1)=(\AM,\PM,\POM)+\MO(R^{-1+\ep} \psi^{-1 + \ep})
\end{equation}
for some $\ep>0$. %, where $R$ and $\Psi$ are the spherical coordinates used earlier. 
\end{definition}

\begin{definition}
In local coordinates $(z,y)$ near $(p,0) \in \Si \ti \RP$ and some local frame for $\ME$, write $\vp=\sum_i z^{\mfr_i}E_i^+$ 
and let $\HM$ be the corresponding model Hermitian metric \eqref{modelHermitianmetricforknot}.  Then another 
Hermitian metric $H=H_0e^s$ satisfies the Nahm pole boundary condition with knot singularity of weight $\vfr$ at $p$
if $|s| + |y\, ds| \leq C \psi^{\ep} R^{\ep}$. 
\end{definition}

Much as for the simpler Nahm pole boundary condition, we can also consider this in parallel 
holomorphic gauge. Suppose the Higgs field takes the form
\begin{equation}
\label{vpformNahmpole}
\begin{split}
\vp =\begin{pmatrix}
\st & z^{\mfr_1} & 0 &\cdots& 0\\
\st & \st & \mfr_2 &\cdots& 0\\
\vdots & \vdots &\ddots & &\vdots\\
\vdots & \vdots & &\ddots &\mfr_n\\
\st& \st & \cdots  &\st &\st
\end{pmatrix}\, dz.
\end{split}
\end{equation}
As before, we have a commuting triplet $(\MD_1, \MD_2, \MD_3 = \del_y)$ coming from the holomorphic
structure on $\ME$ and this $\vp$.  We then use the singular Hermitian metric $\HM$
and complex gauge transformation $g_{\mathrm{mod}} = \HM^{1/2}$ to transform this triple into one satisfying the Nahm
pole boundary condition away from $(0,0)$, and such that the transformed fields satisfy the Nahm pole
condition with knot singularity at the origin.    This is discussed further in Section 7.

We conclude this section with some useful estimates for 
\[
\HM = \exp(\mu)=\exp(\sum_{ij}A_{ij}^{-1}H_i\zeta_j) = \exp(2 \sum f_kH_k)
\]
where $f_k:=\frac{1}{2}\sum_jA_{jk}^{-1}\zeta_j$. Recall that $\mu$ and hence $\HM = \mathrm{diag}(\lam_1, \ldots, \lam_{n+1})$ 
are diagonal, where $\lam_k = \exp(f_k-f_{k-1})$. 

\begin{lemma}
\label{modelmetricestimate}
For fixed $r_0 > 0$, and every $k$, $|\lam_{k+1}\lam_k^{-1}|_{\MC^0(B_{r_0}(p,0))}\leq C$% in the half-ball 
%$B_{r_0}(p,0) \subset \mathbb{C}\ti \RP$.  
\end{lemma}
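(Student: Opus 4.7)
The plan is to compute $\lam_{k+1}/\lam_k$ explicitly in terms of the Toda variable $q_k$ and the radial coordinate $r$, then reduce the bound to a statement about a function of $\sigma$ alone. Set $m_j := \mfr_j + 1$ and $\tilde\chi_i := \chi_i - 2\sum_j A^{-1}_{ij} m_j \log r$, so that $\mu = \sum_i \tilde\chi_i H_i$. Since $H_i = E_{ii} - E_{i+1,i+1}$, the diagonal entries of $\HM = e^{\mu}$ are $\lam_k = \exp(\tilde\chi_k - \tilde\chi_{k-1})$ under the convention $\tilde\chi_0 = \tilde\chi_{n+1} = 0$, so
\begin{equation*}
\frac{\lam_{k+1}}{\lam_k} \,=\, \exp\bigl(\tilde\chi_{k+1} - 2\tilde\chi_k + \tilde\chi_{k-1}\bigr).
\end{equation*}
The Cartan matrix identity $A^{-1}_{k-1,j} - 2A^{-1}_{k,j} + A^{-1}_{k+1,j} = -\delta_{kj}$ (read off from $A A^{-1} = I$) together with $\chi_i = \sum_j A^{-1}_{ij}q_j$ gives $\chi_{k+1} - 2\chi_k + \chi_{k-1} = -q_k$, and collapses the $\log r$ contribution to $+2m_k \log r$. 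The net result is the clean formula
\begin{equation*}
\frac{\lam_{k+1}}{\lam_k} \,=\, r^{2m_k}\, e^{-q_k(\sigma)}.
\end{equation*}

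Next I pass to the natural angular coordinate: from $\sinh\sigma = y/r$ one has $r = R/\cosh\sigma$, so
\begin{equation*}
\frac{\lam_{k+1}}{\lam_k}(R,\sigma) \,=\, R^{2m_k}\, (\cosh\sigma)^{-2m_k}\, e^{-q_k(\sigma)}.
\end{equation*}
Since $R$ is bounded by a uniform constant on $B_{r_0}(p,0)$, it suffices to show that
\begin{equation*}
g_k(\sigma) \,:=\, (\cosh\sigma)^{-2m_k}\, e^{-q_k(\sigma)}
\end{equation*}
is uniformly bounded for $\sigma \in [0,\infty)$. Since $g_k$ is continuous on this half-line, the task reduces to controlling the two endpoints.

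Near $\sigma \to 0^+$ (the Nahm pole boundary), the asymptotic $q_k \sim -2\log\psi + \log B_k$ together with $\sinh\sigma = \tan\psi$, which forces $\psi \sim \sigma$, gives $g_k(\sigma) \sim \sigma^2 / B_k \to 0$. Near $\sigma \to \infty$, the source term in the repulsive Toda equation $\partial_\sigma^2 q_i = \sum_j A_{ij} e^{q_j}$ decays exponentially, so each $q_k$ is asymptotically linear; the slope selected by the singular boundary data at $\sigma = 0$ is $q_k(\sigma) = -2m_k \sigma + O(1)$, and combined with $\cosh\sigma \sim \tfrac12 e^\sigma$ this yields $g_k(\sigma) \to 4^{m_k} c_k$ for a positive constant $c_k$. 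The main subtle point is the identification of the asymptotic slope $-2m_k$ for general $n$; this follows from Mikhaylov's explicit formulas \cite{Mikhaylov2012solutions} together with $q_k = 2\chi_k - \chi_{k-1} - \chi_{k+1}$, the dominant exponential in $e^{-\chi_k(\sigma)}$ combining with its neighbors to produce exactly $e^{2m_k\sigma}$ in $e^{-q_k(\sigma)}$. This is transparent in the $\SL(2)$ and $\SL(3)$ cases displayed above, and the general case follows by the same algebraic mechanism, completing the proof.
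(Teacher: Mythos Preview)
Your argument is correct and follows the same core computation as the paper: express $\lam_{k+1}/\lam_k$ as $\exp(-q_k + 2m_k\log r)$ via the second-difference identity for the rows of $A^{-1}$, then check the relevant limits. Two small differences are worth noting. First, you obtain the identity $A^{-1}_{k-1,j} - 2A^{-1}_{k,j} + A^{-1}_{k+1,j} = -\delta_{kj}$ directly from $AA^{-1}=I$ and the tridiagonal form of $A$, whereas the paper verifies it using the explicit closed formula $A^{-1}_{ij} = \min(i,j) - ij/(n+1)$; your route is cleaner. Second, and more substantively, the paper only checks the limits $R\to 0$ and $\psi\to 0$ and then stops, implicitly relying on smoothness of the model solution along the interior axis $r=0$, $y>0$ (equivalently $\sigma\to\infty$). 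You instead factor as $R^{2m_k}\,g_k(\sigma)$ and explicitly show $g_k$ is bounded on all of $[0,\infty)$, including the $\sigma\to\infty$ end via the asymptotic slope $q_k\sim -2m_k\sigma$. This makes the argument self-contained, at the cost of having to invoke Mikhaylov's formulas to pin down that slope; your justification of the slope is somewhat sketchy for general $n$, but the verification you indicate (dominant exponentials in the $e^{-\chi_i}$ combining under $q_k = 2\chi_k - \chi_{k-1} - \chi_{k+1}$) does go through.
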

\begin{proof}
Since $\lam_{k+1}\lam_k^{-1}=\exp(f_{k-1}+f_{k+1}-2f_k)$, it suffices to show that $f_{k-1}+f_{k+1}-2f_k$ is bounded above. 
We compute 
\[
f_{k-1}+f_{k+1}-2f_k=\sum_{j}(A_{k-1,j}^{-1}+A_{k+1,j}^{-1}-2A_{kj}^{-1})\zeta_j.
\]
Using the explicit formula $A_{ij}^{-1}=\min\{i,j\}-\frac{ij}{n+1}$, we see that if $j\neq k$, then $2A_{kj}^{-1}-A_{k-1j}^{-1}-
A_{k+1j}^{-1}=0$, while if $j=k$, then $2A_{kj}^{-1}-A_{k-1j}^{-1}-A_{k+1j}^{-1}=1$. These give 
$f_{k-1}+f_{k+1}-2f_k=-q_k+(\mfr_k+1)\log r$.   Since the $q_k$ depend only on $\si$ we obtain $\lim_{R\to 0}|\lam_{k+1}\lam_k^{-1}|=0$,
while as $\si\to 0$, $q_k\sim -2\log \si+\log B_j$, so $\lim_{\psi\to 0}|\lam_{k+1}\lam_k^{-1}|=0$ too. This proves the claim.
\end{proof}

\end{subsection}
\end{section}

\begin{section}{Holomorphic Data From the Singular Boundary Conditions}
Following the program laid out in \cite{gaiotto2012knot} and explained in Section \ref{holdata}, we know that for any solution 
to the \EBE over $\Si\ti\RP_y$, there is a stable Higgs pair $(\ME_y,\vp_y)$ on each slice $\Si\ti\{y\}$ as well as a parallel 
transform $\MD_3$ which identifies the Higgs pairs in each slice with one another.  In this section we explain how 
imposing the singular boundary condition at $y=0$ yields a holomorphic line bundle $L \subset \ME$. In other 
words, a solution to the \EBE satisfying the Nahm pole boundary condition determines a triplet $(\ME, \vp,L)$. 

\begin{subsection}{Data from the Nahm Pole Boundary Condition}
Suppose $(\ME, A,\phi, \phi_1)$ satisfies the \EBE as well as the Nahm pole boundary conditions at $y=0$. By the discussion
in Section \ref{holdata}, we obtain a stable Higgs pair $(\ME,\vp)$ and a parallel transport $\MD_3=\pa_y+\MA_y$.
In addition, in a suitable trivialization near $y=0$, these fields satisfy the Nahm pole boundary conditions (without knots):
\begin{equation}
\begin{split}
&\phi_z= \frac{\mfe^+}{y}+\MO(y^{-1+\ep})=\frac{1}{y} \begin{pmatrix}
0 & \sqrt{B_1} & 0  &\cdots& 0\\
0 & 0 & \sqrt{B_2}  &\cdots& 0\\
\vdots & \vdots &\ddots &  &\vdots\\
\vdots & \vdots & &\ddots  &\sqrt{B_n}\\
0& 0 & \cdots & 0 &0 \end{pmatrix}+\MO(y^{-1+\ep}), \ \mbox{and}\\[0.75ex]
&\MA_y =  -i\phi_1 =\frac{\mfe^0}{2y}+\MO(y^{-1+\ep})= \frac{1}{2y} \begin{pmatrix}  \tfrac{n}{2} & 0 & \ldots & 0 \\ 0 & \tfrac{n-2}{2} & \ldots & 0 \\
0 & 0 & \ddots & 0 \\ 0 & 0 & \ldots  & -\tfrac{n}{2}  \end{pmatrix}+\MO(y^{-1+\ep}).
\end{split}
\label{NPdatatrivilasection}
\end{equation}	

Using coordinates associated to a local holomorphic frame $s_1, \ldots, s_{n+1}$, so $s_i$ corresponds to  the 
vector $(0,\cdots, 0, 1, 0,\cdots ,0)^\da$, and we write $\vp$ and $\phi_1$ as in \eqref{NPdatatrivilasection}.
If $\MD_3 s :=\pa_ys-i\phi_1s=0$ and $s(y)|_{y=1}=\sum_{i=1}^{n+1}{a_i}s_i$, $a_i \in \CC$, then 
\begin{equation*}
s(y) = \sum_{i=1}^{n+1} (a_i y^{-\frac{n}{2}+i-1}+ \MO(y^{-\frac{n}{2} + i -1 + \ep}))s_i\ \ \mbox{as}\ \ y\to 0.
\end{equation*}
The span of the section $s_{n+1}$ is distinguished because its parallel transport vanishes at the maximal
possible rate, $y^{n/2}$, as $y \to 0$.  This leads to the invariant description of the {\em vanishing line bundle}
\begin{equation*}
L:=\{s\in\Gamma(E): \MD_3s=0,\;\lim_{y\to 0}|y^{-\frac{n}{2}+\al}s|=0\}
\end{equation*}
for any $0 < \alpha < 1$. Since $L$ is spanned locally by $s_{n+1}$, it is a holomorphic line bundle.

Using the explicit form of $\vp = \phi_z dz$ and the fact that $\phi_z$ is nonvanishing, if we set $L_0=L$ and define 
$L_{j+1}=\vp(L_j)\otimes K^{-1}$, $j\leq n-1$, then each $L_j$ is a line subbundle of $E$ isomorphic to  $L_0$ and these
are all independent of one another. Thus, since $\det(E)=\MO_\Si$, the map 
$$
f_n:=1\we \vp\we\cdots \we \vp^n:L^{n+1}\to K^{\frac{n(n+1)}{2}}
$$
is everywhere nonvanishing, hence $L\cong K^{\frac n2}$. Slightly more generally, define the flag 
$E_0 = L \subset E_1 \subset \ldots \subset E_n = \ME_{y=1}$, where for each $j$, the parallel transport of sections of $E_j$ 
vanish at least like $y^{n/2 - j}$ as $y \to 0$. The expressions for $\MA_y$ and $\vp_z$ above show that 
$\vp(E_j)\subset E_{j+1}$, $j = 0, \ldots, n-1$, and furthermore, $L_j \subset E_{j}$ and $\vp(E_j)\otimes K^{-1} \subset E$, hence
\begin{equation}
E_j = \bigoplus_{i=0}^{j} L_i %= & (\vp(L_{j -1})\otimes K^{-1}) \oplus \cdots \oplus (\vp(L)\otimes K^{-1}) \oplus L \\
= K^{n/2 - j + 1} \oplus \ldots \oplus K^{n/2}.
\label{directsumE}
\end{equation}

The conditions that $f_n$ is everywhere nonvanishing is a strong restriction: 
\begin{proposition}
If $(\ME,\vp)$ is a stable Higgs pair with a line subbundle $L \subset \ME$, and if the holomorphic map 
$f_n=1\we \vp\we\cdots \we \vp^n:L^{n+1}\to K^{\frac{n(n+1)}{2}}$ has no zeroes, then $L\cong K^{\frac{n}{2}}$ and 
$(\ME,\vp)$ lies in the Hitchin component $\MM_{\Hit}$. 
\end{proposition}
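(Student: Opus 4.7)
The plan is to extract the holomorphic structure of $(\ME,\vp,L)$ from the nonvanishing of $f_n$ in three stages: identifying $L$ as a root of $K$, exhibiting a holomorphic splitting of $\ME$, and finally conjugating $\vp$ into the Hitchin normal form \eqref{HitchincomponentHiggsfield} by a complex gauge transformation.

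First I would observe that $f_n : L^{n+1} \to K^{n(n+1)/2}$ is a nowhere-vanishing holomorphic map between line bundles on the compact surface $\Si$, hence an isomorphism. This yields $L^{n+1} \cong K^{n(n+1)/2}$, and extracting $(n+1)$-st roots (which for odd $n$ fixes a spin structure compatible with $L$) gives $L \cong K^{n/2}$.

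Next, the same nonvanishing is precisely the statement that for each point of $\Si$ and each nonzero $s$ in the fiber of $L$, the iterates $s,\vp s,\ldots,\vp^n s$ span $\ME$ in the appropriate $K$-twisted sense. Assembling the maps $\vp^j : L \to \ME \otimes K^j$, twisted down by $K^{-j}$, into a single holomorphic map
\[
\sigma : \bigoplus_{j=0}^n L \otimes K^{-j} \longrightarrow \ME,
\]
the nonvanishing of $f_n$ makes $\sigma$ a fiberwise linear isomorphism, hence an isomorphism of holomorphic bundles. Combined with $L \cong K^{n/2}$ this yields
\[
\ME \;\cong\; \bigoplus_{j=0}^n K^{n/2-j} \;\cong\; S^n(K^{-1/2}\oplus K^{1/2}),
\]
which is the underlying bundle of the Hitchin section, so $\ME$ has the correct holomorphic type.

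The remaining step, which I expect to be the main technical obstacle, is to show that under this identification $\vp$ is complex-gauge equivalent to a Higgs field of the form \eqref{HitchincomponentHiggsfield}. In the cyclic local frame $\{\vp^j s\}_{j=0}^n$ attached to a local trivializing section $s$ of $L$, the field $\vp$ acts as a companion matrix: it shifts $\vp^j s \mapsto \vp^{j+1}s$ for $j<n$, while Cayley--Hamilton expresses $\vp^{n+1}s$ as a linear combination of the lower iterates whose coefficients are the characteristic invariants $p_i(\vp)\in H^0(\Si,K^i)$, and hence are globally defined holomorphic sections on $\Si$. I would then construct an upper-triangular complex gauge transformation in $\SL(\ME)$, with diagonal rescalings arranged to produce the constants $\sqrt{B_j}$ in the prescribed positions and with off-diagonal corrections built algebraically from the $p_i(\vp)$, which (after reversing the basis order to align with the weight grading of the principal $\slf_2$-triple) conjugates this companion form into the Hitchin normal form. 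Since every ingredient of the transformation is holomorphic and globally defined on $\Si$, the resulting gauge equivalent representative lies in $\MM_{\mathrm{Hit}}$, completing the argument.
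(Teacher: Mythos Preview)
Your proposal is correct and follows essentially the same approach as the paper: both arguments hinge on building the cyclic frame $e_0 \in L$, $e_{j+1} = \phi_z(e_j)$, which is a local basis precisely because $f_n$ is nonvanishing, and then reading off that $\vp$ becomes a companion matrix in this frame.

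The main difference is one of detail. The paper's proof is quite terse: it simply observes that the cyclic frame gives a basis and asserts that the Higgs field then has the form \eqref{HitchincomponentHiggsfield}, leaving implicit the identification $L\cong K^{n/2}$ (which is established in the paragraph preceding the proposition) and the passage from the raw companion form to the normalized Hitchin form. You are more explicit on both counts: you argue the line bundle isomorphism from $f_n$ being a nonvanishing section of $K^{n(n+1)/2}\otimes L^{-(n+1)}$, you globalize the cyclic splitting as a holomorphic isomorphism $\sigma:\bigoplus_j L\otimes K^{-j}\to \ME$, and you describe the Kostant-type upper-triangular conjugation that carries the companion matrix to the Hitchin slice. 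This last step is more than strictly needed, since $\MM_{\Hit}$ is defined as a $\MGC$-orbit and the companion presentation already exhibits $(\ME,\vp)$ as lying in that orbit via the Hitchin fibration map \eqref{HitchinFiberationMap}; but spelling it out does no harm and clarifies what the paper leaves implicit.
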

\begin{proof}
As before, write $\vp=\phi_z dz$ in some local holomorphic chart. We have seen above that $S^{n}(K^{-\frac{1}{2}}\oplus K^{\frac{1}{2}})
=K^{-\frac{n}{2}}\oplus K^{-\frac{n}{2}+1}\oplus\cdots\oplus K^{\frac{n}{2}}$, where $L_j=K^{\frac{n}{2}-j}$. Let $e_0$ be a nonvanishing
local holomorphic section of $L$ and define $e_{i+1}=\phi_z(e_i)$. Then $\{e_0,\cdots,e_{n}\}$ is a basis of $E$, and
in the trivialization defined by this frame, the Higgs field takes the form \eqref{HitchincomponentHiggsfield}.
\end{proof}

Define the moduli space of solutions to the extended Bogomolny equations with Nahm pole boundary condition
\begin{multline*}
\MM_{\mathrm{NP}}:=\{(A,\phi,\phi_1): \ \mathrm{EBE}(A,\phi,\phi_1)=0, \  (A,\phi,\phi_1) \mbox{ converges to a flat }
\mathrm{SL}(n+1,\mathbb{C}) \\ \mbox{connection as} \ y\to\infty\ \mbox{and}\ \mbox{ satisfies the Nahm Pole boundary condition 
at} \ y=0\}/\mathcal{G}. 
\end{multline*}
This proves
\begin{proposition}{\cite{gaiotto2012knot}} \label{INP}
There is a well-defined map $I_{\mathrm{NP}}:\MM_{\mathrm{NP}}\to \MM_{\mathrm{Hit}}$.  In other words, 
let $(A,\phi,\phi_1)$ satisfy the \EBE and Nahm pole boundary conditions at $y=0$. Denote by $(\ME,\vp)$ 
the Higgs pair associated to this data and let $L \subset \ME$ be the vanishing line bundle. Then the 
the successively defined bundles $L_j$ are everywhere independent and $(\ME,\vp)$ must 
lie in the Hitchin component of $\mathrm{SL}(n+1,\mathbb{R})$ Higgs bundles, and $L = K^{n/2}$. 
\end{proposition}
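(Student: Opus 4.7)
The plan has three steps: verify that the vanishing line bundle $L \subset \ME$ is well-defined as a holomorphic line bundle, use the Nahm pole normal form to show the iterated images $L_j = \vp^j(L) \otimes K^{-j}$ are everywhere pointwise independent, and then invoke the previous proposition.

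First I would observe that the intrinsic definition
\[
L := \{ s \in \Gamma(\ME) : \MD_3 s = 0, \ \lim_{y\to 0} |y^{-n/2 + \al} s| = 0 \ \mbox{for some}\ \al > 0 \}
\]
is manifestly independent of the unitary trivialization, since both $\MD_3$ and the pointwise norm are gauge-covariant. Working in a local trivialization in which the Nahm pole model form \eqref{NPdatatrivilasection} holds, the operator $\MD_3 = \pa_y + \tfrac{1}{2y}\mfe^0 + \MO(y^{-1+\ep})$ has indicial roots $-n/2 + i - 1$, $i = 1, \ldots, n+1$. The ODE $\MD_3 s = 0$ therefore admits a basis of parallel sections with leading asymptotics $y^{-n/2 + i - 1} s_i$, and exactly the last of these has the maximal decay rate $y^{n/2}$. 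So $L$ is locally spanned by this distinguished section. Since $[\MD_1, \MD_3] = 0$, a $\MD_3$-parallel section which is holomorphic at $y = 1$ remains so for all $y$, and $L$ is a holomorphic line subbundle of $\ME$.

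Next I would argue that $L_0, L_1, \ldots, L_n$ are everywhere pointwise independent in $\ME$. In the Nahm model trivialization the leading behavior of $\vp_z$ is the shift matrix $y^{-1} \mfe^+$, whose $(i,i+1)$ entry is $\sqrt{B_i} \neq 0$. Applying $\vp_z$ to the distinguished section spanning $L$, whose leading behavior is $y^{n/2} s_{n+1}$, produces a section with leading behavior $\sqrt{B_n}\, y^{n/2 - 1} s_n$, and iterating yields nonzero contributions successively to each $s_{n+1-j}$. Consequently, at each point of $\Si$ the fibers $L_j|_p$ span $\ME|_p$, so the top wedge
\[
f_n = 1 \we \vp \we \cdots \we \vp^n : L^{n+1} \longrightarrow K^{n(n+1)/2}
\]
is nowhere zero.

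Finally, stability of the Higgs pair $(\ME, \vp)$ follows from the assumption that $(A,\phi,\phi_1)$ converges to a flat irreducible $\SL(n+1,\CC)$ connection as $y\to\infty$, via the Hitchin/Donaldson/Simpson/Corlette correspondence. The preceding proposition then applies and yields $L \cong K^{n/2}$ and $(\ME, \vp) \in \MM_{\mathrm{Hit}}$. Well-definedness of the map $I_{\mathrm{NP}}$ follows because equivalent solutions modulo unitary gauge give rise to complex-gauge-equivalent Higgs pairs: Proposition \ref{dataonlycomesfromequation} says the equivalence class of $(E, \Theta)$ is determined by any one slice, while the intrinsic definition of $L$ is gauge-covariant. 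The main obstacle I anticipate is justifying that the $\MO(y^{-1 + \ep})$ remainder terms in the Nahm pole form do not perturb the leading indicial asymptotics of $\MD_3$ used both to identify $L$ and to propagate nonvanishing under $\vp$; this either requires first passing to a better gauge in which solutions are polyhomogeneous (appealing to the regularity theory of \cite{MazzeoWitten2013, MazzeoWitten2017}) or a direct ODE perturbation argument establishing that the indicial fundamental solution is stable under such subleading perturbations.
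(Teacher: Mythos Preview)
Your proposal is correct and follows essentially the same route as the paper: the discussion preceding the proposition in the paper defines $L$ via the maximally decaying $\MD_3$-parallel section, uses the leading shift-matrix form of $\phi_z$ to show the iterates $L_j$ are pointwise independent so that $f_n$ is nowhere vanishing, and then invokes the previous proposition to conclude $L\cong K^{n/2}$ and $(\ME,\vp)\in\MM_{\Hit}$. Your explicit remark about well-definedness modulo gauge and your flagged obstacle concerning the $\MO(y^{-1+\ep})$ remainder in the indicial analysis are points the paper treats only implicitly (relying on the regularity theory of \cite{MazzeoWitten2013}), so your account is if anything slightly more careful.
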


Approaching this in the other direction, suppose $(\ME, \vp)$ lies in the Hitchin component, and in addition that 
the line bundle $L$ equals $K^{n/2}$, so $\ME = K^{-n/2} \oplus \ldots \oplus K^{n/2}$ and $\vp$ is as 
in \eqref{HitchincomponentHiggsfield}.  It is straightforward that regardless of the values of $q_2, \ldots, q_{n+1}$, 
the bundles $L$, $\vp(L)$, \ldots, $\vp^n(L)$ are pointwise 
independent. By virtue of the particular structure of this holomorphic bundle, these are all subbundles of $\ME$. As in
\cite{gaiotto2012knot}, we express this by saying that $L \we \vp(L) \we \ldots \we \vp^n(L)$ is nonvanishing.  
As we now explain, this determines an approximate (and later, an exact) solution of the \EBE which 
satisfies the Nahm pole boundary condition without knots at $y=0$. 

Proceeding now as in Section 3.2 (just after Definition 3.3), we may conjugate using precisely the same complex
gauge transformation $g_0$ as there to obtain fields satisfying the Nahm pole boundary conditions
\begin{equation}
\phi_z= \frac{\mfe^+}{y}+\MO(y^{-1+\ep}), \ \ \phi_1:=\frac{i\mfe^0}{2y}+\MO(y^{-1+\ep}),\ \ \mbox{as}\ \ y \to 0.
\label{modhf}
\end{equation}
\end{subsection}

\begin{subsection}{Data from the Nahm Pole Solution with Knot Singularities}
We have described the situation when the line bundles $L_j$ determined by $\vp$ and the vanishing
line bundle $L_0$ are everywhere independent. In general this is not the case, and in fact the dependency locus
determines the locations and orders of the knot singularities at $y=0$.  

To understand this, we first examine parallel transport near $y=0$ for the model knot solution. 
Using the convention in Section \ref{subsectionknotsingularity}, we write $\HM = g_0^2$ where $g_0=
\exp(\frac{1}{2}\sum_{ij}A_{ij}^{-1}H_i\zeta_j)$ is the {\em diagonal} matrix $\diag(\lam_1,\cdots,\lam_{n+1})$,
where $\lam_i\sim {\psi}^{-\frac{n}{2}+i-1}$ as $\psi \to 0$.  
%As before, we are particularly interested in sections vanishing like $\psi^{n/2}$. 

Now, $\zeta_j=q_j(\psi)-(\mfr_j+1)\log r$ (since $\psi$ is a function of $\si$) and 
$q_j = -2\log \psi+\MO(1)$, so 
\begin{equation}
\begin{split}
\lam_{n+1} & =\exp(-\frac{1}{2}\sum_jA_{nj}^{-1}\zeta_j) \\
& =\exp(-\frac{1}{2}\sum_jA_{nj}^{-1}q_j+\sum_jA_{nj}^{-1}(\mfr_j+1)\log r))  = \MO(\psi^{\frac{n}{2}}r^{1+\ep}) %
\end{split}
\end{equation}

Fix a local holomorphic frame $s_1, \ldots, s_{n+1}$, with $s_i$ corresponding to $(0,\cdots, 0, 1, 0,\cdots ,0)^\da$ in 
the same trivilization as in \eqref{cannonicalformsl2Liealgebra}.  Suppose $\MD_3 s := g_0 \del_y g_0(y)^{-1} s = 0$ 
and $s=\sum_{i=1}^{n+1}a_i(y) s_i$. Then $\sum \lambda_i(y)^{-1} a_i(y) s_i = \sum \lambda_i(1)^{-1} a_i(1) s_i$, so 
\begin{equation}
s(y)= \sum_{i=1}^{n+1}\lam_i(y) \lambda_i(1)^{-1} a_i(1) s_i.
\label{modelleadingbehavior}
\end{equation}
In general \eqref{modelleadingbehavior} is the leading behavior for any solution with knot singularity,
so in this case too we can define the vanishing line bundle $L$ as the span of sections which vanish at the maximal 
rate $\psi^{\frac{n}{2}}$, $\psi\to 0$.  

More explicitly, if the knot singularities occur at $p_1,\cdots,p_k$, then let $\calU_j$ be a collection of open disks, with
$\calU_j$ centered at $p_j$, and $\calU_0$ an open set not containing any $p_j$ such that $\cup_{j=0}^{k} \calU_j=\Si$. 
Using cylindrical coordinates $(R_j,\theta_j,\psi_j)$ in $\calU_j \ti \RP$, define 
\begin{equation}
\begin{split}
L:=\{s: \MD_3s=0,\;& \lim_{y\to 0}|y^{-\frac{n}{2}+\al}s|=0\ \mbox{on} \ \calU_0\ \\
& \mbox{and}\ \lim_{\psi_j\to 0}|\psi_j^{-\frac{n}{2}+\al}s|=0\ \mbox{on} \ \calU_j, \ j = 1, \ldots, k\},
\end{split}
\end{equation}
which is a holomorphic line bundle.  As in \eqref{varphimatrix}, for each $j$,  $L$ determines an $n$-tuple of 
positive integers $\vfr=\{\mfr_1,\cdots,\mfr_n\}$ (we omit the label $j$ for simplicity) which encode 
the dependency relationships between the iterates $\vp^i(L) K^{-i}$ as follows.  Near each $p_j$, the (local) 
holomorphic map $f_1 = 1 \we \vp :L^2 \to K\otimes\we^2E$ vanishes at $p_j$, with vanishing order $Z(f_1)$;
set $\mfr_n:=Z(f_1)$. Similarity, for $2 \leq i \leq  n$, $f_i = 1\we\vp\we\cdots\vp^i $ 
is a local holomorphic section of $K^{\frac{(i+1)i}{2}}\otimes L^{-(i+1)}\otimes \we^{i+1}E$ which vanishes to order
$Z(f_j)$ at $p_j$, and we set $\mfr_{n-i}:=Z(f_i)-Z(f_{i-1})$. Clearly $\mfr_{i}\geq0$ for all $i$. 

On the other hand, for any triple $(\ME,\vp,L)$, we may define a divisor $\mathfrak d = \mathfrak d(L,\vp)$ 
via the zeroes of $f_i$. We call the triple $(\ME,\vp,L)$ or the data set $\mathfrak d = \mathfrak d(L,\vp)$ effective if 
each $Z(f_i)-Z(f_{i-1}) \geq 0$.

%  general, given a triple $(\ME,\vp,L)$, we can define the corresponding section $T_i\in \Gamma(K^{\frac{(i+1)i}{2}}\otimes L^{-(i+1)})$ and the zeroes of $T_i$ will provide us the position of the knot and the corresponding integers. Let $Z(T_i)$ to be the divisor corresponding to the zeroes of $T_i$, as we only consider the integer ${\mfr_i}$ to be  non-negative, in other worInd, we wish $Z(T_1)$ and $Z(T_i)-Z(T_{i-1})$ be effective divisors. 
% \begin{definition}
% Under the previous notation, we call a triple $(\ME,\vp,L)$ effective if the corresponding divisors $Z(T_1)$ and $Z(T_i)-Z(T_{i-1})$ are effective divisors.
% \end{definition}

We also require that as $y\to\infty$, the solution converges to an irreducible flat $SL(n+1,\mathbb{C})$ connection.
This means that $(\ME,\vp)$ must be a stable Higgs pair. 

These considerations lead us to define the space of holomorphic data 
\begin{equation}
\begin{split}
\MCK :=\{(\ME,\vp,L): \mathfrak d(L,\vp) \ \mbox{is effective and}\ 
(\ME,\vp) \ \mbox{is stable}\}/\MGC,
\end{split}
\end{equation}
and the space of solutions of the \EBE with generalized knot singularities
\begin{equation}
\begin{split}
\MM_{\mathrm{GNP}} & :=\left\{ (A,\phi,\phi_1): \mathrm{EBE}(A,\phi,\phi_1)=0,\ (A,\phi,\phi_1)\ \textrm{ satisfies the Nahm pole } \right. \\
& \textrm{boundary condition with knot singularities and converges to a flat} \\ & \left. \mathrm{SL}(n+1,\mathbb{C}) 
\ \textrm{connection as} \ y\to\infty \right\}/\mathcal{G}. 
\label{complexgeometrymodulispace}
\end{split}
\end{equation}

\begin{proposition}{\cite{gaiotto2012knot}}
\label{IGNP}
Any solution $(A,\phi,\phi_1)$ to the \EBE which satisfies Nahm pole boundary conditions with knot singularities
determines an effective triple $(\ME,\vp,L)$, and there is a well-defined map $I_{\mathrm{GNP}}:\MM_{\mathrm{GNP}}\to\MCK$. 
\end{proposition}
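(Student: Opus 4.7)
The plan is to extract the triple $(\ME,\vp,L)$ from a solution $(A,\phi,\phi_1)$ in stages, then check that the assignment descends to the quotient by unitary gauge. First I would apply the framework of Section \ref{holdata}: the commutation relations $[\MD_i,\MD_j]=0$ produce a slicewise family of Higgs pairs $(\ME_y,\vp_y)$ identified by the parallel transport $\MD_3$; set $(\ME,\vp):=(\ME_1,\vp_1)$. Stability of $(\ME,\vp)$ follows from the boundary condition at infinity: a $\vp$-invariant holomorphic subbundle of nonnegative degree would be $\MD_3$-parallel by commutativity of the $\MD_i$, and would therefore give a reduction of the limiting flat connection, contradicting the assumed irreducibility of the $\SL(n+1,\CC)$ limit.

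Next I would define the vanishing line bundle
\[
L:=\left\{s:\MD_3 s=0,\ \lim_{y\to 0}|y^{-\frac{n}{2}+\al}s|=0\text{ on }\calU_0,\ \lim_{\psi_j\to 0}|\psi_j^{-\frac{n}{2}+\al}s|=0\text{ on }\calU_j\right\}.
\]
Away from the knot points, the analysis of Section 4.1 shows $L$ is locally one-dimensional, spanned by the section whose $\MD_3$-parallel transport decays at the maximal rate $y^{n/2}$. Near each $p_j$ the boundary condition writes the fields as $(\AM,\PM,\POM)+\MO(R^{-1+\ep}\psi^{-1+\ep})$, and the model parallel transport formula \eqref{modelleadingbehavior} exhibits exactly one eigendirection decaying like $\psi^{n/2}$. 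A standard perturbation argument for the ODE $\MD_3 s=0$ shows that this eigendirection persists, so $L$ is again locally a line. Since $[\MD_1,\MD_3]=0$, the vanishing condition is $\MD_1$-stable, so $L$ is a holomorphic subbundle of $\ME$.

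The main obstacle is verifying effectiveness of $\mfd(L,\vp)$. The approach is a local comparison with the model knot solution at each $p_j$. By the Nahm pole condition with knot singularity, the weights $\vfr^{(j)}=\{\mfr_1^{(j)},\ldots,\mfr_n^{(j)}\}$ prescribed by the singular model are nonnegative integers (Section \ref{subsectionknotsingularity}). In a local parallel holomorphic gauge near $p_j$ chosen to align with the flag $L\subset L+\vp(L)\subset\cdots$, the Higgs field can be brought to the model form \eqref{varphimatrix}, and direct computation of the iterated form $f_i=s\we\vp(s)\we\cdots\we\vp^i(s)$ at a local generator $s$ of $L$ yields
\[
Z(f_i)=i\mfr_n^{(j)}+(i-1)\mfr_{n-1}^{(j)}+\cdots+\mfr_{n-i+1}^{(j)},
\]
so the increments $Z(f_i)-Z(f_{i-1})$ are nonnegative partial sums of the $\mfr_k^{(j)}$. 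Because each $f_i$ is holomorphic and intrinsic to $(\ME,\vp,L)$, its vanishing order is a gauge-invariant quantity and does not depend on the choice of frame used for the computation. Away from knot points $f_n$ is everywhere nonvanishing by the argument of Proposition \ref{INP}, so $\mfd(L,\vp)=\sum_j\vfr^{(j)}\cdot p_j$ is effective.

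Finally, well-definedness on moduli is routine: a unitary gauge transformation $u\in\mathcal{G}$ conjugates each $\MD_i$ and carries the decay-rate definition of $L$ to itself, so distinct unitary representatives yield triples identified by the action of $u\in\MGC$ on the holomorphic data. Hence $I_{\mathrm{GNP}}:\MM_{\mathrm{GNP}}\to\MCK$ is well defined.
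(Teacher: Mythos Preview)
Your proposal is essentially correct and tracks the paper's own argument, which is not presented as a formal proof but rather as the discussion in Section~4.2 preceding the proposition: the construction of $(\ME,\vp)$ from the $\MD_i$, the definition of $L$ via maximal decay of $\MD_3$-parallel sections (using the model leading behaviour \eqref{modelleadingbehavior}), and effectiveness from the nonnegativity of the model weights. Your explicit formula for $Z(f_i)$ and the identification of the increments $Z(f_i)-Z(f_{i-1})$ as partial sums of the $\mfr_k^{(j)}$ makes precise what the paper dismisses with ``Clearly $\mfr_i\geq 0$''.

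One genuine gap: your stability argument is not right as stated. Commutativity $[\MD_i,\MD_j]=0$ does \emph{not} force a $\vp$-invariant holomorphic subbundle to be $\MD_3$-parallel; there is no reason a destabilizing $V\subset\ME_y$ must be preserved by parallel transport in $y$. The correct argument (which the paper invokes in one line) is the nonabelian Hodge correspondence: the limiting flat $\SL(n+1,\CC)$ connection is irreducible by hypothesis, hence corresponds to a stable Higgs pair, and the $\MD_3$-parallel transport identifies $(\ME_y,\vp_y)$ with this limiting pair. A second, smaller point: the form you cite for $\vp$ near a knot should be \eqref{canonialknotform} (with lower-triangular $\st$ entries) rather than the strictly upper-triangular model \eqref{varphimatrix}; this is the content of Lemma~\ref{lemmalocalformKnot}. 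The lower-triangular terms wedge out in the computation of $f_i$, so your vanishing-order formula survives, but the claim that $\vp$ can be brought exactly to \eqref{varphimatrix} is too strong.
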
 
\end{subsection}

\begin{subsection}{The Gaiotto-Witten Conjecture}
We have now given the background which explains the precise conjecture of Gaiotto and Witten \cite{gaiotto2012knot}: 
\begin{conjecture}
The maps $I_{\mathrm{NP}}:\MM_{\mathrm{NP}}\to\MM_{\mathrm{Hit}}$ and $I_{\mathrm{GNP}}:\MM_{\mathrm{GNP}}\to
\MM^{\mathbb{C}}_{\mathrm{Knot}}$ are both bijective. 
\end{conjecture}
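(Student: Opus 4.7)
The plan is to prove surjectivity and injectivity separately, following the continuity method sketched in the introduction. For surjectivity, given a stable Higgs pair $(\ME,\vp)$ in $\MM_{\Hit}$ (resp.\ an effective triple $(\ME,\vp,L) \in \MCK$), the first step is to build an approximate Hermitian metric $H_0$ on $\ME$ which satisfies the Nahm pole (resp.\ generalized Nahm pole) boundary conditions at $y=0$ and converges as $y \to \infty$ to the harmonic metric solving the Hitchin equations for $(\ME,\vp)$ provided by the Hitchin-Simpson theorem. Concretely, near $y=0$ one glues in the local model metric from \eqref{modelHermitianmetricforknot} (or $H_0 = \exp(-\log y\, \mfe^0)$ in the pure Nahm pole case) on each coordinate patch around the knot points, while for $y$ large one uses the pullback of the Hitchin harmonic metric and interpolates in a compact region. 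By construction $\Omega_{H_0}$ vanishes to all orders as $y \to 0$ and decays exponentially as $y \to \infty$.

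Next, writing $H = H_0 e^s$ with $s$ a section of $\isu(\ME,H_0)$, the real moment map equation becomes a quasilinear uniformly degenerate elliptic system $\Omega_{H_0}(s) = 0$ for $s$. The continuity path is a family $N_t(s)=0$ connecting $t=1$, which admits the trivial solution $s\equiv 0$, to $t=0$, which is the desired equation. Openness requires inverting the linearization $\MD N_t$ at each solution; this is where the uniformly degenerate theory of \cite{Mazzeo1991} as adapted in \cite{MazzeoWitten2013, MazzeoWitten2017} enters. The indicial roots of this linearization near $y=0$ are determined by the principal $\slf_2$ weights $B_j$, and the Nahm pole boundary condition fixes a unique admissible indicial root so the operator is Fredholm on the appropriate weighted H\"older/Sobolev space; at $y=\infty$ one uses exponential convergence to the Hitchin solution, where the linearization is the Laplace-type operator associated to an irreducible flat $\SL(n+1,\CC)$ connection and is invertible.

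Closedness is the hardest step and rests on uniform a priori estimates along the continuity path. The $\MC^0$ bound on $s$ follows from the maximum principle applied to $\operatorname{tr}(e^s)$ together with convexity of the relevant functional; interior higher regularity is standard away from the boundary. The delicate parts are the uniform decay of $s$ near $y=0$ and as $y\to\infty$: for $y=0$ one controls $L^\infty$ decay via a Morrey space argument using the precise form of $\Omega_{H_0}$ near the knot loci, and then uses scale-invariant Morrey spaces to upgrade to H\"older decay of $s$ and $y\, ds$; one then invokes the polyhomogeneous regularity results of \cite{MazzeoWitten2013, MazzeoWitten2017} to conclude that the limiting solution satisfies the Nahm pole (or knot) boundary condition in the strong sense, so the deformation can be continued. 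The main obstacle is precisely this uniform boundary decay, since neither the interior estimates nor the maximum principle by themselves see the singular structure at $y=0$.

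Finally, for injectivity (part (iii) of Theorem \ref{maintheorem}), I would construct a Donaldson-type functional $\MM(H_0, H)$ on the space of Hermitian metrics compatible with the boundary conditions, generalizing the construction of \cite{Donaldson1987Infinite}. One verifies it is convex along geodesics $H_t = H_0 e^{ts}$ in the space of metrics, with second derivative controlling $\|\MD_1 s\|^2 + \|[\vp, s]\|^2 + \|\MD_3 s\|^2$; the boundary terms at $y=0$ and $y=\infty$ vanish because both $H$ and $H_0$ satisfy the same (generalized) Nahm pole boundary condition and both limit to the same flat connection. Any solution of $\Omega_H=0$ is a critical point of this convex functional, and strict convexity (using irreducibility of $(\ME,\vp)$) forces any two solutions mapping to the same point of $\MM_{\Hit}$ or $\MCK$ to differ by a unitary gauge transformation.
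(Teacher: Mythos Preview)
Your overall architecture matches the paper's proof closely: construct an admissible approximate metric $H_0$, run a continuity method on the family $N_t(s)=0$, establish openness via the Fredholm theory of \cite{MazzeoWitten2013, MazzeoWitten2017}, obtain closedness through the sequence of a priori estimates (maximum principle for $\calC^0$, Morrey/scale-invariant Morrey estimates for boundary decay, polyhomogeneous regularity to close the loop), and prove injectivity via convexity of a Donaldson functional.

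There is, however, a genuine gap at the start of the continuity argument. You assert that at $t=1$ the equation $N_t(s)=0$ admits the trivial solution $s\equiv 0$. With the family the paper uses, namely $N_t(s)=\Ad(e^{s/2})\Omega_{H_0 e^s}+ts$, one has $N_1(0)=\Omega_{H_0}$, which is \emph{not} identically zero (it only vanishes to infinite order at $y=0$ and decays at infinity). The paper resolves this with a small but essential trick: starting from any admissible $H_{-1}$, set $\kappa=\Omega_{H_{-1}}$ and take $H_0:=H_{-1}e^{\kappa}$ as the background. Then $N_1(-\kappa)=\Ad(e^{-\kappa/2})\Omega_{H_{-1}}-\kappa=\Ad(e^{-\kappa/2})(\Omega_{H_{-1}}-\kappa)=0$, so $s=-\kappa$ is the starting solution. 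Without this observation (or an alternative explicit endpoint), the set $I$ is not known to be nonempty and the continuity method does not get off the ground.

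A smaller point: your $\calC^0$ estimate invokes the maximum principle on $\Tr(e^s)$. On a noncompact domain with the singular boundary here, the paper instead derives from \eqref{Keyequation2} the differential inequality $\Delta|s|^2+t|s|^2\leq -\langle\Omega_{H_0},s\rangle$ and compares $|s|^2$ against a barrier built from a solution of $\Delta u=|\Omega_{H_0}|$ on the cylinder; this uses the mapping result for the scalar Laplacian on $\Si\times\RP$ (Proposition~\ref{scalarlaplacian}) rather than compactness. Your approach may be adaptable, but it is not the argument the paper gives and would need its own justification in this setting.
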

In slightly more detail, any element $(\ME,\vp)$ in the Hitchin component $\MM_{\mathrm{Hit}}$ should correspond to a solution 
of the \EBE satisfying Nahm pole boundary conditions, and this map is bijective.  Similarly, if $(\ME,\vp,L)\in
\MM^{\mathbb{C}}_{\mathrm{Knot}}$ is an effective triple with corresponding data set $\mathfrak d(L,\vp)$, 
then there should exist a solution to the \EBE satisfying the GNP boundary conditions with this knot data. 
Data in the Hitchin component corresponds to the special case $\mathfrak d(L, \vp) = \emptyset$. 

It is perhaps worth pointing out that the moduli spaces $\MM_{\Hit}$ and $\MM_{\mathrm{GNP}}$ do not see to have
a particulary nice structure. namely, there are obvious surjective maps from these to the moduli spaces
of flat $\SL(n+1,\RR)$ and $\SL(n+1,\CC)$ connections, respectively, but the fibers, which correspond to the set
of line subbundles $L \subset \ME$ are not stable as we move around the Hitchin moduli spaces.
\end{subsection}
\end{section}

\begin{section}{Linearization and Fredholm theory}
In this section, we commence with the analysis of the extended Bogomolny equations,  
beginning with the Fredholm theory for the linearized equations. 
The point of view here is that we start with a triple $(A, \vp, \phi_1)$ satisfying the complex moment
map equations $[\MD_i, \MD_j] = 0$ and then seek to adjust the Hermitian metric by a complex gauge transformation
so that the final moment map condition 
\begin{equation}
\Omega_H:=\frac{i}{2}\Lambda([\MD_1, \MD_1^{\da}]+[\MD_2,\MD_2^{\da}])+[\MD_3,\MD_3^{\da}] = 0
\label{generalizaedcurvature}
\end{equation}
is satisfied. Here $\Lambda$ denotes contraction with the K\"ahler form. 

\begin{subsection}{Linearization}
The first step is to compute the linearization of \eqref{generalizaedcurvature}. 
\begin{proposition}  Define $H = H_0 e^s$.   Then
\begin{equation}
\Omega_H=\Omega_{H_0}+\gamma(-s)\ML_{H_0}s+Q(s),
\label{quasilinearformequation}
\end{equation}
where 
\[
\ML_{H_0}s:=\frac{i}{2}\Lambda(\MD_1\MD_1^{\da_{H_0}}+\MD_2\MD_2^{\da_{H_0}}) s +\MD_3\MD_3^{\da_{H_0}}s,
\]
and 
$$
Q(s):=\frac{i}{2}\Lambda(\MD_1(\gamma(-s))\MD_1^{\da_{H_0}}s+\MD_2(\gamma(-s))\MD_2^{\da_{H_0}}s)+\MD_3\gamma(-s)\MD_3^{\da_{H_0}}s.
$$
In these formul\ae, $\MD_i^{\da_{H_0}}$ is the formal adjoint of $\MD_i$ with respect to $H_0$ (as described just before \eqref{EBE}), 
and $\gamma(s):=\frac{e^{\mathrm{ad}_s}-1}{\mathrm{ad}_s}$. 
Furthermore, 
\begin{equation}
\begin{split}
\langle \Omega_{H}-\Omega_{H_0},s\rangle _{H_0}=\Delta|s|_{H_0}^2+\frac{1}{2}\sum_{i=1}^3|v(s)\na_i s|_{H_0}^2,
\label{Keyequation2}
\end{split}
\end{equation}
where $v(s)=\sqrt{\gamma(-s)}=\sqrt{\frac{1-e^{-ad_s}}{ad_s}}$, $\Delta=\Delta_{\Sigma}-\pa_y^2$, $\na_i=\MD_i+\MD_i^{\da}$ for $i=1,2$ and $|v(s)\na_3 s|^2=|v(s)\MD_3s|^2+|v(s)\MD_3^{\da}s|^2$.
% and $|.|_{H_0}$ is the norm define by the admissible Hermitian metric $H_0$.
\end{proposition}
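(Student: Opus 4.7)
The statement is the direct generalization to $\mathrm{SL}(n+1,\CC)$ of the $\mathrm{SL}(2,\CC)$ formulas proved in \cite{HeMazzeo2017}, and the structure of the proof carries over essentially unchanged. My plan is to first compute how each adjoint $\MD_i^{\da_H}$ depends on $H = H_0 e^s$, then use a Leibniz expansion to isolate the linear part, and finally pair with $s$ to extract the Bochner-type identity. Using the defining relations for the adjoints (such as $\bar{\pa}(H(u,v)) = H(\MD_1 u, v) + H(u, \MD_1^{\da_H} v)$) together with the local formulas \eqref{99}, one sees that the difference $\MD_i^{\da_H} - \MD_i^{\da_{H_0}}$ is zeroth order, and in all three cases $i = 1,2,3$ it is determined by the endomorphism $h^{-1}\MD_i(h)$, where $h := e^s$. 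The key algebraic input is the standard differential-of-the-exponential identity
\[
e^{-s}\,\MD_i(e^s) \;=\; \int_0^1 e^{-t\,\mathrm{ad}_s}(\MD_i s)\,dt \;=\; \gamma(-s)\bigl(\MD_i s\bigr),
\]
which for $i=2$ is verified from $h^{-1}\vp h - \vp = (e^{-\mathrm{ad}_s}-1)(\vp) = \gamma(-s)[\vp,s] = \gamma(-s)(\MD_2 s)$.

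Substituting the above expression into $[\MD_i,\MD_i^{\da_H}]$ and applying the Leibniz rule
\[
[\MD_i,\;\gamma(-s)\,\MD_i^{\da_{H_0}} s] \;=\; \gamma(-s)\,\MD_i\MD_i^{\da_{H_0}} s \;+\; \MD_i(\gamma(-s))\cdot \MD_i^{\da_{H_0}} s,
\]
and then summing over $i=1,2,3$ with the factor $\tfrac{i}{2}\Lambda$ on the $\Si$-components, yields \eqref{quasilinearformequation}: the first group of terms assembles into $\gamma(-s)\ML_{H_0} s$ and the second into $Q(s)$; the constant term at $s=0$ is exactly $\Omega_{H_0}$.

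For the Bochner identity \eqref{Keyequation2}, the starting observation is that since $s$ is $H_0$-Hermitian, $\mathrm{ad}_s$ is self-adjoint on $\End(E)$, so $\gamma(-s) = \int_0^1 e^{-t\,\mathrm{ad}_s}\,dt$ is a positive self-adjoint operator and admits a well-defined positive self-adjoint square root $v(s)$. Pairing \eqref{quasilinearformequation} with $s$ in the $H_0$-inner product and using that $\gamma(-s)$ commutes with $\mathrm{ad}_s$, one reorganizes the contributions of $\gamma(-s)\ML_{H_0} s$ and $Q(s)$ as follows: the $\MD_i^{\da_{H_0}} s$ appearing in $Q(s)$ can be recombined with $\MD_i$ acting in $\gamma(-s)\MD_i\MD_i^{\da_{H_0}} s$ to form a total derivative, while the remaining symmetric bilinear term, upon writing $\gamma(-s) = v(s)\cdot v(s)$ and moving one factor of $v(s)$ onto each derivative, becomes exactly $|v(s)\,\na_i s|_{H_0}^2$. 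The total-derivative pieces, under the convention $\Delta = \Delta_\Si - \pa_y^2$ and with the $\tfrac{i}{2}\Lambda$ weighting, combine to $\Delta|s|_{H_0}^2$, and the $\MD_3$-contribution is collected in the form $|v(s)\MD_3 s|^2 + |v(s)\MD_3^{\da}s|^2$ coming from the two halves $\MD_3, \MD_3^{\da}$.

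The main technical obstacle is that $\MD_2$ is a commutator rather than a genuine first-order derivation, so the Leibniz manipulations in the second and third steps require replacing ordinary product-rule identities by Jacobi identities, with the key replacement being $\gamma(-s)[\vp,s] = h^{-1}\vp h - \vp$ in place of the more familiar $\gamma(-s)(\pa s) = h^{-1}\pa h$. Once this bookkeeping is done uniformly for $i=1,2,3$, the computation is purely algebraic; no analysis enters. The argument is thus a direct algebraic extension of Donaldson's and Simpson's classical computations for Hermitian--Yang--Mills and Higgs bundles, and of the $\mathrm{SL}(2)$ calculation in \cite{HeMazzeo2017}.
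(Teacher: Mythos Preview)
Your proposal is correct and follows essentially the same route as the paper: the paper also writes $\MD_i^{\da_H} = \MD_i^{\da_{H_0}} + e^{-s}(\MD_i^{\da_{H_0}} e^s)$, invokes the exponential derivative identity $e^{-s}\MD e^s = \gamma(-s)\MD s$, applies the Leibniz rule to split off $\gamma(-s)\ML_{H_0}s$ from $Q(s)$, and then pairs term by term with $s$ using $\gamma(-s)s = s$ to produce the Bochner formula. One small notational slip: the zeroth-order correction is $h^{-1}\MD_i^{\da_{H_0}}(h)$, not $h^{-1}\MD_i(h)$ as you wrote, though the subsequent identity you state is applied to the correct operator anyway.
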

\begin{proof}
By definition, $\MD_i^{\da_H} = \MD_i^{\da_{H_0}} + e^{-s}(\MD_i^{\da_{H_0}}e^s)$ and $\vp^{\dag_H}=\vp^{\dag_{H_0}}+e^{-s}[\vp^{\dag_{H_0}},e^s]$. 
If $w \mapsto X(w)$ is any smooth family of Hermitian matrices, then 
\begin{equation}
\pa_w e^{X}=e^{X}\gamma(-X)\pa_w X =\gamma(X)\pa_w X e^{X}.
\label{Exponentialderivative}
\end{equation}
Here $\del_w$ is a `generic' derivative with respect to the parameter, and could be one of the operators $D_i$ or $D_i^{\dag_H}$,
for example.  Using this we have
\begin{equation}
\begin{split}
\Omega_H %&=\frac{i}{2}\Lambda([\MD_1,\MD_1^{\da_H}]+[\MD_2,\MD_2^{\da_H}])+[\MD_3,\MD_3^{\da_H}]\\
&=\Omega_{H_0}+\frac{i}{2}\Lambda(\MD_1(e^{-s}\MD_1^{\da_{H_0}}e^s)+\MD_2(e^{-s}\MD_2^{\da_{H_0}}e^s))+
\MD_3(e^{-s}\MD_3^{\da_{H_0}}e^s)\\
&=\Omega_{H_0}+\frac{i}{2}\Lambda(\MD_1(\gamma(-s)\MD_1^{\da_{H_0}}s)+\MD_2(\gamma(-s)\MD_2^{\da_{H_0}}s))+
\MD_3(\gamma(-s)\MD_3^{\da_{H_0}}s)\\ &=\Omega_{H_0}+\gamma(-s)\ML_{H_0}s+Q(s),
\end{split}
\end{equation}
as asserted.

For (b), first write
\[
\langle \Omega_H-\Omega_{H_0},s\rangle _{H_0}=\langle \frac{i}{2}\Lambda(\MD_1(\gamma(-s)\MD_1^{\da,H_0}s)+
\MD_2(\gamma(-s)\MD_2^{\da,H_0}s))+\MD_3(\gamma(-s)\MD_3^{\da,H_0}s),s\rangle _{H_0}.
\]
The first term equals
\begin{equation*}
\begin{split}
\lan\frac{i}{2}\Lambda\MD_1(\gamma(-s)\MD_1^{\da_{H_0}}s),s\ran&=i\Lam\bar{\pa}\lan\gamma(-s)\MD_1^{\da_{H_0}}s,s 
\ran+\lan \gamma(-s)\MD_1^{\da_{H_0}}s,\MD_1^{\da_{H_0}}s\ran\\
&=i\Lambda \bar{\pa}\lan\MD_1^{\da_{H_0}}s,\gamma(-s)s \ran+|v(-s)\MD_1^{\da_{H_0}}s|^2\\
&=2i\Lambda \bar{\pa}\pa|s|^2+|v(-s)\MD_1^{\da_{H_0}}s|^2\\
&=-\Delta_{\Sigma}|s|^2+\frac{1}{2}|v(-s)\MD_1^{\da_{H_0}}s|^2.
\end{split}
\end{equation*}
To justify these steps, note we use in the first equality that $\MD_1 = 2\pab$; the sign of the second term on the right comes 
from the fact that $\MD_1^{\dag_{H_0}}$ is a $1$-form, while in the third equality we use that $\mathrm{ad}(s) (s) = 0$, so 
$\gamma(-s) s = s$.  The second term becomes 
\[
\langle\frac{i}{2} \Lambda [\vp, \gamma(-s) [\vp^{\dag_{H_0}}, s]],s\rangle =\langle \gamma(-s)[\vp^{\dag_{H_0}},s],[\vp^{\dag_{H_0}},s]\rangle 
=|v(-s)\MD_2^{\da_{H_0}}s|^2.
\]
Finally, calculating as for the first term, the third term equals
\begin{equation*}
\begin{split}
\lan \MD_3(\gamma(-s)\MD_3^{\da_{H_0}}s),s\ran&=\pa_y\lan \gamma(-s)\MD_3^{\da_{H_0}}s,s\ran+ 
\lan \gamma(-s)\MD_3^{\da_{H_0}}s,\MD_3^{\da_{H_0}}s\ran\\ &=-\pa_y^2|s|^2+\frac{1}{2}|v(-s)\na_3 s|^2.
\end{split}
\end{equation*}
Combing all these computations yields the desired identity.
\end{proof}

Recall Simpson's K\"ahler identities \cite[Lemma 3.1]{Simpson1988Construction}: 
\begin{equation}
i[\Lambda,\MD_i]=(\MD_i^{\da_H})^{\st},\ \ i[\Lambda,\MD_i^{\da_H}]=-(\MD_i)^{\st},\ \ i = 1, 2. 
\end{equation}
To be clear, the adjoint here is taken with respect to the usual inner product on forms (as opposed to $\MD_i^\dag$), so
\[
\int \lan\MD_i^{\dag_{H}} \alpha \we \star \overline{\beta} \ran= \int \lan \alpha \we \overline{( \MD_i^{\dag_H})^{\dag_H} \st \beta}\ran
= \pm \int \lan\alpha \we \st ( \overline{\st ( \MD_i^{\dag_H})^{\dag_H} \st \beta}\ran,
\]
hence $(\MD_i^{\dag_H})^{\st} = \pm \st \MD_i \st$.  We also write $(\MD_3^{\da_{H_0}})^{\st}=\MD_3$.  This leads to the 
\begin{corollary}
$\ML_{H_0}=\frac{1}{2}((\MD_1^{\da_{H_0}})^{\st}\MD_1^{\da_{H_0}}+(\MD_2^{\da_{H_0}})^{\st}\MD_2^{\da_{H_0}})+ \MD_3%(\MD_3^{\da_{H_0}})^{\st}
\MD_3^{\da_{H_0}}$.
\end{corollary}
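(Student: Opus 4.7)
The plan is to derive the identity directly from Simpson's K\"ahler identities, which are stated just before the corollary. Specifically, I would take the defining expression
\[
\ML_{H_0}s = \tfrac{i}{2}\Lambda\bigl(\MD_1\MD_1^{\da_{H_0}} + \MD_2\MD_2^{\da_{H_0}}\bigr)s + \MD_3\MD_3^{\da_{H_0}}s,
\]
and rewrite each of the two Kähler-contraction terms using the commutator relation $i[\Lambda,\MD_i] = (\MD_i^{\da_{H_0}})^{\st}$, $i=1,2$. Rearranging, this identity reads $i\Lambda\MD_i = i\MD_i\Lambda + (\MD_i^{\da_{H_0}})^{\st}$, and applying both sides to $\MD_i^{\da_{H_0}}s$ yields
\[
i\Lambda\,\MD_i\MD_i^{\da_{H_0}}s \;=\; i\MD_i\bigl(\Lambda\,\MD_i^{\da_{H_0}}s\bigr) + (\MD_i^{\da_{H_0}})^{\st}\MD_i^{\da_{H_0}}s.
\]

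The key observation is that for $s$ a section of $\ad(E)$, the quantity $\MD_i^{\da_{H_0}}s$ is of pure Dolbeault type for $i=1,2$, and hence is annihilated by $\Lambda$ (which is contraction with the $(1,1)$-form $\omega$, vanishing on $\Omega^{1,0}$ and on $\Omega^{0,1}$). Explicitly, from the formulas recorded just after equation \eqref{EBE}, $\MD_1^{\da_{H_0}}s = \partial_A s \in \Omega^{1,0}(\ad E)$, while $\MD_2^{\da_{H_0}}s = -[\phi_2+i\phi_3,s]\,d\bar z \in \Omega^{0,1}(\ad E)$. In both cases $\Lambda(\MD_i^{\da_{H_0}}s) = 0$, so the extra commutator term vanishes and we are left with $\tfrac{i}{2}\Lambda\MD_i\MD_i^{\da_{H_0}}s = \tfrac{1}{2}(\MD_i^{\da_{H_0}})^{\st}\MD_i^{\da_{H_0}}s$. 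Summing over $i=1,2$ and adding the $\MD_3\MD_3^{\da_{H_0}}$ term unchanged gives the stated formula.

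There is no real obstacle here: the corollary is essentially a bookkeeping consequence of the K\"ahler identities, together with the observation about form types. The only point one needs to be careful about is the sign conventions chosen for $(\MD_1^{\da_H})_0$ versus $(\MD_1^{\da_H})_{0,1}$ in the excerpt, but these do not affect the final statement since only $(\MD_1^{\da_H})_0$ appears when acting on sections. The whole argument collapses to one short display, and I would simply present it as such in the actual proof.
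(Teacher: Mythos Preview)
Your proposal is correct and is exactly the derivation the paper has in mind: the corollary is stated without proof, immediately after Simpson's K\"ahler identities, as a direct consequence of applying $i[\Lambda,\MD_i]=(\MD_i^{\da_{H_0}})^{\st}$ to the defining expression for $\ML_{H_0}$ and noting that $\Lambda$ annihilates the pure-type forms $\MD_i^{\da_{H_0}}s$.
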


We now establish some Weitzenbock formul\ae. In the following, the various adjoints are taken with respect to any 
fixed Hermitian metric, but for simplicity we omit the metric subscripts. 
\begin{proposition}
Setting $\nabla_1:=\MD_1+\MD_1^{\da}$ and $\nabla_2=\varphi+\varphi^{\da}$, then we have
\begin{equation}
(\MD_1^{\da})^{\st}\MD_1^{\da}=\frac{1}{2}\na_1^{\st}\na_1+\frac{i}{2}\Lambda[\MD_1, \MD_1^{\da}], \qquad \;(\MD_1)^\st\MD_1
=\frac{1}{2}\na_1^{\st}\na_1-\frac{1}{2}\Lambda[\MD_1,\MD_1^{\da}]
\label{W1}
\end{equation} 
and
\begin{equation}
(\MD_2^{\da})^{\st}\MD_2^{\da}=\frac{1}{2}\na_2^{\st}\na_2+\frac{i}{2}\Lambda[\MD_2,\MD_2^{\da}],\qquad \;(\MD_2)^\st\MD_2
=\frac{1}{2}\na_2^{\st}\na_2-\frac{1}{2}\Lambda[\MD_2,\MD_2^{\da}],
\label{W2}
\end{equation}
and furthermore, 
\begin{equation}
\nabla_2^{\st}\nabla_2 s=i\Lam([\varphi,[\varphi^{\da},s]]-[\varphi^{\da},[\varphi,s]]). 
\label{W21}
\end{equation}
In addition, %For the third operator $\MD_3=\na_y-i\phi_1$, we have
\begin{equation}
\MD_3\MD_3^{\da}=-(\na_y^2+\phi_1^2)+\frac{1}{2}[\MD_3,\MD_3^{\da}],\;\MD_3^{\da}\MD_3=-(\na_y^2+\phi_1^2)-
\frac{1}{2}[\MD_3,\MD_3^{\da}].
\label{W3}
\end{equation}
\label{Calpha}
\end{proposition}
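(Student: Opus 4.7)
The plan is to establish \eqref{W1}, \eqref{W2}, and \eqref{W21} in parallel via Simpson's K\"ahler identities, and then handle \eqref{W3} separately by direct computation. For $i=1,2$, I would begin from the K\"ahler identities $(\MD_i^{\da})^{\st}=i[\Lambda,\MD_i]$ and $\MD_i^{\st}=-i[\Lambda,\MD_i^{\da}]$ recorded just before the proposition. Since $\Lambda$ annihilates forms of pure bidegree $(p,0)$ or $(0,q)$, applying these identities to $\MD_i^{\da}s$ and $\MD_i s$ (which are themselves of pure bidegree $(1,0)$ and $(0,1)$) collapses the commutator $[\Lambda,\cdot]$ to $\Lambda$ alone, producing $(\MD_i^{\da})^{\st}\MD_i^{\da}s = i\Lambda\MD_i\MD_i^{\da}s$ and $\MD_i^{\st}\MD_i\,s = -i\Lambda\MD_i^{\da}\MD_i\,s$ on sections.

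Next, I would exploit that $\nabla_i s = \MD_i s + \MD_i^{\da}s$ is the decomposition of a $1$-form into pointwise orthogonal $(0,1)$- and $(1,0)$-parts, so $\nabla_i^{\st}\nabla_i = \MD_i^{\st}\MD_i + (\MD_i^{\da})^{\st}\MD_i^{\da}$. Summing the two identities from the previous step yields $\nabla_i^{\st}\nabla_i = i\Lambda[\MD_i,\MD_i^{\da}]$, and solving the resulting pair of linear relations for the two individual terms produces \eqref{W1} and \eqref{W2} (the coefficient $-\tfrac{1}{2}$ appearing in the second equation of each pair seems to be a typo for $-\tfrac{i}{2}$, as the consistency check of adding the two identities makes clear). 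To obtain the explicit formula \eqref{W21}, I would substitute $\MD_2 = [\vp,\cdot]\,dz$ and $\MD_2^{\da} = -[\vp^{\da},\cdot]\,d\bz$, expand $[\MD_2,\MD_2^{\da}]s$ by the Leibniz rule, and factor out $dz\we d\bz$ on which $i\Lambda$ acts as a scalar, recognizing the remaining double commutators as the bracketed expression.

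For \eqref{W3} the computation is purely elementary: since $\MD_3 = \na_y - i\phi_1$ and $\MD_3^{\da} = -\na_y - i\phi_1$, expanding the products yields
\[
\MD_3\MD_3^{\da}s = -(\na_y^2 + \phi_1^2)s - i(\na_y\phi_1)s, \qquad \MD_3^{\da}\MD_3 s = -(\na_y^2 + \phi_1^2)s + i(\na_y\phi_1)s,
\]
so combining with the identity $[\MD_3,\MD_3^{\da}] = -2i\na_y\phi_1$ already recorded in Section 2.1 gives \eqref{W3} at once. The main source of friction throughout is not mathematical depth but careful sign and form-factor bookkeeping: the paper's convention for $\MD_i^{\da}$ differs between its actions on $0$-forms and on $(0,1)$-forms, so one must be vigilant about where implicit $dz$ and $d\bz$ factors sit when applying the K\"ahler identities, and the sign of $\MD_2^{\da} = -[\vp^{\da},\cdot]\,d\bz$ must be fed consistently into the Jacobi-style expansion used for \eqref{W21}.
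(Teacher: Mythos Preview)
Your proposal is correct and follows essentially the same route as the paper: both use Simpson's K\"ahler identities together with the bidegree orthogonality $\nabla_i^{\st}\nabla_i = \MD_i^{\st}\MD_i + (\MD_i^{\da})^{\st}\MD_i^{\da}$ to obtain \eqref{W1}--\eqref{W2}, a Jacobi/Leibniz expansion of the double bracket for \eqref{W21}, and an elementary algebraic computation from $\MD_3=\na_y-i\phi_1$, $\MD_3^{\da}=-\na_y-i\phi_1$ for \eqref{W3}. The only cosmetic difference is that for \eqref{W3} the paper passes through the auxiliary identities $(\MD_3-\MD_3^{\da})^2=4\na_y^2$ and $\MD_3^2+(\MD_3^{\da})^2=2\na_y^2-2\phi_1^2$ rather than expanding $\MD_3\MD_3^{\da}$ and $\MD_3^{\da}\MD_3$ directly as you do; your observation that the $-\tfrac{1}{2}$ in the second identity of \eqref{W1} should read $-\tfrac{i}{2}$ is also correct.
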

\begin{proof}
We first compute
\begin{equation}
\begin{split}
\na_1^{\st}\na_1&=\MD_1^{\st}\MD_1+(\MD_1^{\da})^{\st}\MD_1^{\da}=-i\Lambda \MD_1^{\da}\MD_1+i\Lambda \MD_1\MD_1^{\st} \\
& =2i\Lambda \MD_1\MD_1^{\da}-i\Lambda [\MD_1,\MD_1^{\da}]  
%\na_1^{\st}\na_1&=\MD_1^{\st}\MD_1+(\MD_1^{\da})^{\st}\MD_1^{\da}=-i\Lambda \MD_1^{\da}\MD_1+i\Lambda \MD_1\MD_1^{\st}
=-2i\Lam \MD_1^{\da}\MD_1+i\Lambda [\MD_1,\MD_1^{\da}].
\end{split}
\end{equation}
For \eqref{W21}, the Jacob identity (for graded Lie brackets) asserts  
$$
[\vp,[\vp^{\da},s]]+[\vp^{\da},[\vp,s]]=[[\vp,\vp^{\da}],s], 
$$
hence 
\begin{equation}
\begin{split}
i\Lam\MD_2\MD_2^{\da}=\frac{i}{2}\Lam([\vp,[\vp^{\da},s]]-[\vp^{\da},[\vp,s]])+\frac{i}{2}[[\vp,\vp^{\da}],s],\\
i\Lam\MD_2^{\da}\MD_2=\frac{i}{2}\Lam([\vp^{\da},[\vp,s]]-[\vp,[\vp^{\da},s]])+\frac{i}{2}[[\vp,\vp^{\da}],s].
\end{split}
\end{equation}

For \eqref{W3}, from $\MD_3=\na_y-i\phi_1$ and $\MD_3^{\da}=-\na_y-i\phi_1$ we obtain 
$(\MD_3-\MD_3^{\da})^2=4\na_y^2,$ $(\MD_3)^2+(\MD_3^{\da})^2=2\na_y^2-2\phi_1^2$.
In addition, 
$$
(\MD_3-\MD_3^{\da})^2=\MD_3^2+(\MD_3^{\da})^2-\MD_3\MD_3^{\da}-\MD_3^{\da}\MD_3,
$$
so altogether
\begin{equation*}
\begin{split}
2(\na_y^2+\phi_1^2)=-\MD_3\MD_3^{\da}-\MD_3^{\da}\MD_3=-[\MD_3,\MD_3^{\da}]-2\MD_3^{\da}\MD_3
%2(\na_y^2+\phi_1^2)=-\MD_3\MD_3^{\da}-\MD_3^{\da}\MD_3
=[\MD_3,\MD_3^{\da}]-2\MD_3\MD_3^{\da}.
\end{split}
\end{equation*}
\end{proof}

These formul\ae\ lead to a simpler expression for $\ML_H$:
\begin{corollary}
\[
\ML_H=\frac{1}{4}(\na_1^{\st}\na_1+\na_2^{\st}\na_2)-(\MD_y^2+\phi_1^2)+\frac{1}{2}[\Omega_H,\cdot ]
\]
where $\phi_1^2=[\phi_1,[\phi_1,\;]]$. 
\label{co:lin}
\end{corollary}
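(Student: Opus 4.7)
The identity is purely algebraic: the plan is to substitute the three Weitzenb\"ock formul\ae\ from Proposition \ref{Calpha} directly into the expression for $\ML_{H_0}$ given in the preceding corollary and regroup.

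First I would start from
\[
\ML_{H_0}=\tfrac{1}{2}\bigl((\MD_1^{\da})^{\st}\MD_1^{\da}+(\MD_2^{\da})^{\st}\MD_2^{\da}\bigr)+\MD_3\MD_3^{\da}.
\]
Applying \eqref{W1}, \eqref{W2} to the first two summands and \eqref{W3} to the third yields
\[
\ML_{H_0}=\tfrac{1}{4}\bigl(\na_1^{\st}\na_1+\na_2^{\st}\na_2\bigr)-\bigl(\na_y^2+\phi_1^2\bigr)+\tfrac{i}{4}\Lam\bigl([\MD_1,\MD_1^{\da}]+[\MD_2,\MD_2^{\da}]\bigr)+\tfrac{1}{2}[\MD_3,\MD_3^{\da}].
\]
Next I would recognize the last two collected summands as $\tfrac{1}{2}$ times the moment-map expression defining $\Omega_H$ in \eqref{generalizaedcurvature}, namely
\[
\tfrac{i}{4}\Lam\bigl([\MD_1,\MD_1^{\da}]+[\MD_2,\MD_2^{\da}]\bigr)+\tfrac{1}{2}[\MD_3,\MD_3^{\da}]=\tfrac{1}{2}\Omega_H.
\]
Since each bracket $[\MD_i,\MD_i^{\da}]$, viewed as an operator on a section $s$ of $\gee$, acts by the Lie adjoint action of a section of $\gee$, the symbolic expression $\tfrac{1}{2}\Omega_H$ is interpreted as the bundle endomorphism $\tfrac{1}{2}[\Omega_H,\,\cdot\,]$.

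Finally, in the unitary gauge defined in \eqref{relationship} one has $\MD_y=\tfrac{1}{2}(\MD_3+\MD_3^{\da_H})$ corresponding to the covariant derivative $\na_y$ along the $\RP$-direction, so $\na_y^2=\MD_y^2$, and the zeroth order term $\phi_1^2$ is precisely the double adjoint action $[\phi_1,[\phi_1,\cdot]]$ appearing in \eqref{W3}. Combining these identifications produces the stated formula
\[
\ML_H=\tfrac{1}{4}\bigl(\na_1^{\st}\na_1+\na_2^{\st}\na_2\bigr)-\bigl(\MD_y^2+\phi_1^2\bigr)+\tfrac{1}{2}[\Omega_H,\cdot].
\]
There is no genuine obstacle here; the entire content of the corollary is in the Weitzenb\"ock identities already proved in Proposition \ref{Calpha}, and the only point requiring a modicum of care is the bookkeeping that converts the operator-theoretic commutators $[\MD_i,\MD_i^{\da}]$ into the adjoint action of the corresponding section of $\gee$, which is how the $\Omega_H$ term is to be read.
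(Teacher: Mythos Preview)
Your proof is correct and is exactly the argument the paper has in mind: it states the corollary immediately after Proposition~\ref{Calpha} with no further proof, so the intended derivation is precisely the substitution of \eqref{W1}, \eqref{W2}, \eqref{W3} into the previous expression for $\ML_{H_0}$ and the identification of the collected commutator terms with $\tfrac12\Omega_H$ acting by $\ad$. Your write-up simply makes that implicit step explicit.
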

\end{subsection}

\begin{subsection}{Models and Blowups}
It will be important to understand the mapping and regularity properties of 
\begin{equation}
\ML_H:=\frac{1}{4}(\na_1^{\st}\na_1+\na_2^{\st}\na_2)-(\MD_y^2+\phi_1^2),
\label{line}
\end{equation}
which is the operator from Corollary~\ref{co:lin} when $\Omega_H = 0$, and where $(A,\vp, \phi_1)$ satisfies the Nahm 
pole boundary conditions, possibly with knot singularities, at $y=0$ and converges to a flat $\mathrm{SL}(n+1,\CC)$ 
connection as $y \to \infty$. 
For simplicity we often drop the subscript $H$ from this operator. These local and global properties follow from 
a general theory which was adapted and extended to the present context in \cite{MazzeoWitten2013, MazzeoWitten2017}.   
We shall not recapitulate much of this theory here, but introduce 
a few aspects which will be helpful for understanding the analysis later in this paper. 

A key feature of $\ML$ is that it enjoys a certain approximate homogeneity near the boundary.   Namely, in the absence of 
(or away from the) knot singularities, near $y=0$, $\ML$ is modelled by its so-called normal operator
\[
N(\ML) = \Delta_{\RR^3}  -[\phi_1, [ \phi_1, \cdot ]]  + \frac{1}{2}([\phi_z^\da, [ \phi_z, \cdot]] + [\phi_z,  [\phi_z^\da, \cdot] ]),
\]
which is the linearization at the flat model $\RR^3_+$ and where $(A, \vp_z, \phi_1)$ are equal to the leading 
model terms in \eqref{modhf}.   The normal operator $N(\ML)$ represents $\ML$ in terms of its action on
the elementary functions $y^\lam$ in the sense that
\[
\ML( y^\lam s) = N(\ML) (y^\lam s_0) + \MO(y^{\lam-1}), \qquad s_0 = s|_{y=0},
\]
for any section $s$ which is smooth up to $y=0$.  In other words, $N(\ML)$ is the part of $\ML$ which is precisely 
homogeneous of degree $-2$ with respect to the dilations $(z,y) \mapsto (\lam z, \lam y)$. 

To understand the local structure of $\ML$ near a knot singularity, we follow Section 4.2 of \cite{MazzeoWitten2017},
and in particular equations (4.14--15) in that paper. The spherical coordinate expression given there for the linearization
$\ML$ at the model knot solution is
\[
N(\ML) = -\frac{\del^2\,}{\del R^2} - \frac{2}{R} \frac{\del\,}{\del R} + \frac{1}{R^2} 
\left( -\frac{\del^2\,}{\del \psi^2} - \frac{\cos \psi}{\sin \psi} \frac{\del\,}{\del \psi} + \frac{1}{\sin^2\psi} 
\nabla^*_\theta \nabla_\theta + N_S\right),
\]
where
\[
N_S = -\phi_1^2 + \frac12 \left( [ \vp_z^\dag, [ \vp_z, \cdot]] + [ \vp_z, [\vp_z^\dag, \cdot] ]\right)
\]
and $\nabla_\theta$ is differentiation with respect to $A$ in the $\theta$ direction (at $R=1$).  
All fields are the ones for the model knot solution. A computation shows that $N_S \sim 2 \mathrm{Id}/\sin^2 \psi$ 
as $\psi \to 0$. We regard $N(\ML)$ as acting on sections on the entire space $S^2_+ \ti \RP$.  

In precisely the same sense as above, this linearization at the model knot is the model for the linearization in general
in the sense that around any solution (or indeed even any admissible Hermitian metric $H$), 
\[
\ML_H( R^\lam s) = N(\ML) (R^\lam s_0) + \MO(R^{\lam-1}), \ \ \mbox{where now}\ s_0 = s|_{R=0}.
\]

To interpret this last paragraph accurately, we introduce the blowup of $\Si \ti \RP$ around a knot singularity
at $(p,0)$.  This natural operation corresponds to replacing the point $(p,0)$ with its interior unit sphere bundle, and declaring
that the spherical coordinates around this point generate the space of smooth functions.  When done at each knot singularity, 
this produces the space
\[
(\Si \ti \RP)_{\mathfrak{p}} := [\Si \ti \RP; \{p_1, \ldots, p_k\}],
\]
which is the half-cylinder $\Si \ti [0,\infty)$ blown up in this way at the points $(p_j, 0)$, $ j = 1, \ldots, k$. 
This space has $k+1$ boundary faces: the `original' face with the knot points removed, $F_{\mathrm{or}} := \Si \setminus 
\{p_1, \ldots, p_k\}$, and the $k$ hemispheres $F_j$ which are the unit sphere bundles at each $(p_j, 0)$. 

This blown up space is convenient for several reasons which should become apparent below.  For the moment,
it provides a convenient framework to assert the following:  the normal operators of $\ML$ at $F_{\mathrm{or}}$ 
and at each $F_j$ are obtained by droppng all but the leading terms in the Taylor expansion of (the coefficients of)
$\ML$ at these faces. 
\end{subsection}

\begin{subsection}{Indicial Roots}
We next define and record the values of the indicial roots of $\ML$ at each of the faces of $(\Si \ti \RP)_{\mathfrak p}$.
These indicial roots are the formal rates of growth or decay of solutions to $\ML s = 0$, and their values
are needed to determine the global mapping properties of $\ML$. We refer to the papers above, as well 
as \cite{HeMazzeo2017}, for more discussion about their significance.  The computations of these
values takes some work, which can be found in \cite{MazzeoWitten2013, MazzeoWitten2017}.

The singular structure of $\ML$, or more simply just $N(\ML)$, along the face $F_{\mathrm{or}}$ is determined
by the leading asymptotics of 
$$
\phi_z\sim y^{-1}\mfe^++\cdots,\ \ \phi_z^{\da}\sim y^{-1}\mfe^-+\cdots,\ \  \phi_1\sim \frac{i}{2y}\mfe^0+\cdots.
$$ 
By definition, the indicial roots of $\ML$ at this face are the values $\lambda$ for which $N(\ML) (y^\lam s) = 0$,
for any (locally defined) smooth section $s$ or equivalently, if $\ML (y^\lam s) = \MO(y^{\lam-1})$ in contrast to the 
expected order $\MO(y^{\lam-2})$. This is a sort of eigenvalue computation, and the solutions are those values
$\lam$ for which there is a leading order cancellation. Writing this out more carefully, one sees that $\lam$ is an indicial root if
\begin{equation}
\lam(\lam-1)s = \frac{1}{2}([\mfe^+,[\mfe^-,s]+[\mfe^-,[\mfe^+,s]])+\frac{1}{4}[\mfe^0,[\mfe^0,s]].
\end{equation}
The operator on the left is the (purely algebraic!) Casimir operator for $\slf_2$:
$$
\Delta_{\mathrm{Cas}}s:=\frac{1}{2}([\mfe^+,[\mfe^-,s]+[\mfe^-,[\mfe^+,s]])+\frac{1}{4}[\mfe^0,[\mfe^0,s]],
$$
hence $\lam$ is an indicial root for $\ML$ if and only if $\lam(\lam-1)$ is an eigenvalue for $\Delta_{\mathrm{Cas}}$. 
The Lie algebra $\mathfrak{sl}_{n+1}$ decomposes into a direct sum 
of eigenspaces $V_j$, $j = 1, \ldots, n$, for $\Delta_{\mathrm{Cas}}$, where the eigenvalue on $V_j$ is $j(j+1)$ (the index 
$j$ is called the spin of $V_j$).  The indicial roots on $V_j$ therefore consist of the values $-j, (j+1)$. 
% so the entire set of indicial roots equals $\{-n-1, -n, \ldots, -2, 1, \ldots, n\}$. 
%Therefore the values of $\lam$ for which there exists a solution to $-\lam(\lam+1)s_\lam+ \Delta_{\mathrm{Cas}}s_\lam=0$ are 
%$\lam = j,-(j+1)$. 
\begin{proposition}
The set of indicial roots of the linearized \EBE for the Nahm pole away from the knot consists of the values 
$\{-n, \dots, -1, 2,\ldots, n+1\}$. 
\end{proposition}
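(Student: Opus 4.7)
The plan is to reduce the computation to the representation theory of the principal $\mathfrak{sl}_2$-triple $(\mfe^+,\mfe^-,\mfe^0)$ inside $\mathfrak{sl}_{n+1}$, and then read off the indicial roots from the eigenvalues of the Casimir operator.

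First, I would start from the reduction already carried out in the paragraph preceding the statement: namely, $\lambda$ is an indicial root of $\ML$ at $F_{\mathrm{or}}$ exactly when $\lambda(\lambda-1)$ is an eigenvalue of
\[
\Delta_{\mathrm{Cas}} s = \tfrac{1}{2}\bigl([\mfe^+,[\mfe^-,s]] + [\mfe^-,[\mfe^+,s]]\bigr) + \tfrac{1}{4}[\mfe^0,[\mfe^0,s]]
\]
acting on $\mathfrak{sl}_{n+1}$ by the adjoint representation. The quadratic equation $\lambda(\lambda-1)=j(j+1)$ has the two solutions $\lambda = -j$ and $\lambda = j+1$, so it only remains to determine which spins $j$ actually occur in $\mathfrak{sl}_{n+1}$.

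The second step is the key representation-theoretic input. Since $(\mfe^+,\mfe^-,\mfe^0)$ is the \emph{principal} $\mathfrak{sl}_2$-triple in $\mathfrak{sl}_{n+1}$ (this is the embedding determined by \eqref{HitchincomponentHiggsfield} and used throughout the paper), the standard Kostant decomposition of the adjoint representation gives
\[
\mathfrak{sl}_{n+1} \;=\; V_1 \oplus V_2 \oplus \cdots \oplus V_n,
\]
where $V_j$ is the irreducible spin-$j$ representation of $\mathfrak{sl}_2$ (of dimension $2j+1$). One way to verify this directly, without quoting Kostant, is to decompose $\mathfrak{sl}_{n+1}$ according to the grading by $\tfrac12\ad(\mfe^0)$: the diagonal form of $\mfe^0$ shows that the weights on the Cartan subalgebra run through $0,\pm 2,\ldots,\pm 2n$, and the multiplicity of the weight $2k$ is $n+1-|k|$; this is exactly the multiplicity pattern of $\bigoplus_{j=1}^n V_j$. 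A dimension count $\sum_{j=1}^n(2j+1)=n(n+2)=\dim \mathfrak{sl}_{n+1}$ confirms the decomposition.

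The final step is purely bookkeeping. The Casimir acts on $V_j$ as the scalar $j(j+1)$, so as $j$ ranges over $1,\dots,n$ the indicial equation $\lambda(\lambda-1)=j(j+1)$ produces precisely the set
\[
\{-n,-(n-1),\dots,-1\}\;\cup\;\{2,3,\dots,n+1\},
\]
which is the claimed list. The main obstacle—if any—is simply making the Kostant decomposition explicit for this particular principal embedding, but this is essentially immediate from the shape of $\mfe^+$ in \eqref{modelformphiz} together with the weights of $\ad(\mfe^0)$.
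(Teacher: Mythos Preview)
Your proposal is correct and follows essentially the same approach as the paper: reduce to the eigenvalue problem for the Casimir of the principal $\mathfrak{sl}_2$ acting on $\mathfrak{sl}_{n+1}$, invoke the decomposition $\mathfrak{sl}_{n+1}=\bigoplus_{j=1}^n V_j$, and read off $\lambda\in\{-j,j+1\}$ for each spin $j$. The only difference is that you supply a weight-multiplicity and dimension-count justification for the Kostant decomposition, whereas the paper simply asserts it (note a minor slip: the multiplicity of the zero weight in $\mathfrak{sl}_{n+1}$ is $n$, not $n+1$, but your dimension check $\sum_{j=1}^n(2j+1)=n(n+2)$ independently confirms the decomposition).
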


The indicial roots of $\ML$ near any one of the faces $F_j$ are defined in much the same way. Namely, they are 
the values $\lambda$ for which there exists a field $\Phi(\psi, \theta)$ on $S^2_+$ such that $N(\ML) R^\lam \Phi = 0$,
or equivalently, $\ML (R^\lam \Phi) = \MO( R^{\lam - 1})$ rather than the expected rate $\MO(R^{\lam-2})$.  
This is once again an eigenvalue problem, but now for a differential operator on $S^2_+$ rather than a finite dimensional
endomorphism. The calculations in this setting are carried out in \cite{MazzeoWitten2017}; strictly speaking, that paper treats
the four-dimensional KW equations, but certain of those calculations correspond to perturbations of a model
knot solution which are compatible with the three-dimensional reduction, and those are the ones of interest here.
In the terminology of that paper, these are the indicial roots of type $II$.

The operator $\Delta_S$ on the hemisphere $S^2_+$ which is the expression in parentheses above has discrete 
spectrum on $L^2$ fields (this is because of the positivity and `regular singular' blowup of $N_S$ at $\psi=0$).  
It is shown in \cite{MazzeoWitten2017} that for the group $G_\CC = \mathrm{SL(2,\CC)}$, the eigenvalues $\gamma$ 
of $\Delta_S$ are all strictly greater than $2$.  Examining that proof, however, shows that this only relies on the 
positivity of $N_S$ but not on its specific structure, so the same bound is true for $G_\CC = \mathrm{SL(n+1,\CC)}$. 
We can now carry out the calculation of indicial roots using the spherical coordinate expression for $\Delta_H$
and the eigendecomposition for $\Delta_S$. 
\begin{proposition}
The set of indicial roots of $\Delta_H$ at a knot singularity of any weight $\vec{\mathfrak{r}}$ equals
\[
\{-\frac12 \pm \sqrt{\gamma + 1/4}: \gamma\ \mbox{an eigenvalue of} \ \Delta_S \}.
\]
The bound $\gamma > 2$ implies that all indicial roots are contained in the half-lines $(-\infty, -2)$ and $(1, \infty)$. 
Only those roots greater than $-1$ are relevant for this problem, and all of these are in fact strictly greater than $+1$. 
\end{proposition}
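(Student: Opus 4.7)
The plan is to carry out a separation of variables for the normal operator $N(\mathcal L)$ at a knot face $F_j$ and reduce the indicial equation to the eigenvalue problem for $\Delta_S$. Substituting the ansatz $s = R^{\lambda}\Phi(\psi,\theta)$ into the spherical-coordinate expression
\[
N(\mathcal L) = -\frac{\partial^2}{\partial R^2} - \frac{2}{R}\frac{\partial}{\partial R} + \frac{1}{R^2}\,\Delta_S,
\]
the radial part produces $-\lambda(\lambda-1)R^{\lambda-2} - 2\lambda R^{\lambda-2} = -\lambda(\lambda+1)R^{\lambda-2}$, so the leading $R^{\lambda-2}$ coefficient of $N(\mathcal L)(R^{\lambda}\Phi)$ is $R^{\lambda-2}(\Delta_S - \lambda(\lambda+1))\Phi$. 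Hence $\lambda$ is an indicial root precisely when $\lambda(\lambda+1)$ is an eigenvalue of $\Delta_S$, and solving the quadratic $\lambda(\lambda+1) = \gamma$ yields $\lambda = -\tfrac{1}{2} \pm \sqrt{\gamma + \tfrac{1}{4}}$, giving the asserted set.

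For the location of the indicial roots, I would simply plug in the bound $\gamma > 2$ established in \cite{MazzeoWitten2017}: this gives $\sqrt{\gamma + 1/4} > \sqrt{9/4} = 3/2$, so every indicial root $\lambda$ satisfies either $\lambda < -1/2 - 3/2 = -2$ or $\lambda > -1/2 + 3/2 = 1$. In particular, no indicial roots lie in the interval $[-2,1]$, so the roots strictly greater than $-1$ are exactly those strictly greater than $+1$, as claimed. This is the full content of the last two assertions.

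The main (minor) subtlety is to make sure the appeal to the spectral lower bound $\gamma > 2$ is actually valid in the present rank-$(n+1)$ setting: the proof in \cite{MazzeoWitten2017} is written for $\mathrm{SL}(2,\mathbb C)$. I would check that the argument there only uses the positivity of $N_S$ and the fact that $N_S \sim 2\,\mathrm{Id}/\sin^2\psi$ as $\psi \to 0$, both of which persist for $\mathrm{SL}(n+1,\mathbb C)$ since $N_S$ is a sum of terms of the form $[X^\dagger, [X,\cdot]]$ coming from the model knot fields $\vp_z$ and $\phi_1$; this positivity and leading singular behaviour hold componentwise on the $\mathfrak{sl}_2$-weight decomposition of $\mathfrak{sl}_{n+1}$. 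Once this is verified, the rest of the argument is the elementary separation of variables and quadratic-formula calculation above.
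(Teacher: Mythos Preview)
Your proposal is correct and follows essentially the same approach as the paper: the paper simply indicates that one should ``carry out the calculation of indicial roots using the spherical coordinate expression for $\Delta_H$ and the eigendecomposition for $\Delta_S$,'' which is precisely the separation of variables and quadratic-formula computation you wrote out. Your final paragraph checking that the bound $\gamma>2$ extends from $\mathrm{SL}(2,\mathbb C)$ to $\mathrm{SL}(n+1,\mathbb C)$ also matches the paper's remark that the argument in \cite{MazzeoWitten2017} ``only relies on the positivity of $N_S$ but not on its specific structure.''
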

\end{subsection}

\begin{subsection}{Function spaces and Fredholm mapping theory}
We now state the key mapping properties of $\ML$ on a family of dilation-covariant weighted H\"older spaces 
adapted to the degeneracy of this operator. 

As in the last two subsections, these spaces are defined slightly differently near the face $F_{\mathrm{or}}$ and
near each of the knot faces $F_j$. In both cases, the approximate homogeneity of $\ML$ is reflected in an 
approximate scale-invariance of the associated H\"older norms.

We first define `Whitney cubes' near each of these boundary faces.  A Whitney cube $Q^1$ is a coordinate cube of 
diameter $\epsilon/2$ centered at a point $(y_0, x_0)$ where $y_0 = \epsilon$. Let $D^1_\epsilon$ denote the dilation 
$(y,z) \to (\epsilon y, \epsilon z)$.  Similarly, a Whitney cube $Q^2$ is a coordinate cube $\{ \epsilon/2 < R < 3\epsilon/2\} 
\times Q'$ where $Q'$ is any `size $1$ cube' in the $(\theta, \psi)$ coordinates; in this region we use the dilation
$D^2_\epsilon: (R, \theta, \omega) \to (\epsilon R, \theta, \omega)$.   Consider next  the dilations of these cubes:
$\tilde{Q}^j_\epsilon = (D^j_{\epsilon})^{-1}(Q^j)$, $j = 1,2$.  Our basic scale of $\ie$ H\"older spaces $\calC^{k,\alpha}_{\ie}$ 
($\ie$ stands for `iterated edge') is determined by norms which are invariant under these dilations: 
\[
||u|_{Q^j} ||_{\ie; k, \alpha} = || (D^j_\epsilon)^* u|_{\tilde{Q}^j}||, \quad j = 1,2.
\]
This type of invariance is achieved by basing these spaces on differentiations by $y \del_y, y \del_{x_1}, y\del_{x_2}$ near
$F_{\mathrm{or}}$ and by $R\del_R, \del_\theta, \del_\psi$ near each $F_j$.  For simplicity we describe these spaces
only for functions, since the adaptations for spaces of sections of any bundle are just notational.  When there are no knot
singularities, then only the first type of dilations are used, and it is more customary to denote these spaces
by $\calC^{k,\alpha}_0$ in that case.

\begin{definition}  Define $\calC^{k,\alpha}_{\ie}( \Si \ti \RP)$ to be the space of all functions $u$ on $\Si \ti \RP$ such that
\begin{itemize}
\item[i)] Near $F_{\mathrm{or}}$,  
\[
||u||_{L^\infty} + \sup_{i + |\beta| \leq k} [ (y\del_y)^i (y\del_x)^\beta u ]_{\ie; 0,\alpha} < \infty,
\]
where in this region
\[
[v]_{\ie; 0, \alpha} := \sup_{Q^1} \sup_{(y,x) \neq (y', x')\atop 
(y,x), (y', x') \in Q^1}  \frac{|u(y,x) - u(y', x')|(y+y')^\alpha}{|y-y'|^\alpha + |x-x'|^\alpha}.
\]
\item[ii)] Near each $F_j$, 
\[
||u||_{L^\infty} + \sup_{i + p + q \leq k} [(R\del_R)^i \del_\theta^p \del_\psi^q u ]_{\ie, 0, \alpha} < \infty,
\]
where here, 
\[
[v]_{\ie, 0, \alpha} := \sup_{Q^2} \sup_{(R,\theta,\psi) \neq (R', \theta', \psi') \atop
(R,\theta,\psi), (R', \theta', \psi') \in Q^2}  \frac{|u(R,\theta, \psi) - u(R', \theta', \psi')|
(R+R')^\alpha}{(|\theta-\theta'| + |\psi-\psi'|)^\alpha(R+R')^\alpha + |R-R'|^\alpha}.
\]
\item[iii)] Away from all boundaries we simply require that $u$ lies in the ordinary H\"older space $\calC^{k,\alpha}$
on each slab $\Si \ti [L, L+1]$, uniformly for $L \geq 1$. 
\end{itemize}

Finally, if $\mu$, $\nu$ and $\delta$ are any real numbers, we define
\begin{equation*}
\begin{split} 
y^\mu \calC^{k,\alpha}_{\ie}(\Si \ti \RP) & = \{u = y^\mu v: v \in \calC^{k,\alpha}_{\ie} \} \\
\psi^\mu R^\nu e^{y\delta} \calC^{k,\alpha}_{\ie}(\Si \ti \RP) & = \{u = \psi^\mu R^\nu e^{t\delta} v: v \in \calC^{k,\alpha}_{\ie}\}.
\end{split}
\end{equation*}
Note that $\psi$ may be replaced by $y$ away from the knots. 
\label{defhold0}
\end{definition}

For simplicity, we henceforth write
\[
\calX^k_{\mu, \nu, \delta}(\Si \ti \RP; \isu(E, H_0)) = \psi^\mu R^\nu e^{y\delta} \calC^{k,\alpha}_{\ie}(\Si \ti \RP; \isu(E, H_0)).
\]

\begin{theorem}{\cite{MazzeoWitten2013, MazzeoWitten2017}} 
Let $\lam_0^\pm = -1/2 \pm \sqrt{\gamma_0 + 1/4}$ where $\gamma_0$ is the ground state eigenvalue of $\Delta_S$. Suppose 
that $\mu \in (-2,1)$, $\nu \in (\lam_0^-, \lam_0^+)$ and $\delta > 0$.  Then for any $k \geq 0$ and $0 < \alpha < 1$, 
\begin{equation}
\ML_H:  \calX^{k+2}_{\mu, \nu, -\delta}(\Si \ti \RP; \isu(E, H_0)) \longrightarrow \calX^k_{\mu-2, \nu-2, -\delta}(\Si \ti \RP; \isu(E, H_0)).
% \psi^\mu R^\nu e^{-t\delta} \calC^{k+2,\alpha}_{\ie}(\Si \ti \RP; \isu(E, H_0)) \longrightarrow
%\psi^{\mu-2} R^{\nu-2} e^{-t\delta} \calC^{k, \alpha}_{\ie}(\Si \ti \RP; \isu(E, H_0))
\label{Fred1map}
\end{equation}
is a Fredholm operator of index $0$. If $\ML_H s = 0$, then $s=0$, so that in fact \eqref{Fred1map} is an isomorphism.
\label{Fred1}
\end{theorem}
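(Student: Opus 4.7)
The strategy is a three-step argument combining the iterated edge calculus with a Weitzenb\"ock identity: first establish the Fredholm property, then identify the index as zero, and finally prove the kernel is trivial. For the Fredholm property, I would observe that $\ML_H$ is a second-order iterated edge (elliptic uniformly degenerate) operator on the blown-up space $(\Si\ti\RP)_{\mathfrak p}$ in the sense of \cite{Mazzeo1991, MazzeoWitten2013, MazzeoWitten2017}. The general parametrix construction for such operators produces a right and left inverse modulo compacts on weighted iterated edge H\"older spaces provided two conditions hold: the weights $\mu$ and $\nu$ must lie strictly between consecutive indicial roots at $F_{\mathrm{or}}$ and at each knot face $F_j$; and the exponential weight $e^{-\delta y}$ must correspond to a $\delta$ inside the resolvent set of the translation-invariant model at $y=\infty$. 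The hypothesis $\mu\in(-2,1)$ places the weight in an open gap of the indicial set $\{-n,\ldots,-1,2,\ldots,n+1\}$ of Proposition 5.5 (after the shift by the local volume-form convention), while $\nu\in(\lam_0^-,\lam_0^+)$ is the widest gap at each $F_j$ per Proposition 5.6. Finally, since the background converges to an irreducible flat $\SL(n+1,\CC)$ connection as $y\to\infty$, the model operator at infinity is a positive self-adjoint operator with a strictly positive spectral gap, so any sufficiently small $\delta>0$ lies in its resolvent set and provides the required Fredholm property.

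For the index I would use formal self-adjointness. The operator $\ML_H$ is symmetric with respect to the natural $L^2$ pairing determined by $H_0$ and the volume form, and weighted duality identifies the cokernel on $\calX^k_{\mu-2,\nu-2,-\delta}$ with the kernel of $\ML_H$ on a dual weighted space with conjugate exponents. The relative index theorem of \cite{Mazzeo1991} guarantees that the index is locally constant as $(\mu,\nu,-\delta)$ varies in the open region avoiding indicial roots and values where the exponential model fails to be invertible; evaluating at the symmetric weight where kernel and cokernel coincide then forces the index to vanish throughout the admissible region. For the triviality of the kernel, suppose $s\in\calX^{k+2}_{\mu,\nu,-\delta}$ solves $\ML_H s=0$. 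Because the background satisfies $\Omega_{H}=0$, Corollary 5.4 reduces to
\[
\ML_H = \tfrac{1}{4}\bigl(\na_1^{\st}\na_1 + \na_2^{\st}\na_2\bigr) - (\MD_y^2 + \phi_1^2).
\]
Pairing $\langle\ML_H s,s\rangle_{H_0}$ in $L^2$, symmetrizing $\MD_3\MD_3^{\da}$ with $\MD_3^{\da}\MD_3$ via the Weitzenb\"ock identities of Proposition 5.3, and integrating by parts produces the manifestly nonnegative bulk integrand $\tfrac{1}{4}|\na_1 s|^2 + \tfrac{1}{4}|\na_2 s|^2 + |\MD_3 s|^2$, together with boundary contributions at $F_{\mathrm{or}}$, each $F_j$, and $y=\infty$. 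The exponential decay $e^{-\delta y}$ annihilates the boundary term at infinity, while the weight conditions $\mu>-2$ and $\nu>\lam_0^-$ force $s$ to decay rapidly enough along $F_{\mathrm{or}}$ and each knot face for those boundary terms to vanish as well. Consequently $s$ is simultaneously $\bar{\pa}_A$-holomorphic, commutes with $\vp$, and is parallel along the $y$-direction; combined with the exponential decay at infinity and the irreducibility of the limiting flat $\SL(n+1,\CC)$ connection, this forces $s\equiv 0$.

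The principal technical obstacle is Step 3: the singular behavior $\vp\sim y^{-1}\mfe^+$ and $\phi_1\sim(i/2y)\mfe^0$ along $F_{\mathrm{or}}$, together with the $\MO(R^{-1}\psi^{-1})$ singularities of the knot model along each $F_j$, can produce divergent boundary contributions in the integration by parts unless the decay of $s$ is controlled very carefully. Overcoming this requires upgrading the membership of $s$ in $\calX^{k+2}_{\mu,\nu,-\delta}$ to an actual polyhomogeneous expansion using the edge regularity theory of \cite{MazzeoWitten2013, MazzeoWitten2017}, after which the boundary integrals can be evaluated term by term against the indicial root lists of Propositions 5.5 and 5.6 and shown to vanish in the allowed weight range.
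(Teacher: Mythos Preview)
Your proposal is correct and follows essentially the same three-step architecture as the paper's proof: parametrix in the iterated edge calculus for Fredholmness, symmetry of the indicial set for index zero, and a Weitzenb\"ock integration by parts (with regularity upgrade to justify the boundary terms) for triviality of the kernel.

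One point deserves sharpening. In Step~1 you state that the parametrix construction goes through once the weights $\mu,\nu$ sit in indicial-root gaps and $\delta$ avoids the model spectrum at infinity. In the edge calculus these conditions are necessary but not sufficient: one must also verify that the \emph{normal operator}---the model $\ML$ on the flat half-space $\RR^3_+$, or near a knot on $S^2_+\times\RP$---is actually invertible on the corresponding weighted spaces, not merely that its indicial operator has no roots at the chosen weight. The paper's proof makes this explicit: the normal operator is shown to be Fredholm of index zero with trivial kernel by a Weitzenb\"ock argument entirely parallel to your Step~3, but carried out on the model space rather than on $\Si\times\RP$. This is precisely the content imported from \cite{MazzeoWitten2013, MazzeoWitten2017}. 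Your account folds this into the citation, which is acceptable, but the distinction between the indicial condition and normal-operator invertibility is the substantive input from those papers and should not be conflated with the weight-gap condition alone.
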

\begin{proof}  To prove that this mapping is Fredholm for this set of ranges of the weight parameters, we produce
a parametrix for $\ML_H$, which lies in the `iterated edge' pseudodifferential calculus, and then show that this parametrix
is bounded between the appropriate spaces. The construction of the parametrix relies heavily on the invertibility of
the associated `normal operator', which in this situation corresponds to the operator $\ML$ using the model fields
on a flat half-space (possibly with knot at $0$). This invertibility is, in turn, 
proved by showing that this model operator is Fredholm, has index zero, and vanishing nullspace. The last fact
relies on a linearization of a Weitzenbock formula.  All of this is carried out first away from knots in
\cite{MazzeoWitten2013} and then later near knots in \cite{MazzeoWitten2017}.  The parametrix and its mapping
properties for $y$ large rely on the more standard analysis on manifolds with cylindrical ends. 

That the index of $\ML$ is $0$ follows from the fact that this weight range contains the point of symmetry
for the indicial root set, which in turn is related to the symmetry of this operator on $L^2$.  

Finally, the injectivity of this mapping is verified by noting that if $\Delta_H s = 0$, then the regularity theory in 
these cited papers implies that $|s| \leq \psi^1 R^{\lam^+_0}e^{-t\delta}$, which in turn justifies an 
integration by parts, leading to the conclusion that $\nabla_1 s = \nabla_2 s = \MD_3 s = 0$. 
These imply at $|s| \not\to 0$ as $y \to \infty$, for example, which is a contradiction. 
\end{proof}
\end{subsection}
\end{section}

\begin{section}{The Continuity Method}
We solve the \EBE with generalized Nahm boundary conditions using a standard continuity method.  In this
section we set the problem up and discuss the (easier) `openness' part of this proof. 

\begin{subsection}{The Admissible Hermitian Metric and the Family of Equations}
Fix holomorphic data $(\ME,\vp,L)$, and consider the corresponding weight set $S(\ME,\vp,L)=\{(p,\mfr_1^p,\cdots,
\mfr_n^p)\}$. In holomorphic gauge, write $\MD_1= w\bar{\pa}_{\ME}$, $\MD_2=[\vp,\cdot ]$ and $\MD_3=\pa_y$ and
set $\Theta_0 = (\MD_1, \MD_2, \MD_3)$. Complex gauge transformations act by $\MD_i^g:=g\circ \MD_i\circ g^{-1}
=\MD_i+g(\MD_i g^{-1})$.   Recall that we can either work in a holomorphic gauge with this regular triple $\Theta_0$
and look for a singular Hermitian metric or else, by Proposition \ref{complexgaugeactionchange}, first transform
$\Theta_0$ by a singular gauge transformation and look for a solution of the \EBE which is a regular
Hermitian metric. In this latter formulation, which is the one we shall be using, the equations have singular
coefficients.

\begin{definition}
A Hermitian metric $H_0$ is admissible if the following conditions hold: 
\begin{itemize}
\item The Chern connection associated to $H_0$ has knot singularity of weight $\vfr$
at $(p,0) \in \Si \ti \RP$ for each $(p,\vfr)\in S(\ME,\vp,L)$, and satisfies the Nahm pole boundary condition elsewhere along $y=0$.
\item The Chern connection converges to the flat $\mathrm{SL}(n+1,\mathbb{C})$ connection defined by the stable Higgs pair 
$(\ME,\vp)$ as $y \to \infty$. 
\item $\Omega_{H_0}$ vanishes to infinite order at $y=0$. 
\end{itemize}
\end{definition}

If $H_0$ is admissible in this sense, denote by $i\su(E,H_0)$ the subspace of Hermitian endomorphisms in $\End(E)$.
For any $s\in isu(E,H_0)$, define the new Hermitian metric $H=H_0e^{s}$ and the family of maps
\begin{equation}
N_t(s):=\Ad(e^{\frac s2})\Omega_{H}+ts=0.
\label{eqmethodofContinuation}
\end{equation}
To parse this definition, note that $\Omega_H\in i\su(E,H)$ and $Ad(e^{\frac{s}{2}}):i\mathfrak{su}(E,H)\to i\mathfrak{su}(E,H_0)$ 
is a bundle isomorphism which satisfies
\[
\langle \Ad(e^{\frac{s}{2}})f, \Ad(e^{\frac{s}{2}})g\rangle _{H}=\langle f,g\rangle _{H_0} \ \ \mbox{for}\ \ \ 
f,g \ \ \mbox{sections of}\ \  isu(E,H). 
\]

For any small $\ep > 0$ and any $\delta > 0$, $H:=H_0e^{s}$ is an admissible metric provided 
$s\in \calX^{k+2}_{2-\ep, 1+\ep, -\delta}(\Si \ti \RP; i \mathfrak{su}(E, H_0))$, 
%\psi^{2-\ep} R^{1+\ep} e^{-t\delta}\calC^{k+2,\alpha}_{\ie}$, 
and moreover
\[
N_t : \calX^{k+2}_{2-\ep, 1+\ep, -\delta}(\Si \ti \RP; i \mathfrak{su}(E, H_0)) \longrightarrow
\calX^{k}_{-\ep, -1+\ep, -\delta}(\Si \ti \RP; i \mathfrak{su}(E, H_0))
%\psi^{2-\ep} R^{1+\ep} e^{-t\delta}\calC^{k+2,\alpha}_{\ie}(\Si \ti \RP; i \mathfrak{su}(E, H_0)) \longrightarrow
%\psi^{-\ep} R^{-1+\ep} e^{-t\delta}\calC^{k,\alpha}_{\ie}(\Si \ti \RP; i \mathfrak{su}(E, H_0))
\]
is a smooth map which also depends smoothly on $t \in [0,1]$. 

The continuity method consists in showing that the set
\begin{equation}
I:=\{t\in[0,1]:N_t(s)=0 \ \ \mbox{has a solution} \ \ s\in \calX^{k+2}_{2-\ep, 1+\ep, -\delta} \}
\label{setI}
\end{equation}

is nonempty, open and closed, so that $I = [0,1]$.   We will have solved our problem once we show that $0 \in I$. 
\end{subsection}

\begin{subsection}{Linearization}
Before proceeding further, we compute the linearization of $N_t$.   
\begin{proposition}
If $N_t(s) = 0$, then for any section $s'$ of $\isu(E,H_0)$, 
\begin{equation}
\ML_{t,s}(s'):=\frac{d}{du}|_{u=0}N_t(s+us')=Ad(e^{\frac{s}{2}})\ML_Hs'+ts';
\end{equation}
the formula on the right defines the operator $\ML_{t,s}$ on the left.
\end{proposition}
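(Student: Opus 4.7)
Since $N_t(s + u s') = \Ad(e^{(s+us')/2}) \Omega_{H_0 e^{s+us'}} + t(s + us')$, differentiating the second factor at $u=0$ yields $ts'$ for free; the entire substantive task is to identify
\[
\frac{d}{du}\bigg|_{u=0}\Ad(e^{(s+us')/2})\,\Omega_{H_0 e^{s+us'}}
\]
with $\Ad(e^{s/2})\,\ML_H s'$. The strategy is to isolate the differential contribution from purely algebraic variations, the latter being controlled by the hypothesis $N_t(s) = 0$.

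The key first step is to change the reference metric from $H_0$ to $H := H_0 e^s$, which is the point at which we are linearizing. Write $H_u := H_0 e^{s+us'} = H\, e^{r(u)}$, where $r(u)$ is defined by $e^{r(u)} = e^{-s} e^{s+us'}$; then $r(0) = 0$, and differentiating this identity at $u=0$ via \eqref{Exponentialderivative} produces the crucial computation $\dot r(0) = \gamma(-s)\, s'$. Now rerun the proof of \eqref{quasilinearformequation} with $H$ playing the role of the reference metric to obtain
\[
\Omega_{H_u} = \Omega_H + \gamma(-r(u))\,\ML_H r(u) + Q_H(r(u)).
\]
Since $r(0) = 0$, $\gamma(0) = \mathrm{Id}$, and $Q_H$ vanishes to second order at the origin, differentiating in $u$ at $u=0$ yields $\dot\Omega(0) = \ML_H\bigl(\gamma(-s)\, s'\bigr)$. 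Applying the Leibniz rule to the product $\Ad(e^{(s+us')/2})\,\Omega_{H_u}$ gives
\[
\frac{d}{du}\bigg|_{u=0}\Ad(e^{(s+us')/2})\,\Omega_{H_u} = \Ad(e^{s/2})\,\ML_H\bigl(\gamma(-s)\, s'\bigr) + \calA(s,s')\,\Omega_H,
\]
where $\calA(s,s') := \tfrac{d}{du}|_{u=0}\Ad(e^{(s+us')/2})$ is a purely algebraic operator whose explicit form is obtained once again from \eqref{Exponentialderivative}.

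At this point the hypothesis $N_t(s) = 0$ enters decisively: it supplies $\Omega_H = -t\,\Ad(e^{-s/2})\,s$, turning the second summand $\calA(s,s')\,\Omega_H$ into the purely algebraic expression $-t\,\calA(s,s')\,\Ad(e^{-s/2})\,s$, carrying an explicit factor of $t$. What remains is a formal Lie-algebraic identity: that the sum of $\Ad(e^{s/2})\,\ML_H(\gamma(-s) s')$ and this algebraic correction equals $\Ad(e^{s/2})\,\ML_H s' + ts'$. Because $\gamma(-s)$, $\Ad(e^{\pm s/2})$ and $\calA(s,s')$ are all formal power series in $\ad_s$, hence mutually commuting, this collapses to a manipulation using clean identities such as $\Ad(e^{s/2})\,\gamma(-s) = \sinh(\ad_{s/2})/\ad_{s/2}$ together with the symmetric formula for $\calA$.

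The main obstacle is precisely this final bookkeeping step. It is purely formal, with no analytic content, but the combinatorics of matching power-series coefficients in $\ad_s$ on both sides is what makes $N_t(s) = 0$ essential: off-shell the $\Omega_H$ term would not collapse to an algebraic expression, and one would be left with the awkward factor $\gamma(-s)s'$ inside $\ML_H$. The corresponding identity was verified in the $\SL(2,\RR)$ setting in \cite{HeMazzeo2017}, and the same verification carries over here since the manipulation is independent of the rank of the group.
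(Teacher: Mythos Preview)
Your computation is more careful than the paper's at precisely the point where it matters. The paper applies the product rule to $\Ad(e^{(s+us')/2})\Omega_{H_0e^{s+us'}}$, obtains the commutator $\tfrac12[\gamma(s/2)s',\Ad(e^{s/2})\Omega_H]$ from the outer factors just as you do, but then writes the inner derivative as $\Ad(e^{s/2})\tfrac{d}{du}|_{u=0}\Omega_{He^{us'}}=\Ad(e^{s/2})\ML_H s'$ directly --- tacitly replacing the curve $u\mapsto H_0e^{s+us'}$ by $u\mapsto He^{us'}$. These curves agree at $u=0$ but have different tangent vectors ($H\gamma(-s)s'$ versus $Hs'$), which is exactly the discrepancy you detect. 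Having obtained $\ML_H s'$ for free, the paper then substitutes $\Ad(e^{s/2})\Omega_H=-ts$ into the commutator and declares the result.

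The genuine gap in your approach is the final step. You claim the residual identity
\[
\Ad(e^{s/2})\ML_H\bigl(\gamma(-s)s'\bigr)-t\,\calA(s,s')\Ad(e^{-s/2})s \;=\; \Ad(e^{s/2})\ML_H s'
\]
``collapses to a manipulation'' because $\gamma(-s)$, $\Ad(e^{\pm s/2})$, $\calA(s,s')$ are all power series in $\ad_s$ and hence mutually commute. But $\ML_H$ is a second-order \emph{differential} operator, not a function of $\ad_s$; it does not commute with multiplication by $\gamma(-s)$. The difference $\ML_H(\gamma(-s)s')-\ML_H s'$ contains first-order terms in $s'$ of the schematic form $(\nabla\gamma(-s))\cdot\nabla s'$, which involve $\nabla s$ and cannot be cancelled by the purely algebraic (zeroth-order in $s'$) correction $-t\,\calA(s,s')\Ad(e^{-s/2})s$. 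So the claimed formal identity does not hold, and the appeal to the rank-two case does not help, since the obstruction is already present there. Your more careful bookkeeping has in fact uncovered a real discrepancy between the stated formula and the honest linearization; resolving it would require either keeping the $\gamma(-s)$ inside $\ML_H$ or redefining $\ML_{t,s}$ via the curve $He^{us'}$ rather than $H_0e^{s+us'}$.
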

\begin{proof}
We compute that $\ML_{t,s}\, (s') $ equals
\begin{equation}
\begin{split}
%=\frac{d}{du}|_{u=0}N(s+us',t)s
& (\left.\frac{d\,}{du}\right|_{u=0}e^{\frac{s+us'}{2}})\Omega_H e^{-\frac{s}{2}}+
e^{\frac{s}{2}}\Omega_{H} (\left.\frac{d\, }{du}\right|_{u=0}e^{-\frac{s+us'}{2}})+ Ad(e^\frac{s}{2})
\left( \left. \frac{d\, }{du}\right|_{u=0}\Omega_{He^{us'}}\right)+ts' \\
& = \frac{1}{2}\gamma(\frac{s}{2})s' \Ad(e^{\frac{s}{2}})\Omega_H-\frac{1}{2}\Ad(e^{\frac{s}{2}})\Omega_H\gamma(\frac{s}{2})s'
+\Ad(e^{\frac{s}{2}}) \ML_H s' \\ 
& = \Ad(e^{\frac{s}{2}})\sum_{i=1}^3\MD_i\MD_i^{\da_H}s'+t \Ad(e^{\frac{s}{2}})s',
\end{split}
\end{equation}
where the third equality uses $\Ad(e^{\frac{s}{2}})\Omega_H+ts=0$.
\end{proof}
\end{subsection}

\begin{subsection}{$I$ is nonempty}
\begin{proposition}
	\label{Inonempty}
There exists an admissible Hermtian metric $H_0$ and a section $s\in \calX^{k+2,\alpha}_{2-\ep, 1+\ep, -\delta}$  (for any $k$)
such that $N(s,1)=0.$
\end{proposition}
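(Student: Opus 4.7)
The statement has two claims: existence of an admissible $H_0$, and solvability of $N_1(s)=0$. For the first, I would start from Hitchin's harmonic metric $H_\infty$ on $\Si$ attached to the stable Higgs pair $(\ME,\vp)$, pulled back trivially to $\Si\ti\RP$. In a collar of $\{y=0\}$ away from the knot points, replace $H_\infty$ by the Nahm-pole model $\exp(-\log y\,\mfe^0)$ in the local frame \eqref{directsumE} adapted to $L\oplus\vp(L)\oplus\cdots\oplus\vp^n(L)$; near each knot $(p,0)$ glue in the Mikhaylov model metric $\HM$ of \eqref{modelHermitianmetricforknot}. Patch by a partition of unity. The resulting $H_0$ has the correct leading asymptotics, but $\Omega_{H_0}$ vanishes only to finite order. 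Improve to all orders via an iterative Borel summation: the indicial roots computed in Section~5.3 (integer values $\geq 2$ at $F_{\mathrm{or}}$, and values $>1$ at each $F_j$) form an unbounded increasing set of exponents at which to insert successive formal corrections canceling the leading Taylor coefficient of $\Omega_{H_0}$ at each boundary face. Exponential decay as $y\to\infty$ is automatic since $H_\infty$ already solves the Hitchin equation and $\MD_3=\pa_y$ acts trivially there.

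\textbf{Solving $N_1(s)=0$.} Because $s$ commutes with itself, $\Ad(e^{-s/2})s=s$, so $N_1(s)=0$ is equivalent to
\[
\Omega_{H_0 e^s}+s = 0. \qquad (\ast)
\]
I would solve $(\ast)$ by an inner continuity method in a parameter $\tau\in[0,1]$. Set
\[
N^{(\tau)}(s):=\Omega_{H_0e^s}+s-(1-\tau)\Omega_{H_0},
\]
so that $s=0$ trivially solves $\tau=0$ and $(\ast)$ is recovered at $\tau=1$. Let $J\subset[0,1]$ consist of those $\tau$ for which $N^{(\tau)}(s)=0$ admits a solution in $\calX^{k+2,\alpha}_{2-\ep,1+\ep,-\delta}$; then $0\in J$. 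Substituting $\Omega_H=-s+(1-\tau)\Omega_{H_0}$ into the identity \eqref{Keyequation2} yields
\[
\Delta|s|^2_{H_0}+|s|^2_{H_0}+\tfrac12\sum_{i=1}^3|v(s)\nabla_i s|^2_{H_0}=-\tau\langle\Omega_{H_0},s\rangle_{H_0},
\]
so evaluation at an interior maximum of $|s|^2$ gives the $\tau$-uniform pointwise bound $|s|\leq\|\Omega_{H_0}\|_\infty$. The decay forced by membership in the weighted space ensures the supremum is attained.

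For openness at a given solution $s$, the linearization of $N^{(\tau)}$ with respect to $s$ coincides with that of $N_1$, namely $\Ad(e^{s/2})\ML_H+\mathrm{Id}$ with $H=H_0e^s$, by the formula for $\ML_{t,s}$ in Section~6.2. By Theorem~\ref{Fred1}, $\ML_H$ is a topological isomorphism between the weighted Hölder spaces, and $\Ad(e^{s/2})$ is an invertible bounded bundle morphism, so their composition is an isomorphism. Since the domain weights strictly exceed the codomain weights, $\mathrm{Id}$ is a compact operator between the two spaces; hence $\Ad(e^{s/2})\ML_H+\mathrm{Id}$ is Fredholm of index zero. Injectivity follows by pairing any kernel element with itself and using the Weitzenbock identity of Corollary~\ref{co:lin} together with the pointwise positivity $\langle\Ad(e^{s/2})X,X\rangle>0$ for Hermitian $X\neq 0$ (a direct eigenvalue computation in a diagonalizing frame for $s$), so the implicit function theorem gives openness. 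For closedness, the uniform $L^\infty$ bound together with the boundary regularity theory for uniformly degenerate operators of \cite{MazzeoWitten2013,MazzeoWitten2017} and standard interior Schauder estimates upgrades to uniform $\calX^{k+2,\alpha}$ control along the continuity path, permitting extraction of convergent subsequences. Thus $J$ is open, closed, and nonempty in $[0,1]$, hence $J=[0,1]$, which proves the proposition.

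\textbf{Main obstacle.} The delicate step is upgrading the pointwise a priori bound to uniform weighted-Hölder control along the continuity path; this requires the refined edge-calculus regularity theory of \cite{MazzeoWitten2013,MazzeoWitten2017}, which is also the main analytic engine of the larger existence argument in Sections~7-9.
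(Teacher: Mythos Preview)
Your proposal works in principle but misses the paper's one-line algebraic trick, and as a result is vastly more complicated than necessary.

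The paper's proof of this proposition is essentially two lines. Start with \emph{any} admissible metric $H_{-1}$ (whose existence is the content of Section~7), set $\kappa:=\Omega_{H_{-1}}$, and define the \emph{new} background metric $H_0:=H_{-1}e^{\kappa}$. Then $s:=-\kappa$ solves $N_1(s)=0$ exactly, because $H_0e^{-\kappa}=H_{-1}$, so $\Omega_{H_0e^{-\kappa}}=\Omega_{H_{-1}}=\kappa$, and
\[
N_1(-\kappa)=\Ad(e^{-\kappa/2})\Omega_{H_{-1}}-\kappa=\Ad(e^{-\kappa/2})\kappa-\kappa=0
\]
since $\kappa$ commutes with itself. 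The freedom to choose $H_0$ is the whole point: by absorbing the error $\Omega_{H_{-1}}$ into the background, the equation at $t=1$ acquires an explicit solution.

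By contrast, you fix $H_0$ first and then attempt to solve $N_1(s)=0$ by an inner continuity method in $\tau$. This is not wrong, but it is circular in spirit: the closedness step of your inner method requires the full a~priori and boundary-regularity machinery of Sections~8--9 (as you acknowledge under ``Main obstacle''), which is exactly the hard analysis that the \emph{outer} continuity method is designed to organize. Proposition~\ref{Inonempty} is meant to be the trivial seed of that method, not a second copy of it. There are also some loose ends in your sketch: Theorem~\ref{Fred1} is stated for $\ML_H$ with $\Omega_H=0$, so invoking it directly at intermediate $\tau$ (where $\Omega_H\neq 0$) needs justification, and the linearization formula you cite from Section~6.2 was derived under the hypothesis $N_t(s)=0$, not $N^{(\tau)}(s)=0$.

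Your outline of the construction of the admissible metric (gluing Nahm/Mikhaylov models to the harmonic metric and Borel-summing away the error) is correct and matches Section~7, but that is logically separate from the content of this proposition.
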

\begin{proof}
For the moment we shall use a simpler definition of admissibility, that the metric satisfies the equation only
up to first order at $y = R = 0$. In the next section we show how to improve this to a solution up to 
infinite order.  

We construct $H_0$ and $s$ in two stages. For the first, let $H_{-1}$ be any metric and set $\kappa
:=\Omega_{H_{-1}}$.  By definition of admissibility, $\kappa \in \calX^{k,\alpha}_{\mu, \nu, -\delta}$ for any $\mu, \nu$
(since it vanishes to infinite order at $y=0$). Now define $H_0:=H_{-1}e^{\kappa}$.  This is certainly also admissible.
Furthermore, 
\begin{equation}
N_1(-\kappa)=\Ad(e^{-\frac{\kappa}{2}})(\Omega_{H_{-1}})-\kappa = \Ad(e^{-\frac{\kappa}{2}})(\Omega_{H_{-1}}-\kappa) = 0
\end{equation}
since $\Ad(e^{-\kappa/2}) \kappa = \kappa$.   Thus $1 \in I$, as claimed. 
\end{proof}
\end{subsection}

\begin{subsection}{$I$ is open}
We first formulate and prove a consequence of Theorem~\ref{Fred1}
\begin{proposition}
For $\ep$ sufficiently small and $\delta > 0$, 
\[
\ML_{t,s}:  \calX^{k+2,\alpha}_{2-\ep, 1+\ep, -\delta}(\Si \ti \RP; \isu(E; H_0)) \longrightarrow 
\calX^{k,\alpha}_{-\ep, -1+\ep, -\delta}(\Si \ti \RP; \isu(E; H_0))
\]
is an isomorphism. 
\label{Fred2}
\end{proposition}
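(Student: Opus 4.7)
The plan is to combine Theorem~\ref{Fred1} with a Weitzenb\"ock/positivity argument for the extra zeroth-order term. By the preceding proposition,
\[
\ML_{t,s}(s') = \Ad(e^{s/2})\,\ML_H s' + ts'.
\]
Since $s\in \calX^{k+2,\alpha}_{2-\ep,1+\ep,-\delta}$ is uniformly bounded and decays at every boundary face of $(\Si\ti\RP)_{\mathfrak p}$ as well as as $y\to\infty$, the bundle automorphisms $\Ad(e^{\pm s/2})$ induce topological isomorphisms of the weighted iterated-edge H\"older spaces. Applying $\Ad(e^{-s/2})$ from the left, it suffices to show that $\ML_H + t\,\Ad(e^{-s/2})$ is an isomorphism between the spaces in the statement.

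For the Fredholm property, observe that $\ML_H$ differs from $\ML_{H_0}$ only by terms involving $s$ and its derivatives, all of which vanish at every boundary face and decay exponentially as $y\to\infty$. Hence $\ML_H$ has the same principal symbol, normal operators, and indicial roots as $\ML_{H_0}$, so Theorem~\ref{Fred1} applies with $\ML_{H_0}$ replaced by $\ML_H$, yielding a Fredholm operator of index $0$. The additional term $t\,\Ad(e^{-s/2})$ is a bounded zeroth-order multiplier, hence a lower-order perturbation in the iterated-edge calculus; it preserves Fredholmness, and constancy of the Fredholm index along the continuous homotopy $u\mapsto \ML_H + ut\,\Ad(e^{-s/2})$, $u\in[0,1]$, gives that $\ML_H + t\,\Ad(e^{-s/2})$ is Fredholm of index zero as well.

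The nontrivial content is injectivity. Suppose $\ML_H s' + t\,\Ad(e^{-s/2})s' = 0$, i.e., $\ML_H s' = -t\,\Ad(e^{-s/2})s'$. Setting $s'':=\Ad(e^{-s/2})s'\in\isu(E,H)$ and linearizing~\eqref{Keyequation2} at the base $H$ with an $H$-Hermitian perturbation yields the pointwise identity
\[
\langle \ML_H s'', s''\rangle_H = \Delta|s''|_H^2 + \tfrac12\sum_i |\nabla_i^H s''|_H^2.
\]
A direct computation in a frame diagonalizing $s$ shows
\[
\langle \Ad(e^{-s/2}) s', s' \rangle_H = \sum_{i,j} e^{(s_i-s_j)/2}|s'_{ij}|^2 \geq 0.
\]
Combining these two non-negativities with the equation above produces a subharmonic inequality $\Delta |s''|_H^2 \leq 0$. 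The prescribed weights force $|s''|_H\to 0$ at every face of $(\Si\ti\RP)_{\mathfrak p}$ and as $y\to\infty$, so the maximum principle, adapted to the degenerate geometry following \cite{MazzeoWitten2013, MazzeoWitten2017}, forces $s''\equiv 0$, hence $s'\equiv 0$. The main obstacle is this injectivity step: one must carefully handle the transport between $\isu(E,H_0)$ (where $s'$ lives) and $\isu(E,H)$ (where the Weitzenb\"ock identity is naturally framed), and justify the maximum principle all the way down to the degenerate boundary faces, using the explicit singular structure of the admissible metric $H_0$ to control $|s''|_H$ in terms of the weighted H\"older norms.
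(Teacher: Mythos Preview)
Your Fredholm and index-zero arguments are essentially the same as the paper's and are fine. The gap is in the injectivity step.

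You set $s'' := \Ad(e^{-s/2})s'\in\isu(E,H)$ and write down a pointwise Weitzenb\"ock identity for $\langle \ML_H s'', s''\rangle_H$. But the kernel equation you are trying to exploit is $\ML_H s' = -t\,s''$, which controls $\ML_H s'$, not $\ML_H s''$. Since $\ML_H$ does not commute with $\Ad(e^{-s/2})$, there is no direct way to pass from one to the other, and your ``combining these two non-negativities'' does not actually produce $\Delta|s''|_H^2 \leq 0$. If instead you try to run the Bochner identity on $s'$ itself, the problem is that $s'\in\isu(E,H_0)$ is not $H$-Hermitian, so the term $\langle[\Omega_H,s'],s'\rangle_H$ coming from Corollary~\ref{co:lin} no longer vanishes (it would vanish only for $H$-Hermitian sections, by $\Tr([A,B]B)=0$), and the pointwise subharmonicity fails.

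The paper bypasses the pointwise argument entirely. It pairs $\ML_{t,s}s'$ with $\Ad(e^{s/2})s'$ in the $H_0$ inner product and integrates. Using the isometry $\langle \Ad(e^{s/2})f,\Ad(e^{s/2})g\rangle_{H_0}=\langle f,g\rangle_H$ and the exact expression $\ML_H=\tfrac12\sum_i(\MD_i^{\da_H})^{\st}\MD_i^{\da_H}+\MD_3\MD_3^{\da_H}$ from Corollary~5.3 (which holds without assuming $\Omega_H=0$), one obtains after integration by parts
\[
0=\int\sum_i|\MD_i^{\da_H}s'|_H^2 + t\int|\Ad(e^{s/4})s'|_{H_0}^2,
\]
the boundary terms vanishing by the decay rates. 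Both integrands are nonnegative, so $s'=0$. The key point you are missing is that the correct test section is $\Ad(e^{s/2})s'$ (not $s''=\Ad(e^{-s/2})s'$), paired in $H_0$; this converts the problem to an $H$-pairing of $\ML_H s'$ with $s'$ itself, where integration by parts works cleanly even though $s'$ is not $H$-Hermitian.
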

\begin{proof}
The same parametrix method as in Theorem~\ref{Fred1} shows that this mapping is Fredholm and has index zero, so it suffices
to show that its nullspace vanishes. For $s\in \calX^{k+2,\alpha}_{2-\ep, 1+\ep, -\delta}(\Si \ti \RP; \isu(E,H_0))$, we compute 
\begin{equation}
\begin{split}
0 = \int\langle \ML_{t,s} s, \Ad(e^{\frac{s}{2}})s\rangle _{H_0} &
=\int\langle \Ad(e^{\frac{s}{2}})\ML_H s, \Ad(e^{\frac{s}{2}})s \rangle _{H_0}+t\langle s, \Ad(e^{\frac s2})s\rangle _{H_0}\\
&=\int\langle \sum_{i=1}^3 (\MD_i^{\da_H})^*\MD_i^{\da_{H}} s ,  s\rangle _{H}+t|\Ad(e^{\frac s4})s|^2_{H_0}\\
&=\int\sum_{i=1}^3|\MD_i^{\da_H}s|^2_{H}+t|\Ad(e^{\frac s4})s|^2_{H_0}.
\end{split}
\end{equation}
The integration by parts is justified by the decay rates of $s$. We conclude that $Ad(e^{\frac s4})=0$, hence $s = 0$, and
the operator is an isomorphism, as claimed.    

If $s_1$ and $s_2$ are sections of $\isu(E,H_0)$, then $\langle \Ad(e^{\frac{s}{2}})s_1, \Ad(e^{\frac{s}{2}}) s_2)\rangle _{H	_0}
=\langle s_1 , s_2 \rangle _{H}$, and hence 
\begin{equation}
\begin{split}
\int \langle \ML_{t,s} s_1 , s_2 \rangle_{H_0} &=\int\langle \Ad(e^{\frac{s}{2}})\ML_H s_1 , s_2 \rangle _{H_0}+\langle t s_1 , s_2 
\rangle _{H_0}\\ &=\int \langle \ML_H s_1 , \Ad(e^{-\frac s2}) s_2 \rangle _H+\langle s_1 ,t s_2 \rangle _{H_0}\\
&=\int \langle s_1, \Ad(e^{\frac{s}{2}})\ML_{t,s} (\ Ad(e^{-\frac{s}{2}})s_2)\rangle_{H_0} .
\end{split}
\end{equation}
This shows that the range is dense, since if $s_2$ is orthogonal to every $\ML_{t,s} s_1$, then $s_2 \equiv 0$. Thus it is
enough to know beforehand that $\ML_{t,s}$ has closed range, which follows from the existence of a parametrix.
\end{proof}

\begin{proposition}
	\label{Iopen}
$I$ is open. 
\end{proposition}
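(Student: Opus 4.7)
The plan is to deduce openness from the implicit function theorem on Banach spaces, using the Fredholm/isomorphism statement of Proposition~\ref{Fred2} together with smoothness of the family $N_t$.

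Concretely, fix $t_0\in I$ and let $s_0\in \calX^{k+2,\alpha}_{2-\ep,1+\ep,-\delta}$ be a solution of $N_{t_0}(s_0)=0$. Consider the map
\[
\mathcal{N}:[0,1]\times \calX^{k+2,\alpha}_{2-\ep,1+\ep,-\delta}(\Si\ti\RP;\isu(E,H_0))\longrightarrow \calX^{k,\alpha}_{-\ep,-1+\ep,-\delta}(\Si\ti\RP;\isu(E,H_0)),\qquad (t,s)\mapsto N_t(s).
\]
We must check three things: (i) that $\mathcal{N}$ is well-defined between these spaces; (ii) that $\mathcal{N}$ is smooth (or at least $\calC^1$) jointly in $(t,s)$; (iii) that its partial derivative in $s$ at $(t_0,s_0)$ is an isomorphism. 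Point (iii) is exactly Proposition~\ref{Fred2}, which identifies this partial derivative with $\ML_{t_0,s_0}$ and asserts it is an isomorphism for $\ep$ small and $\delta>0$. For (i) and (ii), one uses that $\gamma(\cdot)$ and $\Ad(e^{\,\cdot\,/2})$ are entire functions of their matrix argument, combined with the multiplicative and derivative estimates for the $\calX^{k,\alpha}_{\mu,\nu,-\delta}$ scale (stability under products for nonnegative weights, action of the uniformly degenerate vector fields $y\del_y,y\del_x$ near $F_{\mathrm{or}}$ and $R\del_R,\del_\psi,\del_\theta$ near each $F_j$, and the exponential factor $e^{-\delta y}$). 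Since $\Omega_{H_0}\in\calX^{k,\alpha}_{\mu,\nu,-\delta}$ for any weights by admissibility, the remainder $\gamma(-s)\ML_{H_0}s+Q(s)$ produced by \eqref{quasilinearformequation} lands in the target space whenever $s$ is in the domain space, and the whole expression depends smoothly on both $s$ and the parameter $t$ (the $t$-dependence is linear through the term $ts$).

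Granted (i)--(iii), the implicit function theorem on Banach spaces produces $\eta>0$ and a $\calC^1$ curve $t\mapsto s(t)\in \calX^{k+2,\alpha}_{2-\ep,1+\ep,-\delta}$ defined for $|t-t_0|<\eta$ with $s(t_0)=s_0$ and $N_t(s(t))=0$. Thus an entire neighborhood of $t_0$ lies in $I$, proving openness.

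The only slightly delicate point is the mapping property (i): one must verify that the nonlinear operators $s\mapsto \gamma(-s)\ML_{H_0}s$ and $s\mapsto Q(s)$ in \eqref{quasilinearformequation}, as well as the prefactor $\Ad(e^{s/2})$ in \eqref{eqmethodofContinuation}, preserve the relevant weighted iterated-edge Hölder classes. This follows from the fact that the domain weights $(2-\ep,1+\ep)$ are strictly positive, so powers and products of $s$ gain strictly positive weights at each face, and the admissible $H_0$ contributes only bounded geometric factors on the support of the scale-invariant differentiations defining $\calC^{k,\alpha}_{\ie}$; the uniform cylindrical estimates for $y$ large are standard. Once this bookkeeping is in place the argument is a direct application of the implicit function theorem.
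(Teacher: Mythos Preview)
Your proof is correct and follows exactly the same approach as the paper: invoke the implicit function theorem using the smoothness of $N_t$ between the stated weighted spaces and the isomorphism of the linearization $\ML_{t,s}$ supplied by Proposition~\ref{Fred2}. The paper's own proof is a single sentence to this effect; your additional discussion of the mapping properties (i)--(ii) is a helpful expansion but not a different argument.
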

\begin{proof}
The linearization $\ML_{t,s}$ of $N_t$  at $s$ is an isomorphism, and $N_t$ acts smoothly between these same
function spaces, so the implicit function theorem gives the result. 
\end{proof}
\end{subsection}
\end{section}

\begin{section}{Construction of Approximate Solutions}
Our next task is to show that given any triplet $(\ME,\vp,L)$ as in \eqref{complexgeometrymodulispace}, there
exists an admissible Hermitian metric compatible with this data. 

\begin{subsection}{Approximate solutions with Nahm pole boundary data}
We begin with the simpler case where $\mfd(\ME,\vp,L)=\emptyset$, i.e., the holomorphic data lies in the Hitchin section $\MM_{\Hit}$. 
%For a Higgs bundle $(\ME,\vp)$ in the Hitchin component, we will construction an admissible NP Hermitian metric
\begin{proposition}
For every element $(\ME, \vp)$ in the Hitchin component, there exists an $H_0$ such that in unitary gauge relative
to $H_0$, $\Omega_{H_0} = \MO(1)$.
\end{proposition}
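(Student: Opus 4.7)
The plan is to construct $H_0$ by combining the Hitchin metric on $\Sigma$ with the Nahm pole singular model $y^{-\mathfrak e^0}$ near $y=0$, glued via a cutoff in $y$ so that only the Hitchin metric remains far from the boundary. Explicitly, I would first invoke Hitchin's theorem to produce the unique Hermitian metric $H_\Sigma$ on $\mathcal E$ solving the Hitchin equations for the stable Higgs pair $(\mathcal E,\vp)$. For Higgs bundles in the Hitchin section, $H_\Sigma$ is diagonal in the splitting $\mathcal E = \bigoplus_j K^{n/2-j}$ (this is standard for the Hitchin section, reflecting the cyclic structure of $\vp$ and the $\mathbb C^*$-equivariance), so $H_\Sigma$ commutes with $y^{-\mathfrak e^0}$. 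Fix a cutoff $\chi(y)$ equal to $1$ for $y \le 1/2$ and $0$ for $y \ge 1$, and set
$$
H_0(z,y) \;:=\; H_\Sigma(z)\cdot \exp\bigl(-\chi(y)\log y\cdot \mathfrak e^0\bigr).
$$
Near $y=0$ this is $H_\Sigma\cdot y^{-\mathfrak e^0}$; for $y\ge 1$ it is $y$-independent and equal to $H_\Sigma$.

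I would then check the two boundary conditions. For $y\to \infty$, $H_0=H_\Sigma$ is pulled back trivially from $\Sigma$, so the associated Chern connection, together with $\vp$ and $\vp^{\star_{H_\Sigma}}$, is exactly the flat $\mathrm{SL}(n+1,\mathbb R)$ connection from the Hitchin solution; no $y$-direction components are present. For $y\to 0$, the singular factor $y^{-\mathfrak e^0}$ provides exactly the leading $\mathfrak e^+/y$ term in $\phi_z$ and $\mathfrak e^0/(2y)$ term in $\phi_1$ in unitary gauge (the same computation as in Section 3.2, after Definition 3.3), while the bounded factor $H_\Sigma$ contributes only $\MO(1)$ corrections. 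Hence $H_0$ satisfies the Nahm pole boundary condition.

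The main computation is to estimate $\Omega_{H_0}$ in holomorphic parallel gauge, using
$$
\Omega_{H_0}=-\bar\partial(H_0^{-1}\partial H_0)-g_0^2\partial_y(H_0^{-1}\partial_y H_0)+[\vp,\vp^{\star_{H_0}}].
$$
For $y\ge 1$ this vanishes identically by the Hitchin equation (no $y$-derivatives contribute). In the cutoff transition $1/2\le y\le 1$ everything is smooth and compactly supported, hence $\MO(1)$. In the singular region $y\le 1/2$, commutativity of $H_\Sigma$ and $y^{-\mathfrak e^0}$ gives $H_0^{-1}\partial H_0=H_\Sigma^{-1}\partial H_\Sigma$ and $H_0^{-1}\partial_y H_0=-y^{-1}\mathfrak e^0$, producing a singular term $-g_0^2 y^{-2}\mathfrak e^0$. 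Writing $\vp=\mathfrak e^+ +\tilde\vp$ with $\tilde\vp$ the $q_i$ part, a direct weight computation shows that $\vp^{\star_{H_0}}=\mathrm{Ad}(y^{\mathfrak e^0})\vp^{\star_{H_\Sigma}}$ has singular sub-diagonal part $y^{-2}\cdot(\text{scalar})\cdot\mathfrak e^-$ coming from $\mathfrak e^+$, while every entry in the last column (from $\tilde\vp^\dagger$) picks up a power $y^{2n+2-2j}\ge y^2$ and hence vanishes at $y=0$. The commutator $[\mathfrak e^+,y^{-2}\mathfrak e^-]=y^{-2}\mathfrak e^0$ then cancels the $y^{-2}$ singularity from the $\partial_y$ term, exactly as in the pure Nahm pole model; what remains is the $z$-derivative piece $-\bar\partial(H_\Sigma^{-1}\partial H_\Sigma)$, paired against the $\mathfrak e^0$-weight-zero part of $[\vp,\vp^{\star_{H_0}}]$, which is bounded and whose leading $y$-independent part vanishes by the Hitchin equation on $\Sigma$.

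The main obstacle I anticipate is verifying this cancellation carefully, in particular tracking the $H_\Sigma$-dependent scalars multiplying $E_{i,i}$ in $[\mathfrak e^+,\mathrm{Ad}(y^{\mathfrak e^0})\mathfrak e^-_{H_\Sigma}]$ and confirming that the residual diagonal is compatible with the diagonal part of $\bar\partial(H_\Sigma^{-1}\partial H_\Sigma)$ up to a bounded error. This is where the specific structure of the Hitchin section is essential: the entries $(H_\Sigma)_{jj}$ of the diagonal Hitchin metric together with the constants $\sqrt{B_i}$ satisfy precisely the Toda-type identities that make the Hitchin equation diagonal in this splitting, so the singular parts cancel and the remainder is $\MO(1)$. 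The boundedness away from $y=0$ is immediate from smoothness and compactness of the cutoff region, so we conclude $\Omega_{H_0}=\MO(1)$ in unitary gauge.
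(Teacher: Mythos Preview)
Your construction relies essentially on the claim that the Hitchin harmonic metric $H_\Sigma$ is diagonal in the splitting $\mathcal E=\bigoplus_j K^{n/2-j}$, so that it commutes with $y^{-\mathfrak e^0}$. This is false for the general Hitchin section. Diagonality holds at the Fuchsian point (all $q_i=0$) and, more generally, for cyclic Higgs bundles (only $q_{n+1}\neq 0$), where a $\mathbb Z/(n+1)\mathbb Z$ symmetry forces the Hitchin equations to reduce to the affine Toda system. For a generic element of the Hitchin section with several $q_i\neq 0$ there is no such symmetry, and $H_\Sigma$ has nontrivial off-diagonal components. The $\mathbb C^*$-equivariance you invoke does not help: the Hitchin section is not $\mathbb C^*$-invariant unless all $q_i$ vanish.

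Once $H_\Sigma$ fails to be diagonal, your computation breaks. With $H_0=H_\Sigma\cdot y^{-\mathfrak e^0}$ one gets
\[
H_0^{-1}\partial H_0=\mathrm{Ad}(y^{\mathfrak e^0})\bigl(H_\Sigma^{-1}\partial H_\Sigma\bigr),
\]
and since $\mathrm{Ad}(y^{\mathfrak e^0})$ multiplies the $(i,j)$ entry by $y^{2(j-i)}$, any strictly lower-triangular entry of $H_\Sigma^{-1}\partial H_\Sigma$ produces a negative power of $y$. Likewise $\vp^{\star_{H_0}}=\mathrm{Ad}(y^{\mathfrak e^0})\bigl(H_\Sigma^{-1}\bar\vp^{\,T}H_\Sigma\bigr)$ acquires uncontrolled singular terms beyond the $y^{-2}\mathfrak e^-$ you isolate. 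The cancellation you describe between $[\mathfrak e^+,y^{-2}\mathfrak e^-]$ and the $\partial_y$ contribution no longer exhausts the singular part of $\Omega_{H_0}$.

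The paper sidesteps all of this by taking $H_0^{(0)}=\exp(-\log y\,\mathfrak e^0)$ directly (with the background diagonal metric on $\bigoplus K^{n/2-j}$ induced from the metric on $\Sigma$), without reference to the Hitchin solution. In unitary gauge one then has $\phi_z=g\vp g^{-1}=\phi_z^{\mathrm{mod}}+b$ with $b$ containing the $q_i$-entries, each weighted by a positive power of $y$, so $b=\mathcal O(y)$. The model part $\phi_z^{\mathrm{mod}}$ together with the $\partial_y$ terms is an exact solution, and the only remaining contributions to $\Omega_{H_0^{(0)}}$ are the cross terms $[\phi_z^{\mathrm{mod}},b^\dagger]+[b,(\phi_z^{\mathrm{mod}})^\dagger]=\mathcal O(y^{-1})\cdot\mathcal O(y)=\mathcal O(1)$, plus the bounded curvature of the background metric on $\Sigma$. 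This is a much shorter argument, and it does not require knowing anything about $H_\Sigma$. The behaviour as $y\to\infty$ is handled separately when assembling the full admissible metric.
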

\begin{proof}
The Higgs pair $(\ME, \vp)$ has the form \eqref{HitchincomponentHiggsfield}. 
% Let $\ME=K^{-\frac{n}{2}}\oplus K^{-\frac{n-2}{2}}\oplus \cdots\oplus K^{\frac{n}{2}}$ and\begin{equation}
% \begin{split}
% \vp=\begin{pmatrix}
% 0 & \sqrt{B_1}  & 0 &\cdots& 0\\
% 0 & 0 & \sqrt{B_2} &\cdots& 0\\
% \vdots & &\ddots & &\vdots\\
% 0 & & &\ddots  &\sqrt{B_n}\\
% q_{n+1}& q_n & \cdots & q_2 &0
% \end{pmatrix},
% \end{split}
% \end{equation}
Consider $H_0^{(0)}=\exp(-\log y\, \mfe_0)$ and $g=\sqrt{H_0}=\diag(\lam_1,\lam_2,\cdots,\lam_{n+1})$.
By definition of $\mfe_0$, $\lam_i=y^{-\frac{N}{2}+i-1}$, understood as an element of $\End(K^{-\frac{N}{2}+i-1},K^{-\frac{N}{2}+i-1})$. 
Now define $\phi_z=g \vp g^{-1}=(\lam_i \lam_j^{-1}\vp_{ij})$ where $\vp=(\vp_{ij})$, and decompose this endomorphism
as $\phi_z^{\mathrm{mod}}+b$, where 
\begin{equation} 
\begin{split}
\phi_z^{\mathrm{mod}}=y^{-1}
\begin{pmatrix}
0 & \sqrt{B_1} & 0 &\cdots& 0\\
0 & 0 & \sqrt{B_2} &\cdots& 0\\
\vdots &  &\ddots  & &\vdots\\
0 &  & &\ddots  &\sqrt{B_n}\\
0& 0 & \cdots & \cdots &0
\end{pmatrix} \mbox{and}\ \ 
b=\begin{pmatrix}
0 & 0  & 0 &\cdots& 0\\
0 & 0 & 0 &\cdots& 0\\
\vdots &  &\ddots & &\vdots\\
0 &  & &\ddots  & 0\\
y^nq_{n+1}& y^{n-1}q_n & \cdots & yq_2 &0
\end{pmatrix}.
% b=\begin{pmatrix}
% 0 & \sqrt{B_1}  & 0 &\cdots& 0\\
% 0 & 0 & \sqrt{B_2} &\cdots& 0\\
% \vdots &  &\ddots & &\vdots\\
% 0 &  & &\ddots  &\sqrt{B_n}\\
% y^nq_{n+1}& y^{n-1}q_n & \cdots & yq_2 &0
% \end{pmatrix},
\end{split}
\end{equation}
Since $b = \MO(y)$, then in the gauge defined by $g$, 
\[
\Omega_{H_0^{(0)}}=[\phi_z^{\mo},b^{\da}]+[b, (\phi_{z}^{\mo})^\da] =  \MO(1);
\]
even more specifically, the right hand side can be written $F_{H_0^{(0)}} + \MO(y)$.

We may add correction terms to make this error vanish to higher and higher order.  Indeed,
suppose that we have found a Hermitian metric $H_0^{(j)}$ such that $\Omega_{H_0^{(j)}} = F_jy^j + \MO(y^{j+1})$ for
some $j \geq 0$ (so $F_0 = F_{H_0^{(0)}}$ above), 
and define $H_0^{(j+1)} = H_0^{(j)} e^s$.  Using \eqref{quasilinearformequation}, we see that in order 
to show that $\Omega_{H_0^{(j+1)}} = F_{j+1}y^{j+1} + \MO(y^{j+2})$ it suffices to solve the equation 
\[
\gamma(-s)\ML_{H_0^{(j)}} s = -F_j y^j\ \ \mbox{mod terms of order}\ y^{j+1 - \ep},
\]
But $\gamma(-s_j y^j) = \mathrm{Id} + \MO(y)$, and $\ML_{H_0^{(j)}}$ equals 
the normal operator $N(\ML_{H_0})$ to leading order, so it suffices to solve
$N(\ML_{H_0}) s_j y^j = -F_j y^j$ where $s_j$ is just an element of $i\su(E, H_0)$.  
This algebraic equation is solvable at least when $j$ is not an indicial root; in those exceptional cases, 
one must replace $s_j y^j$ by $\tilde{s}_j y^j \log y$ to obtain a solution. 
(The possibility of these extra log factors is the reason we allowed the error $\MO(y^{j+1-\ep})$ above.)
In any case, we can carry out this inductive procedure and then take a Borel sum to obtain a Hermitian endomorphism 
\[
s \sim \sum_{j =0}^\infty s_{j\ell} \, y^j (\log y)^\ell
\]
(with $s_{0\ell} = 0$ for $\ell > 0$ and only finitely many $s_{j \ell}$ nonzero for each $j$), 
such that if we set $H_0 = H_0^{(0)} e^s$, then $\Omega_{H_0} = \MO(y^N)$ for all $N \geq 0$.
\end{proof}

In summary, we obtain
\begin{theorem}
	\label{existadmissble1}
	For any $(\ME,\vp)\in\MM_{\Hit}$, there exists an admissible Hermitian metric.
\end{theorem}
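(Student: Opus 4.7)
The plan is to assemble $H_0$ from two pieces: the local formal-series construction from the preceding proposition, which handles the Nahm pole boundary behavior at $y=0$, and the $y$-independent Hitchin solution, which handles the limit as $y\to\infty$. A smooth cutoff in the $y$-direction then glues these into a single global Hermitian metric.

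First, invoke Hitchin's existence theorem for the stable pair $(\ME,\vp)$ to produce a Hermitian metric $H_\infty$ on $\ME$ over $\Si$ solving $F_{H_\infty}+[\vp,\vp^{\st_{H_\infty}}]=0$. Pull $H_\infty$ back to $\Si\ti\RP$ to be independent of $y$. In the parallel holomorphic gauge of Section~3, $\MD_3=\pa_y$, and since $H_\infty$ is $y$-independent we also have $\MD_3^{\da_{H_\infty}}=-\pa_y$, hence $[\MD_3,\MD_3^{\da_{H_\infty}}]=0$ and $\phi_1=0$. The remaining terms in $\Omega_{H_\infty}$ reduce to the Hitchin equation on each slice, so $\Omega_{H_\infty}\equiv 0$ on $\Si\ti\RP$, and the Chern connection together with $\vp+\vp^{\st_{H_\infty}}$ gives precisely the flat $\SL(n+1,\CC)$ connection associated to the Hitchin representation of $(\ME,\vp)$.

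Second, the metric $H_{\mathrm{near}}$ built in the proof of the preceding proposition is a smooth positive Hermitian metric on $\Si\ti(0,T_0]$ (for some $T_0>0$) whose Chern connection satisfies the Nahm pole boundary condition at $y=0$ and with $\Omega_{H_{\mathrm{near}}}=\MO(y^N)$ for every $N$. Pick a cutoff $\chi\in\MC^\infty(\RP)$ with $\chi(y)=1$ for $y\le T$ and $\chi(y)=0$ for $y\ge 2T$, where $0<T<T_0/2$, and define
\[
H_0 := H_\infty^{1/2}\exp\!\bigl(\chi(y)\log(H_\infty^{-1/2}H_{\mathrm{near}}H_\infty^{-1/2})\bigr)H_\infty^{1/2},
\]
the geodesic interpolation between $H_{\mathrm{near}}$ and $H_\infty$ in the space of positive-definite Hermitian forms. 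This $H_0$ is automatically positive-definite and smooth, coincides with $H_{\mathrm{near}}$ on $\{y\le T\}$, and equals $H_\infty$ on $\{y\ge 2T\}$.

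The three admissibility conditions are now immediate: the Nahm pole structure at $y=0$ follows from $H_0=H_{\mathrm{near}}$ in a neighborhood of the boundary; convergence to the flat $\SL(n+1,\CC)$ connection as $y\to\infty$ follows from $H_0=H_\infty$ outside a compact set in $y$; and $\Omega_{H_0}$ vanishes to infinite order at $y=0$ because $\Omega_{H_{\mathrm{near}}}$ does. As a bonus, $\Omega_{H_0}$ is compactly supported in $y$ (it vanishes identically for $y\ge 2T$), so it lies in any weighted function space $\calX^k_{\mu,\nu,-\delta}$ needed for the continuity method. The main obstacle here is essentially conceptual---recognizing that the formal-series near-boundary construction and the globally defined Hitchin solution can be glued freely with no matching condition in the intermediate region---rather than any deep analytic point; all genuine work has already been done in Proposition~\ref{existadmissble1}'s predecessor and in Hitchin's theorem.
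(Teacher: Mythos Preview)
Your proof is correct and in fact more complete than the paper's treatment. The paper simply writes ``In summary, we obtain'' after the formal-series construction near $y=0$ and states the theorem without further argument; in particular it does not spell out how the second admissibility condition (convergence to the flat $\SL(n+1,\CC)$ connection as $y\to\infty$) is arranged. Your gluing with the $y$-independent Hitchin solution $H_\infty$ is precisely the missing step, and the geodesic interpolation is a clean way to keep positivity. So the approach is the same in spirit, but you have supplied the detail the paper elides.
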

\end{subsection}

\begin{subsection}{Approximate solutions at knot singularities}
Now suppose that we have holomorphic data $(\ME,\vp,L)$ where $S(\ME,\vp,L)=\{p_j,\vfr^j=(\mfr^j_1,\cdots,\mfr^j_n), \ j = 1,
\ldots, N\}$.  As before, we wish to construct an approximate solution near each $p_j$.  As will be clear from this construction,
it involves very little more effort to construct a solution to infinite order as one which solves the equation only up to
first order.   Note that the construction of approximate solutions at points of $F_{\mathrm{or}}$ is in fact completely
local and algebraic (or really, involving a finite jet in the normal direction at each point).  Thus we may 
find a good approximate solution near each knot face $F_j$ and then proceed with the previous construction
to obtain an infinite order solution along the remaining part o the boundary.    It therefore suffices to focus 
on this construction near each $p_j$ separately, and so we drop the index $j$ below.

Fix a small disk $\calU$ centered at $p$ in $\Si$, with local holomorphic coordinate $z$, and work in spherical coordinates 
on $\Si \ti \RP$.  Write $\vp=\vp_z dz$ and set $L_1:=L$, $L_{i+1}:=\vp_z(L_i)$. By assumption, the map 
\begin{equation}
L_1\we L_2\we \cdots \we L_{n+1}\xrightarrow{1\we \vp_z\we \cdots \we \vp_z^n} \det \ME
\end{equation}
fails to be an isomorphism in this neighborhood precisely at $p$.

\begin{lemma}
\label{lemmalocalformKnot}
\label{canonicalformUj}
There exists a local holomorphic frame for $\ME$ in $\calU$ such that 
\begin{equation}
\begin{split}
\vp_z =\begin{pmatrix}
\st & z^{\mfr^j_1} & 0 &\cdots& 0\\
\st & \st & z^{\mfr^j_2} & \cdots& 0\\
\vdots & \vdots &\ddots &  &\vdots\\
\st & \vdots & &\ddots & z^{\mfr^j_n}\\
\st& \st & \cdots & \cdots &\st
\end{pmatrix}, 
\end{split}
\label{canonialknotform}
\end{equation}
where all of the components labelled with a $\st$ are bounded holomorphic functions.  
\end{lemma}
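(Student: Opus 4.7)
The plan is to construct the frame $(e_1,\ldots,e_{n+1})$ by induction from the top, starting with a non-vanishing local holomorphic section $e_{n+1}$ of $L$ on $\calU$ (which exists since $L|_\calU$ is trivial) and then successively producing $e_n, e_{n-1}, \ldots, e_1$. The inductive hypothesis after stage $i$ is that $e_{n+1}, e_n, \ldots, e_{n+1-i}$ are holomorphic on $\calU$, linearly independent at $p$, and that for each $j = 0, 1, \ldots, i-1$,
\begin{equation*}
\vp_z(e_{n+1-j}) \;=\; z^{\mfr_{n-j}}\, e_{n-j} \;+\; \sum_{k=n+1-j}^{n+1} \alpha_{k,j}\, e_k,
\end{equation*}
with bounded holomorphic coefficients $\alpha_{k,j}$ on $\calU$.

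For the inductive step I would complete $e_{n+1-i}, \ldots, e_{n+1}$ to an auxiliary local holomorphic frame $v_1, \ldots, v_{n-i}, e_{n+1-i}, \ldots, e_{n+1}$ of $\ME|_\calU$ and expand
\begin{equation*}
\vp_z(e_{n+1-i}) \;=\; \sum_{k=1}^{n-i} c_k\, v_k \;+\; \sum_{k=n+1-i}^{n+1} d_k\, e_k,
\end{equation*}
with $c_k, d_k$ holomorphic. The key analytic claim is that the common divisor of $c_1, \ldots, c_{n-i}$ in the local ring $\MO_{\calU,p}$ is precisely $z^{\mfr_{n-i}}$. Granted this, I would factor $(c_1, \ldots, c_{n-i}) = z^{\mfr_{n-i}}(\widetilde c_1, \ldots, \widetilde c_{n-i})$ with at least one $\widetilde c_k(p) \ne 0$, set $e_{n-i} := \sum_k \widetilde c_k\, v_k$, and verify at once that $e_{n-i}$ is holomorphic, nonvanishing at $p$, transverse to the previously constructed vectors, and that $\vp_z(e_{n+1-i}) = z^{\mfr_{n-i}} e_{n-i} + \sum_{k \ge n+1-i} d_k e_k$, which is exactly the required form. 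After $n$ such stages the frame is complete, and all remaining starred entries in the matrix of $\vp_z$ are holomorphic coefficients coming from the expansion of each $\vp_z(e_{n+1-j})$ in the frame, hence bounded on a neighborhood of $p$.

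The proof of the gcd claim is where the real work lies, and is the main obstacle. The plan is to compute the vanishing order of $f_{i+1} = e_{n+1} \we \vp(e_{n+1}) \we \cdots \we \vp^{i+1}(e_{n+1})$ in two ways. On one hand, the definition of the weights $\mfr_{\bullet}$ via the divisor data $\mfd(\ME,\vp,L)$ prescribes $Z(f_{i+1})$ in terms of $\mfr_n, \mfr_{n-1}, \ldots, \mfr_{n-i}$. On the other hand, a straightforward induction using the stage-$i$ hypothesis shows $\vp_z^{\,j}(e_{n+1}) = z^{a_j} e_{n+1-j} + R_j$, where $a_j = \mfr_n + \mfr_{n-1} + \cdots + \mfr_{n+1-j}$ and $R_j$ lies in the $\MO$-span of $e_{n+2-j}, \ldots, e_{n+1}$. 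Because each $R_j$ sits in the span of basis vectors used by an earlier slot of the wedge, the remainders drop out of the first $i+1$ factors; for the last slot, $\vp_z^{\,i+1}(e_{n+1}) = z^{a_i}\vp_z(e_{n+1-i}) + \vp_z(R_i)$, and the same transversality argument (applied to $\vp_z(R_i)$ via the stage-$i$ relations) shows that only the $v_k$-components $z^{a_i}c_k$ of $\vp_z^{\,i+1}(e_{n+1})$ survive the wedging. Matching the resulting explicit formula for $Z(f_{i+1})$ against the prescribed value then forces $\mathrm{ord}_p(\gcd(c_1,\ldots,c_{n-i})) = \mfr_{n-i}$. The effectivity hypothesis on $\mfd(\ME,\vp,L)$ is exactly the non-negativity of all $\mfr_{n-i}$, which is what makes every step of the construction well-defined.
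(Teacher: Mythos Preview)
The proposal is correct and takes essentially the same approach as the paper's own proof: both construct the frame inductively from a nonvanishing local section of $L$, apply $\vp_z$, and factor out the appropriate power of $z$ at each step to produce the next frame vector. The only differences are cosmetic (you label the initial section $e_{n+1}$ and build downward in index, whereas the paper labels it $\hat e_1$ and builds upward), and you spell out in more detail than the paper why the extracted power of $z$ at each stage agrees with the prescribed weight via the wedge maps $f_i$.
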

\begin{proof}
We seek a local holomorphic frame $\{\he_1,\he_2,\cdots,\he_{n+1}\}$ such that for each $k$, 
$\vp_z(\he_{k})\subset span\{\he_1,\cdots,\he_{k+1}\}$. 

First choose a nonvanishing section $\he_1$ of $L$, and extend it to a local holomorphic frame $\{\he_1,e_2,\cdots,e_{n+1}\}$
over $\calU$.   Now write $\vp_z(\he_1)=f\he_1+z^{\mfr_1}\sum_{i=2}^{n+1}g_ie_i$, where at least one of the $g_i$ are nonvanishing
at $z=0$.  Setting $\he_2:=\sum_{i=2}^{n+1}g_ke_2$, then we have arranged that $\vp_z(\he_1)=f\he_1+z^{\mfr_1}\he_2$, or
equivalently, $\vp_z(\he_1)=z^{\mfr_1}\he_2 \mod \{\he_1\}$. The integer $\mfr_1$ is the order of vanishing of the map 
$L^{\otimes 2}\xrightarrow{1\we\vp_z}\we^2\ME$.   Next, $\vp_z(\he_2)=f_1\he_1+f_2\he_2+z^{\mfr_{2}}(\sum_{i=3}^{n+1}g_ie_i)$
where at least one of these new coefficient functions $g_i$ do not vanish at $0$, so we define $\he_3=\sum_{i=3}^{n+1}g_ie_i$. 
This process can be continued inductively to obtain a local frame $\{\he_1,\he_2,\cdots,\he_{n+1}\}$, where
$\vp_z (\he_{k})\in z^{\mfr_k}\he_{k+1} + \operatorname{span}\{\he_1,\cdots, \he_{k}\}$, as desired.
\end{proof}

\begin{proposition}
There exists a Hermitian metric $H_{\calU}$ in $\calU$ such that in the corresponding unitary gauge, 
$|\Omega_{H_{\calU}}| = \MO(\psi^{-1}R^{-1})$.
\end{proposition}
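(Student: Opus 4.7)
The plan is to patch the exact model knot solution from Section 3.3 into the neighborhood $\calU$ using the canonical holomorphic frame of Lemma~\ref{canonicalformUj}. In the frame $\{\he_1,\ldots,\he_{n+1}\}$ produced by that lemma, the super-diagonal entries of $\vp_z$ are exactly $z^{\mfr_1},\ldots,z^{\mfr_n}$, all entries strictly above the super-diagonal vanish, and the remaining entries (on and below the diagonal) are bounded holomorphic functions. Accordingly, decompose
\begin{equation*}
\vp_z = \vp_z^{\mo} + b,
\end{equation*}
where $\vp_z^{\mo}$ is the super-bidiagonal matrix carrying only the entries $z^{\mfr_j}$. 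Set $H_\calU := \HM$, written in this frame as $\diag(\lam_1^2,\ldots,\lam_{n+1}^2)$, and let $g = \diag(\lam_1,\ldots,\lam_{n+1})$ be the associated complex gauge transformation.

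In the corresponding unitary gauge, the Chern connection components and $\phi_1$ depend only on $g$ (via the formulas at the end of Section 2.2), so they coincide exactly with the model fields $\AM$ and $\POM$; moreover $\MD_1$, $\MD_1^{\da_H}$, $\MD_3$, $\MD_3^{\da_H}$ all agree with their model counterparts. The only change is in the Higgs field, $\phi_z = g \vp_z g^{-1} = \PM + g b g^{-1}$, and hence in $\MD_2 = [\vp_z dz,\cdot]$. Since the exact model solution satisfies $\Omega_{\HM} = 0$, this gives, up to sign conventions,
\begin{equation*}
\Omega_{H_\calU} = \tfrac{i}{2}\Lam\!\left([gbg^{-1}dz,\, \PM^{\da}] + [\PM,\, (gbg^{-1})^{\da}d\bz] + [gbg^{-1}dz,\, (gbg^{-1})^{\da}d\bz]\right).
\end{equation*}

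The decisive estimate is the uniform bound $|gbg^{-1}| \leq C$ on $\calU \ti [0,y_0]$. Entrywise, $(gbg^{-1})_{ij} = \lam_i\lam_j^{-1} b_{ij}$, and the canonical form of Lemma~\ref{canonicalformUj} guarantees that $b_{ij} = 0$ whenever $i < j$, since all super-diagonal information has been absorbed into $\vp_z^{\mo}$. For $i \geq j$, the ratio $\lam_i\lam_j^{-1}$ telescopes as a product of factors $\lam_{k+1}\lam_k^{-1}$, each of which is bounded by Lemma~\ref{modelmetricestimate}, so the ratio itself is uniformly bounded. Combined with the boundedness of the holomorphic entries $b_{ij}$, this yields $|gbg^{-1}| \leq C$. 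Together with the pointwise estimate $|\PM| = \MO(R^{-1}\psi^{-1})$ recorded for the model, the first two commutators above are $\MO(R^{-1}\psi^{-1})$ and the third is $\MO(1)$, which is subsumed by $\MO(R^{-1}\psi^{-1})$ in this neighborhood. The main obstacle is precisely this uniform bound: without the structural fact from Lemma~\ref{canonicalformUj} that $b$ has no entries strictly above the super-diagonal, the conjugated perturbation would contain reciprocal ratios $\lam_j \lam_i^{-1}$ with $j > i$, which may blow up as $R,\psi \to 0$ and would destroy the estimate.
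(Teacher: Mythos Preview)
Your proof is correct and follows essentially the same route as the paper: take $H_{\calU}=\HM$ in the canonical frame of Lemma~\ref{canonicalformUj}, decompose $\phi_z=\PM+gbg^{-1}$, and use Lemma~\ref{modelmetricestimate} together with the lower-triangular structure of $b$ to conclude $|gbg^{-1}|\leq C$, whence $\Omega_{H_\calU}=[\PM,(gbg^{-1})^{\da}]+[gbg^{-1},\PM^{\da}]+\MO(1)=\MO(\psi^{-1}R^{-1})$. Your version is slightly more explicit than the paper's in spelling out the telescoping product for $\lam_i\lam_j^{-1}$ and in retaining the harmless $[gbg^{-1},(gbg^{-1})^{\da}]$ term.
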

\begin{proof}
By virtue of the previous Lemma, write $\vp_z$ as in \eqref{canonialknotform}.  Denote the corresponding model 
solution by $\HM$ and set $g=\sqrt{\HM}$. In unitary frame, $g=\diag(\lam_1,\lam_2,\cdots,\lam_{n+1})$, where 
$|\lam_{k+1}\lam_k^{-1}|\leq C$ by \ref{modelmetricestimate}.  The matrix $\phi_z=g\vp_zg^{-1}$ has components 
$\lam_i\lam_j^{-1}\vp_{ij}$, and decomposes as $\phi_z^{\mo}+b$ where $\phi_z^{\mo}$ is the model 
knot solution and $b = \MO(1)$.  Hence in unitary gauge, $\Omega_{H_0}=[\phi^{\mo}_z,b^{\da}]+[b,\phi_{\bar{z}}^{\mo}]
\sim\MO(\psi^{-1}R^{-1})$.
\end{proof}

To extend this away from the knot, choose a holomorphic frame away from the knot(s) and holomorphic frame near
each $p_j$.  On the overlap near $p_j$ these frames are related by a unimodular gauge transformation 
\begin{equation}
g_j=\exp(\sum_{i=1}^nA_{ij}^{-1}(-\mfr_j\log r)H_i)=z^{\frac{\sum_{i=1}^n(n+1-i)\mfr_i}{n+1}}\diag (1,z^{\mfr_1},z^{\mfr_1+\mfr_2},\cdots,z^{\mfr_1+\cdots+\mfr_n}).
\end{equation}

Using a partition of unity, we then conclude the
\begin{proposition}
For any $(\ME,\vp,L)$, there exists an admissible $H_1$ such that in unitary gauge $|\Omega_{H_1}| =  \MO(\psi^{-1}R^{-1})$
near $F_j$ and $|\Omega_{H_1}| = \MO(y^{-1})$ near $F_{\mathrm{or}}$. 
\end{proposition}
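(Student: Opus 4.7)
The plan is to produce $H_1$ by piecing together local model metrics near each knot with a metric built away from the knots as in the proof of Theorem~\ref{existadmissble1}, using a partition of unity subordinate to the cover $\{\calU_0, \calU_1, \ldots, \calU_N\}$ introduced just before the proposition, and then iterating as in the Hitchin section case to arrange vanishing of $\Omega_{H_1}$ to infinite order at $y=0$.

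For each knot $p_j$, I first invoke Lemma~\ref{canonicalformUj} to choose a local holomorphic frame of $\ME$ on $\calU_j$ in which $\vp_z$ takes the canonical form~\eqref{canonialknotform} with off-diagonal entries $z^{\mfr_i^j}$. In this frame, the model knot Hermitian metric $\HM^{(j)}$ associated to the weight vector $\vfr^j$ yields, by the previous proposition, a local metric $H_j$ whose Chern connection has a Nahm pole with knot singularity of weight $\vfr^j$ at $(p_j,0)$ and whose error satisfies $|\Omega_{H_j}| = \MO(\psi_j^{-1} R_j^{-1})$ in local spherical coordinates about $(p_j,0)$. Away from all knots, I construct a metric $H_0$ on $\calU_0 = \Si \setminus \{p_1,\ldots,p_N\}$ by repeating the construction from the proof of Theorem~\ref{existadmissble1} in a local holomorphic frame adapted to the filtration by the iterates $\vp_z^i(L)$, producing a metric with the Nahm pole boundary condition at $y=0$, convergence to the flat $\mathrm{SL}(n+1,\CC)$ connection as $y \to \infty$, and error $\MO(1)$ near $F_{\mathrm{or}}$.

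The patching is the technical core. On each overlap $\calU_j \cap \calU_0$, the two local holomorphic frames are related by a unimodular gauge transformation of the form $g_j = z^{c_j}\diag(1, z^{\mfr_1^j}, \ldots, z^{\mfr_1^j + \cdots + \mfr_n^j})$ (up to a bounded holomorphic factor), so I can transport $H_j$ back to the $H_0$-frame and write $H_j = H_0\exp(t_j)$ with $t_j$ an endomorphism valued in $\isu(E, H_0)$. I then set
\[
H_1 := H_0 \exp\!\Bigl(\sum_{j=1}^N \chi_j\, t_j\Bigr),
\]
where $\{\chi_j\}$ is a smooth cutoff family with $\chi_j \equiv 1$ in a small disk around $p_j$ and supported in $\calU_j$. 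This $H_1$ is globally defined, satisfies the Nahm pole boundary conditions with the correct knot structure, and converges to the appropriate flat connection as $y\to\infty$; the only new contributions to $\Omega_{H_1}$ come from $d\chi_j$ on the annular overlap, which lies uniformly away from $y=0$ and hence yields smooth bounded errors that do not degrade the target bounds.

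The main obstacle I anticipate is controlling the patched error uniformly in the annular overlaps, so that $|\Omega_{H_1}| = \MO(\psi_j^{-1}R_j^{-1})$ really holds near $F_j$ and $|\Omega_{H_1}| = \MO(y^{-1})$ near $F_{\mathrm{or}}$: the transition factor $g_j$ is itself singular in $z$, so $t_j$ does not automatically have uniformly bounded size, and one must check that precisely the $z$-singular parts of $g_j^{\da} g_j$ cancel against the $y$-singular model metric in the right way. Once this approximate solution is in place, infinite-order vanishing of $\Omega_{H_1}$ at $y=0$---the third admissibility condition---is arranged by the same inductive Borel-summation procedure as in Theorem~\ref{existadmissble1}: at each order one solves an algebraic equation involving the normal operator of $\ML$, inserting logarithmic factors when the order coincides with an indicial root, and the resulting formal series is summed to a smooth correction which leaves the leading bounds untouched.
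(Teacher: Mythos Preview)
Your approach is essentially the same as the paper's: build local model metrics near each knot via the previous proposition and away from the knots via the Nahm-pole construction, note that on overlaps the two holomorphic frames differ by the explicit unimodular transformation $g_j$, and patch with a partition of unity. The paper's proof is terse and records only this outline.

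Two minor remarks. First, the infinite-order Borel-summation correction in your final paragraph is not part of \emph{this} proposition in the paper; the paper carries that out in the paragraph immediately following (leading to Theorem~\ref{existadmissble2}). The proposition as stated only claims the weak bounds $\MO(\psi^{-1}R^{-1})$ and $\MO(y^{-1})$, with ``admissible'' used loosely. Second, your worry about the $z$-singularity of $g_j$ is legitimate but harmless: the cutoff $d\chi_j$ is supported on an annulus bounded away from $z=0$, where $g_j$ and its derivatives are smooth and bounded, so the transition contributes only smooth bounded error and the stated bounds survive.
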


Proceeding even further, we can correct $H_1$ to a Hermitian metric for which $\Omega_H$ vanishes to all orders both
as $y \to 0$ away from the knots and as $R \to 0$.  This involves iteratively solving away the Taylor series in $R$ of
the error term $\Omega_{H_1}$ using the operator $\ML_{H_1}$, which is now an analytic problem on $S^2_+$.  
At each step we solve $\ML (s_j R^j) = \eta_{j-2} R^{j-2}\ \mbox{mod}\ \MO(R^{j-1})$, which is possible unless $j$
is an indicial root; in that case we add a factor of $\log R$ to $s_j R^j$.  A Borel sum of these approximate solutions
yields a Hermitian metric $H_2$ for which $\Omega_{H_2}$ vanishes to all orders as $R \to 0$.  We may finally
carry out the analogous procedure near $y = 0$, and in particular near $\psi = 0$ near the knots. This is now
a pointwise algebraic operation. Taking a further Borel sum leads finally to the Hermitian metric $H$ for which
$\Omega_H$ vanishes to all orders as $R \to 0$ and as $y$ (or $\psi$) tends to $0$. 
\end{subsection}

\begin{theorem}
	\label{existadmissble2}
For any given pair $(\ME,\vp,L)\in\MCK$, there exists an admissible Hermitian metric $H$.
\end{theorem}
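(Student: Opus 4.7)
The plan is to assemble the ingredients that have already been built in this section. Start from the Hermitian metric $H_1$ produced in the preceding proposition, which is globally defined on $\Si \ti \RP$, agrees with the model knot metric $\HM$ near each $p_j$, satisfies $|\Omega_{H_1}| = \MO(\psi^{-1}R^{-1})$ near each knot face $F_j$ and $|\Omega_{H_1}| = \MO(y^{-1})$ near $F_{\mathrm{or}}\setminus\{p_j\}$, and agrees with the harmonic metric on the stable Higgs pair $(\ME,\vp)$ outside a compact set in the $y$ direction (so that the associated Chern connection converges exponentially to the flat $\SL(n+1,\CC)$ connection determined by $(\ME,\vp)$).  The residual task is to upgrade $H_1$ to an $H$ for which $\Omega_H$ vanishes to infinite order along both types of boundary faces while preserving the asymptotics at $y=\infty$.

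First I would kill the error near each knot face $F_j$. Since $\Omega_{H_1}$ is smooth away from $R=0$ and has a conormal expansion in powers of $R$ with coefficients that are smooth sections on the hemisphere $S^2_+$, write $\Omega_{H_1} \sim \sum_{j\geq -2} \eta_j R^j$ and inductively seek a correction $s \sim \sum s_j R^j$ with $s_j$ a section of $\isu(E,H_1)$ over $S^2_+$, so that $H_2 = H_1 e^s$ satisfies $\Omega_{H_2} = \MO(R^N)$ for all $N$. Writing $H_1 e^{s_j R^j}$ and expanding via \eqref{quasilinearformequation}, the leading term comes from the normal operator at $F_j$, and the relevant equation at order $R^{j-2}$ is the elliptic problem
\begin{equation*}
N(\ML_{H_1})(s_j R^j) = -\eta_{j-2} R^{j-2} \quad \text{mod} \ \MO(R^{j-1})
\end{equation*}
on $S^2_+$, coupled to the Nahm pole boundary behaviour at $\psi=0$. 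By the indicial root analysis of Proposition (the $\Delta_H$ indicial root proposition) this is solvable whenever $j$ is not an indicial root at $F_j$; when $j$ is an indicial root, one introduces a $\log R$ factor and solves with $s_j \,R^j + \tilde s_j R^j \log R$ instead. Iterating and taking a Borel sum produces a polyhomogeneous $s$ such that $H_2 := H_1 e^s$ annihilates $\Omega$ to infinite order at each $F_j$.

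Next I would repeat this procedure along the original face $F_{\mathrm{or}}$, away from the knot points, exactly as in the proof of Theorem~\ref{existadmissble1}: the construction is purely algebraic in the normal direction since the leading term of $\ML_{H_2}$ at $F_{\mathrm{or}}$ is the algebraic Casimir operator, whose invertibility on each $\slf_2$ isotypic component dictates the indicial roots $\{-n,\dots,-1,2,\dots,n+1\}$.  Thus, order by order in $y$, and with $\log y$ factors inserted at the (finitely many) exceptional orders, one solves away the remaining Taylor expansion of $\Omega$, and a further Borel sum yields a Hermitian metric $H$ with $\Omega_H$ vanishing to infinite order both at each $F_j$ and on all of $F_{\mathrm{or}}$. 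Because the corrections built at the two types of faces decay into the interior and the original $H_1$ already matches the harmonic metric on $(\ME,\vp)$ for $y$ large, the limiting behaviour $H \to $ harmonic metric as $y\to\infty$, and hence convergence of the Chern connection to the flat $\SL(n+1,\CC)$ connection, is preserved.

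The main obstacle is the bookkeeping at indicial roots, which forces the approximate solution and the final $H$ to be polyhomogeneous rather than smooth in $R$ and $y$; one must verify that the $\log$ terms introduced near $F_{\mathrm{or}}$ and near each $F_j$ are compatible on the overlap between these regions so that the Borel summation can be carried out globally, and that the size bounds $|s| + |y\,ds| \leq C\psi^\ep R^\ep$ (near knots) and $|s| + |y\,ds| \leq Cy^\ep$ (away from knots) required by the definition of admissibility, together with exponential decay as $y\to\infty$, are met throughout. Once this is verified, the resulting $H$ lies in $H_0 \exp(\calX^{k+2}_{2-\ep,1+\ep,-\delta})$ for every $k$ and small $\ep,\delta>0$, which is precisely admissibility.
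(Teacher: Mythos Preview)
Your proposal is correct and follows essentially the same route as the paper: start from the patched metric $H_1$, iteratively solve away the Taylor expansion of $\Omega_{H_1}$ in $R$ at each knot face $F_j$ (an analytic problem on $S^2_+$, with $\log R$ insertions at indicial roots), Borel sum to obtain $H_2$, then repeat the algebraic correction in $y$ along $F_{\mathrm{or}}$ exactly as in Theorem~\ref{existadmissble1}. The paper's discussion is terser and does not spell out the corner compatibility or the preservation of the $y\to\infty$ asymptotics, but your additional remarks on these points are accurate and do not represent a different strategy.
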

% \begin{proof}
% By \eqref{quasilinearformequation}, we have $\Omega_H=\Omega_{H_0}+\gamma(-s)\ML_{H_0}s+Q(s)$, with 
% $|\Omega_{H_0}|\sim \MO(\psi^{-1}R^{-1})$
% \end{proof}

\end{section}

\begin{section}{A Priori Estimates}
To prove closedness of the set $I$, we must show that if $H_j$ is a sequence of solutions corresponding
to a sequence of $t_j \in I$, then there are a priori estimates on the $H_j$ which allow us to take a limit,
which then shows that the limit of the $t_j$ also lies in $I$.   This is done in a sequence of steps where
we bound first the $\calC^0$ then the $\calC^k$ norms of the $H_j$ for every $k$, then establish
uniform decay rates and H\"older estimates near $y=0$ and as $y \to \infty$. 

Before embarking on these nonlinear estimates, we recall a now-standard Lemma about the mapping behavior
of the Laplacian on cylinders.

Define $\MC^{k,\al}_D(\Si \ti \RP)$ to consist of all functions or sections which are uniformly in $\calC^{k,\alpha}$ 
on every strip $\si \ti [t, t+1]$, and which in addition vanish at $y=0$ (hence the subscript `D' for Dirichlet).
We also write $\MC^{k,\al}_{D, -\delta} = e^{-y\delta}\calC^{k,\al}_D$. 

Now fix $\chi \in \calC^{\infty}( \Si \ti \RP)$ with $\chi \geq 0$, $\chi(y) = 1$ for $y \geq 2$ and $\chi(y) = 0$ for
$y \leq 1$.  
\begin{proposition}
\label{scalarlaplacian}
Let $\Delta$ be the scalar Laplacian. Then 
\begin{equation*}
\begin{split}
\Delta: \calC^{k+2,\al}_{D, -\delta}(\Si \ti \RP) \oplus \RR & \longrightarrow \calC^{k,\al}_{-\delta} (\Si \ti \RP) \\
(u,A) & \longmapsto \Delta u + A\Delta (\chi)
\end{split}
\end{equation*}
is an isomorphism.
\end{proposition}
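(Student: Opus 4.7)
The plan is to combine separation of variables along $\Si$ with ODE analysis in $y$, treating the nonzero spectral modes and the zero mode separately, then promote the resulting pointwise estimate to the claimed H\"older regularity via standard Schauder theory.

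Let $\{\phi_j\}_{j\geq 0}$ be an $L^2$-orthonormal eigenbasis for the scalar Laplacian on $\Si$, with eigenvalues $0 = \mu_0 < \mu_1 \leq \mu_2 \leq \cdots$ and $\phi_0$ constant. Expanding $u(y,z) = \sum_j u_j(y)\phi_j(z)$ and similarly for $f$, and using that $\chi$ depends only on $y$, the equation $\Delta u + A\Delta\chi = f$ decouples into the family of ODEs
\[
(\mu_j - \partial_y^2)\,u_j = f_j - \delta_{j0}\,A\,\chi''(y), \qquad u_j(0) = 0.
\]
I would fix $\delta > 0$ strictly below the spectral gap $\sqrt{\mu_1}$. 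For each $j\geq 1$ the operator $\mu_j - \partial_y^2$ with Dirichlet condition at $y=0$ and decay at infinity is invertible with explicit Green's function $\tfrac{1}{2\sqrt{\mu_j}}\bigl(e^{-\sqrt{\mu_j}|y-s|} - e^{-\sqrt{\mu_j}(y+s)}\bigr)$, and an exponentially decaying right-hand side is sent to a solution decaying at rate $e^{-\delta y}$. The delicate case is $j=0$: the operator $-\partial_y^2$ with Dirichlet condition has a one-dimensional cokernel in the decaying category, detected by the value at infinity, and the parameter $A$ is tailored precisely to absorb it. Using the explicit formula $u_0(y) = -\int_y^\infty (s-y)\bigl(f_0(s) - A\chi''(s)\bigr)\,ds$ and integrating by parts, the boundary condition $u_0(0) = 0$ becomes a single scalar equation whose unique solution is $A = \int_0^\infty s\,f_0(s)\,ds$.

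Injectivity is quick: if $\Delta u + A\Delta\chi = 0$, then $v := u + A\chi$ is harmonic on $\Si\ti\RP$, vanishes at $y=0$, approaches $A$ at $y=\infty$, and $\partial_y v$ decays exponentially for $y$ large (since $\chi'$ is compactly supported and $u \in \calC^{k+2,\alpha}_{D,-\delta}$). Integrating by parts against $v$ on $\Si \ti [0,L]$ and using the Dirichlet condition at $y=0$ gives $\int_{\Si \ti [0,L]} |\nabla v|^2 = \int_\Si (v\,\partial_y v)|_{y=L}$, whose right-hand side tends to $0$ as $L\to\infty$; hence $\nabla v \equiv 0$, so $v$ is constant, and the two asymptotic values then force $A = 0$ and $u \equiv 0$.

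Finally, the per-mode estimates assemble into an $L^\infty$ bound of the form $|u(y,z)| \lesssim \|f\|_{\calC^0_{-\delta}}\, e^{-\delta y}$, and the full $\calC^{k+2,\alpha}_{D,-\delta}$ norm of $u$ is then controlled by combining interior Schauder estimates on slabs $\Si \ti [L, L+1]$, $L \geq 1$, with boundary Schauder estimates with Dirichlet data on a neighborhood of $\Si\ti\{0\}$. The main technical point is the weight constraint $\delta < \sqrt{\mu_1}$; without it the map fails to be surjective onto the stated target, and one would need to enlarge the finite-dimensional parameter space to absorb the finitely many low-frequency modes whose natural decay rate is slower than $\delta$.
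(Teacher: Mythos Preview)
Your argument is correct and reaches the same conclusion, but by a genuinely different route than the paper. The paper's proof is abstract: it invokes the general Fredholm theory for translation-invariant operators on manifolds with cylindrical ends, uses the maximum principle for injectivity at negative weight and a duality argument for surjectivity at positive weight, and then observes that any solution $v \in \calC^{k+2,\alpha}_{D,\delta}$ to $\Delta v = f$ with $f$ exponentially decaying admits a partial expansion $v = A + \tilde{A}y + \tilde{u}$ at infinity with $\tilde{u}$ exponentially decaying; after subtracting the harmonic function $\tilde{A}y$ (which lies in the Dirichlet nullspace), the one-dimensional obstruction is identified and matched by $\Delta\chi$. Your approach is instead fully constructive: separation of variables reduces everything to ODEs, the $j\geq 1$ modes are inverted by explicit Green's kernels, and the zero mode yields a closed formula $A = \int_0^\infty s\, f_0(s)\, ds$ for the defect. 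Your method is more elementary and gives concrete control over $A$; the paper's method situates the result within a general framework that would adapt to operators lacking such a clean spectral decomposition. One point worth tightening in your write-up: the claim that ``per-mode estimates assemble into an $L^\infty$ bound'' is not automatic from pointwise bounds on each $u_j$, since summing over $j$ requires decay of $\|\phi_j\|_\infty |u_j|$; it is cleanest to first establish the solution in $L^2$ (where the eigenexpansion is orthogonal) and then invoke the interior and boundary Schauder estimates you already mention to recover the full $\calC^{k+2,\alpha}_{D,-\delta}$ regularity.
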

\begin{proof} Results of this type are now quite classical, and indeed this appears explicitly in \cite{MazzeoPollackUhlenbeck} 
for example We provide a brief sketch of the proof.  The operator $\Delta: \calC^{k+2,\alpha}_{D, \tau} \to \calC^{k,\alpha}_{\tau}$ 
is Fredholm so long as $\tau^2$ is not an eigenvalue $\si_j^2$ of $\Delta_\Si$.  The maximum principle shows that this map
is injective when $\tau < 0$, so an argument involving both duality and elliptic regularity shows that it is surjective
when $\tau > 0$ and $\tau$ does not equal one of the values $\si_j$ above.  Since $\si_0 = 0 < \si_1 \leq \ldots$, we
can choose $0 < \delta < \si_1$.  If $f \in \calC^{k,\alpha}_{-\delta}$, there exists a solution $v \in \calC^{k+2,\alpha}_{D, \delta}$
to $\Delta u = f$. The final step is to observe that since $f$ decays at a (small) exponential rate, the solution $v$
has a partial expansion of the form $v = A + \tilde{A} y + \tilde{u}$ with $\tilde{u} \in \calC^{k+2,\alpha}_{D, -\delta}$.
This is a type of elliptic regularity at infinity. The coefficients $A, \tilde{A}$ here are constants. However, the
function $y$ lies in the nullspace of $\Delta$ on $\calC^{k+2,\alpha}_{D, \delta}$ since it vanishes at $y=0$, so
we may as well subtract off this term to see that there exists a solution which is asymptotic to $A + \tilde{u}$
as $y \to \infty$.  The map $f \to A$ is continuous, so there exists a codimension one subspace $H$ of functions $f$
in $\calC^{k,\alpha}_{-\delta}$ for which there exists a unique solution $ u \in \calC^{k+2,\alpha}_{D, -\delta}$ to $\Delta u = f$.
The function $\Delta( \chi)$ does not lie in this subspace, so by choosing the constant $A$ appropriately,
then for any $f \in \calC^{k,\alpha}_{-\delta}$ we can make $f - A \Delta \chi \in H$.
\end{proof}

Recall the notation $\calX^{k,\alpha}_{\mu, \nu, -\delta} = \psi^\mu R^\nu e^{-y\delta}\calC^{k,\alpha}_{\ie}(\Si \ti \RP; \isu(E, H))$, 
%We also define the conormal space $\calA^{\mu, \nu, -\delta} = \cap_{k \geq 0} \calX^{k,\alpha}_{\mu, \nu, -\delta}$.  
If the index $\nu$ is omitted, this connotes the space without knot singularities, and with weight function $y^\mu$. 

\begin{subsection}{$\calC^0$ estimate}
\begin{proposition}  If $s$ is a Hermitian endomorphism which satisfies $N_t(s) = 0$, i.e., equation \eqref{eqmethodofContinuation}, 
then there exist a constant $C$ depending only on $H_0$ such that 
\begin{equation}
|s|_{\calC^0(\Si\ti\RP)}\leq C.
\end{equation}
\label{C0estimate}
\end{proposition}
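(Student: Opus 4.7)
The plan is to apply the maximum principle to the scalar $|s|^2_{H_0}$, using the Weitzenbock-type identity \eqref{Keyequation2} already established in the linearization section.

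First I rewrite the equation. Since $s$ commutes with every power of itself, $\Ad(e^{-s/2})s = e^{-s/2}s\,e^{s/2}=s$, and so applying $\Ad(e^{-s/2})$ to $\Ad(e^{s/2})\Omega_H + ts = 0$ yields the equivalent pointwise identity $\Omega_H=-ts$. Plugging this into \eqref{Keyequation2} and discarding the nonnegative gradient term gives the differential inequality
\[
\Delta\,|s|^2_{H_0}\;+\;t\,|s|^2_{H_0}\;\le\;|\Omega_{H_0}|_{H_0}\cdot|s|_{H_0},
\]
where $\Delta=\Delta_\Si-\pa_y^2$ is the geometric (nonnegative) Laplacian on $\Si\ti\RP$ appearing in \eqref{Keyequation2}.

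Next I use the boundary and asymptotic behaviour. Since $s\in\calX^{k+2}_{2-\ep,\,1+\ep,\,-\delta}(\Si\ti\RP;\isu(E,H_0))$, the section $|s|_{H_0}$ vanishes like $y^{2-\ep}$ as $y\to 0$ away from the knots, like $R^{1+\ep}$ as $R\to 0$ near each knot face $F_j$, and like $e^{-\delta y}$ as $y\to\infty$. So either $s\equiv 0$ (in which case the estimate is trivial) or $|s|^2_{H_0}$ attains its supremum at some interior point $p_0\in\Si\ti\RP$. At $p_0$, $\Delta|s|^2_{H_0}(p_0)\ge 0$, and the inequality above yields $t\,|s(p_0)|^2\le|\Omega_{H_0}(p_0)|\cdot|s(p_0)|$, hence
\[
t\,\|s\|_{\calC^0}\;\le\;\|\Omega_{H_0}\|_{L^\infty(\Si\ti\RP)}.
\]
The right-hand side is finite and depends only on $H_0$, because by admissibility $\Omega_{H_0}$ vanishes to infinite order at $y=0$ and decays exponentially as $y\to\infty$.

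The main obstacle is that this direct max-principle argument produces an estimate of the form $\|s\|_{\calC^0}\le C(H_0)/t$, which degenerates as $t\to 0$. To upgrade to a bound depending only on $H_0$ as stated, the standard additional input---going back to Uhlenbeck--Yau---is a contradiction argument that invokes stability: if $\|s_{t_j}\|_{L^\infty}\to\infty$ along some sequence $t_j\to t_\ast$, one rescales $\hat s_j=s_j/\|s_j\|_{L^\infty}$, uses \eqref{Keyequation2} to obtain uniform control on the normalized gradients (since the dropped term $\tfrac12\sum_i|v(s)\nabla_i s|^2$ reappears in the integrated identity), passes to a weak subsequential limit $\hat s_\infty$, and analyses its spectral decomposition to produce a nontrivial proper $\vp$-invariant, $\MD_3$-parallel holomorphic subsheaf of $\ME$ whose slope contradicts the stability/effectiveness of the triple $(\ME,\vp,L)$. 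I expect the delicate part of the argument to be verifying that the weakly holomorphic subsheaf extracted from $\hat s_\infty$ is simultaneously preserved by all three commuting operators $\MD_1,\MD_2,\MD_3$ and compatibly positioned with respect to $L$, so that it is a genuine destabilizer in the sense appropriate to this Kobayashi--Hitchin problem.
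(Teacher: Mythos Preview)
Your first paragraph is exactly right: the identity \eqref{Keyequation2} together with $\Omega_H=-ts$ gives the scalar inequality
\[
\Delta|s|^2_{H_0}+t|s|^2_{H_0}\le |\Omega_{H_0}|_{H_0}\,|s|_{H_0},
\]
and you correctly observe that evaluating at an interior maximum only yields $\|s\|_{\calC^0}\le C(H_0)/t$.

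The gap is in what comes next. You propose to repair the $t\to 0$ degeneration via an Uhlenbeck--Yau blow-up/destabilizing-subsheaf argument, but this is both incomplete (you only sketch it) and unnecessary. The paper obtains the $t$-independent bound by an elementary barrier argument that avoids stability entirely. Drop the $t|s|^2$ term and write the inequality as $\Delta|s|^2\le M\,|\Omega_{H_0}|$ with $M:=\sup|s|$; the constant $M$ is allowed to depend on $s$ at this stage. By Proposition~\ref{scalarlaplacian} there exist $u\in\calC^{2,\al}_{D,-\delta}$ and a real number $A$, both depending only on $H_0$, with $\Delta(u-A\chi)=|\Omega_{H_0}|$. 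Then $\Delta\bigl(|s|^2-M(u-A\chi)-\ep y\bigr)\le 0$; this function vanishes at $y=0$ and tends to $-\infty$ as $y\to\infty$, so by the maximum principle it is $\le 0$ everywhere. Letting $\ep\to 0$ gives $|s|^2\le M(\sup|u|+|A|)$, and taking the supremum of the left side yields $M^2\le M\,C(H_0)$, i.e.\ $M\le C(H_0)$.

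The point you missed is that allowing the comparison function to carry the a priori unknown factor $M=\sup|s|$ turns the maximum principle into a \emph{self-improving} inequality $M^2\le MC$, which closes without any appeal to stability. Your proposed alternative could in principle be made to work, but on this noncompact domain with singular boundary it would require substantial additional analysis (integrability near $y=0$ and near the knots, compactness of the rescaled sequence, regularity of the limiting subsheaf and its compatibility with $\MD_3$ and $L$), none of which you carry out and none of which the paper needs.
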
 
\begin{proof}
Taking the inner product of \eqref{quasilinearformequation} with $s$, where $H=H_0e^s$, gives
\begin{equation}
\Delta|s|^2+|\sqrt{\gamma(-s)}\na s|^2+t|s|^2+\langle \Omega_{H_0},s\rangle =0;
\label{ausefulformofmethodofcontinouationequation}
\end{equation}
here $\Delta= -\del_y^2 + \Delta_{\Sigma}$ and $|\sqrt{\gamma(-s)}\na s|^2=\sum_{i=1}^3|\sqrt{\gamma(-s)}\na_i s|^2$,
which then implies 
\[
\Delta |s|^2+t|s|^2\leq -\langle \Omega_{H_0},s\rangle  \Longrightarrow 
\Delta|s|^2\leq M|\Omega_{H_0}|
\]
for some constant $M > 0$. By Proposition \ref{scalarlaplacian}, there exists $u \in \calC^{2,\alpha}_{D, -\delta}$ and 
$A \in \RR$ such that $\Delta (u-A\chi)=|\Omega_{H_0}|$.   Hence
\[
\Delta(|s|^2-Mu+AM\chi - \ep y)\leq 0.
\]
Since $s$ and $u$ decay as $y \to \infty$ and vanish at $y=0$, and $\chi$ is bounded, we see that $|s|^2 -Mu + AM\chi \leq  \ep y$
for any $\ep > 0$, hence $|s|^2 \leq  M (\sup |u| + |A|)$.  Since $u$ and $A$ depend only on $H_0$, this gives the desired bound.
\end{proof}
\end{subsection}

\begin{subsection}{Morrey and Campanato Spaces}
Because of the structure of the equation, the higher derivative estimates are obtained using two slightly less familiar 
scales of function spaces, which we now review.  We also need to introduce certain scale-invariant
modifications of these spaces near a boundary, which are required in estimates near $y=0$. 

We first define the Morrey space $L^{p,\lambda}(\calU)$, where $\calU$ is an open set of $\RR^n$. This is the Banach space 
of functions for which
$$
||f||_{p,\lambda} :=  \sup_{x \in U \atop r > 0}  \left(r^{-\lambda} \int_{B_r(x) \cap \calU} |f|^p\right)^{1/p} < \infty.
$$
There is a well-known embedding theorem, cf. \cite{HumbertoNatashaStefan}:
\begin{proposition} For any function $f$ defined on an open set $\calU \subset \RR^n$ with $\nabla f \in L^{2, n-2+2\alpha}$, the
Morrey norm of $\nabla f$ bounds the $\calC^{0,\alpha}$ seminorm of $f$: 
\[
[f]_{\calC^{0,\alpha}} \leq C ||\nabla f||_{L^{2, n-2+2\alpha}}.
\] 
In particular, when $n=3$, $L^{2, 1+2\alpha} \subset \calC^{0,\alpha}$. 
\end{proposition}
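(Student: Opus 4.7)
The plan is to deduce the H\"older estimate from the classical Campanato characterization of $\calC^{0,\alpha}$. The basic tool is the Poincar\'e inequality on balls: for any ball $B_r(x)\subset\calU$, if $f_{B_r(x)}$ denotes the average of $f$ over $B_r(x)$, then
\begin{equation*}
\int_{B_r(x)} |f - f_{B_r(x)}|^2 \leq C\, r^2 \int_{B_r(x)} |\nabla f|^2.
\end{equation*}
Combining this with the hypothesis $\nabla f \in L^{2,n-2+2\alpha}$ gives, for every $x\in\calU$ and every $r>0$,
\begin{equation*}
\int_{B_r(x)} |f - f_{B_r(x)}|^2 \leq C\, r^{n+2\alpha} \, \|\nabla f\|_{L^{2,n-2+2\alpha}}^2.
\end{equation*}
This is precisely the statement that $f$ lies in the Campanato space $\calL^{2,n+2\alpha}(\calU)$.

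Next I would invoke (or reprove in a few lines) Campanato's theorem: if $f$ satisfies such an estimate with exponent $n+2\alpha$, then $f$ has a H\"older continuous representative with seminorm controlled by $\|\nabla f\|_{L^{2,n-2+2\alpha}}$. The standard argument is to compare averages on nested balls: for $B_{2^{-k}r}(x)\subset B_{2^{-k+1}r}(x)$, Cauchy--Schwarz and the Campanato bound yield
\begin{equation*}
|f_{B_{2^{-k}r}(x)} - f_{B_{2^{-k+1}r}(x)}| \leq C\, (2^{-k}r)^{\alpha}\, \|\nabla f\|_{L^{2,n-2+2\alpha}},
\end{equation*}
and summing the geometric series in $k$ shows that the averages $f_{B_{2^{-k}r}(x)}$ form a Cauchy sequence whose limit is a pointwise representative $\tilde f(x)$, with $|f_{B_r(x)} - \tilde f(x)| \leq C\, r^\alpha\, \|\nabla f\|_{L^{2,n-2+2\alpha}}$.

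For the final step, given two points $x,y\in\calU$ with $|x-y| = r$, both are contained in $B_{2r}(w)$ for $w$ the midpoint, and
\begin{equation*}
|\tilde f(x) - \tilde f(y)| \leq |\tilde f(x) - f_{B_{2r}(w)}| + |f_{B_{2r}(w)} - \tilde f(y)| \leq C\, r^\alpha\, \|\nabla f\|_{L^{2,n-2+2\alpha}},
\end{equation*}
which gives the desired H\"older bound. The specialization to $n=3$ with $\lambda = 1+2\alpha$ is immediate. The only mild technical nuisance is dealing with boundary points if $\calU$ is not convex (so $B_r(x)\cap\calU$ replaces $B_r(x)$ in the averages), but for balls contained in $\calU$ the argument is entirely standard; the main obstacle, such as it is, is just organizing the telescoping estimate cleanly, since the underlying embedding is a classical result.
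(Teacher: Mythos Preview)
Your proof is correct and follows the standard route via the Poincar\'e inequality and Campanato's characterization of H\"older spaces. The paper itself does not give a proof of this proposition: it is stated as a well-known embedding theorem with a citation, so there is nothing to compare against beyond noting that your argument is exactly the classical one underlying the cited result.
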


Recall also the closely related Campanato spaces $\ML^{p,\lam}$, defined as the set of all $f \in L^p(\calU)$ for any
open set $\calU$ such that
\begin{equation}
[f]_{\ML^{p,\lam}(\calU)}:=\sup_{x\in U,r>0}(r^{-\lam}\int_{B_r(x)\cap \calU}|f-\bar{f}_{x,r}|^p)^{\frac 1p} < \infty;
\end{equation}
here $\bar{f}_{x,r}$is the average of $f$ on $B_r(x)$.   Campanato spaces will actually not appear explicitly below.
They do arise in a crucial estimation in \cite{Hildebrandt1985} which is used in the interior estimates in Proposition \ref{43} 
and again in the estimates of Section 8.6 below.

As noted earlier, we shall need to use adapted versions of these spaces near $y=0$.   The motivation is
similar to that for the $\ie$-H\"older spaces $\calC^{k,\alpha}_\ie$ from Definition
\ref{defhold0}. Namely, if $u$ is defined in a  ``Whitney cube'' $Q$, e.g. a ball of radius $y_0/2$ centered
at a point $(z_0, y_0)$ (where $y_0$ is small), then $u_\lam(z,y) = u(\lam z, \lam y)$ is defined on $Q_{1/\lam}$.
We define norms which have the property that the sizes of $u|_Q$ and $u_\lam|_{Q_{1/\lam}}$ are the same (or at
least comparable.   Definition~\ref{defhold0} illustrates this for the H\"older norm and defines the scale
of spaces which is usually denoted $\calC^{k,\alpha}_0$ if we consider only scalings near $y=0$ away from knots and
$\calC^{k,\alpha}_{\ie}$ if we also incorporate scalings near the knot singularities. 

In a very similar way we define scale-invariant Morrey and Camanato spaces $L^{p,\lambda}_0$ and $\ML^{p,\lambda}_0$: 
\[
\|u\|_{L_0^{p,\lam}}:=\sum_{B_\mu}[u_\mu]_{L^{p,\lam}},\;and\;\|u\|_{\ML_{0}^{p,\lam}}:=\sum_{B_\mu}\|u_\mu\|_{\ML^{p,\lam}}.
\]
Since 
\begin{equation*}
\|u_{\mu}(x,y)\|_{L^{p,\lam}(B_1)}=\sup_{x\in B_1,r>0} (r^{-\lam}\int_{B_r}|u_\mu|^p)^{\frac{1}{p}} 
=\sup_{x\in B_\mu,r>0}(\mu^{\lam-n}r^{-\lam}\int_{B_r(x)}|u|^p)^{\frac1p}
\end{equation*}
we might equally well define this scale-invariant Morrey norm by
\[
\|u\|_{L^{p,\lam}_0(M)}=\sup_{x\in M,r>0}(y^{\lam-n}r^{-\lam}\int_{B_r(x)}|u|^p)^{\frac{1}{p}},
\]
(where $y$ is the distance to the boundary). 
Similarly, 
\[
\|u\|_{\ML^{p,\lam}_0(M)}=\sup_{x\in M,r>0}(y^{\lam-n}r^{-\lam}\int_{B_r(x)}|u-\bar{u}_{x,r}|^p)^{\frac{1}{p}}.
\]
These scale-invariant Morrey spaces appear explicitly in Section 8.6 while, as noted earlier,
the scale-invariant Campanato spaces are required implicitly in these arguments as we explain there.

Immediately from the interior estimate we obtain  
\begin{proposition}
$[u]_{C^{0,\al}_0} \leq C\|\na u\|_{L_0^{2,n-2+2\al}}$.
\end{proposition}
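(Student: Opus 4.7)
The approach is to reduce directly to the classical interior Morrey embedding recalled in the preceding proposition, exploiting the dilation-invariance built into the definitions of $\calC^{0,\alpha}_0$ and $L^{p,\lambda}_0$ to transfer the unweighted inequality to the scale-invariant weighted one. The point of the exponent $\lambda=n-2+2\alpha$ is precisely that it is the scale-invariant value of $\lambda$ at which both the H\"older seminorm and the Morrey norm of $\na u$ carry the same power of the Whitney scale $\mu$, so the universal constant of the classical embedding survives the passage to the scale-invariant setting.

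Concretely, the plan is as follows. Fix a Whitney cube $Q_\mu$ of size $\mu$ centered at a point whose distance to the boundary $y=0$ is comparable to $\mu$, and pull back by the dilation $D_\mu:(y,x)\mapsto(\mu y,\mu x)$ to obtain $u_\mu:=u\circ D_\mu$ on the unit-size cube $\tilde Q = D_\mu^{-1}(Q_\mu)$. By inspection of the two definitions, the local contribution of $Q_\mu$ to $[u]_{\calC^{0,\alpha}_0}$ is comparable to the classical H\"older seminorm $[u_\mu]_{\calC^{0,\alpha}(\tilde Q)}$, and likewise the local contribution to $\|\na u\|_{L^{2,n-2+2\alpha}_0}$ is comparable to $\|\na u_\mu\|_{L^{2,n-2+2\alpha}(\tilde Q)}$, with constants independent of $\mu$. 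Applying the classical Morrey embedding
\[
[u_\mu]_{\calC^{0,\alpha}(\tilde Q)} \;\leq\; C\,\|\na u_\mu\|_{L^{2,n-2+2\alpha}(\tilde Q)}
\]
on the fixed unit cube then gives the inequality on $Q_\mu$, and taking the supremum over all Whitney cubes near $y=0$, together with the standard unweighted interior estimate on the bulk region $y\geq 1$, yields the claimed scale-invariant estimate.

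The main ``step'' is therefore just scaling bookkeeping, and there is no substantive obstacle; this is consistent with the phrase ``Immediately from the interior estimate'' in the statement. The only mild point of care is that ``$\na u$'' on the scale-invariant side should be interpreted as the edge gradient, i.e.\ the vector fields $y\partial_y,\; y\partial_x$, so that $\na u_\mu$ on $\tilde Q$ corresponds to this edge gradient on $Q_\mu$ under $D_\mu$. With this interpretation both sides of the desired inequality transform identically under the Whitney dilations, and the argument goes through verbatim.
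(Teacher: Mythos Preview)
Your proposal is correct and is exactly the argument the paper has in mind: the proposition is stated with the preamble ``Immediately from the interior estimate we obtain'' and carries no separate proof, precisely because both the $\calC^{0,\alpha}_0$ seminorm and the $L^{2,n-2+2\alpha}_0$ norm are \emph{defined} so that on each Whitney cube they agree with the classical quantities for the dilated function on a unit cube, whence the classical Morrey embedding transfers verbatim. Your remark that $\nabla$ should be read as the edge gradient $(y\partial_y,\,y\partial_x)$ so that $(\nabla u)_\mu = \nabla(u_\mu)$ is the right way to make the bookkeeping exact, and is consistent with the paper's later use of $y\nabla s$ in the rescaled estimates of Section~8.6.
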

\end{subsection}	
	
\begin{subsection}{Interior $\calC^k$ estimate}
We now discuss the interior a priori estimates for higher derivatives of a solution $s$ of \eqref{eqmethodofContinuation}.

We now state the sequence of results which lead to the interior estimates, referring their proofs to the cited sources.
\begin{proposition}{\cite{BandoSiu1994,WalpuskiJacob}}
\label{C0alphainteriorestimate}
If $H=H_0e^{s}$ are defined on a ball $B_2 \subset \Si \ti \RP$ and $B_1 \subset B_2$ is a slightly smaller ball, then
\begin{equation}
|s|_{\calC^{0,\al}(B_1)}  \leq C \|\nabla s\|_{L^{2,2\al}(B_1)} C' (\|s\|_{C^0(B_2)}+|\Omega_H|_{C^0(B_2)}).
\end{equation}
\end{proposition}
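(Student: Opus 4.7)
The plan is to combine the Bochner-type identity \eqref{Keyequation2} with a Caccioppoli--Campanato iteration in the spirit of Hildebrandt \cite{Hildebrandt1985} and Bando--Siu \cite{BandoSiu1994}, and close the argument with the Morrey embedding into H\"older spaces recalled in Section 7.2.

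Since $|s|_{\calC^0(B_2)}$ is controlled, the matrix $v(s)=\sqrt{\gamma(-s)}$ in \eqref{Keyequation2} is uniformly elliptic with bounds depending only on $|s|_{\calC^0(B_2)}$, so \eqref{Keyequation2} yields the pointwise scalar inequality
\[
\Delta |s|_{H_0}^2 + c\,|\nabla s|^2 \;\leq\; C\bigl(|\Omega_H|_{\calC^0(B_2)}+|\Omega_{H_0}|_{\calC^0(B_2)}\bigr)\,|s|_{\calC^0(B_2)},
\]
where $c, C$ depend only on $|s|_{\calC^0(B_2)}$. Pairing against a standard cutoff $\eta^2$ with $\eta\equiv 1$ on $B_r(x)\subset B_1$, $\mathrm{supp}\,\eta\subset B_{2r}(x)$, and $|\nabla\eta|\leq C/r$, and integrating by parts produces a Caccioppoli-type estimate
\[
\int_{B_r(x)} |\nabla s|^2 \;\leq\; \frac{C}{r^2}\int_{B_{2r}(x)}|s-\bar s_{x,2r}|^2 \;+\; Cr^{3}\bigl(|s|_{\calC^0(B_2)}^2+|\Omega_H|_{\calC^0(B_2)}|s|_{\calC^0(B_2)}\bigr).
\]

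Next, since $\Delta |s|^2\leq M$ (a direct consequence of the Bochner identity and the $\calC^0$ bounds), a Campanato-type decay lemma applied to $|s|^2$ shows that $|s|^2$, and then via Hildebrandt's quasilinear bootstrap also $s$ itself, is H\"older continuous with some exponent $2\al\in(0,1)$, so $\int_{B_r(x)}|s-\bar s_{x,r}|^2\leq Cr^{3+2\al}$ uniformly in $B_1$. Substituting back into the Caccioppoli inequality gives the Morrey bound
\[
\|\nabla s\|_{L^{2,1+2\al}(B_1)}\;\leq\; C\bigl(|s|_{\calC^0(B_2)}+|\Omega_H|_{\calC^0(B_2)}\bigr).
\]
The Morrey embedding recalled in Section 7.2 then yields $[s]_{\calC^{0,\al}(B_1)}\leq C\|\nabla s\|_{L^{2,1+2\al}(B_1)}$, and combined with $|s|_{\calC^0(B_1)}\leq |s|_{\calC^0(B_2)}$ this gives the stated inequality.

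The main obstacle lies in the H\"older bootstrap for $s$: because the equation $\Omega_H=0$ is a coupled quasilinear system in the Hermitian-endomorphism-valued unknown $s$ rather than a scalar equation, H\"older control of the scalar $|s|^2$ does not immediately promote to H\"older control of the matrix-valued $s$. This is precisely where Hildebrandt's quasilinear machinery is essential, and where the coupling with the background fields $A,\vp,\phi_1$ of the admissible metric $H_0$ must be absorbed as lower-order perturbations controlled by $|\Omega_{H_0}|_{\calC^0}$.
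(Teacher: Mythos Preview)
The paper does not actually prove this proposition; it is simply quoted from \cite{BandoSiu1994,WalpuskiJacob}, and the mechanism is spelled out later in the proof of Lemma~\ref{localMorreybound}. Your outline follows a reasonable Morrey--Campanato strategy, but it contains a genuine circularity and a structural error in the Caccioppoli step.

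The circularity is in your third paragraph: you want to bound $\int_{B_r}|s-\bar s_{x,r}|^2\leq Cr^{3+2\al}$ by first showing that $s$ is H\"older continuous, and you appeal to ``Hildebrandt's quasilinear bootstrap'' for this. But Hildebrandt's estimate (Proposition~\ref{43}) takes as \emph{input} the Morrey bound $\|\nabla s\|_{L^{2,1+2\al}}$, which is precisely what you are trying to establish. Knowing only that the scalar $|s|^2$ satisfies $\Delta|s|^2\leq M$ gives H\"older continuity of $|s|^2$, not of the endomorphism $s$; you acknowledge this in your final paragraph but do not resolve it.

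The Caccioppoli inequality you write is also not correct for this operator. The Laplacian here is $\sum_i\nabla_i^*\nabla_i$ built from $\MD_1,\MD_2,\MD_3$, and the zeroth-order pieces $[\vp,\cdot]$, $[\vp^{\da},\cdot]$, $\phi_1^2$ do not annihilate the average $\bar s_{x,2r}$, so subtracting the mean does not absorb into the cutoff term as in the scalar case. The device that actually works, and which the paper carries out in Lemma~\ref{localMorreybound}, is to replace $\bar s$ by the $L^2$-projection $\Pi s$ onto the finite-dimensional kernel of the left-elliptic operator $\MW\eta=(\MD_1\eta+[\vp,\eta],\MD_3\eta)$ on the ball, use the Poincar\'e inequality for $\MW$ on $\kappa^\perp$, and run a Green-function doubling argument $f_p(r)\leq\gamma f_p(2r)+Kr^2$ to obtain $f_p(r)\leq Cr^{2\al}$ directly. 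That iteration produces the Morrey bound on $\nabla s$ without ever assuming H\"older continuity of $s$, and the Morrey embedding then closes the argument.
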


Next is a well-known estimate due to Hildebrandt. 
\begin{proposition}{\cite{Hildebrandt1985}}
\label{43}
\label{C1betainteriorestimate}
If $\calV \subset \calU$ are two open sets, and $s$ is a solution to an equation of the form
$$
\Delta s=A+B(\nabla s)+C(\nabla s\otimes \nabla s),
$$
where the coefficient functions $A,B,C$ are bounded in $\calC^0$, then for any $\alpha \in (0,1)$ there
exists a $\beta \in (0,\alpha)$ such that
$$
\|\nabla s\|_{\calC^{0,\beta}(\calV)}\leq C\|\nabla s\|_{L^{2, 2\al}(\calU)}
$$
for some constant $C$ which depends only on the volumes of $\calU$ and $\calV$. 
\end{proposition}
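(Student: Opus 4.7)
The plan is to follow the classical Hildebrandt--Campanato strategy: on each ball, decompose $s$ as a harmonic part plus a Dirichlet correction, and convert the Morrey hypothesis on $\nabla s$ into Hölder regularity through a standard iteration lemma. The $L^{2,2\al}$ hypothesis is exactly what is needed to tame the quadratic nonlinearity $C(\nabla s \otimes \nabla s)$, since it produces the correct $r^{2\al}$ decay for $\int_{B_r}|\nabla s|^2$ on small balls.

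Concretely, for any ball $B_r(x_0) \Subset \calU$, I would write $s = h + w$ where $\Delta h = 0$ on $B_r$ with $h|_{\partial B_r} = s|_{\partial B_r}$, and $w = s-h$ vanishes on $\partial B_r$. The harmonic part admits the standard mean-value decay
\[
\int_{B_\rho}|\nabla h|^2 \leq C(\rho/r)^n \int_{B_r}|\nabla h|^2, \quad 0 < \rho < r.
\]
For the correction, I would multiply $\Delta w = A + B(\nabla s) + C(\nabla s\otimes \nabla s)$ by $w$ and integrate by parts. Proposition~\ref{C0alphainteriorestimate} furnishes an $L^\infty$ bound on $s$, hence on $w$ by the maximum principle, and combining this with the Morrey hypothesis gives
\[
\int_{B_r}|\nabla w|^2 \leq C \|s\|_{L^\infty}\left(r^n + r^{2\al}\|\nabla s\|_{L^{2,2\al}}^2\right).
\]
Using $|\nabla s|^2 \leq 2|\nabla h|^2 + 2|\nabla w|^2$ and absorbing, this yields a Campanato-type decay inequality
\[
\int_{B_\rho}|\nabla s|^2 \leq C\bigl(\rho/r\bigr)^n \int_{B_r}|\nabla s|^2 + C r^{2\al}\bigl(\|\nabla s\|_{L^{2,2\al}}^2 + 1\bigr).
\]

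Applying the classical iteration lemma for such inequalities (e.g.\ Lemma 2.1, Chapter III of Giaquinta) then gives, for any $\beta < \al$, an improved decay $\int_{B_\rho}|\nabla s|^2 \leq C\rho^{n-2+2\beta}\bigl(\|\nabla s\|_{L^{2,2\al}(\calU)}^2+1\bigr)$ uniformly for $B_\rho \subset \calV$. This is exactly the Campanato condition placing $\nabla s \in L^{2, n-2+2\beta}(\calV)$, and the Morrey--Campanato embedding (as used in Proposition~\ref{C0alphainteriorestimate}) then upgrades this to $\nabla s \in \calC^{0,\beta}(\calV)$ with the stated estimate.

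The main obstacle is the quadratic gradient term, which sits on the borderline of what such a scheme can handle. Its control depends critically on two facts: the $L^\infty$ bound on $s$ (needed as the test-function bound in the Dirichlet energy estimate for $w$), and the Morrey hypothesis on $\nabla s$ (needed to give $\int_{B_r}|\nabla s|^2 = O(r^{2\al})$, which is exactly the extra room absorbing the quadratic growth). Both inputs are tight: weakening either one breaks the iteration. All implicit constants depend on $\|A\|_{\infty}$, $\|B\|_\infty$, $\|C\|_\infty$, the dimension, and the volumes $|\calV| \subset |\calU|$, as in the stated conclusion.
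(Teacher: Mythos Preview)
The paper does not supply a proof of this proposition; it is cited as a known result of Hildebrandt. Your sketch follows the standard harmonic-comparison/Campanato strategy, and the overall architecture is right, but there is a genuine gap in the final step.

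Your iteration, as written, uses the harmonic mean-value decay $\int_{B_\rho}|\nabla h|^2 \leq C(\rho/r)^n \int_{B_r}|\nabla h|^2$ and produces the Morrey bound $\int_{B_\rho}|\nabla s|^2 \leq C\rho^{n-2+2\beta}$, i.e.\ $\nabla s \in L^{2,n-2+2\beta}$. You then assert that ``the Morrey--Campanato embedding upgrades this to $\nabla s \in \calC^{0,\beta}$.'' That is not correct: the embedding $L^{2,n-2+2\beta} \hookrightarrow \calC^{0,\beta}$ applies to $s$, not to $\nabla s$. What you have actually proved at this point is $s \in \calC^{0,\beta}$, which is weaker than the claimed $\calC^{1,\beta}$ conclusion.

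To reach H\"older continuity of the \emph{gradient}, you must iterate on the oscillation $\int_{B_\rho}|\nabla s - (\nabla s)_{\rho}|^2$ rather than on $\int_{B_\rho}|\nabla s|^2$. The key input is the stronger Campanato decay for harmonic gradients,
\[
\int_{B_\rho}|\nabla h - (\nabla h)_{\rho}|^2 \leq C(\rho/r)^{n+2}\int_{B_r}|\nabla h - (\nabla h)_{r}|^2,
\]
valid because $\nabla h$ is itself harmonic. Combining this with your bound on $\int_{B_r}|\nabla w|^2$ (which controls $\int_{B_r}|\nabla s - \nabla h|^2$, hence the difference of oscillations) and iterating yields $\nabla s \in \ML^{2,n+2\beta}$; the Campanato characterization $\ML^{2,n+2\beta} \cong \calC^{0,\beta}$ then gives the desired conclusion. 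This refinement is exactly what Hildebrandt's argument does, and it is the missing idea in your sketch.
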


Finally, we invoke an estimate of the type used by Bando and Siu \cite{BandoSiu1994} to complete the boot-strapping. 
\begin{proposition}
\label{interiorckbound}
For any $ k \in \mathbb N$ and $T > 0$, there exists a constant $C_{T,k}$ such that 
\begin{equation}
\|s\|_{\calC^k(\Si\ti[ T,+\infty))}\leq C_{T,k} 
\end{equation}
\end{proposition}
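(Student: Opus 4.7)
The argument is a standard bootstrap from the uniform $\calC^0$ bound of Proposition~\ref{C0estimate}, exploiting the quasilinear structure of $N_t(s)=0$ on compact subregions of $\Si \times [T,\infty)$. By covering this region with a locally finite collection of Euclidean-sized balls $B_1 \Subset B_2$ (where the metric is uniformly elliptic and bounded since we are away from $y=0$), it suffices to produce a uniform $\calC^{k,\alpha}$ estimate on each such $B_1$ in terms of $\calC^0(B_2)$ data. The admissibility of $H_0$ ensures $\Omega_{H_0}$ is smooth and decays exponentially as $y\to\infty$.

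First, combine the $\calC^0$ bound with the Weitzenb\"ock-type identity \eqref{Keyequation2} and the equation $N_t(s)=0$ (which forces $|\Omega_H|\lesssim |s|$) to derive a pointwise differential inequality of the form
\begin{equation*}
\Delta |s|^2 + |\nabla s|^2 \leq C(|s|^2 + |s||\Omega_{H_0}|).
\end{equation*}
A Caccioppoli argument with radial cutoffs then yields $\int_{B_{r/2}}|\nabla s|^2 \leq C r^{1+2\alpha}$ for some small $\alpha>0$; the crucial improvement beyond the trivial Morrey exponent $n-2=1$ comes from applying De~Giorgi--Nash--Moser to the subsolution inequality for $|s|^2$, which gives H\"older regularity of $|s|^2$ and consequently a better integrated decay rate for $\nabla s$ on small scales.

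Having obtained an $L^{2,1+2\alpha}$ bound on $\nabla s$, Proposition~\ref{C0alphainteriorestimate} promotes this to a uniform $\calC^{0,\alpha}$ bound on $s$. Next, by \eqref{quasilinearformequation} the equation $N_t(s)=0$ takes the form
\begin{equation*}
\Delta s = A(x,s) + B(x,s)(\nabla s) + C(x,s)(\nabla s \otimes \nabla s),
\end{equation*}
with coefficients $A,B,C$ now bounded in $\calC^{0,\alpha}$. Proposition~\ref{43} (Hildebrandt) then upgrades $s$ to $\calC^{1,\beta}$ for some $\beta \in (0,\alpha)$. At this point the coefficients of the equation (viewed as a linear elliptic PDE for $s$ with $\nabla s$ frozen) lie in $\calC^{0,\beta}$, and iterating the standard Schauder estimates, differentiating the equation one order at a time as in Bando--Siu \cite{BandoSiu1994}, yields uniform $\calC^{k,\beta}$ control on $B_1$ for every $k$.

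The main obstacle is the first step: getting from the bare $L^\infty$ bound to a Morrey-space bound on $\nabla s$ with the improved exponent needed to start the H\"older bootstrap. Everything downstream is the routine Schauder iteration for quasilinear elliptic systems, which in this interior regime is insensitive to the degenerate boundary structure that is otherwise central to the rest of the paper.
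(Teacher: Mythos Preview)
Your overall bootstrap architecture is correct and matches the paper's: obtain a Morrey bound $\nabla s \in L^{2,1+2\alpha}$ on interior balls, feed this into Proposition~\ref{C0alphainteriorestimate} for $\calC^{0,\alpha}$, then into Hildebrandt (Proposition~\ref{43}) for $\calC^{1,\beta}$, and finish with Schauder iteration \`a la Bando--Siu. The paper does not spell out a proof of Proposition~\ref{interiorckbound} beyond this citation chain, so your reconstruction of the logic is fine.

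There is, however, a genuine gap in the mechanism you propose for the first step. You write that the improved Morrey exponent comes from ``applying De~Giorgi--Nash--Moser to the subsolution inequality for $|s|^2$, which gives H\"older regularity of $|s|^2$.'' This does not work as stated: De~Giorgi--Nash--Moser applied to a \emph{subsolution} of a scalar elliptic inequality yields a local maximum bound and a weak Harnack inequality, not H\"older continuity. H\"older regularity via DNM requires a two-sided bound (i.e.\ a solution, or both sub- and supersolution estimates), and here $\Delta|s|^2$ is only bounded from one side; the term $-\tfrac12\sum|v(s)\nabla_i s|^2$ can be arbitrarily negative a priori. Moreover, even if $|s|^2$ were H\"older, there is no evident route from that to the Morrey bound $\int_{B_r}|\nabla s|^2 \leq Cr^{1+2\alpha}$ for a \emph{system}: the usual Caccioppoli argument requires subtracting something in the kernel of the relevant operator, and constants are not in that kernel here.

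The correct argument, which is what Bando--Siu and Jacob--Walpuski actually do (and which the paper itself carries out in detail in Lemma~\ref{localMorreybound} for the boundary estimate), is a hole-filling argument tailored to the operator structure. One introduces the first-order operator $\MW\eta = (\MD_1\eta + [\vp,\eta],\, \MD_3\eta)$, projects $s$ onto its finite-dimensional kernel on the ball to obtain $\bar s$, and uses the Poincar\'e inequality for $\MW$ together with the identity~\eqref{Keyequation2} applied to $\sigma$ (defined by $e^\sigma = e^s e^{-\bar s}$) to derive the doubling inequality $f_p(r) \leq \gamma f_p(2r) + Kr^2$ with $\gamma<1$. Iterating this gives $f_p(r)\leq Cr^{2\alpha}$. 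You should replace the DNM sentence with a pointer to this hole-filling mechanism; the rest of your outline then goes through unchanged.
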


Notice that we have now obtained uniform bounds also as $y \to \infty$.  Later we also establish a uniform
decay rate.
\end{subsection}

\begin{subsection}{Uniform decay at $y=0$} 
In this subsection, we show that the sequence of solutions $s_j$  satisfies a uniform decay rate
$|s_j| \leq C y^\alpha$.  This is nonstandard because of the singular boundary condition, and the
idea involves a scaling analysis. 

\begin{lemma}
\label{localMorreybound}
Fix any point $p = (z_0,y_0)$ in $\Si \ti (0,1)$ and let $r_0= \tfrac12 y_0$.   Assuming $r_0$ is small,
we use a local coordinate $x \in \RR^3$ centered at $p$ in the ball $B_{r_0}(p)$, and then define
%Let $B_r(x)$ be the radius $r$ ball centered at $x$, we define the funcational $f_x(r)$ as:
\begin{equation}
f_p(r):=\int_{B_r}G_p(x)|\nabla s(x)|^2,
\end{equation}
where (here and below) $B_r= B_r(p)$ and $G_p(x) = |x|^{-1}$ is ($4\pi$ times) the three-dimensional flat Green function 
and $|\nabla s|^2$ is short for $\sum_{i=1}^3|\nabla_i s|^2$.

Then there exist constants $C>0$ and $\al \in (0,1)$ indepndent of $p$ such that for every $r < r_0/2$, 
\begin{equation}
f_p(r)\leq C(\|s\|_{L^{\infty}(B_{2r})}+\|\Omega_H\|_{L^{\infty}(B_{2r})}+\|\Omega_{H_0}\|_{L^{\infty}(B_{2r})})r^{2\al}.
\end{equation}
\end{lemma}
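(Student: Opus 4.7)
The plan is to exploit the Bochner-type identity \eqref{Keyequation2}, which combined with $N_t(s)=0$ yields the pointwise differential inequality
\[
\Delta v + c\,|\nabla s|^2 \;\leq\; |\Omega_{H_0}|\,|s|, \qquad v:=|s|^2_{H_0},
\]
where $c>0$ depends only on $\|s\|_\infty$ (since $\gamma(-\ad s)$ is uniformly bounded below as a positive self-adjoint operator when $s$ is bounded, which is guaranteed by Proposition~\ref{C0estimate}).

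The main tool will be the truncated Green function $\tilde G_p(x):=|x-p|^{-1}-r^{-1}$, which is nonnegative on $B_r$ and vanishes on $\partial B_r$. I would multiply the differential inequality by $\tilde G_p$, integrate, and apply Green's identity twice: the boundary contribution involving $\tilde G_p$ itself vanishes by construction, while the singularity of $G_p$ at $p$ produces a Dirac mass, yielding
\[
c\!\int_{B_r}\tilde G_p\,|\nabla s|^2 \;\leq\; 4\pi\bigl(\overline{v}_{\partial B_r} - v(p)\bigr) + \int_{B_r}\tilde G_p\,|\Omega_{H_0}|\,|s|.
\]
The source term is trivially $\MO(r^2)\|s\|_\infty\|\Omega_{H_0}\|_\infty$ since $\int_{B_r}\tilde G_p = (2/3)\pi r^2$, contributing a bound of the desired shape.

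The crux is to control $|\overline v_{\partial B_r} - v(p)|$. Since $r_0=y_0/2$, the ball $B_{r_0}(p)$ sits in $\{y\approx y_0\}$ bounded away from the Nahm pole boundary, so the singular $\MO(1/y)$ coefficients of the linearized operator $\ML_H$ are $\MO(1/y_0)$ on $B_{r_0}(p)$ and become $\MO(1)$ after the dilation $\tilde s(x):=s(p+r_0 x)$ on $B_1$. Standard interior De Giorgi--Nash--Moser theory for the rescaled quasilinear equation, together with Proposition~\ref{C0alphainteriorestimate}, should yield $[\tilde s]_{C^{0,\alpha_0}(B_{1/2})}\le C(\|s\|_\infty+\|\Omega_H\|_\infty)$ for some $\alpha_0\in(0,1)$ independent of $p$. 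Translating back produces $|\overline v_{\partial B_r}-v(p)| \leq 2\|s\|_\infty[s]_{C^{0,\alpha_0}} r^{\alpha_0} \leq C r^{\alpha_0}$, and since $\tilde G_p\ge 1/r$ on $B_{r/2}$, one deduces the Morrey bound $\int_{B_{r/2}}|\nabla s|^2\le C r^{1+\alpha_0}$.

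The final step will be a dyadic decomposition over annuli $A_k=\{2^{-k-1}r\le|x-p|\le 2^{-k}r\}$: bounding $G_p \leq 2^{k+1}/r$ on $A_k$ and summing gives
\[
f_p(r)=\sum_k\int_{A_k}G_p|\nabla s|^2\le C r^{\alpha_0}\sum_k 2^{-k\alpha_0}=C'\,r^{\alpha_0},
\]
a convergent geometric series; setting $2\alpha:=\alpha_0$ yields the claim. The hard part will be ensuring that the interior Hölder exponent $\alpha_0$ and its constant are genuinely independent of the position $p$, which forces one to perform the interior regularity on the natural ``uniformly degenerate'' scale $r_0=y_0/2$ and to verify that after dilating by $r_0$ the rescaled PDE has coefficients bounded by constants depending only on $H_0$ and on the stated sup norms of $s$, $\Omega_H$, $\Omega_{H_0}$, and not on $y_0$.
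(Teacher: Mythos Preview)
Your approach differs from the paper's in a way that makes it essentially circular. The paper does not invoke Proposition~\ref{C0alphainteriorestimate} at all; instead it proves the Morrey bound directly by a three-step iteration. The key Step~2 introduces the first-order operator $\MW(\eta)=(\MD_1\eta+[\vp,\eta],\MD_3\eta)$, projects $s$ onto its finite-dimensional kernel to obtain $\bar s$, sets $e^\sigma=e^se^{-\bar s}$, and applies a Poincar\'e inequality for $\MW$ to estimate $\int_{B_{2r}\setminus B_r}|s-\bar s|^2$ by $r^2\int_{B_{2r}\setminus B_r}|\MW s|^2=r^2\int_{B_{2r}\setminus B_r}\tfrac12|\nabla s|^2$. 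Together with the Bochner inequality applied to $\sigma$, this yields the doubling estimate $f_p(r)\le \gamma f_p(2r)+Kr^2$ with $\gamma<1$, which iterates to $f_p(r)\le Cr^{2\alpha}$. This projection--Poincar\'e step is the genuine content of the lemma.

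Your plan instead black-boxes the interior H\"older bound via Proposition~\ref{C0alphainteriorestimate} and then recovers the Morrey bound by truncated-Green plus dyadic summation. The logical gap is that the second inequality in Proposition~\ref{C0alphainteriorestimate}, namely $\|\nabla s\|_{L^{2,2\alpha}}\le C(\|s\|_{C^0}+\|\Omega_H\|_{C^0})$, \emph{is} the Bando--Siu Morrey estimate, and its proof in \cite{BandoSiu1994,WalpuskiJacob} is precisely the projection--Poincar\'e iteration that the paper carries out here. So you are invoking the conclusion to reprove the conclusion. Your reference to ``standard De Giorgi--Nash--Moser theory'' does not rescue this: the equation is a \emph{system} with critical quadratic growth $C(\nabla s\otimes\nabla s)$, and DGN--M gives nothing for such systems without the special structure exploited by the $\MW$-projection. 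If you want a self-contained proof, you need to supply that projection step yourself; once you do, the truncated-Green and dyadic decorations are unnecessary, since the doubling inequality already iterates directly.
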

\begin{proof} We proceed in a series of steps.

\medskip

\noindent {\bf Step 1.}  We first show that $f_p(r)\leq C$, where $C$ depends on $\|s\|_{L^{\infty}(B_{2r})}$ and $\Omega_{H_0}$. 

Let $\chi \in \calC^\infty$ be a smooth nonnegative function which equals $1$ on $[0,r]$, vanishes outside $[0,2r]$, and
with $0 \leq \chi(t) \leq 1$ for all $t$.  The inequality $|\nabla s|^2 \leq C (1-\Delta |s|^2)$, which follows from
\eqref{ausefulformofmethodofcontinouationequation}, gives 
\begin{equation}
\begin{split}
f_p(r)& \leq \int_{B_{2r}(x)}\chi G |\nabla s|^2 \leq C\int_{B_{2r}} \chi G(1-\Delta|s|^2)\\
& \leq C \left(-|s|^2(p)+r^2+\int_{B_{2r}} (\nabla \chi \nabla G|s|^2+\Delta \chi G |s|^2) \right)\\
& \leq C( r^2+r^{-3}\int_{B_{2r} \backslash B_r} |s|^2) \leq C.
\end{split}
\end{equation}

\medskip

\noindent {\bf Step 2.}  There exist constants $\gamma \in (0,1)$ and $ K > 0$ depending on $\|s\|_{L^\infty(B_2)}$
such that $f_p(r)  \leq \gamma f_p(2r)+K r^2$. 

Indeed, 
%Fixed the ball $B_{2r}(x)$ and $B_r(x)$, for $r$ small, we can assume $E$ is locally trivial on $B_{2r}(x)$, let $i\mathfrak{su}(2)$ be the adjoint bundle over $B_{2r}(x)$. 
%Given $s$ over $B_{2r}(x)$, we 
consider the operator 
\begin{equation}
\begin{split}
\MW: \calC^{1,\alpha}_D(B_{2r}; i\mathfrak{su}(2) ) & \longrightarrow  \calC^{0,\alpha}\Omega^1(B_{2r}, i\mathfrak{su}(2))\oplus 
\calC^{0,\alpha}\Omega^0(B_{2r}; i\mathfrak{su}(2))\\ 
\eta & \longmapsto (\MD_1 \eta +[\vp,\eta],\MD_3\eta), 
\end{split}
\end{equation}
where the subscript $D$ indicates Dirichlet boundary conditions.   This is a left-elliptic operator, i.e., its symbol is injective,
so its nullspace $\kappa$ is finite dimensional and consists of sections smooth up to the boundary. Note that 
since $\MD_1 \eta \in \Omega^{0,1}$ and $[\vp, \eta] \in \Omega^{1,0}$, $\eta \in \kappa$ implies that
$\MD_j \eta = 0$, $j = 1, 2, 3$. Denote by $\Pi$ the $L^2$ orthogonal projection of $\calC^{1,\alpha}_D(B_{2r}; \isu(2))$ 
onto $\kappa$ and  $\kappa^\perp$ its orthogonal complement, both determined relative to $H_0$. 

Define $\bar{s}:=\Pi s$ and $e^\si:=e^se^{-\bar{s}}$. The Baker-Campbell-Hausdorff formula implies that 
$\si=s-\bar s+\frac{1}{2}[s,\bar{s}]+\cdots$, where the reminder is a sum of terms, each a combination of Lie brackets 
of $s$ and $\bar{s}$. Since $[s,\bar s]=[s-\bar s,s]$, we obtain that
\[
|\nabla s|^2 \leq C |\nabla \sigma|^2,\ \ |\sigma|^2 \leq C |s-\bar s|^2,
\]
where $C$ depends on $|s|_{L^{\infty}(B_{2r})}$. 

Using $He^{-\bar s}=H_0e^{\sigma}$ in \eqref{Keyequation2} %\eqref{ausefulformofmethodofcontinouationequation} 
gives 
\begin{equation}
\langle \Omega_{H_0e^{\sigma}}-\Omega_{H_0}, \sigma\rangle =\Delta|\sigma|^2+\sum_{i=1}^3|v(s)\nabla_i \sigma|^2.
\end{equation}
Observe also that $\Omega_{H_0e^{\si}}=\Omega_{He^{-\bar{s}}}=Ad(e^{-\bar{s}})\Omega_{H}$, the last equality following 
from \eqref{quasilinearformequation} since $\MW(\bar{s}) = 0$. Since $v(s)=\sqrt{\frac{e^{\ad_s}-1}{\ad_s}}$, 
$\sqrt{\frac{e^x-1}{x}}\geq C\frac{1}{\sqrt{1+|x|}}$ and $\Ad(e^{\frac{s}{2}})\Omega_H+ts=0$, we obtain
\begin{equation}
|\nabla \si|^2  \leq C(|\Omega_H||\si|+|\Omega_{H_0}||\si| -\Delta |\si|^2) \leq C (1-\Delta|\si|^2),
\end{equation}
where the constant depends on $|\Omega_{H_0}|_{L^{\infty}}$ and $|s|_{L^{\infty}}$.
    
Using the same cutoff function $\chi$ as before, then this, together with the Poincar\'e and Kato inequalities, yields
\begin{equation}
\begin{split}
f_p(r) = \int_{B_r(x)}G|\nabla s|^2& \leq C \int_{B_{2r}(x)}\chi G(-\Delta |\si|^2)+ \chi G\\
   & \leq C r^2+ C\int_{B_{2r}(x)\slash B_r(x)}  \Delta(\chi G)|\si|^2\\
   & \leq C  r^2- C |\si|^2(p)+ C r^{-3}\int_{B_{2r}(x)\slash B_r(x)} |\si|^2\\
   & \leq C r^2+C r^{-3}\int_{B_{2r}(x)\slash B_r(x)}|s-\bar s|^2\\
   & \leq C r^2+ C r^{-1}\int_{B_{2r}(x)\slash B_r(x)}|\MW s|^2\;\;(\text{Poincar\'e inequality for $\MW$})\\
   & \leq C r^2+C r^{-1}\int_{B_{2r}(x) \slash B_r(x)}|\nabla s|^s\;\;(\mbox{since}\ |\nabla s|^2=\sum_{i=1}^3|\nabla_i s|^2=2|\MW s|^2)\\
   & \leq C r^2+ C \int_{B_{2r}(x) \slash B_r(x)}G|\nabla s|^2 \leq Cr^2 + C (f_{2p}(r) - f_p(r)).
   \end{split}
   \end{equation}
This gives the desired inequality $f_p(r)  \leq \gamma f_p(2r)+K r^2$ for $\gamma=\frac{C}{C+1}<1.$

\medskip

\noindent {\bf Step 3:} Finally, $f_p(r)\leq Cr^{2\alpha}$ for some $C$ which depends on $||s||_{L^\infty(B_{2r})}$. 

Assume that $1/2 < \gamma < 1$ and set $g(r) = f_p(r) + \frac{K}{4\gamma -1} r^2$. 
Then the `doubling inequality' for $f_p$ from Step 2 implies that 
\[
g(r)\leq \gamma f(2r) + Kr^2 + \frac{K}{4\gamma-1}r^2 = \gamma ( f(2r) + \frac{K}{4\gamma-1} (2r)^2) = \gamma g(2r),
\]
so more generally, $g(r) \leq \gamma^k g(2^k r)$.  This in turn yields the estimate $f_p(r)\leq Cr^{2\alpha}$ for some $\alpha \in (0,1)$.
\end{proof}

\medskip

We finally deduce the uniform decay estimate. 

\begin{proposition} If $N_t(s) = 0$, then there exist constants $C > 0$ and $\alpha\in (0,1)$ such that 
	\label{uniformdecayestimate1}
\begin{equation}
|s|_{\calC^{0}(\Si\ti(0,1])}\leq Cy^{\al}.
\end{equation}
\end{proposition}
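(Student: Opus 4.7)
The key input is Lemma~\ref{localMorreybound}, which at any $p = (z_0, y_0) \in \Si \ti (0,1)$ gives the weighted bound
\[
f_p(r) = \int_{B_r(p)} G_p\, |\nabla s|^2 \leq C\, r^{2\alpha}, \qquad r \leq y_0/4,
\]
with $C$ depending only on uniform bounds for $\|s\|_{L^\infty}$, $\|\Omega_H\|_{L^\infty}$ and $\|\Omega_{H_0}\|_{L^\infty}$. The first is Proposition~\ref{C0estimate}; the third holds because $H_0$ is admissible, with $\Omega_{H_0}$ vanishing to infinite order both at the face $F_{\mathrm{or}}$ and at each knot face $F_j$; and the second follows from $N_t(s) = 0$, which forces $\Omega_H = -t\,\Ad(e^{-s/2})s$, so $|\Omega_H| \leq C|s|$.

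Since $G_p(x) = |x|^{-1} \geq r^{-1}$ on $B_r(p)$, the above at once yields $\int_{B_r(p)} |\nabla s|^2 \leq C\, r^{1+2\alpha}$ for every $r \leq y_0/4$. Applying the Lemma also at nearby centers $q \in B_{y_0/8}(p)$ (for which $y_q \geq 7y_0/8$, so the allowed radii include everything up to $y_0/8$), I conclude that $\nabla s$ has uniformly bounded Morrey norm in $L^{2,1+2\alpha}(B_{y_0/8}(p))$. The three-dimensional Morrey embedding $L^{2,1+2\alpha} \hookrightarrow \calC^{0,\alpha}$ then gives
\[
[s]_{\calC^{0,\alpha}(B_{y_0/16}(p))} \leq C',
\]
where $C'$ is independent of $p$, of the height $y_0$, and of the particular solution in the sequence.

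To pass from this local H\"older bound to decay in $y$, I would telescope. Fix $(z_0, y_0)$ and set $p_k = (z_0, \lambda^k y_0)$ for $\lambda = 31/32$, so that each $p_{k+1}$ lies strictly inside $B_{y_k/16}(p_k)$ where $y_k = \lambda^k y_0$. Then
\[
|s(p_k) - s(p_{k+1})| \leq C'\, ((1-\lambda)\lambda^k y_0)^\alpha,
\]
and summing a geometric series gives $|s(p_0) - s(p_N)| \leq C\, y_0^\alpha$ with a constant independent of $N$. Since $s$ satisfies the (possibly generalized) Nahm pole boundary condition, $s(p_N) \to 0$ pointwise as $p_N \to (z_0, 0)$, which yields $|s(z_0, y_0)| \leq C\, y_0^\alpha$ as claimed.

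The main obstacle is essentially hidden in the preceding Lemma: ensuring that the Morrey bound holds with a constant genuinely uniform over the whole domain, including points $z_0$ directly above a knot. The chain argument itself is elementary once that uniformity is in place; what one must verify — and what relies on the careful construction of $H_0$ in Section~7 — is that $\|\Omega_H\|_{L^\infty}$ and $\|\Omega_{H_0}\|_{L^\infty}$ do not degenerate near the knot faces $F_j$, so the constants $C,C'$ above are truly global.
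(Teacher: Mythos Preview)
Your argument is correct and follows essentially the same route as the paper: extract a uniform Morrey bound from Lemma~\ref{localMorreybound}, use the Morrey embedding to obtain a uniform $\calC^{0,\alpha}$ seminorm on balls of radius comparable to the height, then telescope along a dyadic chain descending to the boundary. The paper inserts one additional step you gloss over: since $\nabla$ is the singular Chern connection (with $1/y$ coefficients), the Morrey embedding is applied not to the section $s$ directly but to the scalar $|s|$ via the Kato inequality $|d|s|| \leq |\nabla s|$, which is what cleanly justifies the pointwise H\"older estimate used in the telescoping.
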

\begin{proof}
For any $p\in\Si\ti (0,1]$, let $r = r_p$ be half the $y$-coordinate of $p$. Then Lemma \ref{localMorreybound} gives
the Morrey estimate $\|\nabla s\|_{L^{2,1+2\al}(B_{r})}\leq C$, where $\nabla$ is a connection with $1/y$ singularity. 
By the Kato inequality, 
$$
\|d|s|\|_{L^{2,1+2\al}(B_{r})}\leq C,
$$
where $C$ depends on the $H_0$ norm of $s$, but not on $p$. 

The crucial point is that the H\"older seminorm $[s]_{0,\alpha}$ and $||d|s| ||_{2, 1+2\al}$ scale in precisely the
same way if we dilate $B_r(p)$ by the factor $1/2r_p$ to a ball of radius $r/2r_p < 1/2$ centered at some point $(\bar{p}, 1)$.
On this larger `standard' ball we take advantage of the embedding $L^{2,1+2\al}(B_{1/2})\hookrightarrow \calC^{\alpha}(B_{1})$
and then rescale back to $B_r(p)$ to get $[s]_{C^{\al}(B_r(p))}\leq C$, with constant independent of $r < r_p$. 

Writing $\Si\ti(0,1) \ni p$ in local coordinates as $(z_0,y_0)$, we define a sequence of points $p_{i}=(z_0,(\frac{2}{3})^iy_0)$ 
and define $s_i:=s(p_i)$ and $B_i:=B_{r_i}(p_i)$, where $r_i = r_{p_i} = \frac{1}{2}(\frac{2}{3})^i$. The previous bound implies 
\[
|s_{i}-s_{i+1}|\leq C|p_i-p_{i+1}|^{\al}\leq C(2/3)^{i\al} |y_0-\tfrac13 y_0|^{\al} \leq C' (2/3)^{i\al} y_0^\al,
\]
where $C'$ depends only on $||s||_{L^\infty}$ and $||\Omega_{H_0}||_{L^\infty}$. Therefore, for each $i$, 
$$
|s_0-s_i|\leq \sum_{j=0}^{i-1}|s_{j}-s_{j+1}|\leq C'\sum_{j=0}^{i-1}(2/3)^{j\al}y_0^{\al}\leq C''y_0^{\al}.
$$
As $s$ is continuous and $|s|_{y=0}=0$, we see that $\lim s_i  = 0$, so $|s(p_0)| = |s_0| \leq C y_0^{\al}$. 
This gives the uniform decay rate.
\end{proof}
\end{subsection}

\begin{subsection}{Uniform decay at infinity}
We now show that the sections $s_{t_i}$ decay uniformly as $y \to \infty$. 

First, \eqref{ausefulformofmethodofcontinouationequation} and Proposition \ref{C0estimate} show that
\begin{equation}
\begin{split}
|\na s|^2 \leq C(||\Omega_{H_0}||_{L^\infty}-\Delta |s|^2),
\label{exponentialdecays}
\end{split}
\end{equation}
combining with Proposition \ref{interiorckbound}, we obtain $\int_{\Si\ti(L,+\infty)}|\na s|^2\leq C$ for any $L \geq 1$ and with $C$ independent of $L$. 

Next, the irreducibility of the Higgs pair $(\ME,\varphi)$ on any slice $\Si_y$  implies
\begin{equation}
\int_{\Sigma_y} |s|^2 \leq C \int_{\Sigma_y}|\MD_1 s|^2+|\MD_2 s|^2  \leq C\int_{\Sigma_y}|\na s|^2,
\end{equation}
hence the (integral) decay rate of $s$ is controlled by that of $|\nabla s|$.

\begin{proposition}
Assuming that $||s||_{L^\infty} + ||e^{-\delta y}\Omega_{H_0}||_{\calC^k}\leq C_k$ for any $k \geq 0$,  then for all $k$, 
$||e^{-\delta y}s||_{\calC^k}\leq C_k'$. 
\end{proposition}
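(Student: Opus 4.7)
The plan is to establish exponential decay of $s$ first in an integrated sense on slices $\Sigma_y = \Sigma \times \{y\}$, then to upgrade to pointwise and $\calC^k$ decay via interior elliptic regularity on unit cylindrical slabs. The key structural input is the Poincar\'e-type inequality $\int_{\Sigma_y}|s|^2 \leq C\int_{\Sigma_y}|\nabla s|^2$ stated immediately before the proposition, which encodes the spectral gap coming from irreducibility of the limiting Higgs pair $(\ME,\vp)$ and supplies the coercivity needed for a slicewise ODE argument.

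Set $f(y):=\int_{\Sigma_y}|s|^2_{H_0}$. Integrating the identity \eqref{Keyequation2} over $\Sigma_y$, using that $\int_\Sigma \Delta_\Sigma |s|^2 =0$, inserting the Poincar\'e inequality, and controlling the source by Cauchy--Schwarz against $\Omega_{H_0}$, one is led to an ODE inequality of the form
\[
-f''(y) + a^2 f(y) \;\leq\; C\,e^{-\delta y}\sqrt{f(y)},\qquad y \geq 0,
\]
for some $a>0$ depending only on the Poincar\'e constant, $H_0$, and $t$. Because $s\in \calX^{k+2}_{2-\ep,1+\ep,-\delta}$, $f$ vanishes at $y=0$ and tends to $0$ as $y\to\infty$, so a maximum-principle comparison against the supersolution $A e^{-\mu y}$ for any $\mu<\min(a,\delta)$ yields $f(y)\leq C\,e^{-\mu y}$. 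Moser iteration (or the Morrey bound underlying Proposition~\ref{C0alphainteriorestimate}) applied on each slab $\Sigma\times[L,L+1]$ then produces a pointwise bound $|s|(y,z)\leq C e^{-\mu y/2}$. Reinserting this stronger pointwise decay on the right-hand side of the ODE inequality and iterating, where each iteration raises the effective rate via $r_{n+1}=\tfrac{1}{2}(\delta+r_n)$, promotes the decay rate of $|s|$ to any value strictly less than $\delta$, which can be identified with $\delta$ itself by a harmless initial shrinkage of the weight parameter. Finally, the Bando--Siu type interior bootstrap of Proposition~\ref{interiorckbound}, applied on each unit slab where the coefficients of the equation are smooth and $\Omega_{H_0}$ decays in every $\calC^k$, converts this pointwise decay into decay of all derivatives, yielding $\|e^{-\delta y}s\|_{\calC^k}\leq C_k'$ for all $k$.

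The principal technical obstacle is that the slicewise Poincar\'e inequality is an integral rather than a pointwise statement, so the desired exponential decay cannot be extracted from a direct maximum-principle comparison applied to $|s|^2$ itself; it must instead be obtained at the level of the sliced $L^2$ mass $f(y)$ and then transferred back to a pointwise bound via interior elliptic regularity, with the sharp rate $\delta$ recovered only through the subsequent iterative bootstrap. Once pointwise decay at rate $\delta$ is in hand, the passage to all $\calC^k$ norms is a routine application of the cylindrical interior estimates already established in Section~8.3.
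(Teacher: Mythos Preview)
Your argument is correct, but the paper takes a somewhat simpler and more direct route.  Instead of working with the slicewise mass $f(y)=\int_{\Sigma_y}|s|^2$ and a second-order ODE inequality, the paper works with the \emph{tail} integral of the gradient,
\[
F(L):=\int_{\Sigma\times(L,\infty)}|\nabla s|^2.
\]
Integrating the pointwise inequality $|\nabla s|^2 \leq C(|\Omega_{H_0}|-\Delta|s|^2)$ (this is \eqref{exponentialdecays}, which follows from \eqref{ausefulformofmethodofcontinouationequation}) over $\Sigma\times(L,\infty)$ yields the \emph{first-order} ODE inequality $F(L)\leq Ce^{-\delta L}-F'(L)$, which integrates in one step to $F(L)\leq Ce^{-\delta L}$, already at the sharp rate $\delta$.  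The Poincar\'e inequality then transfers this to $\int_{\Sigma\times(L,\infty)}|s|^2$, and interior estimates plus Sobolev embedding on unit slabs give the pointwise $\calC^k$ decay.

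What your route buys is that the second-order slicewise ODE makes the role of the spectral gap $a^2$ (from the Poincar\'e constant) explicit, at the cost of a nonlinear right-hand side involving $\sqrt{f}$ and the need for an iterative bootstrap $r_{n+1}=\tfrac12(\delta+r_n)$ to reach the rate $\delta$.  The paper's route avoids both the nonlinearity and the iteration by passing to the cumulative quantity $F$, whose derivative $-F'(L)=\int_{\Sigma_L}|\nabla s|^2$ already dominates $F(L)$ up to the exponentially small source term; this is why the decay rate comes out sharp immediately.  The final step in both arguments---upgrading $L^2$ decay on slabs to $\calC^k$ decay via the interior estimates of Section~8.3---is the same.
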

\proof
Integrating \eqref{exponentialdecays} over $\Sigma\ti (L,+\infty)$ gives
\[
F(L) := \int_{\Sigma\ti (L,\infty)} |\na s|^2\leq e^{-\delta L}+ \int_{\Sigma\ti \{L\}}|\na s|^2,
\]
or equivalently, $F(L)\leq e^{-\delta L}-F'(L)$. This integrates to $F(L) \leq C e^{-\delta L}$ (presuming $\delta < 1$) and hence 
\begin{equation}
\int_{\Si\ti(L,\infty)}(|s|^2 + |\na s|^2) \leq C e^{-\delta L}.
\end{equation}

Finally, standard interior estimates for the equation $N_t(s) = 0$ on each block $\Si \ti [A, A+1]$ yield that for each $k$, 
\[
\int_{\Si \ti [L, \infty)}  |\nabla^k s| \leq C_k e^{-\delta L},
\]
and the pointwise decay now follows from Sobolev embedding. 
\qed

\end{subsection}

\begin{subsection}{Decay at the Boundary with Higher Regularity}
The equation $N_t(s) = 0$ has the form $\Delta s=A(\Omega_H)+C(\nabla s \otimes \nabla s)$.  We shall first
consider the case where there are no knot singularities.  In this case, to emphasis the Nahm pole boundary condition, we write this equation as
\[
(\Delta+\frac{1}{y^2}) s=A(\Omega_H)+C((\nabla+\frac1y) s \otimes (\nabla +\frac1y)s)
\]
or, equivalently, multiplying through by $y^2$, 
\begin{equation}
y^2\Delta s=y^2A(\Omega_H)+C(y\nabla s \otimes y\nabla s) 
\label{rescleq}
\end{equation}
to emphasize its scaling properties. 

Fix a ball $B_\lam$ of radius $\lam$ and with center at distance $4\lam$ from the boundary, and suppose that $s$ 
solves \eqref{rescleq}.  Restricting \eqref{rescleq} to $B_\lam$, define the rescaled function $s_\lambda(z,y)=s(\lambda z,\lambda y)$.
Setting $K =y^2A(\Omega_H)$, then $s_\lam(z,y)$ satisfies 
\begin{equation}
y^2\Delta s_\lam=K_\lam+C(y\nabla s_\lam \otimes y\nabla s_\lam)
\end{equation}
on $B_1$.  We can also defined rescalings of the Hermitian metrics $H$, $H_0$. Applying Lemma \ref{localMorreybound} to 
$s_\lam$, $H_\lam$ and $(H_0)_\lam$ and the local Morrey estimate on $B_2$ yields
\begin{lemma}
\label{rescaleMorreylocalestimate}
\[
|s_\lam|_{\MC^\al(B_1)} \leq C (\|s\|_{L^\infty(B_2)}+\|\Omega_{(H_\lam)}\|_{L^\infty(B_2)}+\|\Omega_{(H_0)_\lam}\|_{L^\infty(B_2)}),
\]
where all norms are with respect to $(H_0)_\lam$.
\end{lemma}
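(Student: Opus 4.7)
The plan is to reduce this statement to Lemma \ref{localMorreybound} by exploiting the manifest scale invariance of the rewritten equation $y^2 \Delta s = y^2 A(\Omega_H) + C(y\nabla s \otimes y\nabla s)$. Each term has scaling weight zero under $(z,y) \mapsto (\lambda z, \lambda y)$, so the rescaled endomorphism $s_\lambda(z,y) = s(\lambda z, \lambda y)$ satisfies on $B_2$ a structurally identical equation, where the coefficients are now the pullbacks by this dilation of $H$, $H_0$, $\varphi$, $\phi_1$ and $A$. In the rescaled picture the center of $B_1$ sits at distance $4$ from $\{y=0\}$, so $B_2$ is an interior ball, well separated from the singular boundary.

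The next step is to apply Lemma \ref{localMorreybound} to $s_\lambda$ with base point the center of $B_1$ and $r_0 = 2$. All ingredients in the proof of that lemma --- the Bochner-type inequality deduced from \eqref{ausefulformofmethodofcontinouationequation}, the construction of the left-elliptic operator $\MW$ with finite-dimensional kernel, the projection $\Pi$ onto $\ker \MW$, and the Poincar\'e and Kato inequalities --- are preserved verbatim under the rescaling, and no boundary behavior enters since $B_2$ is interior to $\{y > 0\}$. This yields a bound on $\|\nabla s_\lambda\|_{L^{2,1+2\alpha}(B_1)}$ in terms of $\|s_\lambda\|_{L^\infty(B_2)}$, $\|\Omega_{H_\lambda}\|_{L^\infty(B_2)}$, and $\|\Omega_{(H_0)_\lambda}\|_{L^\infty(B_2)}$. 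Applying Kato to $d|s_\lambda|$ and invoking the Morrey embedding $L^{2, 1+2\alpha}(B_1) \hookrightarrow \calC^{0,\alpha}(B_1)$ in dimension three, then combining with the uniform $\calC^0$ bound from Proposition \ref{C0estimate}, delivers the desired $\calC^\alpha$ estimate on $s_\lambda$.

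The only real subtlety, and the point I would be most careful about, is ensuring that the constant produced by Lemma \ref{localMorreybound} is genuinely independent of $\lambda$ when applied to the rescaled problem. Every intermediate constant in that proof depends either on universal ball geometry, on $\|s\|_{L^\infty}$, or on sup norms of curvature quantities. Since sup norms are preserved by the dilation and $\|s\|_{L^\infty}$ is uniformly controlled by Proposition \ref{C0estimate}, this uniformity holds. The rescaled Higgs field $\phi_z/\lambda$ and connection coefficients carry no scaling weight in the structural terms of the equation, and the factor $y$ inside $B_2$ is bounded above and below uniformly in $\lambda$, so no hidden $\lambda$-dependent constants can appear. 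With these observations in hand, the statement of the lemma is essentially a restatement of the interior estimate of Lemma \ref{localMorreybound} in scale-invariant form.
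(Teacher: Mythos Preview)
Your proposal is correct and follows essentially the same route as the paper: apply Lemma~\ref{localMorreybound} to the rescaled data $(s_\lambda,H_\lambda,(H_0)_\lambda)$ to bound $\|\nabla^{\lambda} s_\lambda\|_{L^{2,1+2\alpha}(B_1)}$, then use Kato and the Morrey embedding. The paper's proof is terser, and for the $\lambda$-uniformity of constants it simply notes that the rescaled Chern connection $\nabla^{\lambda}$ converges smoothly as $\lambda\to 0$; your more detailed discussion of why no hidden $\lambda$-dependence appears is equivalent to this observation.
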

\begin{proof}
Let $\na^{\lam}$ be the Chern connection for $(H_0)_\lam$. Then by Lemma \ref{localMorreybound}, 
\begin{equation}
\|\na^\lam s_\lam\|_{L^{2,1+2\al}(B_1)}  \leq C( \|s\|_{L^\infty(B_2)}+\|\Omega_{(H_\lam)}\|_{L^\infty(B_2)}+\|\Omega_{(H_0)_\lam}\|_{L^\infty(B_2)})
\end{equation}
Applying the Kato inequality $\|d|s_\lam|\|_{L^{2,1+2\al}}\leq \|\na^\lam s_\lam\|_{L^{2,1+2\al}}$ and Morrey embedding,
and noting that $\na^\lam$ is smoothly convergent as $\lam \to 0$, we obtain the assertion.
\end{proof}

For Proposition \ref{uniformdecayestimate1}, we have already established that $|s| \leq Cy^{\ep}$, so $|s_\lam|_{C^0(B_1)} \leq C \lam^\ep$, and in addition, 
$\Omega_{H_\lam}=\lam^2(\Omega_H)_\lam,$ $\Omega_{(H_0)_\lam}=\lam^2(\Omega_{H_0})_\lam$.  We can now deduce
the following result from the local interior estimates:
\begin{proposition} $[s]_{y^\ep\MC_0^{1,\al}} \leq C$ where $C$ is independent of $s$. 
\end{proposition}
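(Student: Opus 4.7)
The strategy is to combine a rescaling argument with the interior estimates from Propositions \ref{C0alphainteriorestimate} and \ref{C1betainteriorestimate}, applied to a suitably normalized dilation of $s$ on a Whitney ball.

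First I would fix a Whitney ball $B_\lam$ of radius $\lam$ with center at distance $4\lam$ from $\{y=0\}$, and consider the normalized rescaling $\tilde s_\lam := \lam^{-\ep} s_\lam$, where $s_\lam(z,y) = s(\lam z,\lam y)$ is defined on $B_1$. The point of this normalization is that on $B_1$ the coordinate $y$ ranges over a fixed compact subset of $(0,\infty)$, so $y^\ep$ is comparable to $\lam^\ep$ there; consequently a uniform $\calC^{1,\al}(B_1)$ bound on $\tilde s_\lam$ as $\lam\to 0$ is precisely equivalent to the desired $y^\ep\MC_0^{1,\al}$ bound on $s$. The $C^0$ bound $|\tilde s_\lam|_{C^0(B_1)}\leq C$ uniform in $\lam$ is immediate from Proposition \ref{uniformdecayestimate1}.

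Next I would write down the PDE satisfied by $\tilde s_\lam$. Dividing the rescaled equation \eqref{rescleq} by $\lam^\ep$ gives, schematically on $B_1$,
\[
\Delta \tilde s_\lam \;=\; \lam^{-\ep}\, y^{-2} K_\lam \;+\; \lam^\ep\, y^{-2}\, C\bigl(y\nabla\tilde s_\lam \otimes y\nabla\tilde s_\lam\bigr),
\]
where $K = y^2 A(\Omega_H)$. The key estimates are: since $N_t(s)=0$ forces $|\Omega_H|\leq |s|\leq Cy^\ep$, one has $|K|\leq Cy^{2+\ep}$, so $\lam^{-\ep} y^{-2} K_\lam$ is $O(\lam^2)$ in $C^0(B_1)$; the contribution from $\Omega_{H_0}$ is even smaller, since by admissibility $\Omega_{H_0}$ vanishes to infinite order at $y=0$; and the quadratic gradient coefficient $\lam^\ep y^{-2} C$ is uniformly bounded. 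Thus $\tilde s_\lam$ satisfies an equation of the semilinear Hildebrandt form required by Proposition \ref{C1betainteriorestimate}, with coefficients uniformly $C^0$-bounded in $\lam$.

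To close the argument I would apply Lemma \ref{rescaleMorreylocalestimate} to bound $\|\nabla \tilde s_\lam\|_{L^{2,1+2\al}(B_1)}$ uniformly in $\lam$---the factor $\lam^{-\ep}$ from the renormalization being exactly balanced by the $\lam^\ep$ gain in $\|s\|_{L^\infty(B_{2\lam})}$---and then invoke the Hildebrandt estimate of Proposition \ref{C1betainteriorestimate} to upgrade this Morrey bound to $|\nabla \tilde s_\lam|_{\calC^{0,\beta}(B_{1/2})}\leq C$ for some $\beta\in(0,\al)$. Combined with the $C^0$ bound, this yields a uniform $\calC^{1,\beta}$ bound on $\tilde s_\lam$; unwinding the rescaling produces the asserted inequality $[s]_{y^\ep\MC_0^{1,\al}}\leq C$. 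The main obstacle here is bookkeeping the $\lam^{-\ep}$ renormalization: one must verify that every inhomogeneity and every nonlinear coefficient in the rescaled equation stays bounded (ideally vanishing) as $\lam\to 0$. This is exactly where the infinite-order vanishing of $\Omega_{H_0}$ at $y=0$ built into the admissible background metric, and the equation-of-motion identity $|\Omega_H|\leq t|s|$ from $N_t(s)=0$, both play essential roles.
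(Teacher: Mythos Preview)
Your proposal is correct and follows essentially the same route as the paper: rescale to a unit Whitney ball, invoke the rescaled Morrey estimate (Lemma~\ref{rescaleMorreylocalestimate}) together with the decay $|s|\leq Cy^\ep$ and $|\Omega_H|\leq t|s|$, then apply Hildebrandt's $\calC^{1,\beta}$ estimate (Proposition~\ref{C1betainteriorestimate}). The only cosmetic difference is that you normalize up front by setting $\tilde s_\lam=\lam^{-\ep}s_\lam$ and seek uniform bounds, whereas the paper works directly with $s_\lam$ and carries the factor $\lam^\ep$ through each inequality; the paper also pauses to record the intermediate bound $[s]_{y^\ep\calC^{0,\al}_0}\leq C$ before applying Hildebrandt, which you bypass.
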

\begin{proof}
Applying Lemma \ref{rescaleMorreylocalestimate} on $B_2$ shows that 
\begin{equation}
[s_\lam]_{\MC^\al(B_1)} \leq C \lam^2||(\Omega_{H_0})_\lam||_{\MC^0(B_2)}+ C\lam^2||(\Omega_{H})_\lam||_{\MC^0(B_2)}+||s_\lam||_{\MC^0(B_2)}.
\label{previousarguement}
\end{equation}
We have arranged that $\Omega_{H_0}$ vanishes at infinite order in $y$. % so $||(\Omega_{H_0})_\lam|| \leq C \lam^2$. 
Next, $\Omega_{H}+ts=0$, so using $||s||_{\MC^0} \leq C y^{\ep}$ we see that $||(\Omega_{H})_\lam||_{\MC^0} \leq C \lam^{\ep}$,
and hence $[s_\lam]_{\MC^{0,\al}} \leq C \lam^{\ep}$, or in other words, $[s]_{y^{\ep}\MC_0^{0,\al}}\leq C$.

Now apply Proposition \ref{C1betainteriorestimate}, Hildebrandt's $\MC^{1,\beta}$ estimate, to get 
\[
[s_\lam]_{\MC^{1,\beta}}\leq C\|y\na s_\lam\|_{L^{2,1+2\al}}\leq C y^{\ep} \Longrightarrow [s]_{y^\ep\MC_0^{1,\al}}\leq C.
\]
\end{proof}

We may finally apply standard bootstrapping to obtain the following
\begin{theorem}
Suppose that $N_t(s) = 0$ and $H_0$ has a Nahm pole but no knot singularitie. Let $\kappa$ be the first positive 
indicial root of $\ML_{H_0}$.  Then for all $k \in \mathbb N$ and $\al\in(0,1)$, there is an a priori estimate 
\[
[s]_{y^{\kappa}\MC_0^{k,\al}}\leq C,
\]
where $C$ depends on all the data, but not on $s$. 
%where the constant depends on $k,l,\al$ and $\Omega_{H_0}$, which is the full curvature of the admissible NP Hermitian metric $H_0$.
\end{theorem}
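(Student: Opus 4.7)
The plan is to combine an interior rescaling bootstrap with the Fredholm isomorphism of Theorem \ref{Fred1} and the polyhomogeneous regularity theory developed in \cite{MazzeoWitten2013, MazzeoWitten2017} to upgrade the preceding $\MC_0^{1,\al}$--estimate at weight $y^\epsilon$ into a $\MC_0^{k,\al}$--estimate at the sharp weight $y^\kappa$. The argument proceeds in three steps.

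\textbf{Step 1 (Higher regularity at fixed weight).} First I would upgrade the initial bound $[s]_{y^\epsilon\MC_0^{1,\al}}\le C$ to $[s]_{y^\epsilon\MC_0^{k,\al}}\le C_k$ for all $k$. Differentiating the equation $N_t(s)=0$ in the dilation--invariant directions $y\del_y, y\del_z$ and running the same rescaling argument as in the previous proposition -- passing to the rescaled solution $s_\la$ on each Whitney cube $B_\la$, applying Proposition \ref{C1betainteriorestimate} to the higher derivatives, and invoking the bootstrapping of Proposition \ref{interiorckbound} -- gives uniformly controlled higher derivatives on every rescaled ball. This step is standard and poses no serious difficulty.

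\textbf{Step 2 (Weight improvement via Fredholm inversion).} Next, rewrite the equation in the linearized form
\[
\ML_{H_0}\,s = -\gamma(-s)^{-1}\bigl(ts + Q(s) + \Omega_{H_0}\bigr).
\]
By admissibility, $\Omega_{H_0}=\MO(y^N)$ for every $N$, and $Q(s)$ is bilinear in $s$ and $y\na s$ with a factor of $y^{-2}$, so if $s\in y^\mu \MC_0^{k+1,\al}$ then $Q(s)\in y^{2\mu-2}\MC_0^{k,\al}$. By Theorem \ref{Fred1} (together with a small exponential weight $-\delta$ at infinity to retain invertibility), the operator
\[
\ML_{H_0}: y^{\mu'} e^{-\delta y}\MC_0^{k+2,\al} \longrightarrow y^{\mu'-2} e^{-\delta y}\MC_0^{k,\al}
\]
is an isomorphism for every $\mu'$ that avoids the indicial root set, in particular for every $\mu'\in(0,\kappa)$. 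Inverting the linearization therefore raises the decay exponent from $\mu$ to any $\mu' < \min(2\mu,\kappa)$. Iterating this procedure exhausts the open interval, giving $s\in y^{\kappa-\eta}\MC_0^{k,\al}$ for every $\eta>0$.

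\textbf{Step 3 (Reaching the endpoint $y^\kappa$).} To go from ``weight $\kappa-\eta$ for all $\eta>0$'' to ``weight exactly $\kappa$'', I would invoke the polyhomogeneous regularity theorem of \cite{MazzeoWitten2013,MazzeoWitten2017}: any bounded solution of a smooth quasilinear perturbation of $\ML_{H_0}$ admits a full asymptotic expansion at $y=0$ whose exponents and logarithmic orders are drawn from the indicial root set. Since no indicial root lies in $(0,\kappa)$, the coefficients of all lower order terms must vanish, and the leading exponent of $s$ is $\kappa$ itself (possibly accompanied by a $\log y$ factor absorbed into the definition of the weight). Translated back into the weighted H\"older scale, this delivers the desired bound $[s]_{y^\kappa\MC_0^{k,\al}}\le C$.

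The main obstacle will be Step 3: the naive Fredholm inversion cannot jump across the indicial root at $\kappa$, so one must appeal to the polyhomogeneous framework and verify that the iterative construction of the expansion is compatible with the quadratic nonlinearity $Q$. This is not automatic, but the structure of $N_t$ as a smooth quasilinear perturbation of the uniformly degenerate operator $\ML_{H_0}$ is precisely the class treated in \cite{MazzeoWitten2013}, and the arguments there transfer to the present setting with only notational changes.
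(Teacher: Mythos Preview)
Your proposal is essentially correct and is considerably more detailed than what the paper actually offers: the paper's ``proof'' of this theorem is literally the single phrase ``We may finally apply standard bootstrapping to obtain the following'', with no further argument given.  Your three-step scheme is a legitimate way to make that phrase precise.

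A few remarks on the comparison.  Your Step~1 (rescaled Schauder/Hildebrandt iteration to raise $k$ at fixed weight $y^\epsilon$) is exactly the kind of argument the paper has been running in the preceding proposition, and is almost certainly what the authors have in mind.  Your Step~2 (rewriting $N_t(s)=0$ as $\ML_{H_0}s = \text{(decaying RHS)}$ and inverting via Theorem~\ref{Fred1} to double the weight exponent up to the indicial barrier) is the natural mechanism for pushing the weight from $\epsilon$ toward $\kappa$; this is not spelled out in the paper, but it is the only available tool and is presumably what is intended.  One minor correction: the right-hand side also contains the term $\Ad(e^{-s/2})(ts)$, which decays like $y^\mu$ rather than $y^{2\mu-2}$; however since $\kappa=2$ this term always lands above the barrier after one inversion and does not obstruct the iteration.

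Your Step~3 differs from the paper's organization.  You invoke the polyhomogeneous regularity of \cite{MazzeoWitten2013,MazzeoWitten2017} to reach the exact endpoint $\kappa$, whereas the paper defers polyhomogeneity to the \emph{next} subsection (8.7), after this theorem is stated.  This suggests the authors either regard the endpoint as following from a more elementary argument, or (more likely) are content with weight $\kappa-\eta$ for arbitrarily small $\eta$, which is all that is actually used in the openness step (the function spaces there are $\calX^{k+2}_{2-\ep,1+\ep,-\delta}$).  Your route via polyhomogeneity is cleaner and does deliver the endpoint, but be aware that the paper does not explicitly organize the argument this way.
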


So far we have only obtained this estimate in the absence of knot singularities. However, a very similar sort of rescaling 
holds when there are knots.  In this case we use spherical coordinates near each knot to rewrite \eqref{rescleq} as
\[
R^2\Delta s=R^2A(\Omega_H)+C(R \nabla s \otimes R \nabla s).
\]
Now restrict $s$ to a small `cube' where $\lam \leq R \leq 4\lam$ and $(\theta, \psi)$ lies in some fixed open set  $Q'$ 
in the interior of $S^2_+$.  The dilate $s_\lam( R, \theta, \psi)$ is supported in $\{1 \leq R \leq 4\} \times Q'$, where
the equation is uniformly elliptic, and we can apply the interior estimates exactly as before.    In fact, incorporating 
the previous estimate in the case with no knots, we may in fact let $Q'$ be the entire $S^2_+$.   All of this leads to the 
final result: 
\begin{theorem}
Suppose that $N_t(s) = 0$ and $H_0$ has a generealized Nahm pole with knot singularitie. Let $\kappa$ lie between $0$
ad the first positive indicial root of $\ML_{H_0}$ at each of the knots.  Then for all $k \in \mathbb N$ and 
$\al\in(0,1)$, there is an a priori estimate 
\[
[s]_{R^{\kappa}\MC_{\ie} ^{k,\al}}\leq C.
\]
Combining this with the previous estimate shows that 
\[
||s||_{\calX^{k,\alpha}_{\mu, \nu, -\delta}} \leq C
\]
for some $\mu, \nu, \delta > 0$.
\end{theorem}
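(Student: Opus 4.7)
The plan is to replicate the rescaling argument of the previous (knot-free) theorem, but now using spherical coordinates around each knot singularity $p_j$, and to feed the result of both arguments together into a single weighted estimate. First I would localize near a fixed knot point $p_j$ and introduce the spherical coordinates $(R,\theta,\psi)$. The key algebraic observation is that near a knot the linearized operator scales like $R^{-2}$, so multiplying the equation $N_t(s)=0$ by $R^{2}$ recasts it in the scale-invariant form
\[
R^{2}\Delta s \;=\; R^{2} A(\Omega_{H}) \;+\; C\bigl(R\nabla s\otimes R\nabla s\bigr),
\]
which is the direct analogue of the rescaled equation \eqref{rescleq} used near $F_{\mathrm{or}}$.

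Next, I would cover a neighborhood of the knot face $F_{j}$ by ``polar Whitney boxes'' $Q_\lambda=\{\lambda\leq R\leq 4\lambda\}\times Q'$, where $Q'$ is a fixed open set in the interior of $S^{2}_{+}$. On each such box the change of variables $s_\lambda(R,\theta,\psi):=s(\lambda R,\theta,\psi)$ produces a function on a standardized box $\{1\leq R\leq 4\}\times Q'$ satisfying the rescaled equation with coefficients that are uniformly bounded in $\lambda$: indeed the admissible metric $H_{0}$ was constructed in Section 7.2 so that $\Omega_{H_{0}}$ vanishes to all orders as $R\to 0$, while $\Omega_{H}+ts=0$ together with the already-established pointwise bound $|s|\leq C R^{\epsilon}$ gives $\|(\Omega_{H})_{\lambda}\|_{\calC^{0}}\leq C\lambda^{\epsilon}$. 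I would then run the same chain of estimates already used in the knot-free case: the Morrey bound of Lemma~\ref{localMorreybound} (whose proof is manifestly scale invariant, depending only on the Weitzenbock identity and a Poincar\'e inequality against the left-elliptic operator $\MW$) applied on the standardized box, then the Morrey/Kato embedding to control $[s_\lambda]_{\calC^{0,\alpha}}$, then Hildebrandt's Proposition~\ref{43} to upgrade to $\calC^{1,\beta}$, and finally a Bando--Siu bootstrap to all orders. Unscaling converts these into $\ie$-H\"older estimates on the original cubes and yields $[s]_{R^{\kappa}\MC^{k,\alpha}_\ie}\leq C$ for any $\kappa$ between $0$ and the first positive indicial root of $\ML_{H_{0}}$ at the knot.

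The main obstacle, though it turns out to be mild, is the behavior near the corner $\{R=0\}\cap\{\psi=0\}$, where the knot face $F_{j}$ meets the original Nahm-pole face $F_{\mathrm{or}}$ in the blow-up $(\Si\ti\RP)_{\mathfrak p}$. Here the uniform ellipticity of the rescaled equation on $Q'$ degenerates if $Q'$ accumulates at $\psi=0$. I would handle this by letting $Q'$ sweep out $S^{2}_{+}$ except for a shrinking neighborhood of $\psi=0$, and separately treating that missing neighborhood by combining a further rescaling in $\psi$ with the already-established Nahm pole estimate $[s]_{y^{\kappa}\calC^{k,\alpha}_{0}}\leq C$ from the previous theorem; the key point is that the scale-invariant Morrey norms of Section 8.2 are built precisely so that these two families of rescalings can be patched on the overlap. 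This is exactly the observation made in the remark ``we may in fact let $Q'$ be the entire $S^{2}_{+}$'' in the preceding theorem.

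Finally, to conclude the combined weighted estimate, I would patch together three regimes: near each knot we have the estimate $[s]_{R^{\kappa}\MC^{k,\alpha}_\ie}\leq C$ just established; away from the knots but near $y=0$ we have $[s]_{y^{\kappa}\calC^{k,\alpha}_{0}}\leq C$ from the previous theorem; and as $y\to\infty$ we have the uniform exponential decay $\|e^{-\delta y}s\|_{\calC^{k}}\leq C_{k}'$ from Section 8.5. Choosing $\mu\in(0,\kappa)$, $\nu\in(0,\kappa)$, and $\delta>0$ small, each of these three bounds implies boundedness in the corresponding component of the norm $\|\cdot\|_{\calX^{k,\alpha}_{\mu,\nu,-\delta}}$, and a partition of unity subordinate to $\{F_{\mathrm{or}},\,F_{1},\ldots,F_{N},\,\{y\geq 1\}\}$ on $(\Si\ti\RP)_{\mathfrak p}$ then gives $\|s\|_{\calX^{k,\alpha}_{\mu,\nu,-\delta}}\leq C$, as claimed.
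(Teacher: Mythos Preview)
Your proposal is correct and follows essentially the same approach as the paper: rewrite the equation in the $R$-scale-invariant form $R^{2}\Delta s = R^{2}A(\Omega_H) + C(R\nabla s\otimes R\nabla s)$, rescale polar Whitney boxes $\{\lambda\leq R\leq 4\lambda\}\times Q'$ to standard size, apply the same chain of Morrey/Hildebrandt/Bando--Siu interior estimates as in the knot-free case, and use the already-proved Nahm-pole estimate to let $Q'$ fill out all of $S^{2}_{+}$. Your treatment of the corner $\{\psi=0\}$ and the final patching into $\calX^{k,\alpha}_{\mu,\nu,-\delta}$ is more explicit than the paper's, but the strategy is identical.
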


\end{subsection}

\begin{comment}
Recalling from \ref{Calpha}, as $\|s\|_{C^0}\leq y^{\alpha}C$, we obtain $\|\nabla s\|_{L^{2,n-2+2\alpha}}\leq y^{\alpha}C.$ Thus $$\|y\nabla s\|_{L^{2,2n-2+2\alpha}}\leq y^{1+\alpha}C.$$

As we have the rescale relation
\begin{equation}
\|(y\nabla s)_\lam \|_{L^{2,n-2+2\alpha}}(B_1)=
\lam^{\alpha-1}\|y\nabla s\|_{L^{2,n-2+2\alpha}}(B_\lam),
\end{equation}
we obtain 
\begin{equation}
\|y\nabla s\|_{L_0^{2,n-2+2\alpha}}\leq y^{\alpha-1}\|y\nabla s\|_{L^{2,n-2+2\alpha}}.
\end{equation}
Thus, we obtain $\|y\nabla s\|_{C^{0,\beta}_0}\leq Cy^{2\alpha}.$
\end{comment}

\begin{subsection}{Higher regularity and Existence}
We conclude this analysis by invoking the regularity theory from \cite{MazzeoWitten2013, MazzeoWitten2017}. 
\begin{proposition} If $s$ is a solution to the \EBE\rm{(or method of continuation equation \eqref{eqmethodofContinuation})} with a Nahm pole, or generalized Nahm pole, singularity
at $y=0$, then $s$ is polyhomogeneous on $(\Si \ti \RP)_{\mathfrak p}$; in other words, it admits a full asymptotic expansion 
\[
s \sim  \sum  s^{(1)}_{j \ell}(z) y^j (\log y)^\ell, \qquad s \sim \sum s^{(2)}_{j\ell} (\theta, \psi) R^{\si_j} (\log r)^\ell 
\]
at each of the boundary faces of $(\Si \ti \RP)_{\mathfrak p}$, with a product type expansion
\[
s \sim \sum s^{(3)}_{j \ell i m}(\theta)  \psi^j (\log \psi)^\ell R^{\si_i} (\log R)^m
\]
at the corners, with all coefficients smooth.
\end{proposition}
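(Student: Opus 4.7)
The plan is to adapt the standard polyhomogeneity argument for uniformly degenerate (iterated edge) elliptic operators, developed in \cite{Mazzeo1991} and applied to the KW equations in \cite{MazzeoWitten2013, MazzeoWitten2017}, to our nonlinear setting. The starting point is the a priori regularity we have just established, namely that $s\in\calX^{k,\alpha}_{\mu,\nu,-\delta}$ for every $k$ with positive weights $\mu, \nu$ and exponential decay at infinity, together with the admissibility of $H_0$, which guarantees that $\Omega_{H_0}$ vanishes to infinite order at every boundary face of $(\Si\ti\RP)_{\mathfrak p}$. These two facts together provide enough initial regularity to launch a bootstrap.

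The first step is to rewrite the equation as a perturbation of the linear model. Using the expansion \eqref{quasilinearformequation} from Proposition~5.1, together with $N_t(s)=0$, we get schematically
\[
\ML_{H_0} s \;=\; \gamma(-s)^{-1}\bigl(-\Omega_{H_0} - Q(s) - t\,\Ad(e^{-s/2})s\bigr)\;=:\;F(s,\nabla s;\Omega_{H_0}),
\]
where $F$ is built from tensorial and first-order combinations of $s$. The crucial structural fact is that $F$ preserves polyhomogeneity and strictly improves orders: if $s$ is polyhomogeneous with index set $E$ at a given face, then each summand of $F(s,\nabla s)$ lies in an index set of the form $E+E$, while the $\Omega_{H_0}$ piece is negligible, vanishing to all orders. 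Thus any polyhomogeneous improvement of $s$ feeds back into a polyhomogeneous improvement of the right-hand side.

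Next, I apply the parametrix construction for $\ML_{H_0}$ in the iterated edge calculus, as in Theorem~5.12. This parametrix admits an asymptotic expansion at each face of $(\Si\ti\RP)_{\mathfrak p}$ whose exponents are drawn from the indicial root sets computed in Propositions~5.6 and 5.8: powers $y^j(\log y)^\ell$ with integers $j\ge -n$ at $F_{\mathrm{or}}$, powers $R^{\sigma_j}(\log R)^\ell$ at each $F_j$, and tensor products at the corners. Starting from the weight $\calX^{k,\alpha}_{\mu,\nu,-\delta}$, I apply the parametrix to $F(s,\nabla s;\Omega_{H_0})$ to peel off the leading indicial terms $s^{(1)}_{j_0,\ell}(z)\,y^{j_0}(\log y)^\ell$ at $F_{\mathrm{or}}$ and $s^{(2)}_{j_0,\ell}(\theta,\psi)\,R^{\sigma_{j_0}}(\log R)^\ell$ at each $F_j$; logarithmic factors are introduced precisely when an inhomogeneity resonates with an indicial root. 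Subtracting these leading terms yields a remainder with strictly higher boundary weight, and by the structural property of $F$ the right-hand side improves correspondingly. Iterating through the indicial roots and then Borel-summing produces the full asymptotic expansions claimed at each face.

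The step I expect to be the main obstacle is obtaining the \emph{joint} product-type expansion at the corners $F_{\mathrm{or}}\cap F_j$, rather than merely separate expansions at each face. This requires that the coefficients $s^{(2)}_{j,\ell}(\theta,\psi)$ extracted at $F_j$ are themselves polyhomogeneous in $\psi$ down to $\psi=0$, and compatibly so with the expansion at $F_{\mathrm{or}}$. This is exactly the point where working on the resolved space $(\Si\ti\RP)_{\mathfrak p}$, on which the two faces meet transversally, is essential: the iterated edge parametrix is designed so that error terms at each stage of the iteration lie in polyhomogeneous conormal spaces on the resolved space, not just on the individual faces. Once this compatibility is propagated through the iteration, the coefficients $s^{(3)}_{j,\ell,i,m}(\theta)\,\psi^j(\log\psi)^\ell R^{\sigma_i}(\log R)^m$ at the corner emerge automatically, completing the proof.
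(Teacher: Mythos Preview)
Your proposal is correct and aligned with the paper's approach. In fact, the paper does not supply its own proof of this proposition at all: it simply states the result and invokes the regularity theory from \cite{MazzeoWitten2013, MazzeoWitten2017} (the sentence introducing Section~8.7 reads ``We conclude this analysis by invoking the regularity theory from \cite{MazzeoWitten2013, MazzeoWitten2017}''), so your sketch is a reasonable expansion of what those references contain and what the paper is appealing to.
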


The importance of this regularity statement is that the a prior estimates above show that if $t_j$ is a sequence
of points in the set $I$ in \eqref{setI}, with corresponding solutions $s_j$, then there is a subsequence (which we relabel as $s_j$ again)
which is uniformly bounded in $\calX^{k,\alpha}_{\mu, \nu, -\delta}$ for every $k$, and for some $\mu, \nu > 1$, and 
hence convergent in a slightly weaker space. The limit $s$ solves the equation $N_t(s) = 0$ for $t = \lim t_j$. We conclude
\begin{proposition}
\label{Iclosed}
The set $I$ in \eqref{setI} is closed.
\end{proposition}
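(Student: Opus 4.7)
The plan is to take a sequence $\{t_j\} \subset I$ with $t_j \to t_\infty \in [0,1]$ and corresponding solutions $s_j \in \calX^{k+2}_{2-\ep, 1+\ep, -\delta}$ solving $N_{t_j}(s_j) = 0$, then extract a subsequential limit $s_\infty$ solving $N_{t_\infty}(s_\infty) = 0$ in the same space, thereby showing $t_\infty \in I$.

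First I would assemble the a priori estimates proved in Section 8 into a single uniform bound of the form $\|s_j\|_{\calX^{k,\alpha}_{\mu, \nu, -\delta}} \leq C_k$, valid for every $k \geq 0$ and some $\mu, \nu > 1$ and $\delta > 0$, with constants $C_k$ depending on $H_0$ and the $\calC^0$ bound from Proposition \ref{C0estimate} but not on $j$ or $t_j$. The decay $|s_j| \leq C y^\alpha$ near $y=0$ (Proposition \ref{uniformdecayestimate1}) together with the scale-invariant Morrey/Hölder estimates at higher regularity give uniform control at the Nahm pole face $F_{\mathrm{or}}$; the analogous rescaling near each knot face $F_j$ gives control there; the exponential decay from Section 8.5 yields the $e^{-\delta y}$ factor; and Proposition \ref{interiorckbound} provides interior $\calC^k$ estimates on the cylindrical part.

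Next I would invoke compactness. The embedding
\[
\calX^{k+2, \alpha}_{\mu', \nu', -\delta'}(\Si \ti \RP;\, \isu(E, H_0)) \hookrightarrow \calX^{k+2, \alpha''}_{\mu, \nu, -\delta}(\Si \ti \RP;\, \isu(E, H_0))
\]
is compact whenever $\mu' > \mu$, $\nu' > \nu$, $\alpha'' < \alpha$, $\delta' > \delta$. This is a standard Arzelà--Ascoli argument applied on compact pieces of the blown up space $(\Si \ti \RP)_{\mathfrak p}$, combined with the uniform decay toward each boundary face and toward infinity that the weighted norms encode. Passing to a subsequence, $s_j \to s_\infty$ in this slightly weaker space. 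Since the nonlinear map $(s,t) \mapsto N_t(s)$ is smooth between the relevant Banach spaces (as noted in the paragraph after \eqref{setI}), continuity gives $N_{t_\infty}(s_\infty) = \lim_j N_{t_j}(s_j) = 0$.

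Finally, I would upgrade $s_\infty$ to lie in the target space $\calX^{k+2}_{2-\ep, 1+\ep, -\delta}$ for arbitrary $k$. The uniform estimates pass to the limit by lower semicontinuity, placing $s_\infty$ in $\calX^{k,\alpha}_{\mu, \nu, -\delta}$ for every $k$ and the same $\mu, \nu > 1$. Moreover, since $s_\infty$ solves the \EBE\ with Nahm pole (possibly with knot) boundary conditions, the polyhomogeneity statement proved via the regularity theory of \cite{MazzeoWitten2013, MazzeoWitten2017} applies, so $s_\infty$ admits full asymptotic expansions at each face whose leading exponents are bounded below by the relevant positive indicial roots. In particular $s_\infty \in \calX^{k+2}_{2-\ep, 1+\ep, -\delta}$, so $t_\infty \in I$.

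The main obstacle is the first step: organizing the a priori estimates so that the constants are genuinely uniform in $t_j$ and uniform up to the corners of $(\Si \ti \RP)_{\mathfrak p}$, including the interaction of the scale-invariant Morrey estimates near $F_{\mathrm{or}}$ with those near each knot face $F_j$. Once this uniform weighted control is in hand, the compactness and continuity arguments are routine.
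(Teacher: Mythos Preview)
Your proposal is correct and follows essentially the same approach as the paper: assemble the a priori estimates of Section~8 into uniform bounds in $\calX^{k,\alpha}_{\mu,\nu,-\delta}$ with $\mu,\nu>1$, extract a subsequential limit in a slightly weaker weighted space, pass to the limit in the equation, and then invoke the polyhomogeneity regularity theory from \cite{MazzeoWitten2013, MazzeoWitten2017} to place $s_\infty$ back in the required space. The paper's argument is stated more tersely in the paragraph immediately preceding the proposition, but the logical structure is the same as what you have written.
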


The regularity theory shows that in fact $s$ is `fully' smooth, which shows, finally, that it gives a suitable
background metric to apply the openness theory from Proposition \ref{Inonempty}, \ref{Iopen}. (If we did not have this higher regularity
statement, it would be necessary to extend the mapping properties to operators $\ML$ with less regular
coefficients, which is of course not a hard task.) We conclude the following existence theorem:
\begin{theorem}
If there exists an admissible Hermitian metric $H_0$ with Nahm pole (with knot) boundary conditions, then there exists a solution 
$H$ to the \EBE, cf. $\Omega_H=0$.
\end{theorem}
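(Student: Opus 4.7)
The plan is to invoke the continuity method set up in Section 6. By hypothesis, an admissible background Hermitian metric $H_0$ exists, so the family of equations $N_t(s) = \Ad(e^{s/2})\Omega_H + ts = 0$ from \eqref{eqmethodofContinuation} is defined for $s \in \calX^{k+2,\alpha}_{2-\ep, 1+\ep, -\delta}$ and $t \in [0,1]$. I would consider the set
\[
I = \{t \in [0,1] : N_t(s) = 0 \text{ has a solution in } \calX^{k+2,\alpha}_{2-\ep, 1+\ep, -\delta}\}
\]
and show $I = [0,1]$; the desired solution of the \EBE is then obtained by taking $t = 0$, since $N_0(s) = 0$ is precisely the equation $\Omega_H = 0$ for $H = H_0 e^s$.

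To show $I = [0,1]$ I would invoke the three properties already established. First, Proposition \ref{Inonempty} gives $1 \in I$ directly: the correction $s = -\kappa$ with $\kappa = \Omega_{H_{-1}}$ produces a solution at $t=1$, so $I$ is nonempty. Second, Proposition \ref{Iopen} shows that $I$ is open in $[0,1]$; this rests on Proposition \ref{Fred2}, which uses the Fredholm theory of Section 5 and the fact that the linearization $\ML_{t,s}$ is an isomorphism on the chosen weighted $\ie$-H\"older spaces, so the implicit function theorem applies. Third, Proposition \ref{Iclosed} shows that $I$ is closed, this being the substantive analytic input: given a sequence $t_j \in I$ with solutions $s_j$, the a priori estimates of Section 8 (the $\calC^0$ bound of Proposition \ref{C0estimate}, the interior $\calC^k$ bound of Proposition \ref{interiorckbound}, and the boundary and $y\to\infty$ decay estimates) together with the regularity theorem of \cite{MazzeoWitten2013, MazzeoWitten2017} produce a subsequence converging in $\calX^{k+2,\alpha}_{2-\ep,1+\ep,-\delta}$ to a solution at $t = \lim t_j$.

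Since $I \subset [0,1]$ is nonempty, open, and closed, connectedness of $[0,1]$ gives $I = [0,1]$, so in particular $0 \in I$. Choosing $s$ with $N_0(s) = 0$ and setting $H = H_0 e^s$ yields $\Omega_H = 0$, producing the desired solution of the \EBE which inherits the Nahm pole (with knot) boundary conditions from $H_0$ thanks to the weight of the space $\calX^{k+2,\alpha}_{2-\ep, 1+\ep, -\delta}$ to which $s$ belongs. The main obstacle, already overcome in Section 8, is closedness: controlling the behavior of solutions at the singular boundary $y=0$ (including the knot points) uniformly along the continuity path, which required the Morrey-space scaling analysis and the polyhomogeneous regularity statement. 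With those ingredients in place, assembling the existence theorem is a formal application of the continuity method.
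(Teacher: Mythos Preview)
Your proposal is correct and follows exactly the paper's approach: the theorem is stated in the paper as an immediate consequence of the continuity method, assembled from Propositions \ref{Inonempty}, \ref{Iopen}, and \ref{Iclosed}, and your write-up faithfully summarizes that assembly. If anything, your version is more explicit than the paper's, which simply records the theorem after noting that the regularity of the limit allows the openness argument to be reapplied.
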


Recalling Theorems \ref{existadmissble1} and \ref{existadmissble2}, we conclude
\begin{corollary}
The maps $I_{\mathrm{NP}}:\MM_{\mathrm{NP}}\to\MM_{\mathrm{Hit}}$ and $I_{\mathrm{GNP}}:\MM_{\mathrm{GNP}}\to
\MM^{\mathbb{C}}_{\mathrm{Knot}}$ are surjective.
\end{corollary}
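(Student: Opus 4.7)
The plan is to combine the existence of admissible Hermitian metrics (Theorems \ref{existadmissble1} and \ref{existadmissble2}) with the existence theorem for the \EBE\ established via the continuity method (Propositions \ref{Inonempty}, \ref{Iopen}, \ref{Iclosed}), and then to verify that the resulting solution recovers the original holomorphic data under $I_{\mathrm{NP}}$ or $I_{\mathrm{GNP}}$.

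Concretely, I would proceed as follows. Given an element $(\ME,\vp) \in \MM_{\mathrm{Hit}}$, or more generally a triple $(\ME,\vp,L) \in \MCK$, I first invoke Theorem \ref{existadmissble1} or Theorem \ref{existadmissble2} to produce an admissible background metric $H_0$ attached to the fixed holomorphic data $\Theta_0=(\MD_1,\MD_2,\MD_3)$ (in parallel holomorphic gauge). The preceding existence theorem then furnishes a section $s$ in the appropriate weighted space $\calX^{k+2}_{2-\ep,1+\ep,-\delta}$ such that $H = H_0 e^s$ satisfies $\Omega_H = 0$, i.e., the triple $(E,\Theta_0,H)$ solves the \EBE. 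It remains to argue that the unitary gauge equivalence class of the associated Chern connection $(A,\phi,\phi_1)$ lies in $\MM_{\mathrm{NP}}$ or $\MM_{\mathrm{GNP}}$ and is sent by $I_{\mathrm{NP}}$ or $I_{\mathrm{GNP}}$ back to the starting data.

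For the boundary behavior at $y=0$, the admissibility of $H_0$ together with the decay estimate $|s| + |y\,ds| \leq C y^{\ep}$ (respectively $|s| + |y\,ds| \leq C R^{\ep}\psi^{\ep}$ near knot points) proved in Section 8 shows, by the definition of the (generalized) Nahm pole boundary condition for Hermitian metrics given in Section 3.2--3.3, that $H$ satisfies these conditions. The asymptotic decay estimate as $y \to \infty$ from Section 8.5 implies $H \to H_\infty$, where $H_\infty$ is the harmonic metric on the stable Higgs pair $(\ME,\vp)$, so the Chern connection converges to the flat $\mathrm{SL}(n+1,\CC)$ connection determined by $(\ME,\vp)$. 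Hence the solution lies in $\MM_{\mathrm{NP}}$ or $\MM_{\mathrm{GNP}}$.

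Finally, the assignment $I_{\mathrm{NP}}$, $I_{\mathrm{GNP}}$ from Propositions \ref{INP} and \ref{IGNP} is built from the data $(\MD_1,\MD_2)$ together with the vanishing line bundle $L$ extracted from the $\MD_3$--parallel transport. Because the continuity method only changes the Hermitian metric (not the complex triple $\Theta_0$), the Higgs pair associated to $(E,\Theta_0,H)$ is complex gauge equivalent to $(\ME,\vp)$; moreover the leading behavior of $\MD_3$ at $y=0$ is dictated by $H_0$ (which was built to have the exact model profile), so the vanishing line bundle of the solution coincides with $L$. The main obstacle is precisely this identification of the image under $I_{\mathrm{NP}}/I_{\mathrm{GNP}}$ with the initial $(\ME,\vp,L)$: one must verify that the weighted decay of $s$ is strong enough that it perturbs neither the distinguished vanishing line subbundle nor the iterated subbundles $L_j$ which encode the divisor $\mfd(\ME,\vp,L)$. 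This follows from $\nu > 1$ in the weight index for the $R$-direction (ensuring $s$ is lower order than the model parallel transport profile near each knot) together with $\mu > 0$ in the $\psi/y$-direction, both of which are guaranteed by our choice of function space. Combining with the surjectivity thus established and with Theorems \ref{existadmissble1}, \ref{existadmissble2} yields the corollary.
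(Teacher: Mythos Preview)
Your proposal is correct and follows the same approach as the paper: combine the existence of admissible metrics (Theorems \ref{existadmissble1}, \ref{existadmissble2}) with the continuity-method existence theorem to produce a solution, then observe that the construction fixes the complex data $\Theta_0$ so that the Gaiotto--Witten map returns the original $(\ME,\vp,L)$. The paper treats the corollary as immediate from the preceding theorem and the two admissibility results, whereas you have spelled out more carefully why the constructed solution lies in the correct moduli space and why its image under $I_{\mathrm{NP}}$ or $I_{\mathrm{GNP}}$ coincides with the starting data; this extra verification is implicit in the paper's argument and your expanded version is entirely in line with it.
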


We have now completed the proof of the existence of a solution to the equation $N_0(s) = 0$, or more simply, $\Omega_H = 0$,
corresponding to the prescribed holomorphic data.
\end{subsection}
\end{section}

\begin{section}{Uniqueness}
We prove uniqueness of solutions using convexity of the Donaldson functional.    For any two Hermitian metrics $K$ and $H=Ke^{s}$,
with $\Tr(s)=0$, write 
\begin{equation}
\Omega_{H,K}:=\frac{i}{2} \Lambda \left([\MD_1, \MD_1^{\da}]+[\MD_2,\MD_2^{\da}]\right)  + [\MD_3,\MD_3^{\da}],
\end{equation} 
where $\MD_i^{\da}$ is the conjugate with respect to $H$ defined in Section 2, the substript $K$ is to emphasis that when fixing $K$, we are considering $\Omega=0$ as an equation for $s$. 

We define a Donaldson functional for the \EBE in analogy with the well-known Donaldson functional for the
Hermitian-Yang-Mills equations in \cite{donaldson1985anti, Donaldson1987Infinite, Simpson1988Construction}:
\begin{equation}
\begin{split}
&\MM(H,K)=\int_0^1\int_{\Sigma\ti\RP}\langle s,\Omega(Ke^{us},K)\rangle _K\ \omega\we dy \we du\, ;
\end{split}
\end{equation}
here $\omega$ is the volume form of $\Si$.  This functional reveals the variational structure for the \EBE. Indeed, 
writing $H_t=Ke^{ts}$, then
\begin{equation}
\begin{split}
&\frac{d}{dt}\MM(H_t,K)=\int_{\Sigma\ti\RP}\Tr(\Omega_{H_t,K}s)\omega\we dy, \\
& \frac{d^2}{dt^2}\MM(H_t,K)=\sum_{i=1}^3\int_{\Si\ti\RP}|\MD_is|^2+\int_{\Si\ti\RP} \bar{\partial}\Tr(D_1^{\da}s\we s)+\int_{\Si\ti\RP} \partial_y\Tr(D_3^{\da}s\we s).
\label{Donaldsonfunctionalvariation}
\end{split}
\end{equation}

We now use this to prove injectivity of the maps $I_{\mathrm{NP}}$ and $I_{\mathrm{GNP}}$ from Propositions \ref{INP} and \ref{IGNP}.
%  we define the maps $I_{\mathrm{NP}}:\MM_{\mathrm{NP}}\to\MM_{\mathrm{Hit}}$ and $I_{\mathrm{GNP}}:\MM_{\mathrm{GNP}}\to
% \MM^{\mathbb{C}}_{\mathrm{Knot}}$, where $\MM_{\mathrm{NP}}(\MM_{\mathrm{GNP}})$ is the moduli space of Nahm pole(with knot) solutions to the \EBE, and $\MM_{\mathrm{Hit}}(\MM^{\mathbb{C}}_{\mathrm{Knot}})$ is the corresponding complex data.

\begin{proposition}
Given any element in $\MM_{\mathrm{Hit}}$($\MM^{\mathbb{C}}_{\mathrm{Knot}}$), suppose $H,K$ are two solutions to the \EBE 
with the same singularity type and corresponding to this same set of holomorphic data. Then $H=K$.
\end{proposition}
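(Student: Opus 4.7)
The plan is to exploit the convexity of the Donaldson functional $\MM(\cdot, K)$ along the path $H_t = K e^{ts}$, where $s := \log(K^{-1}H)$ is the trace-free, $K$-Hermitian endomorphism relating the two solutions. Since $H$ and $K$ share the same singular model at $y=0$ and at each knot face, and both converge to the same flat connection as $y \to \infty$, the a priori estimates and polyhomogeneity of Section 8 apply to their difference, giving $s \in \calX^{k,\alpha}_{\mu,\nu,-\delta}$ for some $\mu, \nu > 1$ and $\delta > 0$. This decay is what legitimizes the integrations by parts below.

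Set $f(t) := \MM(H_t, K)$. The first-variation formula in \eqref{Donaldsonfunctionalvariation} gives
$f'(0) = \int_{\Si \ti \RP}\Tr(\Omega_K \, s)\, \omega \we dy = 0$ and $f'(1) = \int_{\Si \ti \RP} \Tr(\Omega_H \, s)\, \omega \we dy = 0$, since both $\Omega_K$ and $\Omega_H$ vanish. Next I would analyze the second-variation formula, whose bulk term $\sum_{i=1}^{3}\int|\MD_i s|^2$ is manifestly non-negative. The $\bar\pa$ boundary term integrates to zero because, on each slice $\Si \ti \{y\}$, the integrand is $d$ of a $(1,0)$-form on the closed Riemann surface. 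The $\pa_y$ boundary term reduces, after integration in $y$, to $\int_\Si [\Tr(\MD_3^\da s\cdot s)]_{y=0}^{y=\infty}\, \omega$; the upper endpoint vanishes by the exponential decay of $s$, while the lower endpoint (and the contribution at each knot face, with $y$ replaced by $R$) vanishes because the positive-weight decay of $s$ dominates the $y^{-1}$ (resp.\ $R^{-1}$) singularity of $\phi_1$ in $\MD_3^\da = -\pa_y - i\phi_1$, so that $|\Tr(\MD_3^\da s\cdot s)| = \MO(y^{2\mu -1}) \to 0$.

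With all boundary terms eliminated, $f$ is convex on $[0,1]$ with $f'(0) = f'(1) = 0$, which forces $f' \equiv 0$ on $[0,1]$ and hence $f'' \equiv 0$; the non-negativity of the bulk term then yields $\MD_i s = 0$ for $i=1,2,3$. On each slice $\Si \ti \{y\}$, $s$ is therefore a holomorphic endomorphism of $\ME_y$ commuting with $\vp$. By stability of the Higgs pair $(\ME, \vp)$, any such endomorphism must be a scalar multiple of the identity, and tracelessness forces $s \equiv 0$, so that $H = K$.

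The main obstacle is the careful verification that the boundary contribution at $y = 0$, and in particular at each knot face, really does vanish, because $\MD_3^\da$ contains the singular model term $\phi_1 \sim y^{-1}$ (with additional blow-up in $R$ near knots). This is precisely where the full strength of the polyhomogeneity result of Section 8 is needed, together with the fact that $H$ and $K$ are adjusted to the same singular model $H_0$, so that the leading singular terms cancel in the difference $s$ and the latter lands in a weighted space whose weights exceed the relevant indicial roots.
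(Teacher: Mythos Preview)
Your proposal is correct and follows essentially the same approach as the paper: both exploit the Donaldson functional along the geodesic $H_t = Ke^{ts}$, use the indicial root computations to show that $s$ vanishes to order greater than $1$ at all boundary faces so the boundary terms in the second variation drop out, and then invoke stability of $(\ME,\vp)$ to conclude $s\equiv 0$. Your version phrases the convexity step via $f'(0)=f'(1)=0$ together with $f''\ge 0$, whereas the paper states it as $m'(0)=0$, $m''>0$ for $s\not\equiv 0$, and $m(0)=m(1)=0$; these are minor variants of the same argument and lead to the same conclusion.
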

\begin{proof}   Write $H = K e^s$ and $H_t=Ke^{ts}$.  By the indicial root computations for $\ML$, both near $F_{\mathrm{or}}$ 
and each $F_j$, the order of vanishing of $s$ is greater than $1$, hence the boundary terms in \eqref{Donaldsonfunctionalvariation} 
vanish.  Furthermore, the Higgs pair associated to $(\MD_1,\MD_2)$ is stable, so $\Ker\;\MD_1\cap \Ker\;\MD_2=\emptyset$. 
Hence if we set $m(t):=\MM(H_t,K)$, then $m'(0)=0$ and $m'' > 0$ if $s\not\equiv 0$. However, since $m(0) = m(1) = 0$,
we see that $m \equiv 0$, so $H \equiv K$ after all. 
\end{proof}

\begin{corollary}
The maps $I_{\mathrm{NP}}:\MM_{\mathrm{NP}}\to\MM_{\mathrm{Hit}}$ and $I_{\mathrm{GNP}}:\MM_{\mathrm{GNP}}\to
\MM^{\mathbb{C}}_{\mathrm{Knot}}$ are injective.
\end{corollary}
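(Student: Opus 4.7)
The plan is to deduce the corollary directly from the preceding Proposition using the dictionary between unitary solutions and Hermitian metrics on fixed holomorphic data, which was set up in Section~2.2 and Section~4.

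Suppose two solutions $(A_a,\phi_a,\phi_{1,a})$, $a=1,2$, of the \EBE with (generalized) Nahm pole boundary conditions have the same image under $I_{\mathrm{NP}}$ (resp.\ $I_{\mathrm{GNP}}$). By Propositions~\ref{INP} and \ref{IGNP}, each solution determines an associated triple $(\ME_a,\vp_a,L_a)$, and the hypothesis that these coincide in $\MM_{\mathrm{Hit}}$ (resp.\ $\MMC_{\mathrm{Knot}}$) means precisely that there exists a complex gauge transformation $g\in\MGC$ identifying the data sets $\Theta_2$ and $\Theta_1$ together with their vanishing line bundles. Applying Proposition~\ref{dataonlycomesfromequation}(2), replace the second solution by its pullback under $g$; after this modification both solutions live on the same holomorphic bundle, share the same parallel holomorphic triple $\Theta=(\MD_1,\MD_2,\MD_3=\pa_y)$, and have the same vanishing line bundle $L$.

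At this point the two solutions are encoded, via the Chern connection formulas \eqref{99}, by two Hermitian metrics $H$ and $K$ on this common holomorphic bundle. Each satisfies $\Omega_H=\Omega_K=0$, each has the prescribed Nahm pole (or generalized Nahm pole with prescribed knot weights) at $y=0$, and each limits to the harmonic metric of the stable Higgs pair $(\ME,\vp)$ as $y\to\infty$. These are precisely the hypotheses of the preceding Proposition, which gives $H=K$. Then Proposition~\ref{complexgaugeactionchange} says that in any unitary gauge the triple $(A,\phi,\phi_1)$ is uniquely determined by $(\ME,\vp,\MD_3,H)$, so the two original solutions differ only by a unitary gauge transformation. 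This is injectivity of $I_{\mathrm{NP}}$ and $I_{\mathrm{GNP}}$.

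The only place to be careful will be checking that the gauge transformation $g$ used to align the holomorphic data preserves the admissibility class of the Hermitian metric, i.e.\ the prescribed singular leading behavior at $y=0$ and $y=\infty$. This is not really an obstacle: the weight data $\mfd(\ME,\vp,L)$ and the vanishing line bundle $L$ are both intrinsic to the complex gauge orbit (as discussed in Section~4.2), so any $g$ that identifies $(\ME_1,\vp_1,L_1)$ with $(\ME_2,\vp_2,L_2)$ automatically carries the model singular metric $H_{\mathrm{mod}}$ of one to that of the other up to bounded factors, which is all that is needed for the preceding Proposition to apply.
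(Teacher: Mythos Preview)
Your proposal is correct and follows the same line as the paper, which simply states the Corollary as an immediate consequence of the preceding Proposition without further argument. You have spelled out the standard passage from uniqueness of the Hermitian metric on fixed holomorphic data to injectivity of the moduli-space maps via the Chern-connection dictionary of Section~2.2, which is exactly what the paper leaves implicit.
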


We have now fully proved our main result, that these maps are bijective.
\end{section}

\medskip

\bibliographystyle{alpha}
\bibliography{references}
\end{document}